\documentclass[11pt]{article}
\usepackage[utf8]{inputenc}

\usepackage{enumitem}
\usepackage{t1enc}
\usepackage{dsfont}
\usepackage{amsmath,amsthm,amsfonts,amssymb,bbm,color}

\usepackage[hmargin=2.5cm,vmargin=2.5cm]{geometry}
\usepackage{setspace}
\usepackage{hyperref}

\usepackage{mathtools}
\usepackage{ stmaryrd }

\usepackage{graphicx, psfrag}
\graphicspath{{figures/}}

\usepackage{booktabs}
\usepackage{longtable}

\newtheorem{thm}{Theorem}
\newtheorem{corollary}[thm]{Corollary}
\newtheorem{lemma}[thm]{Lemma}
\newtheorem{prop}[thm]{Proposition}

\newcommand{\bracing}[1]{\ensuremath{\left\{ #1 \right\}}}
\newcommand{\ceil}[1]{\ensuremath{\left\lceil #1 \right\rceil}}

\newcommand{\bbracket}[1]{\ensuremath{\llbracket #1 \rrbracket}}

\newcommand{\real}{\mathbb{R}}
\newcommand{\N}{\mathbb{N}}
\newcommand{\Nzero}{\mathbb{N}_0}
\newcommand{\F}{\mathcal{F}}

\newcommand{\prob}{\mathbb{P}}
\newcommand{\expect}{\mathbb{E}}
\newcommand{\probT}{\mathbb{P}_{T_n}}
\newcommand{\expectT}{\mathbb{E}_{T_n}}

\newcommand{\eps}{\varepsilon}

\newcommand{\X}{\mathcal{X}}
\newcommand{\Xone}{\mathcal{X}_1}
\newcommand{\XN}{\mathcal{X}_N}
\newcommand{\Y}{\mathcal{Y}}

\newcommand{\Aa}{\mathcal{A}}
\newcommand{\Bb}{\mathcal{B}}
\newcommand{\Cc}{\mathcal{C}}
\newcommand{\Dd}{\mathcal{D}}
\newcommand{\Ee}{\mathcal{E}}
\newcommand{\Gg}{\mathcal{G}}
\newcommand{\Mm}{\mathcal{M}}
\newcommand{\Nn}{\mathcal{N}}
\newcommand{\Pp}{\mathcal{P}}
\newcommand{\Sbf}{\mathbf{S}}
\newcommand{\U}{\mathcal{U}}

\def\R{\mathbb{R}} 

\def\1{\mathds{1}}
\def\to{\rightarrow}

\newcommand{\Nonetwo}{[N]\times\bracing{1,2}}

\newcommand{\Teps}{T^{\eps_N}}
\newcommand{\Tneps}{T_n^{\eps_N}}

\numberwithin{equation}{section}
\numberwithin{thm}{section}
\newcommand\numberthis{\addtocounter{equation}{1}\tag{\theequation}}

\title{Genealogy and spatial distribution of the $N$-particle branching random walk with polynomial tails}
\author{Sarah Penington\thanks{Department of Mathematical Sciences, University of Bath, UK} \and
Matthew I.~Roberts\footnotemark[1] \and Zs\'ofia Talyig\'as\footnotemark[1]}
\date{\today}

\begin{document}
\maketitle

\begin{abstract}
The $N$-particle branching random walk is a discrete time branching particle system with selection. We have $N$ particles located on the real line at all times. At every time step each particle is replaced by two offspring, and each offspring particle makes a jump of non-negative size from its parent's location, independently from the other jumps, according to a given jump distribution. Then only the $N$ rightmost particles survive; the other particles are removed from the system to keep the population size constant. Inspired by work of J.~B\'erard and P.~Maillard, we examine the long term behaviour of this particle system in the case where the jump distribution has regularly varying tails and the number of particles is large. We prove that at a typical large time the genealogy of the population is given by a star-shaped coalescent, and that almost the whole population is near the leftmost particle on the relevant space scale.
\end{abstract}
	
\section{Introduction}
\subsection{The $N$-BRW model}\label{sect: NBRW}
We investigate a particle system called $N$-particle branching random walk ($N$-BRW). In this discrete time stochastic process, at each time step, we have $N$ particles located on the real line. We say that the particles at the $n$th time step or at time $n$ belong to the $n$th generation. The locations of the particles change at every time step according to the following rules. Every particle has two offspring. The offspring particles have random independent displacements from their parents' locations, according to some prescribed displacement distribution supported on the non-negative real numbers. Then from the $2N$ offspring particles, only the $N$ particles with the rightmost positions survive to form the next generation. That is, at each time step we have a \textit{branching step} in which the $2N$ offspring particles move, and we have a \textit{selection step}, in which $N$ out of the $2N$ offspring are killed. Ties are decided arbitrarily. We describe the process more formally in Section~\ref{sect: NBRWdef}.

We will use the notation~$[N]:=\bracing{1,\dots,N}$ and $\Nzero := \N\cup\bracing{0}$ throughout. A pair $(i,n)$ with $i\in[N]$ and $n\in\N_0$ will represent the $i$th particle from the left in generation $n$. We also refer to the rightmost particle $(N,n)$ as the \textit{leader} at time $n$. Furthermore, we will denote the locations of the $N$ particles in the $n$th generation by the ordered set of $N$ real numbers
\begin{equation}\label{eq: def_Xn}
\X(n) = \left\{\Xone(n) \leq \dots \leq \XN(n)\right\},
\end{equation}
where $\X_i(n)$ is the location of particle $(i,n)$. We sometimes call $\X(n)$ the \textit{particle cloud}. 

The long term behaviour of the $N$-BRW heavily depends on the tail of the displacement distribution. Motivated by the work of B\'erard and Maillard \cite{Berard2014}, we investigate the $N$-BRW in the case where the displacement distribution is regularly varying, and $N$ is large.

We say that a function $f$ is regularly varying with index $\alpha\in\real$ if for all $y>0$,
\begin{equation}\label{eq: regvar}
\frac{f(xy)}{f(x)} \to y^\alpha  \text{ as } x\to\infty.
\end{equation}
Let $X$ be a random variable and let the function $h$ be defined by
\begin{equation}\label{eq: poly_tail}
\prob(X>x) = \frac{1}{h(x)} \text{ for }x\geq 0.
\end{equation}
We assume throughout that $\prob(X \geq 0) = 1$, that $h$ is regularly varying with index $\alpha > 0$, and that the displacement distribution of the $N$-BRW is given by \eqref{eq: poly_tail}. These are the same assumptions under which the results of \cite{Berard2014} were proved. The reader may wish to think of the particular regularly varying function given by $h(x)=x^\alpha$ for $x\geq 1$ and $h(x)=1$ for $x\in[0,1)$. We do not expect significant change in the behaviour of the $N$-BRW if jumps of negative size are allowed, but we do not prove this; we use the assumption that the jumps are non-negative several times in our argument. 

\subsection{Time and space scales} \label{sect: scalingintro}

Before explaining our main result, we describe the time and space scales we will be working with. We define
\begin{equation}\label{eq: LN}
\ell_N := \ceil{\log_2 N},
\end{equation}
for $N\geq 2$; this is the time scale we will be using throughout. To avoid trivial cases we always assume that $N\geq 2$. The time scale $\ell_N$ is the time it takes for the descendants of one particle to take over the whole population, if none are killed in selection steps.

For the space scale we choose
\begin{equation}\label{eq: aN}
a_N := h^{-1}(2N \ell_N),
\end{equation}
where $h$ is as in \eqref{eq: poly_tail}, and $h^{-1}$ denotes the generalised inverse of $h$ defined by 
\begin{equation}\label{eq: hinv}
h^{-1}(x) := \inf\bracing{y\geq 0:\: h(y) > x}.
\end{equation}
It is worth thinking of the particular case $h(x) = x^\alpha$ for $x\geq 1$, for which we have $a_N = (2N\ell_N)^{1/ \alpha}$ and $h(a_N) = 2N\ell_N$.

With the choice of $a_N$ in \eqref{eq: aN}, for any positive constant $c$, the expected number of jumps which are larger than~$ca_N$ in a time interval of length $\ell_N$ is of constant order, as $N$ goes to infinity. The heuristic picture in \cite{Berard2014} says that jumps of order $a_N$ govern the speed, the spatial distribution, and the genealogy of the population for $N$ large. Besides the main result of \cite{Berard2014} on the asymptotic speed of the particle cloud, it is conjectured that at a typical time the majority of the population is close to the leftmost particle, and that the genealogy of the population is given by a star-shaped coalescent. In this paper we prove these conjectures.

\subsection{The main result (in words)}\label{sect: intro_result}
Stating our main result precisely involves introducing some more notation and defining some rather intricate events. We will do this in Section~\ref{sec:mainresult}. In this section we instead aim to explain the main message of the theorem. When we say `with high probability', we mean with probability converging to 1 as $N\to\infty$.

\vspace{3mm}

\textit{For all $\eta>0$, $M\in\N$ and $t>4\ell_N$, the $N$-BRW has the following properties with high probability:}
\begin{itemize}
	\item \textbf{Spatial distribution:}
	\textit{At time $t$ there are $N-o(N)$ particles within distance $\eta a_N$ of the leftmost particle, i.e.~in the interval $[\Xone(t),\Xone(t)+\eta a_N]$.}
	\item \textbf{Genealogy:} \textit{The genealogy of the population on an $\ell_N$ time scale is asymptotically given by a star-shaped coalescent, and the time to coalescence is between $\ell_N$ and $2\ell_N$.}
	
	\textit{That is, there exists a time~$T\in[t-2\ell_N,t-\ell_N]$ such that with high probability, if we choose $M$ particles uniformly at random at time $t$, then every one of these particles descends from the rightmost particle at time $T$. Furthermore, with high probability no two particles in the sample of size $M$ have a common ancestor after time $T+\eps_N\ell_N$, where $\eps_N$ is any sequence satisfying $\eps_N\to 0$ and $\eps_N\ell_N\to\infty$, as $N\to\infty$.} 
\end{itemize}

The star-shaped genealogy might seem counter-intuitive because every particle has only two descendants. Indeed, if we take a sample of $M>2$ particles at time $t$, and look at the lineages of these particles, they certainly cannot coalesce in one time step. Our result says that all coalescences of the lineages of the sample occur within $o(\ell_N)$ time. Therefore, looking on an $\ell_N$ time scale the coalescence appears instantaneous.

\subsection{Heuristic picture}\label{sect: heur_pic}
We construct our heuristic picture based on the tribe heuristics for the $N$-BRW with regularly varying tails described in \cite{Berard2014}. The tribe heuristics say that at a typical large time there are $N-o(N)$ particles close to the leftmost particle if we look on the $a_N$ space scale. We call this set of particles the \textit{big tribe}. Furthermore, there are \textit{small tribes} of size $o(N)$ to the right of the big tribe. The number of such small tribes is $O(1)$. While the position of the big tribe moves very little on the $a_N$ space scale, the number of particles in the small tribes doubles at each time step. As a result, the big tribe eventually dies out, and one of the small tribes grows to become the new big tribe and takes over the population.

To escape the big tribe and create a new tribe that takes over the population, a particle must make a big jump of order $a_N$. As we explained in Section~\ref{sect: scalingintro}, jumps of this size occur on an $\ell_N$ time scale, and $\ell_N$ is the time needed for a new tribe to grow to a big tribe of size $N$.

Take $t>4\ell_N$. Building on the tribe heuristics, we describe the following picture. Assume that a particle becomes the leader with a big jump of order $a_N$. We claim that this particle will have of order $N$ surviving descendants $\ell_N$ time after the big jump. Moreover, the particle that makes the last such jump before time $t_1:=t-\ell_N$ will be the common ancestor of the majority of the population at time $t$. We denote the generation of this ancestor particle by $T$, and assume that $T\in [t_2,t_1]$. In Figure~\ref{fig: winningTribe} we illustrate how a new tribe is formed at time $T$, and how it grows to a big tribe by time $t$. 
We will prove the main result described in Section~\ref{sect: intro_result} by showing that the picture in Figure~\ref{fig: winningTribe} develops with high probability.

\begin{figure}[h]
\begin{center}
\def\svgwidth{\columnwidth}
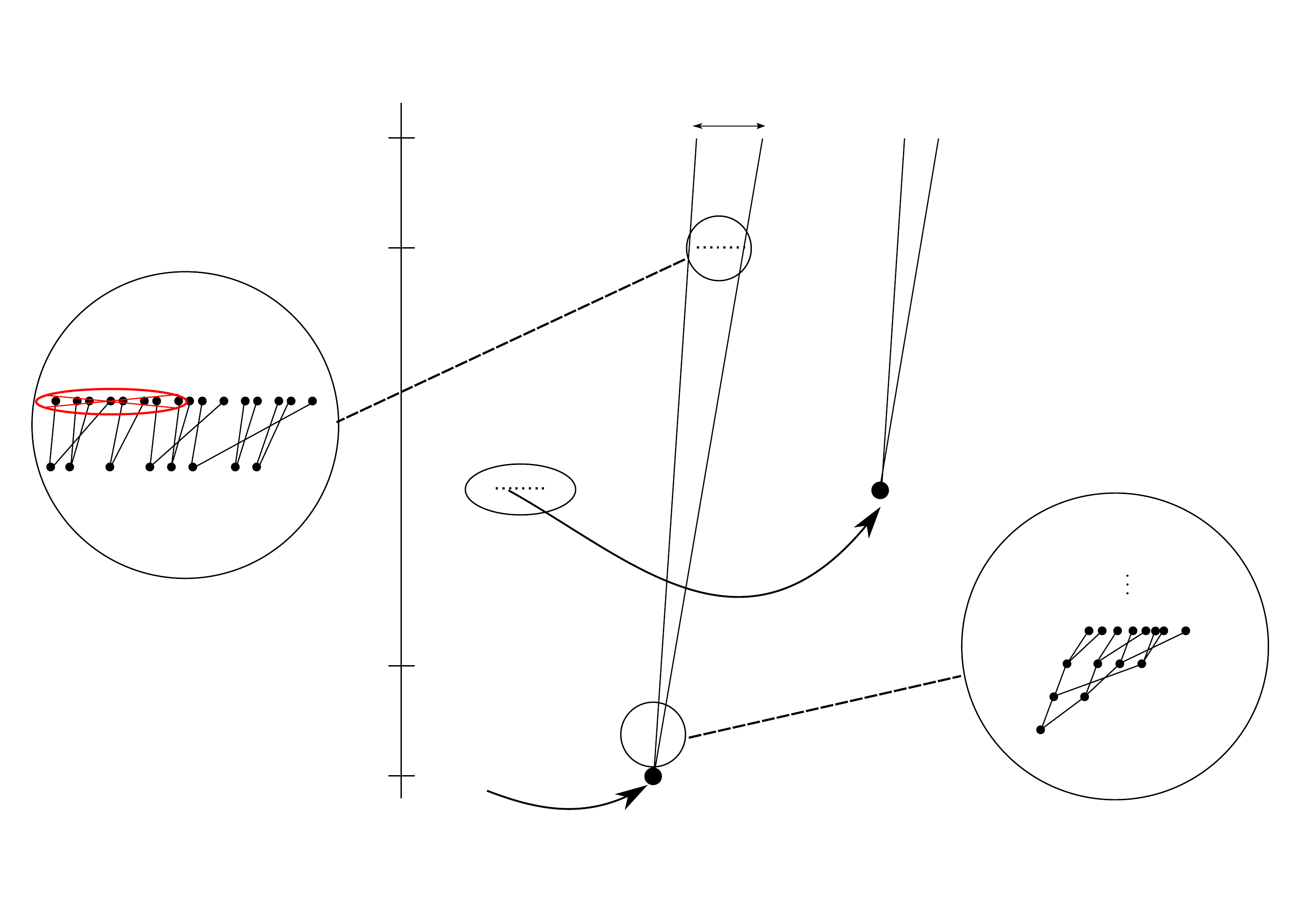
\caption{A particle that makes a big jump of order $a_N$ at time $T$ is the common ancestor of almost the whole population at time $t$. {\small The vertical axis represents time, and the particles' locations are depicted horizontally, increasing from left to right. The black dots represent particles. Horizontal dotted lines in an ellipse or circle show where the majority of the population (the big tribe) is. The arrows represent jumps from the big tribe. We use circles to zoom in on the population. The particles circled in red are killed in the selection step. The events labelled $A$ to $D$ are described in the main text.}}
\label{fig: winningTribe}
\end{center}	
\end{figure}

We introduce the notation
\begin{equation}\label{eq: ti}
t_i := t - i\ell_N,
\end{equation}
for $t,i\in\N$. The message of Figure~\ref{fig: winningTribe}, which we will prove later, is that the following occurs with high probability.

\vspace{3mm}

\noindent
\textbf{A:} At time $T\in [t_2,t_1]$, particle $(N,T)$ has taken a big jump of order $a_N$ and escaped the big tribe. It now leads by a large distance, and its descendants will be the leaders at least until time $t_1$.

There are two main reasons for this. First, we define $T$ as the last time before time $t_1$ when a big jump of order $a_N$ creates a new leader, so particles with big jumps in the time interval $[T,t_1]$ cannot become leaders. Second, particles with smaller jumps not descending from particle $(N,T)$ are unlikely to catch up with the leading tribe, because paths with small jumps move very little on the $a_N$ space scale. This is an important property of random walks with regularly varying tails, which we will state and prove in Lemma~\ref{lemma: durrett} and apply in Corollary~\ref{cor: path}.

\vspace{3mm}

\noindent
\textbf{B:} After time $t_1$, there might be particles which do not descend from particle $(N,T)$, but which, by making a big jump of order $a_N$, move beyond the tribe of particle $(N,T)$. However, these particles have substantially less than $\ell_N$ time to produce descendants by time $t$, and so each of them can only have $o(N)$ descendants at time $t$. Particles which do not descend from $(N,T)$ are unlikely to move beyond the tribe of particle $(N,T)$ without making a big jump.

There will only be $O(1)$ big jumps of order $a_N$ between times $t_1$ and $t$, because jumps of order $a_N$ happen with frequency of order $1/ \ell_N$. Therefore, until time $t$, the total number of particles to the right of the tribe of particle $(N,T)$ is at most $o(N)$.

\vspace{3mm}

\noindent
\textbf{C:} The tribe of particle $(N,T)$ doubles in size at each step up to (almost) time $T+\ell_N$. Selection does not affect these particles significantly, because the number of particles to the right of this tribe is at most $o(N)$ before time $T+\ell_N$, as we explained in part B.

\vspace{3mm}

\noindent
\textbf{D:} At time $T+\ell_N$ there are $N$ particles to the right of the position of particle $(N,T)$. This is an elementary property of the $N$-BRW, following from the non-negativity of the jump sizes. The $N$ particles are mainly in the tribe of particle $(N,T)$, and there may be $o(N)$ particles ahead of the tribe. From this point on, the $N$ leftmost offspring particles in the tribe of particle $(N,T)$ do not survive.

\vspace{3mm}

\noindent
Then, between times $T+\ell_N$ and $t$, the number of particles in the tribe of particle $(N,T)$ will remain $N-o(N)$, where the $o(N)$ part doubles at each time step but does not reach order $N$ by time $t$. Therefore, almost every particle at time $t$ descends from particle $(N,T)$. 

Furthermore, as the number of descendants of particle $(N,T)$ only reaches order $N$ at (roughly) time $T+\ell_N$, the descendants of particle $(N,T)$ are unlikely to make big jumps of order $a_N$ before time $T+\ell_N$. We will prove this property (and many others) in Lemma~\ref{lemma: probC}. Only $O(1)$ descendants of particle $(N,T)$ make big jumps of order $a_N$ between times $T$ and $t$, and these big jumps are likely to happen after time $T+\ell_N$, and so significantly after time $t_1$. Therefore, most time-$t$ descendants of particle $(N,T)$ will not have an ancestor which made a big jump between times $T$ and $t$, thus they will not move far from their ancestor's position $\XN(T)$ on the $a_N$ space scale. 

In order to prove our statements in Section~\ref{sect: intro_result} we also need to show that there is at least one particle which becomes the new leader with a jump of order $a_N$ during the time interval $[t_2,t_1]$. The existence of such a particle will imply that indeed there exists $T\in[t_2,t_1]$ as in Figure~\ref{fig: winningTribe}. We give a heuristic argument for this in Section~\ref{sect: proof_heuristics}, where we also explain the idea for proving that if we take a sample of $M$ particles at time $t$ then the coalescence of the ancestral lineages of these particles happens within a time window of width $o(\ell_N)$.

\subsection{Optimality of our main result}\label{sect: intro_extra}

In order to show that our main result is more or less optimal, we will prove two additional results.

\vspace{3mm}

\noindent
\textbf{Spatial distribution:} Our main theorem says that \emph{most} particles in the population are likely to be within distance $\eta a_N$ of the leftmost at time $t$, for arbitrarily small $\eta>0$ when $N$ and $t$ are large. We will show that this is not true of \emph{all} particles: the distance between the leftmost and rightmost particles is typically of order $a_N$, and is arbitrarily large on the $a_N$ space scale with positive probability. Therefore our result that most particles are close to the leftmost particle on the $a_N$ space scale gives meaningful information on the shape of the particle cloud at a typical time. We state this formally in Proposition~\ref{prop: diameter} and then prove it in Section~\ref{sect: significance}.


\vspace{3mm}

\noindent
\textbf{Genealogy:} Our main theorem says that the generation $T$ of the most recent common ancestor of a sample from the population at time $t$ is between times $t_2$ and $t_1$ with high probability. We will prove that this is the strongest possible result in the sense that for any subinterval of $[t_2,t_1]$ with length of order $\ell_N$ there is a positive probability that $T$ is in that subinterval. This will be the main message of Proposition~\ref{prop: A2prime}, which we prove in Section~\ref{sect: significance}.

We also mention here that the precise statement of our main result, Theorem~\ref{thm}, implies that the distribution of the rescaled time to coalescence, $(t-T)/\ell_N$, has no atom at $1$ or $2$ in the limit $N\to \infty$.


\subsection{Related work}\label{sect: related_work}
The $N$-BRW shows dramatically different behaviours with different jump distributions; this includes the speed at which the particle cloud moves to the right, the spatial distribution within the population, and the genealogy. Below we discuss existing results and conjectures on these properties of the $N$-BRW. We start by summarising the results of B\'erard and Maillard, who studied the speed of the particle cloud when the displacement distribution is heavy-tailed.

\subsubsection*{Heavy-tailed displacement distribution}
B\'erard and Maillard \cite{Berard2014} introduced the stairs process, the record process of a shifted space-time Poisson point process. They proved that it describes the scaling limit of the pair of trajectories of the leftmost and rightmost particles' positions $(\Xone(n),\XN(n))_{n\in\Nzero}$ when the jump distribution has polynomial tails. The correct scaling is to speed up time by $\log_2 N$ and to shrink the space scale by $a_N$. Using the relation between the $N$-BRW and the stairs process they prove their main result: the speed of the particle cloud grows as $a_N/\log_2 N$ in $N$, and the propagation is linear or superlinear (but at most polynomial) in time. The propagation is linear if the jump distribution has finite expectation, and superlinear otherwise; the asymptotics follow from the behaviour of the stairs process. This behaviour is different from that of the classical branching random walk without selection, where the propagation is exponentially fast in time in a heavy-tailed setting \cite{Durrett1983MaximaOB}. 

The tribe heuristics in \cite{Berard2014} predict---but do not prove---that the majority of the population is located close to the leftmost particle, that the genealogy should be star-shaped, and that the relevant time scale for coalescence of ancestral lineages is $\ell_N$. We will prove the above properties in Theorem~\ref{thm}, and therefore the present paper and \cite{Berard2014} together provide a comprehensive picture of the $N$-BRW with regularly varying tails, including the behaviour of the speed, spatial distribution and genealogy.

\subsubsection*{Light-tailed displacement distribution}
Particle systems with selection have been studied with light-tailed displacement distribution in the physics literature as a microscopic stochastic model for front propagation. First Brunet and Derrida \cite{Brunet1997, Brunet2000}, and later Brunet, Derrida, Mueller and Munier \cite{Brunet2007, Brunet2006} made predictions on the behaviour of particle systems with branching and selection.

\textbf{Speed:} For the $N$-BRW, B\'erard and Gou\'er\'e \cite{Berard2010} proved the existence of the asymptotic speed of the particle cloud as time goes to infinity, which in fact applies for any jump distribution with finite expectation. They also proved that the asymptotic speed converges to a finite limiting speed as the number of particles $N$ goes to infinity, with a surprisingly slow rate $(\log N)^{-2}$, which was predicted by Brunet and Derrida \cite{Brunet1997, Brunet2000}. The limiting speed is the same as the speed of the rightmost particle in a classical branching random walk without selection with exponentially decaying tails \cite{hammersley1974, kingman1975, Biggins1976}.

\textbf{Spatial distribution:} The spatial distribution in the light-tailed case is also predicted in \cite{Brunet1997, Brunet2000}. The authors argue that the fraction of particles to the right of a given position at a given time should evolve according to an analogue of the FKPP equation. The FKPP equation is a reaction-diffusion equation admitting travelling wave solutions. 
Rigorous results on the relation between particle systems with selection and free boundary problems with travelling wave solutions have been proved in \cite{durrett2011} and \cite{berestycki2019, demasi2017}.

\textbf{Genealogy:} On the genealogy of the $N$-BRW with light-tailed displacement distribution, the papers \cite{Brunet2007,Brunet2006} arrived at the following conjecture (see also~\cite{maillard2016}). If we pick two particles at random in a generation, then the number of generations we need to go back to find a common ancestor of the two particles is of order $(\log N)^3$. Furthermore, if we take a uniform sample of $k$ particles in a generation and trace back their ancestral lines, the coalescence of their lineages is described by the Bolthausen-Sznitman coalescent, if time is scaled by $(\log N)^3$. This property has been shown for a continuous time model, a branching Brownian motion (BBM) with absorption \cite{berestycki2013}, where particles are killed when hitting a deterministic moving boundary. For the $N$-BRW and its continuous time analogue, the $N$-BBM, no rigorous proof has yet been given. 

\subsubsection*{Displacement distribution with stretched exponential tail}
As we have seen, the behaviour of the $N$-BRW is significantly different in the light-tailed and heavy-tailed cases. It is then a natural question to ask what happens in an intermediate regime, where the jump distribution has stretched exponential tails. Random walks and branching random walks with stretched exponential tails have been investigated in the literature \cite{denisov2008, gantert2000}, but questions about the $N$-BRW with such a jump distribution, such as asymptotic speed, spatial distribution, and genealogy, remain open. In the future we intend to investigate the $N$-BRW in the stretched exponential case.

\subsection{Organisation of the paper}
In Section~\ref{sec:mainresult} we state Theorem~\ref{thm} and Propositions~\ref{prop: A2prime} and \ref{prop: diameter}, our main results, which we have explained in Sections~\ref{sect: intro_result} and \ref{sect: intro_extra}. Furthermore, we give a heuristic argument for the proof of Theorem~\ref{thm}, introduce the notation we will be using throughout, and carry out the first step towards proving Theorem~\ref{thm} in Lemma~\ref{lemma: rewrite}. As a result, the proof of Theorem~\ref{thm} will be reduced to proving Propositions~\ref{prop: A1A3} and \ref{prop: A4}. We prove the former in Sections~\ref{sect: deterministic} and \ref{sect: probabilities} and the latter in Section~\ref{sect: star-shaped}.

In Section~\ref{sect: deterministic} we give a deterministic argument for the existence of a common ancestor between times $t_1$ and $t_2$ of almost the whole population at time $t$. The argument will also imply that almost every particle in the population at time $t$ is near the leftmost particle. Then in Section~\ref{sect: probabilities} we check that the events of the deterministic argument occur with high probability. A key step in the proof is to see that paths cannot move a distance of order $a_N$ in $\ell_N$ time without making at least one jump of order $a_N$. We prove a large deviation result to show this, taking ideas from \cite{Durrett1983MaximaOB} and \cite{gantert2000}. The other important tool, which we will use to estimate probabilities, is Potter's bound for regularly varying functions. 

In Section~\ref{sect: star-shaped} we prove that the genealogy is star-shaped. We will use concentration results from~\cite{McDiarmid1998} to see that a single particle at time $T+\eps_N\ell_N$ cannot have more than of order $N^{1-\eps_N}$ surviving descendants at time $t$, which will be enough to conclude the result.

In Section~\ref{sect: significance} we prove Propositions~\ref{prop: A2prime} and \ref{prop: diameter} using some of our ideas from the deterministic argument in Section~\ref{sect: deterministic}.

Section~\ref{sect: glossary} is a glossary of notation, where we collect the notation most frequently used in this paper with a brief explanation, and with a reference to the section or equation where the notation is defined. In Section~\ref{sect: glossary} we also list the most important intermediate steps of the proof of our main result. 
	
\section{Genealogy and spatial distribution result} \label{sec:mainresult}
\subsection{Formal definition of the $N$-BRW}\label{sect: NBRWdef}
Let $X_{i,b,n}$, $i\in[N]$, $b\in\bracing{1,2}$, $n\in\Nzero$ be i.i.d.~random variables with common law given by \eqref{eq: poly_tail}. Each $X_{i,b,n}$ stands for the jump size of the $b$th offspring of particle $(i,n)$. Let $\X(0) = \{\Xone(0) \le \ldots \le \XN(0)\}$ be any ordered set of $N$ real numbers, which represents the initial locations of the $N$ particles. Now we describe inductively how $\X(0)$ and the random variables $X_{i,b,n}$, $i\in[N]$, $b\in\bracing{1,2}$, $n\in\Nzero$ determine the $N$-BRW, that is, the sequence of locations of the $N$ particles, $(\X(n))_{n\in\Nzero}$.

We start with the initial configuration of particles $\X(0)$. Once~$\X(n)$ has been determined for some~${n\in\Nzero}$, then~$\X(n+1)$ is defined as follows. Each particle has two offspring, each of which performs a jump from the location of its parent. The $2N$ independent jumps at time $n$ are then given by the i.i.d.~random variables $X_{i,b,n}$, $i\in[N]$, $b\in\bracing{1,2}$ as above. After the jumps, only the $N$ rightmost offspring particles survive; that is,
$\X(n+1) = \left\{\Xone(n+1) \leq \dots \leq \XN(n+1)\right\}$ is given by the $N$ largest numbers from the collection~${(\X_i(n) + X_{i,b,n})_{i\in[N],b\in\bracing{1,2}}}$. Ties are decided arbitrarily.

Note that since the jumps are non-negative, the sequences $\X_i(n)$ are non-decreasing in $n$ for all $i\in[N]$. Indeed, at time $n$ there are at least $N-i+1$ particles to the right of or at position $\X_i(n)$, and so there are at least~$\min(N,2(N-i+1))$ particles to the right of or at $\X_i(n)$ at time $n+1$, so we must have $\X_i(n+1)\geq\X_i(n)$. We refer to this property as \emph{monotonicity} throughout.

\subsection{Statement of our main result}\label{sect: statement}
We explained the message of our main result in Section~\ref{sect: intro_result}. In this section we provide the precise statement in Theorem~\ref{thm}. First we introduce the setup for the theorem. 

For~$n,k\in\Nzero$ and $i\in[N]$ we will denote the index of the time-$n$ ancestor of the particle $(i,n+k)$ by
$$\zeta_{i,n+k}(n),$$
i.e.~particle $(\zeta_{i,n+k}(n),n)$ is the ancestor of $(i,n+k)$.
Recall that the relevant space scale for our process is $a_N$, defined in \eqref{eq: aN}. For $r\ge 0$ and $n\in \Nzero$, let $L_{r,N}(n)$ denote the number of particles which are within distance $r a_N$ of the leftmost particle at time $n$:
\begin{equation}\label{eq: LepsN}
L_{r,N}(n) := \max\bracing{i\in[N]:\: \X_i(n) \leq \Xone(n) + r a_N}.
\end{equation}
Define a sequence $(\eps_N)_{N\in \N}$ such that $\eps_N\ell_N$ is an integer for all $N\geq 1$, and which satisfies
\begin{equation}\label{eq: epsN1}
\eps_N\ell_N \to \infty \text{ and } \eps_N\to 0 \text{ as }N\to\infty.
\end{equation}
We introduce two events which describe the spatial distribution and the genealogy of the population at a given time $t$. Our main result, Theorem~\ref{thm}, says that these two events occur with high probability. We define the events for all $N\geq 2$ and $t>4\ell_N$. For $\eta > 0$ and~$\gamma\in(0,1)$, the first event says that at least~$N - N^{1-\gamma}$ particles (i.e.~almost the whole population if $N$ is large) are within distance $\eta a_N$ of the leftmost particle at time $t$. We let
\begin{equation}\label{eq: A1}
\Aa_1 = \Aa_1(t,N,\eta,\gamma):= \bracing{L_{\eta,N}(t)\geq N - N^{1-\gamma}}.
\end{equation}

Recall the notation $t_i$ from \eqref{eq: ti}. We illustrate the second event in Figure~\ref{fig: star-shaped}. We sample $M\in\N$ particles uniformly at random from the population at time $t$. Let $\Pp = (\Pp_1,\dots,\Pp_M)$ be the index set of the sampled particles. The event says that there exists a time $T$ between $t_2$ and $t_1$ such that all of the particles in the sample have a common ancestor at time~$T$, but no pair of particles in the sample have a common ancestor at time $T+\eps_N \ell_N$. Moreover, the common ancestor at time $T$ is the leader particle $(N,T)$. Additionally, the event says that the time $T$ is not particularly close to $t_1$ or $t_2$, in that $T\in[t_2+\ceil{\delta\ell_N},t_1-\ceil{\delta\ell_N}]$ for some $\delta>0$. We let
\begin{align}\label{eq: A2}
\Aa_2 = \Aa_2(t,N,M,\delta) := \bracing{\begin{array}{l}	
\exists T\in[t_2+\ceil{\delta\ell_N},t_1-\ceil{\delta\ell_N}]:\:\zeta_{\Pp_{i},t}(T) = N\;\, \forall i\in[M] \text{ and }
\\
\zeta_{\Pp_{i},t}(T + \eps_N \ell_N) \neq \zeta_{\Pp_{j},t}(T + \eps_N \ell_N)\: \forall i,j\in[M],\; i\neq j
\end{array}}.
\end{align} 
For convenience, we will often write $\Aa_1$ and~$\Aa_2$ for the two events above, omitting the arguments. We will prove the following result.
\begin{thm}\label{thm}
For all $\eta > 0$ and $M\in\N$ there exist $\gamma,\delta\in(0,1)$ such that for all $N\in\N$ sufficiently large and~$t\in\Nzero$ with $t > 4\ell_N$,
\begin{align*}
\prob(\Aa_1\cap\Aa_2) > 1 - \eta,
\end{align*}
where $\ell_N$ is given by \eqref{eq: LN}, and $\Aa_1 = \Aa_1(t,N,\eta,\gamma)$ and $\Aa_2 = \Aa_2(t,N,M,\delta)$ are defined in \eqref{eq: A1} and~\eqref{eq: A2} respectively.
\end{thm}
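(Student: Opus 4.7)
The plan is to formalise the heuristic picture of Figure~\ref{fig: winningTribe}. Define the ``winning ancestor'' time
\[
T := \max\bracing{s \in [t_2,t_1] : \text{some particle becomes the leader at time }s\text{ via a jump of size }\geq c a_N},
\]
for a small constant $c>0$, and track the consequences of this definition through a deterministic argument together with concentration estimates for a handful of auxiliary events. The proof then splits into (a) constructing a collection of high-probability ``good'' events, (b) showing deterministically that on their intersection $\Aa_1$ holds and all $M$ sampled particles descend from $(N,T)$ with $T$ in the required window, and (c) showing that no two sampled particles share a more recent ancestor than $T+\eps_N\ell_N$.

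For part (a), the main good events I would set up are: (i) the total number of jumps of size $\geq c a_N$ made across the entire population in the time interval $[t_2,t]$ is $O(1)$, which follows from $h(a_N) = 2N\ell_N$ together with Potter's bound and Markov; (ii) \emph{no} sequence of $\ell_N$ consecutive jumps along any lineage, each of size less than $c a_N$, covers a total distance greater than $\eta a_N/2$, which is a large-deviation statement for regularly varying random walks (the promised Lemma~\ref{lemma: durrett} and Corollary~\ref{cor: path}); and (iii) at least one leader is created by a big jump inside the inner window $[t_2+\lceil \delta \ell_N \rceil, t_1-\lceil \delta \ell_N \rceil]$, which guarantees that $T$ exists in the required range and can be verified by a first/second moment calculation on the approximately Poisson number of such large jumps.

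For part (b), I would argue inductively over generations $n\in[T,t]$. Up to time $T+\ell_N$, because at most $o(N)$ particles lie to the right of the tribe of $(N,T)$ on the event in (i), the tribe doubles each generation and is unaffected by selection thanks to monotonicity and the scarcity of big jumps. By time $T+\ell_N$ there are at least $N$ particles to the right of $\X_N(T)$, so thereafter the $N$ leftmost survivors in the tribe are killed each step, but the tribe remains of size $N-o(N)$ and stays inside $[\X_N(T), \X_N(T)+\eta a_N]$ by (ii). Since no other particle can overtake $\X_N(T)$ without a big jump and (i) bounds the total number of such jumps, $\Xone(t)$ is within $\eta a_N$ of $\X_N(T)$. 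This yields $\Aa_1$ and the common-ancestor part of $\Aa_2$.

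For part (c), I would apply a McDiarmid-type concentration bound (the promised Lemma~\ref{lemma: probC}) to show that on a high-probability event, any single particle alive at time $T+\eps_N \ell_N$ has at most of order $N^{1-\eps_N+o(1)}$ surviving descendants at time $t$; a union bound over the $\binom{M}{2}$ pairs in the sample then gives non-coalescence after $T+\eps_N\ell_N$. \textbf{Main obstacle.} I expect the hardest step to be part (b), specifically controlling particles that are not descendants of $(N,T)$ but which make their own big jumps at some time $T'\in(T,t_1]$: their subtrees could in principle double for close to $\ell_N$ generations and so reach size $\Theta(N)$. One must argue that each such ``intruder'' starts behind the winning tribe by a macroscopic amount on the $a_N$ scale, and is trimmed by selection once the winning tribe catches up, uniformly over the landing positions of all $O(1)$ intrusions. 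This delicate interplay between selection and the scarcity of big jumps is what forces the machinery of Sections~\ref{sect: deterministic} and \ref{sect: probabilities}.
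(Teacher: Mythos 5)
Your overall architecture coincides with the paper's: the same definition of $T$ as the last record-breaking big jump before $t_1$, a deterministic argument on an intersection of good events to get $\Aa_1$ and the common-ancestor part of $\Aa_2$ (the paper's $\Aa_3$), and McDiarmid-type concentration to show no particle at time $T+\eps_N\ell_N$ has more than $\nu N$ time-$t$ descendants (the paper's $\Aa_4$ and Proposition~\ref{prop: A4}). Your worry about ``intruders'' in part (b) is legitimate but is resolved in the paper rather cheaply: by definition of $T$ no big jump breaks the record in $(T,t_1)$, Corollary~\ref{cor: gaps}(b) forces such jumps to land at least $2c_2a_N$ behind the leader when the diameter is large, and the event $\Cc_6$ (no big jumps within $\delta\ell_N$ of $t_1$) caps each intruder subtree at $2N^{1-\delta}$ descendants, so the $O(1)$ intruders contribute $o(N)$ in total.

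The genuine gap is in your step (a)(iii). You propose to guarantee that ``at least one leader is created by a big jump inside the inner window'' by a first/second moment calculation on the approximately Poisson number of jumps of size $\geq ca_N$. That computes the number of \emph{big jumps}, not the number of big jumps that \emph{take the lead}, and the two are not interchangeable: whether a jump of size $\sim a_N$ creates a new leader depends on the diameter of the particle cloud and on where in the cloud the jumping particle sits, both of which are correlated with the past in a way that a bare moment bound on jump counts does not see. If the diameter happens to be $Ca_N$ with $C$ large throughout $[t_2,t_1]$ and all big jumps originate near the back and are smaller than $Ca_N$, none of them breaks the record. Closing this gap is the bulk of the paper's Sections~\ref{sect: C1} and~\ref{sect: probCD}: one first shows (Lemma~\ref{lemma: noRecord}) that if no record is broken for $\ell_N$ consecutive steps the diameter collapses to $O(c_1 a_N)$, then runs a four-case analysis (Proposition~\ref{prop: C}, using the auxiliary events $\Dd_1$--$\Dd_5$) to show that either the diameter becomes small — after which the next sufficiently large jump provably overtakes the whole cloud — or the record must already have been broken in the window, with a separate argument handling a record broken just before $t_2$ by a not-too-large jump that is later overtaken. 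Without some version of this diameter control, your step (iii) does not go through.
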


\begin{figure}[h]
	\begin{center}
\def\svgwidth{0.45\columnwidth}
\begingroup%
  \makeatletter%
  \providecommand\color[2][]{%
    \errmessage{(Inkscape) Color is used for the text in Inkscape, but the package 'color.sty' is not loaded}%
    \renewcommand\color[2][]{}%
  }%
  \providecommand\transparent[1]{%
    \errmessage{(Inkscape) Transparency is used (non-zero) for the text in Inkscape, but the package 'transparent.sty' is not loaded}%
    \renewcommand\transparent[1]{}%
  }%
  \providecommand\rotatebox[2]{#2}%
  \newcommand*\fsize{\dimexpr\f@size pt\relax}%
  \newcommand*\lineheight[1]{\fontsize{\fsize}{#1\fsize}\selectfont}%
  \ifx\svgwidth\undefined%
    \setlength{\unitlength}{640.78680084bp}%
    \ifx\svgscale\undefined%
      \relax%
    \else%
      \setlength{\unitlength}{\unitlength * \real{\svgscale}}%
    \fi%
  \else%
    \setlength{\unitlength}{\svgwidth}%
  \fi%
  \global\let\svgwidth\undefined%
  \global\let\svgscale\undefined%
  \makeatother%
  \begin{picture}(1,0.94429914)%
    \lineheight{1}%
    \setlength\tabcolsep{0pt}%
    \put(0,0){\includegraphics[width=\unitlength,page=1]{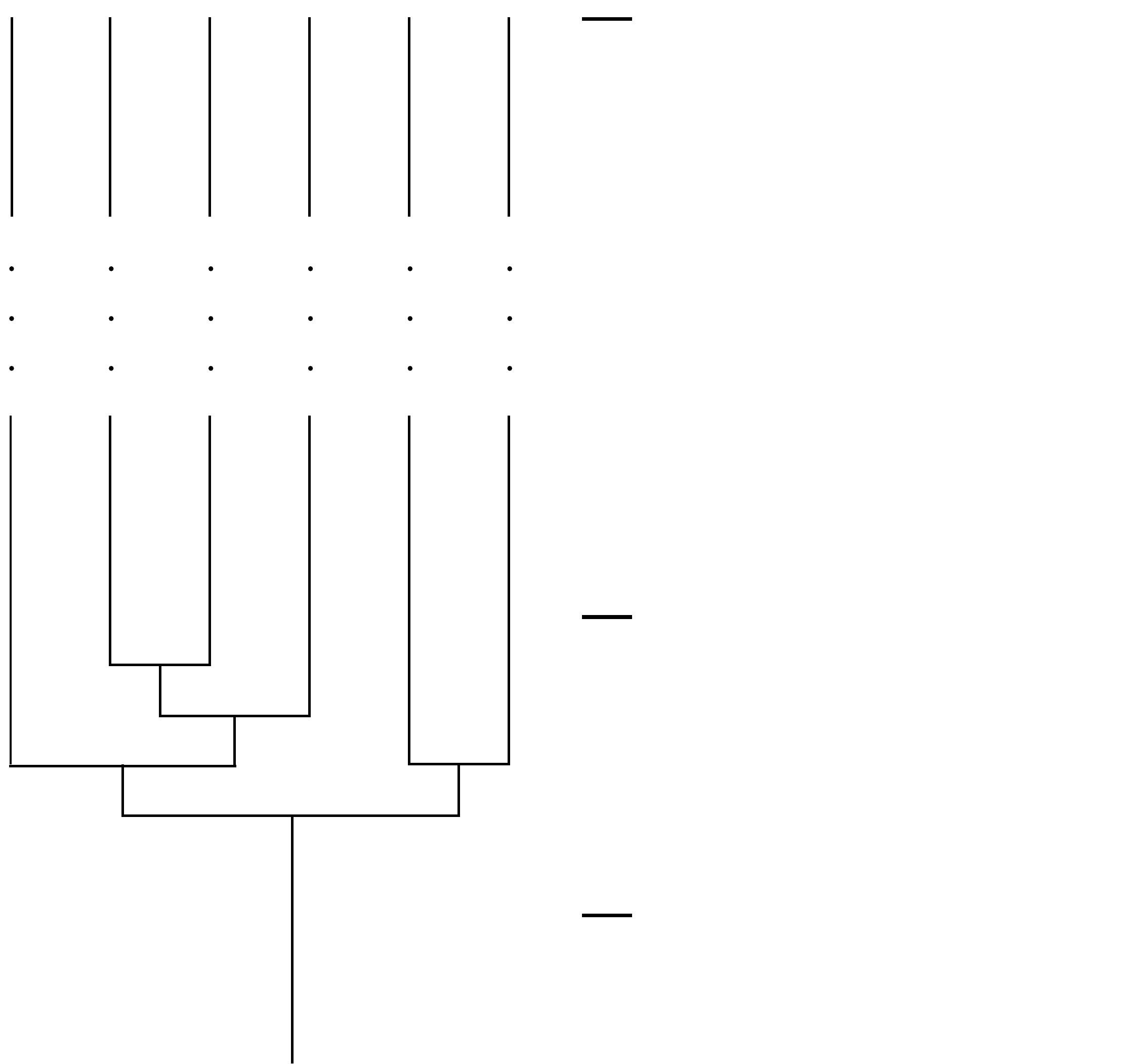}}%
    \put(0.60441824,0.11886713){\makebox(0,0)[lt]{\lineheight{1.25}\smash{\begin{tabular}[t]{l}$T$\end{tabular}}}}%
    \put(0.58752894,0.3915078){\makebox(0,0)[lt]{\lineheight{1.25}\smash{\begin{tabular}[t]{l}$T+\eps_N\ell_N$\end{tabular}}}}%
    \put(0.60815505,0.92238393){\makebox(0,0)[lt]{\lineheight{1.25}\smash{\begin{tabular}[t]{l}$t$\end{tabular}}}}%
    \put(0,0){\includegraphics[width=\unitlength,page=2]{star-shaped.pdf}}%
    \put(0.581309,0.26426446){\makebox(0,0)[lt]{\lineheight{1.25}\smash{\begin{tabular}[t]{l}$o(\ell_N)$\end{tabular}}}}%
    \put(0,0){\includegraphics[width=\unitlength,page=3]{star-shaped.pdf}}%
    \put(0.96847489,0.68777491){\rotatebox{-90}{\makebox(0,0)[lt]{\lineheight{1.25}\smash{\begin{tabular}[t]{l}$t-T\in[\ell_N,2\ell_N]$\end{tabular}}}}}%
    \put(0,0){\includegraphics[width=\unitlength,page=4]{star-shaped.pdf}}%
  \end{picture}%
\endgroup%

\caption{Coalescence of the ancestral lineages of $M=6$ particles. We go backwards in time from top to bottom in the figure. To each particle in the sample we associate a vertical line, representing its ancestral line. Two lines coalesce into one when the particles they are associated with have a common ancestor for the first time going backwards from time $t$. All coalescences of the lineages of the sample happen within a time window of size $o(\ell_N)$. Time $T$ is the generation of the most recent common ancestor of the majority of the whole population at time $t$. The three dots in each line indicate that the picture is not proportional: the time between $t$ and $T$ is of order $\ell_N$, whereas the time between all coalescences and $T$ is $o(\ell_N)$.}
\label{fig: star-shaped}
\end{center}
\end{figure}

We explained two additional results in Section~\ref{sect: intro_extra} which show the optimality of Theorem~\ref{thm}. We state these results precisely below.

We define the event $\Aa_2'$ as a modification of the event $\Aa_2$. Whereas $\Aa_2$ said that the coalescence time $T$ is roughly in $[t_2,t_1]$, the event $\Aa_2'$ says that $T$ is in the smaller interval $[t_2+\ceil{s_1\ell_N},t_2+\ceil{s_2\ell_N}]$ for $0<s_1<s_2<1$; and whereas $\Aa_2$ occurs with high probability, we will show that $\Aa_2'$ occurs with probability bounded away from $0$. For $M\in\N$ and $0 < s_1 < s_2 < 1$, we define
\begin{equation}\label{eq: A2prime}
\Aa_2' = \Aa_2'(t,N,M,s_1,s_2) :=  \bracing{\begin{array}{l}	
	\exists T\in[t_2+\ceil{s_1\ell_N},t_2+\ceil{s_2\ell_N}]:\:\zeta_{\Pp_{i},t}(T) = N\;\, \forall i\in[M] \text{ and }
	\\
	\zeta_{\Pp_{i},t}(T + \eps_N \ell_N) \neq \zeta_{\Pp_{j},t}(T + \eps_N \ell_N)\;\, \forall i,j\in[M],\; i\neq j
	\end{array}}.
\end{equation}

Proposition \ref{prop: A2prime} below says that for all $0 < s_1 < s_2 < 1$ and $r>0$, with probability bounded below by a constant depending on $r$ and $s_2-s_1$, the event $\Aa_2'$ occurs and the diameter at time $t_1$ is at least $ra_N$. 
The diameter of the particle cloud at time $n$ will be denoted by $d(\X(n))$; that is,
\begin{equation} \label{eq:diameterdefn}
d(\X(n)) := \XN(n) - \Xone(n).
\end{equation}

\begin{prop}\label{prop: A2prime}
	For all $0 < s_1 < s_2 < 1$, $M\in\N$ and $r>0$, there exists $\pi_{r,s_2-s_1}>0$ such that for $N$ sufficiently large and $t>4\ell_N$,
	\begin{equation*}
	\prob\left(\Aa_2' \cap \bracing{d(\X(t_1))\geq ra_N}\right) > \pi_{r,s_2-s_1},
	\end{equation*}
	where $\Aa_2'(t,N,M,s_1,s_2)$ is defined in \eqref{eq: A2prime}.
\end{prop}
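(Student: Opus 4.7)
The plan is to isolate a ``forcing event'' $E$, with $\prob(E)$ bounded below uniformly in $N$, on which the generation $T$ of the most recent common ancestor is forced into the sub-window $[t_2 + \lceil s_1 \ell_N \rceil, t_2 + \lceil s_2 \ell_N \rceil]$, and the jump that creates the new winning leader has size at least $R a_N$ for some $R$ to be chosen much larger than $r$. The heuristic in Section~\ref{sect: heur_pic} identifies $T$ as the last generation in $[t_2, t_1]$ at which a particle escapes the big tribe via a jump of order $a_N$ and founds the winning tribe; so to force $T$ into the sub-window it suffices to guarantee (a) the existence of at least one big jump in the sub-window and (b) the absence of any such jump in $(t_2 + \lceil s_2 \ell_N \rceil, t_1]$.

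Fix $R \geq r + 2$. Let $E_1$ be the event that at some generation in $[t_2 + \lceil s_1 \ell_N \rceil, t_2 + \lceil s_2 \ell_N \rceil]$ at least one of the $2N$ offspring displacements exceeds $R a_N$, and let $E_2$ be the event that no such displacement occurs in $(t_2 + \lceil s_2 \ell_N \rceil, t_1]$. By regular variation of $h$ combined with $h(a_N) = 2N\ell_N$ from \eqref{eq: aN}, the probability that a single generation produces such a giant displacement is asymptotic to $2N/h(R a_N) \sim R^{-\alpha}/\ell_N$. Over windows of length proportional to $(s_2 - s_1)\ell_N$ and $(1-s_2)\ell_N$, a Poisson approximation yields $\prob(E_1) \to 1 - e^{-(s_2 - s_1) R^{-\alpha}}$ and $\prob(E_2) \to e^{-(1 - s_2) R^{-\alpha}}$, both strictly positive uniformly in $N$. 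Since $E_1$ and $E_2$ depend on disjoint time intervals they are independent, so $\prob(E_1 \cap E_2) \geq \pi'(R, s_1, s_2) > 0$.

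Finally, intersect with the high-probability ``good'' event supplied by the proofs of Propositions~\ref{prop: A1A3} and~\ref{prop: A4}, which implies $\Aa_1$ at time $t_1$, identifies the coalescence time $T$ as the generation of the last ``successful'' big jump before $t_1$, and delivers the star-shaped property at $T + \eps_N \ell_N$. On this triple intersection $E_2$ pushes $T$ into the sub-window (giving $\Aa_2'$), while the giant jump at time $T$ lands the new leader at distance $\geq R a_N$ above its parent. Monotonicity of $\X_1$ together with a slow-motion estimate for the leftmost particle (a consequence of Lemma~\ref{lemma: durrett} and Corollary~\ref{cor: path} established en route to Proposition~\ref{prop: A1A3}, stating that $\X_1$ drifts by at most $o(a_N)$ over $O(\ell_N)$ generations on the good event) then bound $\X_1(t_1) - \X_1(T-1)$ on the $a_N$ scale; taking $R$ large enough gives $d(\X(t_1)) \geq r a_N$. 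Thus $\prob(\Aa_2' \cap \{d(\X(t_1)) \geq r a_N\})$ is at least a positive constant $\pi_{r, s_2 - s_1}$, as required. The main obstacle is this quantitative slow-motion control of $\X_1$ over the $O(\ell_N)$ generations between the giant jump and time $t_1$, but it is needed only at the strength already delivered inside the proof of Proposition~\ref{prop: A1A3}, so no genuinely new tool is required beyond repackaging what is built up for Theorem~\ref{thm}.
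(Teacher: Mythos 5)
Your overall architecture — a positive-probability forcing event intersected with the high-probability machinery behind Propositions~\ref{prop: A1A3} and~\ref{prop: A4}, plus a Lemma~\ref{lemma: rewrite}-type step to convert ancestry of most of the population into $\Aa_2'$ — matches the paper's, but the forcing event $E_1\cap E_2$ is too weak, and the claim that ``$E_2$ pushes $T$ into the sub-window'' does not hold. First, nothing in $E_1\cap E_2$ or in the good event guarantees that the giant jump actually \emph{breaks the record}: by Lemma~\ref{lemma: bigJumpLeftmost} the jump originates within $c_1a_N$ of the leftmost particle, so it creates a new leader only if the diameter at that time is below $Ra_N$, and the events $\Cc_1$--$\Cc_7$, $\Dd_1$--$\Dd_5$ give only a \emph{lower} bound on the diameter at $t_1$, not an upper bound at the time of the jump. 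The paper fixes this by additionally requiring, in its event $\Gg$, that \emph{every other} jump in $[t_3,t_1-1]$ has size at most $a_N$; Lemma~\ref{lemma: diam_raN} then forces $d(\X(s^*))\le(1+c_1)a_N$, so the jump of size $>(r+3)a_N$ necessarily takes the lead by roughly $(r+2)a_N$ (Lemma~\ref{lemma: breakRecordGap}(b)).

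Second, $T=T(\rho)$ is the last time before $t_1$ at which the record is broken by a jump of size $>\rho a_N$, not of size $>Ra_N$. Your $E_2$ only excludes jumps exceeding $Ra_N$ after the sub-window, so a jump of intermediate size — for instance one just below $Ra_N$ made from near the back of the cloud once the gap behind the new leader has shrunk, or any big jump made by a particle already near the front — can still break the record in $(t_2+\lceil s_2\ell_N\rceil,t_1]$ and push $T$ out of the sub-window. Such jumps occur with probability of constant order, so the inclusion $E_1\cap E_2\cap\{\text{good}\}\subseteq\{T\in[t_2+\lceil s_1\ell_N\rceil,t_2+\lceil s_2\ell_N\rceil]\}$ genuinely fails rather than merely needing more detail. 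The paper's ``all other jumps $\le a_N$'' condition, combined with $\Cc_3$ (descendants of the giant jumper make no further big jumps within $\ell_N$ steps) and $\Cc_4$, closes exactly this hole: non-descendants remain more than $a_N$ behind and cannot retake the lead with jumps of size $\le a_N$, while descendants make no big jumps at all, whence $T=s^*+1$ deterministically on $\Gg\cap\bigcap_{j=2}^{7}\Cc_j$. The extra requirement costs only a constant factor (about $e^{-8}$) in the lower bound on $\prob(\Gg)$, which is why the paper can afford it; your Poisson computation for $E_1$ and $E_2$ is fine as far as it goes, but the forcing event must be strengthened along these lines before the argument closes.
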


Our second result about the diameter says that for all $r$, the probability that $d(\X(n)) \geq ra_N$ is bounded away from zero, and it tends to $1$ as $r\to 0$, and tends to $0$ as $r\to\infty$, if $N$ is sufficiently large and $n>3\ell_N$. This shows that the probability that after a long time the diameter is not of order $a_N$ is small, and therefore the part of Theorem~\ref{thm} that says most of the population is within distance $\eta a_N$ of the leftmost particle with high probability, for arbitrarily small $\eta>0$, is meaningful.

\begin{prop}\label{prop: diameter}
	There exist $0<p_r\leq q_r\leq 1$ such that $q_r\to 0$ as $r\to\infty$ and $p_r\to 1$ as $r\to 0$, and for all $r>0$,
	\begin{equation*}
	0 < p_r \leq \prob(d(\X(n))\geq ra_N) \leq q_r,
	\end{equation*}
	for $N$ sufficiently large and $n>3\ell_N$.
\end{prop}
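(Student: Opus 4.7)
The plan is to prove the upper and lower bounds of Proposition~\ref{prop: diameter} separately, in both cases leveraging the winning-tribe structure developed for Theorem~\ref{thm} together with elementary extreme-value estimates for heavy-tailed i.i.d.\ sums.

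For the upper bound ($q_r \to 0$ as $r \to \infty$), I would re-run the argument underlying Theorem~\ref{thm} to obtain, on a high-probability event, a coalescence time $T \in [t_2, t_1]$ such that particle $(N, n)$ descends from $(N, T)$. By monotonicity of each $\X_i$ we have $\Xone(n) \geq \Xone(T)$, so
\begin{equation*}
d(\X(n)) \;\leq\; d(\X(T)) + \bigl(\XN(n) - \XN(T)\bigr).
\end{equation*}
The first term on the right is bounded by the single big jump of order $a_N$ that created the new leader at time $T$, whose tail decays like $R^{-\alpha}$ on the $a_N$ scale by the regular variation of $h$. The second term is the accumulated displacement along a single ancestral line of length at most $2\ell_N$, which by Lemma~\ref{lemma: durrett} together with a union bound over the jumps on that line is at most $Ra_N$ except with probability also decaying polynomially in $R$. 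Taking $R$ large yields $q_r \to 0$.

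For the lower bound, positivity $p_r > 0$ for each fixed $r$ follows immediately from Proposition~\ref{prop: A2prime} applied with $t := n + \ell_N$ (so that $d(\X(t_1)) = d(\X(n))$), which I would prove first, independently of this result. To upgrade to $p_r \to 1$ as $r \to 0$, I would use the maximum $M_N$ of the $2N\ell_N$ i.i.d.\ jumps in the window $[n - \ell_N, n-1]$. A direct extreme-value computation gives
\begin{equation*}
\prob(M_N \leq y a_N) \;=\; \bigl(1 - 1/h(y a_N)\bigr)^{2 N \ell_N} \;\longrightarrow\; \exp(-y^{-\alpha}) \quad \text{as } N \to \infty,
\end{equation*}
so $M_N/a_N$ has a nondegenerate limit supported on $(0,\infty)$. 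If $M_N$ is attained at time $m$ by a jump from parent $(i, m-1)$, monotonicity gives $\XN(n) \geq \Xone(m-1) + M_N$, whence $d(\X(n)) \geq M_N - (\Xone(n) - \Xone(m-1))$.

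The main obstacle I anticipate is precisely this last step: both $M_N$ and the growth of $\Xone$ over $\ell_N$ steps are of order $a_N$ with nondegenerate limits, so a naive size bound on $M_N$ alone is insufficient to extract a diameter lower bound. The workaround I would try is to shrink the jump window, taking $m \in [n - \floor{\eps \ell_N}, n-1]$ for small $\eps$: the maximum jump over such a shorter window is of order $\eps^{1/\alpha} a_N$, whereas the bulk speed bound of~\cite{Berard2014} gives $\Xone(n) - \Xone(m-1) = O(\eps a_N)$. For $\alpha < 1$ the ratio $\eps^{1/\alpha}/\eps \to \infty$ as $\eps \to 0$ and the argument closes; for $\alpha \geq 1$ a finer distributional argument exploiting that $\Xone$ grows appreciably only at the moments a new tribe takes over (cf.\ the winning-tribe picture of Figure~\ref{fig: winningTribe}) should still suffice.
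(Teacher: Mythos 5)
Your lower bound for fixed $r$ (Proposition~\ref{prop: A2prime} applied with $t=n+\ell_N$, so $t_1=n$) is exactly the paper's argument. Your upper bound, though routed through the coalescence time $T$ rather than the paper's more direct Lemma~\ref{lemma: diam_raN} (if no jump in $[n-\ell_N,n-1]$ exceeds $\tfrac r2 a_N$ then $d(\X(n))\le(\tfrac r2+c_1)a_N$ on $\Cc_3\cap\Cc_4$), reduces to the same mechanism --- the diameter is controlled by the largest jump in a recent window of length $\ell_N$ --- and can be made to work, provided you let the constants (hence $\eta$) depend on $r$ so that the residual error from the high-probability events also vanishes as $r\to\infty$.

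The genuine gap is in the claim $p_r\to1$ as $r\to0$. First, your dichotomy in $\alpha$ is backwards: $\eps^{1/\alpha}/\eps=\eps^{(1-\alpha)/\alpha}\to0$ as $\eps\to0$ when $\alpha<1$ and $\to\infty$ when $\alpha>1$, so the comparison of the maximal jump over the shrunk window ($\asymp\eps^{1/\alpha}a_N$) against the putative bulk displacement ($\asymp\eps a_N$) closes only for $\alpha>1$; the case you defer with a hand-wave is precisely the one your computation was supposed to cover. Second, and more seriously, the input ``$\Xone(n)-\Xone(m-1)=O(\eps a_N)$ with probability close to $1$'' is not available: the speed result of \cite{Berard2014} is a long-time asymptotic and gives no uniform control of the front over a window of length $\eps\ell_N$. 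The scaling limit of $\Xone$ is a component of the stairs process, which advances in macroscopic jumps of order $a_N$ (when the bulk catches up to an old record), so over a short window there is a non-negligible probability that $\Xone$ moves by order $a_N$; controlling the gap between $\Xone$ and the record over such windows is essentially the hard deterministic content of the whole paper, not something one can import. The paper sidesteps the extreme-value route entirely: Step~(ii) of the proof of Proposition~\ref{prop: B} shows, on $\bigcap_{j}\Cc_j\cap\bigcap_{i}\Dd_i$, that $d(\X(t_1))\ge\tfrac32c_3a_N$, via the counting argument that only $O(KN^{1-\delta})<N$ particles descend from big jumps after $\tau_1$, so at least one particle --- hence $\Xone$ --- remains at least $\tfrac32 c_3 a_N$ behind $\XN(\tau_1)$. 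Applying this with $t=n+\ell_N$ and letting $\eta\to0$ (so that $c_3=c_3(\eta)\to0$) yields $p_r\ge1-\eta$ for all $r<c_3(\eta)$, which is the statement $p_r\to1$. You should replace your extreme-value argument by this appeal to Proposition~\ref{prop: B} and Lemma~\ref{lemma: probC}.
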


\subsection{Heuristics for the proof of Theorem~\ref{thm}}\label{sect: proof_heuristics}
We first prove a simple lemma which will be helpful in the course of the proof of Theorem~\ref{thm} and also helpful for understanding the heuristics. The lemma says that the number of particles that are to the right of a given position at least doubles at every time step until it reaches $N$. The statement follows deterministically from the definition of the~$N$-BRW. The proof serves as a warm-up for several more deterministic arguments to come. 
For $x\in \R$ and $n\in \N_0$, we write the set of particles to the right of position $x$ at time $n$ as 
\begin{equation}\label{eq: G}
G_x(n) := \bracing{i\in[N]:\: \X_i(n)\geq x}.
\end{equation}
\begin{lemma}\label{lemma: descendants_est}
	Let $x\in\real$ and $n,k\in\mathbb{N}_0$. Then
	\begin{equation*}
	|G_x(n+k)| \geq \min\left(N,\; 2^k|G_x(n)|\right).
	\end{equation*}
\end{lemma}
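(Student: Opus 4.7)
The plan is to prove the one-step version of the inequality, namely $|G_x(n+1)| \geq \min(N, 2|G_x(n)|)$, and then iterate by induction on $k$.

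For the one-step version, I would argue as follows. Fix $n$ and let $m := |G_x(n)|$, so there are $m$ particles at time $n$ whose positions are $\geq x$. Since each jump $X_{i,b,n}$ is non-negative by our standing assumption, each of these $m$ particles produces two offspring whose pre-selection positions are also $\geq x$. Thus before selection at step $n+1$ there are at least $2m$ offspring at positions $\geq x$. Now every offspring at position $\geq x$ is weakly to the right of every offspring at position $< x$, so when we keep the $N$ rightmost offspring in the selection step, the number of surviving particles at positions $\geq x$ is $\min(N, 2m)$ at least (if $2m \leq N$ all of them survive; if $2m > N$ then at most $N$ can survive, but any particle kept in selection that sits at position $< x$ could be swapped with one of these, so the count is $\geq N$). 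Hence $|G_x(n+1)| \geq \min(N, 2|G_x(n)|)$.

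For the iteration, I would induct on $k$. The base case $k=0$ is immediate. For the inductive step, assuming the bound at $k$, apply the one-step inequality to time $n+k$:
\begin{equation*}
|G_x(n+k+1)| \geq \min\bigl(N,\, 2|G_x(n+k)|\bigr) \geq \min\bigl(N,\, 2\min(N, 2^k|G_x(n)|)\bigr) = \min\bigl(N,\, 2^{k+1}|G_x(n)|\bigr),
\end{equation*}
where the last equality uses $\min(N, 2N, 2^{k+1}|G_x(n)|) = \min(N, 2^{k+1}|G_x(n)|)$.

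I do not expect any serious obstacle here: the lemma is purely deterministic, and the only subtlety is making precise the statement that, at the selection step, no particle at position $\geq x$ can be displaced by a particle at position $< x$ until the cap $N$ is reached. This is a direct consequence of the fact that selection retains the $N$ rightmost offspring and ties are broken arbitrarily, combined with the non-negativity of the displacements, which is the one property of the jump law that the proof actually uses.
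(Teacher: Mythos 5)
Your proof is correct, but it is organised differently from the paper's. You establish the one-step inequality $|G_x(n+1)|\geq\min(N,2|G_x(n)|)$ by analysing the selection step directly (at least $2|G_x(n)|$ offspring sit at positions $\geq x$ before selection, and none of them can be discarded in favour of an offspring at a position $<x$ until all $N$ survivors are at positions $\geq x$), and then iterate by induction on $k$. The paper instead avoids induction with a single dichotomy over the whole time window: either every one of the $2^k|G_x(n)|$ descendants of $G_x(n)$ survives to time $n+k$, in which case the bound is immediate, or some descendant $(j,m)$ has an offspring killed at some intermediate time $m+1$, in which case $x\leq \X_j(m)+X_{j,b,m}\leq \Xone(m+1)\leq\Xone(n+k)$ by non-negativity of the jumps and monotonicity, so that $|G_x(n+k)|=N$. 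Both arguments rest on the same two facts — non-negative displacements and the ``keep the $N$ rightmost'' selection rule — and both are complete. Your inductive version is more modular (the one-step bound could be reused on its own), at the cost of having to state the selection argument carefully at each step; the paper's dichotomy is shorter and also foreshadows the monotonicity-based reasoning (``if anything is killed, the whole front has already passed $x$'') that recurs in the later deterministic arguments, e.g.\ in the proof of Step~(ii) of Proposition~\ref{prop: B}.
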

\begin{proof}
	The statement is clearly true when $G_x(n) = \emptyset$. Now assume that $G_x(n) \neq \emptyset$.
	Let us first consider the case in which every descendant of the particles in $G_x(n)$ survives until time~$n+k$. Since there are $2^k|G_x(n)|$ such descendants, each of which is to the right of $x$ since all jumps are non-negative, in this case we have $|G_x(n+k)| \geq 2^k|G_x(n)|$.
	
	Now let us consider the case in which not every descendant of the particles in $G_x(n)$ survives until time~$n+k$. This means that there exist $m\in[n,n+k-1]$, $j\in [N]$ and $b\in\bracing{1,2}$ such that $(j,m)$ is a descendant of a particle in $G_x(n)$ and
	\begin{equation*}
	\X_j(m) + X_{j,b,m} \leq \Xone(m+1).
	\end{equation*}
	Since particle $(j,m)$ descends from $G_x(n)$, and all jumps are non-negative, we also have $x\leq\X_j(m) + X_{j,b,m}$, and therefore $x\leq \X_1(m+1)\leq \Xone(n+k)$, and the result follows.
\end{proof}

Now we turn to the heuristics for the proof of Theorem~\ref{thm}.  The heuristic picture to keep in mind when thinking about both the statement and the proof is Figure~\ref{fig: winningTribe}. 
As in Section~\ref{sect: heur_pic}, we let $T$ denote the last time at which a
particle takes the lead with a big jump of order $a_N$ before time $t_1$.
In Section~\ref{sect: heur_pic}, we argued that if $T\in [t_2,t_1]$ then with high probability, particle $(N,T)$
will be the common ancestor of almost every particle in the population at time $t$,
and almost the whole population at time $t$ is close to $\mathcal X_N(T)$ on the $a_N$ space scale.
We will use a rigorous version of this heuristic argument to show that the event $\mathcal A_1$ occurs with high probability,
and that the time $T$ satisfies the first line in the event $\Aa_2$ with high probability. That is, every particle from a uniform sample of fixed size $M$ at time $t$ descends from particle $(N,T)$ with high probability.

If $T$ is as described above, then we can only have $T\in [t_2,t_1]$ if there is a particle which takes the lead with a jump of order $a_N$ in the time interval $[t_2,t_1]$. It is not straightforward to show that this happens with high probability. It could be the case that the diameter is large on the $a_N$ space scale during the time interval $[t_2,t_1]$, say greater than $Ca_N$, where $C>0$ is large. In this situation, if the jumps of order $a_N$ in the time interval $[t_2,t_1]$ come from close to the leftmost particle, and they are all smaller than $Ca_N$, then these jumps will not make a new leader, and time $T$ will not be in the time interval $[t_2,t_1]$. We will prove that this is unlikely. A key property which is helpful in seeing this is the following. If no particle takes the lead with a big jump of order $a_N$ for $\ell_N$ time, e.g.~between times $s\in\N$ and $s+\ell_N$, then the diameter of the particle cloud will be very small on the $a_N$ space scale at time $s+\ell_N$. Indeed, all the $N$ particles, including the leftmost, are to the right of position $\XN(s)$ at time $s+\ell_N$ by Lemma~\ref{lemma: descendants_est}. But with high probability, particles cannot move far to the right from this position without making big jumps of order $a_N$. We will prove this in Corollary~\ref{cor: path}. Therefore, provided that no unlikely event happens, if no particle takes the lead with a big jump between times $s$ and $s+\ell_N$, then every particle will be near the position $\XN(s)$ at time $s+\ell_N$. We formally prove this in Lemma~\ref{lemma: noRecord}.

We will be able to use this property for $s=t_2-c'\ell_N$ with small $c'>0$. We will conclude that if no particle takes the lead with a jump of order $a_N$ in the time interval $[t_2-c'\ell_N,t_1-c'\ell_N]$ then the diameter at time $t_1-c'\ell_N$ is likely to be small on the $a_N$ space scale, i.e.~$d(\X(t_1-c'\ell_N))<ca_N$, for some $c>0$ which we can choose to be much smaller than $c'$. If the diameter is less than $ca_N$, then any particle performing a jump larger than $ca_N$ becomes the new leader. 

The expected number of jumps larger than $ca_N$ in $c'\ell_N$ time is $c'\ell_N2Nh(ca_N)^{-1}$, because there are $2N$ jumps at each time step and the jump distribution is given by \eqref{eq: poly_tail}, which is roughly $c'/c^\alpha$ for $N$ sufficiently large. If $c^\alpha$ is much smaller than $c'$, then with high probability there will be a jump of size greater than $ca_N$ in the time interval $[t_1-c'\ell_N,t_1]$, and the particle performing it will become the new leader. Therefore the last time before time $t_1$ when a particle becomes the leader with a jump of order $a_N$ will be after time $t_2$, which gives us $T\in[t_2,t_1]$.

The above idea works for the case where no particle takes the lead with a big jump of order $a_N$ in the time interval $[t_2-c'\ell_N, t_2]$ for some small $c'>0$. If instead there is such a particle then we will argue that in a short interval of length $c'\ell_N$ it is likely that the jump made by this particle will not be too large on the $a_N$ scale and therefore the particle's descendants will be surpassed by larger jumps of order $a_N$ at some point in the much longer time interval $[t_2,t_1]$.

In order to show that the coalescence is star shaped, we also need the second line of the event $\Aa_2$, which says that all coalescences of the lineages of a sample of $M$ particles at time $t$ happen within a time window of size $\eps_N\ell_N$; that is, instantaneously on the $\ell_N$ time scale (see Figure~\ref{fig: star-shaped}).

To prove that no pair of particles in the sample  of $M$ have a common ancestor at time $T + \eps_N \ell_N$, it will be enough to prove that every particle at time $T + \eps_N \ell_N$ has a number of time-$t$ descendants which is at most a very small proportion of the total population size $N$ (we will check this in Lemma~\ref{lemma: rewrite}). With high probability, most of the population at time $t$ descends from the leading $2^{\eps_N\ell_N} \approx N^{\eps_N}$ particles at time $T + \eps_N \ell_N$
(the descendants of particle $(N,T)$). If these particles share their time-$t$ descendants fairly evenly, then a particle in this leading tribe will have roughly $N^{1-\eps_N} = o(N)$ descendants. Indeed, we will prove using concentration results from \cite{McDiarmid1998} that with high probability the number of time-$t$ descendants of a particle from the leading tribe at time $T + \eps_N \ell_N$ will not exceed the order of $N^{1-\eps_N}$.

\subsection{Notation}\label{sect: notation}
We now introduce the notation we will be using throughout the proof of Theorem~\ref{thm}. We define the filtration $(\F_n)_{n\in\Nzero}$ by letting $\F_n$ be the $\sigma$-algebra generated by the random variables~$(X_{i,b,m},\: i\in[N], b\in\bracing{1,2}, m<n)$ from Section~\ref{sect: NBRWdef}. Since $\X(n)$ is defined in such a way that it only depends on jumps performed before time $n$, the process $(\X(n))_{n\in \Nzero}$ is adapted to the filtration $(\F_n)_{n\in\Nzero}$. 
Since $(X_{i,b,m},\: i\in[N], b\in\bracing{1,2}, m\in \N_0)$ are i.i.d.,
the jumps $(X_{i,b,n}, i\in [N], b\in \{1,2\})$ are independent of the $\sigma$-algebra $\F_n$. In Theorem~\ref{thm} we assume that $t > 4\ell_N$, as in the proof we will examine the process in the time interval $[t_4,t]$, where $t_4$ is given by \eqref{eq: ti}. Since jumps at time $t$ are not $\F_t$-measurable, we will be interested in jumps performed in the time interval $[t_4,t-1]$.

The jump of the $i$th particle's $b$th offspring at time $n$ will be referred to using the random variable $X_{i,b,n}$, or the triple $(i,b,n)$.
In order to study the genealogy of the $N$-BRW particle system, we will need a notation which says that two particles are related. Let us introduce the partial order $\lesssim$ on the set of pairs~$\bracing{(i,n), i\in[N], n\in\Nzero}$. First, for $i\in [N]$ and $n\in \Nzero$ we say that~${(i,n) \lesssim (i,n)}$ and, for $j\in[N]$, we write $(i,n) \lesssim (j,n+1)$ if and only if the $j$th particle at time $n+1$ is an offspring of the $i$th particle at time $n$. Then in general, for $n,k\in\Nzero$ and $i_0,i_k\in[N]$ we write $(i_0,n) \lesssim (i_k,n+k)$ if and only if particle $(i_k,n+k)$ is a descendant of particle $(i_0,n)$:
\begin{equation}\label{eq: ancestor}
(i_0,n) \lesssim (i_k,n+k) \quad \Longleftrightarrow \quad \exists i_1,\dots,i_{k-1}:\: (i_{j-1},n+j-1) \lesssim (i_j,n+j),\quad \forall j\in[k].
\end{equation}
Then the particles $( (i_j,n+j)$, $j\in[k])$ represent the ancestral line between $(i_0,n)$ and $(i_k,n+k)$. Recall that for $n,k\in\Nzero$ and $i\in[N]$ we denote the index of the time-$n$ ancestor of the particle $(i,n+k)$ by $\zeta_{i,n+k}(n)$. Thus, using our partial order above, we can write for $j\in[N]$,
\begin{equation}\label{eq: zeta}
\zeta_{i,n+k}(n) = j \quad \Longleftrightarrow \quad (j,n) \lesssim (i,n+k).
\end{equation}

We also introduce a slightly different (strict) partial order $\lesssim_b$, which will be convenient later on. For $i_0,i_k\in[N]$, $n\in\Nzero$ and $k\in\N$ we write~$(i_0,n) \lesssim_b (i_k,n+k)$ if and only if the $b$th offspring of particle $(i_0,n)$ is the time-$(n+1)$ ancestor of particle $(i_k,n+k)$. Note that if $(i_0,n) \lesssim_b (i_k,n+k)$ then there exists $i_1\in[N]$ such that
\begin{equation*}
\X_{i_1}(n+1) = \X_{i_0}(n) + X_{i_0,b,n} \text{ and }
(i_1,n+1) \lesssim (i_k,n+k).
\end{equation*}

Using the above partial order, we define the \textit{path} between particles $(i_0,n)$ and $(i_k,n+k)$ (and between positions $\X_{i_0}(n)$ and $\X_{i_k}(n+k)$), as the sequence of jumps connecting the two particles. For $i_0,i_k\in[N]$ and $n\in\Nzero$, if $k\in \N$ and $(i_0,n)\lesssim (i_k,n+k)$, we let
\begin{equation}\label{eq: P}
P_{i_0,n}^{i_k,n+k}  := \bracing{(i_j,b_j,n+j):\: j\in\bracing{0,\dots,k-1} \text{ and } (i_j,n+j)\lesssim_{b_j}(i_k,n+k)},
\end{equation}
and we let $P_{i_0,n}^{i_k,n+k} := \emptyset$ otherwise. Then if $k\in \N$ and $(i_0,n)\lesssim (i_k,n+k)$,
\begin{equation} \label{eq:pathsum}
\X_{i_k}(n+k) = \X_{i_0}(n) + \sum_{(j,b,m)\in P_{i_0,n}^{i_k,n+k}} X_{j,b,m}.
\end{equation}

For $i\in [N]$ and $n,k \in \Nzero$ with $n\le k$, let $\Nn_{i,n}(k)$ denote the set of descendants of particle $(i,n)$ at time $k$:
\begin{equation}\label{eq: N}
\Nn_{i,n}(k) := \bracing{j \in [N]:\: (i,n)\lesssim (j,k)},
\end{equation}
and if $n<k$, for $b\in \{1,2\}$, let $\Nn_{i,n}^b(k)$ be the set of time-$k$ descendants of the $b$th offspring of particle $(i,n)$:
\begin{equation}\label{eq: Nb}
\Nn_{i,n}^b(k) := \bracing{j \in [N]:\: (i,n)\lesssim_b (j,k)}.
\end{equation}
(Note that the sets $\Nn_{i,n}(k)$ and $\Nn_{i,n}^b(k)$ may be empty.)
We write $|\Nn_{i,n}(k)|$ and $|\Nn_{i,n}^b(k)|$ for the number of descendants in each case. 

Finally, as time is discrete, it will be useful to introduce a notation for the set of integers in an interval; for $0\leq s_1\leq s_2$, we let
\begin{equation*}
\bbracket{ s_1,s_2} := [s_1,s_2]\cap\Nzero.
\end{equation*}

\subsection{Big jumps and breaking the record}\label{sect: bigjumps}
As discussed in Section~\ref{sect: heur_pic}, the common ancestor of the majority of the population at time $t$ is a particle which made an unusually big jump, of order $a_N$, between times $t_2$ and $t_1$. 
The set of unusually big jumps will play an essential role in the proof of Theorem~\ref{thm}. We will be particularly interested in particles which become `leaders' after performing such jumps. These particles are the candidates to become the common ancestor of almost the whole population at time $t$.

We now introduce the necessary notation for the above concepts. In the definitions we will indicate the dependence on a new parameter $\rho \in (0,1)$, as the choice of~$\rho$ will be important later on. Furthermore, everything we define will depend on $N$ and $t$, which we do not always indicate.

For $\rho\in(0,1)$ we introduce the term \textit{big jump} for jumps of size greater than $\rho a_N$, and we denote the set of big jumps on an interval~${[s_1,s_2]\subseteq [t_4,t-1]}$ by $B_N^{[s_1,s_2]}$:
\begin{equation}\label{eq: BN1}
B_N^{[s_1,s_2]} = B_N^{[s_1,s_2]}(\rho) := \bracing{(k,b,s)\in\Nonetwo\times \bbracket{s_1,s_2}:\: X_{k,b,s} > \rho a_N},
\end{equation}
where $a_N$ is given by \eqref{eq: aN}. We also let
\begin{equation}\label{eq: BN2}
B_N := B_N^{[t_4,t-1]}.
\end{equation}

We say a particle \textit{breaks the record} if it takes the lead with a big jump. If one of the current leader's descendants makes a small jump (that is, a non-big jump) to become the leader, then that does not count as breaking the record in our terminology. Let $\Sbf_N$ denote the set of times when the record is broken by a big jump between times $t_4$ and $t$:
\begin{equation}\label{eq: RN}
\Sbf_N = \Sbf_N(\rho) := \bracing{\begin{array}{l}
s\in \bbracket{t_4,{t-1}}:\: \exists (k,b)\in\Nonetwo \text{ such that }
\\
(k,s) \lesssim_b (N,s+1) \text{ and } X_{k,b,s} > \rho a_N
\end{array}}.
\end{equation}

Next, we define $T$ as the last time when the leader broke the record with a big jump before time~$t_1$, if there is any such time. We let
\begin{equation}\label{eq: T}
T = T(\rho) :=
1 + \max \bracing{\Sbf_N(\rho) \cap [t_4,t_1-1]},
\end{equation}
and let $T=0$ if $\Sbf_N(\rho) \cap [t_4,t_1-1] = \emptyset$. Note that the big jump which takes the lead happens at time~$T-1$, and $T$ is the time right after the jump. In the proof it turns out that with high probability, $T\in[t_2+\ceil{\delta \ell_N},t_1-\ceil{\delta \ell_N}]$ for some $\delta>0$, and particle $(N,T)$ is the common ancestor of almost the whole population at time~$t$.

We will have a separate notation, $\hat \Sbf_N$, for the times when the leader is surpassed by a particle which performs a big jump. Note that this is not exactly the same set of times as $\Sbf_N$: it might happen that a particle $(i,s)$ has an offspring $(j,s+1)$, which beats the current leader $(N,s)$ with a big jump, but it does not become the next leader at time $s+1$ because it is beaten by another offspring particle which did not make a big jump. We define
\begin{equation}\label{eq: RNhat}
\hat \Sbf_N = \hat \Sbf_N(\rho) := \bracing{\begin{array}{l}
s\in \bbracket{t_4,{t-1}}:\: \exists (k,b)\in\Nonetwo \text{ such that } \\ X_{k,b,s} > \rho a_N \text{ and }
\X_k(s) + X_{k,b,s} > \XN(s)
\end{array}}.
\end{equation}
We will see in Corollary~\ref{cor: beatingLeader} below
that with high probability, $\Sbf_N$ and $\hat \Sbf_N$ coincide on certain time intervals. Sometimes we will  also need to refer to the set of times when big jumps do not take the lead or beat the current leader. Therefore, with a slight abuse of notation, we will write $\Sbf_N^c$ and~$\hat \Sbf_N^c$ to denote the sets of times $\bbracket{t_4,t-1}\setminus \Sbf_N$ and $\bbracket{t_4,t-1}\setminus \hat \Sbf_N$ respectively.

\subsection{Reformulation}
In this section, we break down the event $\Aa_2$ of Theorem~\ref{thm}. Our ultimate goal is to show, for a suitable choice of $\rho$, that~$T=T(\rho)$, as defined in~\eqref{eq: T}, has the properties required in $\Aa_2$. To this end we introduce new events which imply $\Aa_2$ with high probability, and only involve $T$ and the number of time-$t$ descendants of particle~$(N,T)$ and of the particles at time~$T+\eps_N\ell_N$. We will use the following notation:
\begin{equation}\label{eq: Teps}
\Teps = \Teps(\rho) := T(\rho) + \eps_N \ell_N,
\end{equation}
where $\eps_N$ is defined in \eqref{eq: epsN1}. Recalling~\eqref{eq: N}, for $i\in [N]$, we write
\begin{equation}\label{eq: Ni}
\Nn_i := \Nn_{i,\Teps}(t)
\end{equation}
for the set of time-$t$ descendants of the $i$th particle at time $\Teps$, and
\begin{equation}\label{eq: Di}
D_i = D_{i,\Teps}(t):= |\Nn_{i,\Teps}(t)|
\end{equation}
for the size of this set. 

For $\gamma,\delta,\rho\in (0,1)$, we introduce the event
\begin{equation}\label{eq: A3}
\Aa_3 = \Aa_3(t,N,\delta,\rho,\gamma) := \bracing{T(\rho)\in[t_2+\ceil{\delta \ell_N},t_1-\ceil{\delta \ell_N}]}\cap\bracing{|\Nn_{N,T(\rho)}(t)| \geq N - N^{1-\gamma}}.
\end{equation}
This event says that almost the whole population at time $t$ descends from particle $(N,T)$, which will imply with high probability that each particle in the uniform sample of $M$ particles in the event $\Aa_2$ is a descendant of $(N,T)$.
The final part of the definition of the event $\Aa_2$ says that no two particles at time $t$ in the uniform sample of $M$ particles share an ancestor at time $\Teps$. We now define an event which says that every time-$\Teps$ particle has at most a very small proportion of the $N$ surviving descendants at time $t$, so that with high probability none of them have two descendants in the sample of $M$ particles. For $\nu > 0$ and $\rho\in(0,1)$, we let
\begin{equation}\label{eq: A4}
\Aa_4(\nu) = \Aa_4(t,N,\rho,\nu) := \bracing{\max_{i\in\Nn_{N,T}(\Teps)}D_{i,\Teps}(t) \leq \nu N}.
\end{equation}
Note that in the definition of $\Aa_4(\nu)$ we take the maximum only over the time-$\Teps$ descendants of particle $(N,T)$. It will be easy to deal with the remaining particles at time $\Teps$, because the event $\Aa_3$ implies that for $\nu>0$, if $N$ is large, particles not descended from $(N,T)$ cannot have more than $\nu N$ descendants at time $t$. In the following result, we reduce the proof of Theorem~\ref{thm} to showing that $\Aa_1$, $\Aa_3$ and $\Aa_4(\nu)$ occur with high probability. 

As part of the proof we show that the probability that two particles in the sample of $M$ at time $t$ have a common ancestor at time $\Teps$ can be upper bounded by little more than the sum of the probabilities of the events $\Aa_3^c$ and $\Aa_4(\nu)^c$ when $\nu$ is small. We will use this intermediate result in another argument later on in Section~\ref{sect: significance}, so we state it as part of Lemma~\ref{lemma: rewrite} below.

\begin{lemma}\label{lemma: rewrite}
Take $M\in\N$ and $\gamma, \delta, \rho , \eta \in (0,1)$, and let $0< \nu < \eta / M^2$. Then for all $N$ sufficiently large and $t>4\ell_N$,
\begin{equation*}
\prob(\exists j,l\in[M],\: j\neq l:\:\zeta_{\Pp_{j},t}(\Teps) = \zeta_{\Pp_{l},t}(\Teps)) \leq \prob(\Aa_3^c) + \prob(\Aa_4(\nu)^c) + \eta/2,
\end{equation*}
and
\begin{equation*}
\prob(\Aa_2^c) \leq 2\prob(\Aa_3^c) + \prob(\Aa_4(\nu)^c) + \eta,
\end{equation*}
where $\Aa_2(t,N,M,\delta)$, $\Aa_3(t,N,\delta,\rho,\gamma)$ and $\Aa_4(t,N,\rho,\nu)$ are defined in \eqref{eq: A2}, \eqref{eq: A3} and \eqref{eq: A4} respectively, $\Pp_j$ is the index of a particle in the uniform sample of $M$ particles at time $t$, and $\zeta_{\Pp_{j},t}(\Teps)$ is the index of the time-$\Teps$ ancestor of particle $(\Pp_{j},t)$, defined in \eqref{eq: zeta}.
\end{lemma}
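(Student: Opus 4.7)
The plan is to condition on the entire $N$-BRW process (i.e.\ on $\F_t$) and then use the randomness of the uniform sample $\Pp = (\Pp_1,\dots,\Pp_M)$, which is independent of $\F_t$. Note that on the event $\Aa_3$, the time $T = T(\rho)$ lies in $[t_2 + \ceil{\delta \ell_N}, t_1 - \ceil{\delta \ell_N}]$, so $T$ is a valid candidate for the time required in the definition of $\Aa_2$. The two bounds in the lemma then come from (a) controlling the probability that some sampled particle fails to descend from $(N,T)$ and (b) controlling the probability of a shared time-$\Teps$ ancestor.

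For the first inequality, I would first upper bound $D_i$ uniformly on $\Aa_3 \cap \Aa_4(\nu)$. For $i \in \Nn_{N,T}(\Teps)$, the event $\Aa_4(\nu)$ gives $D_i \le \nu N$ directly. For $i \notin \Nn_{N,T}(\Teps)$, any time-$t$ descendant of $(i,\Teps)$ is automatically not a descendant of $(N,T)$, so $D_i \le N - |\Nn_{N,T}(t)| \le N^{1-\gamma}$ by $\Aa_3$. Taking $N$ large enough that $N^{1-\gamma} \le \nu N$, we obtain $D_i \le \nu N$ for \emph{every} $i$. Since $\sum_i D_i = N$, this gives $\sum_i D_i^2 \le \nu N^2$.

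Conditioning on $\F_t$ and using that $(\Pp_1,\dots,\Pp_M)$ is uniform on ordered $M$-subsets of $[N]$ (without replacement), for each fixed pair $j\ne l$ and $i\in[N]$,
\begin{equation*}
\prob(\zeta_{\Pp_j,t}(\Teps) = \zeta_{\Pp_l,t}(\Teps) = i \mid \F_t) = \frac{D_i(D_i-1)}{N(N-1)}.
\end{equation*}
Summing over $i$ and applying a union bound over the $\binom{M}{2}$ pairs, one obtains, on $\Aa_3 \cap \Aa_4(\nu)$,
\begin{equation*}
\prob\!\left(\exists j\ne l: \zeta_{\Pp_j,t}(\Teps) = \zeta_{\Pp_l,t}(\Teps) \,\Big|\, \F_t\right) \le \binom{M}{2} \frac{\sum_i D_i^2}{N(N-1)} \le \binom{M}{2}\frac{\nu N}{N-1}.
\end{equation*}
Since $\nu < \eta/M^2$ and $\binom{M}{2} \le M^2/2$, for $N$ sufficiently large this is at most $\eta/2$. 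Splitting off the complements of $\Aa_3$ and $\Aa_4(\nu)$ gives the first inequality.

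For the second inequality, I would decompose $\Aa_2^c$ into three pieces: (i) $\Aa_3^c$; (ii) some sampled particle fails to be a descendant of $(N,T)$, while $\Aa_3$ holds; and (iii) some pair of sampled particles shares a time-$\Teps$ ancestor. Piece (iii) is exactly what was bounded above. For piece (ii), the probability that a single uniformly chosen time-$t$ particle fails to lie in $\Nn_{N,T}(t)$ is at most $N^{1-\gamma}/N = N^{-\gamma}$ on $\Aa_3$, so by a union bound the total contribution is at most $MN^{-\gamma}$, which is less than $\eta/2$ for $N$ large. Adding the three pieces yields $\prob(\Aa_2^c) \le 2\prob(\Aa_3^c) + \prob(\Aa_4(\nu)^c) + \eta$, as required. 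The only mild technical issue is making sure all threshold conditions on $N$ (namely $N^{1-\gamma} \le \nu N$, $\binom{M}{2}\nu N/(N-1) \le \eta/2$, and $MN^{-\gamma} \le \eta/2$) can be met simultaneously, but each depends only on the fixed parameters $M, \gamma, \nu, \eta$ so this is straightforward.
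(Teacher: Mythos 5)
Your proposal is correct and follows essentially the same route as the paper: the same conditioning on $\F_t$, the same second-moment bound $\sum_i D_i^2/(N(N-1))$ for the shared-ancestor event (with the complement of $\{\max_i D_i\le \nu N\}$ absorbed into $\prob(\Aa_3^c)+\prob(\Aa_4(\nu)^c)$ exactly as in the paper), and the same decomposition of $\Aa_2^c$ via the inclusion of the three defining sub-events. No gaps.
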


\begin{proof}
Fix $M\in\N$ and $\gamma, \delta, \rho , \eta \in (0,1)$. 
Note that by the definition of $\Aa_2$ in~\eqref{eq: A2},
\begin{multline*}
\bracing{T\in[t_2+\ceil{\delta \ell_N},t_1-\ceil{\delta \ell_N}]}\cap\bracing{\zeta_{\Pp_{j},t}(T) = N\: \forall j\in[M]}\\
\cap
\bracing{
\zeta_{\Pp_{j},t}(\Teps) \neq \zeta_{\Pp_{l},t}(\Teps)\: \forall j,l\in[M],\: j\neq l}
\subseteq \Aa_2.\numberthis\label{eq: inA2}
\end{multline*}
First we aim to show that for $N$ sufficiently large,
\begin{equation}\label{eq: A3c}
\prob(\bracing{T\notin[t_2+\ceil{\delta \ell_N},t_1-\ceil{\delta \ell_N}]}\cup\bracing{\exists j\in[M]:\:\zeta_{\Pp_{j},t}(T) \neq N}) \leq \prob(\Aa_3^c) + \eta/2.
\end{equation}
Note that if $\Aa_3$ occurs then $T\in[t_2+\ceil{\delta \ell_N},t_1-\ceil{\delta \ell_N}]$, and $\Aa_3$ is $\mathcal F_t$-measurable, so
\begin{multline*}
\prob(\bracing{T\notin[t_2+\ceil{\delta \ell_N},t_1-\ceil{\delta \ell_N}]}\cup\bracing{\exists j\in[M]:\:\zeta_{\Pp_{j},t}(T) \neq N}) 
\\
\leq\: 
\expect\left[ \mathds{1}_{\Aa_3} \prob(\exists j\in[M]:\:\zeta_{\Pp_{j},t}(T) \neq N\: |\: \F_t) \right]
+ \prob\left(\Aa_3^c\right).
\end{multline*}
Now, on the event $\Aa_3$, at most $N^{1-\gamma}$ time-$t$ particles are not descended from $(N,T)$, and therefore a union bound on the uniformly chosen sample (which is not $\F_t$-measurable) gives that the above is at most $M N^{1-\gamma}/N +  \prob\left(\Aa_3^c\right)$. This implies \eqref{eq: A3c} for~$N$ sufficiently large.

Now fix $\nu\in(0,\eta / M^2)$. Our second step is to prove that for $N$ sufficiently large,
\begin{align}\label{eq: A4c}
\prob(\exists j,l\in[M],\: j\neq l:\:\zeta_{\Pp_{j},t}(\Teps) = \zeta_{\Pp_{l},t}(\Teps)) \leq \prob(\Aa_3^c) + \prob(\Aa_4(\nu)^c) + \eta /2,
\end{align}
which is the first part of the statement of the lemma. The event on the left-hand side means that there is a particle at time $\Teps$ which has at least two descendants in the sample of $M$ particles at time $t$. That is
\begin{equation*}
\prob(\exists j,l\in[M],\: j\neq l:\:\zeta_{\Pp_{j},t}(\Teps) = \zeta_{\Pp_{l},t}(\Teps)) =
\prob\left( \exists i\in[N],\; j,l\in[M],\;j\neq l:\: \bracing{\Pp_j,\Pp_l}\subseteq \Nn_i\right).
\numberthis\label{eq: epsNcommonAncestor}
\end{equation*}

We will use that if all the $\Nn_i$ sets have size smaller than $\nu N$ then it is unlikely that two particles of the uniformly chosen sample will fall in the same $\Nn_i$ set. Since $D_i$ is $\mathcal F_t$-measurable for all $i$, a union bound gives
\begin{multline*}
\prob\left( \exists i\in[N],\; j,l\in[M],\;j\neq l:\: \bracing{\Pp_j,\Pp_l}\subseteq \Nn_i\right)
\\ 
\leq\:
\expect\bigg[ \mathds{1}_{\bracing{\max_{i\in[N]} D_i \leq \nu N}} \sum_{i=1}^N \sum_{\:\:1\leq j < l \leq M} \prob(\bracing{\Pp_j,\Pp_l}\subseteq \Nn_i\: |\: \F_t) \bigg]
+ \prob\left(\max_{i\in[N]} D_i > \nu N\right).\numberthis\label{eq: lemmaDi1}
\end{multline*}
Since the sample is chosen uniformly at random, the first term on the right-hand side is equal to
\begin{align*}
\expect\left[ \mathds{1}_{\bracing{\max_{i\in[N]} D_i \leq \nu N}} \sum_{i=1}^N {M \choose 2}  \frac{{D_i \choose 2}}{{N \choose 2}} \right] 
&\leq\:\expect\left[ \mathds{1}_{\bracing{\max_{i\in[N]} D_i \leq \nu N}} \max_{j\in[N]} D_j {M \choose 2} \frac{\sum_{i=1}^N D_i}{N(N-1)} \right]
\\ &
\leq\:
{M \choose 2}\frac{\nu N}{N-1}, \numberthis\label{eq: lemmaDi2}
\end{align*}
where in the second inequality we exploit the indicator and use that $\sum_{i=1}^N D_i =N$. In order to deal with the second term on the right-hand side of \eqref{eq: lemmaDi1}, note that the maximum is taken over all particles at time $\Teps$ (because of the definition of $D_i$ in \eqref{eq: Di}). 
Suppose $N$ is sufficiently large that $N^{1-\gamma}\le \nu N$. Then if the event $\Aa_3$ occurs, particles not descended from particle $(N,T)$ (i.e.~particles not in $\Nn_{N,T}(\Teps)$) have at most $\nu N$ descendants at time $t$. Therefore, by the definition of $\Aa_4(\nu)$,
\begin{equation}
\prob\left(\max_{i\in[N]} D_i > \nu N\right) \leq \prob(\Aa_4(\nu)^c) + \prob\left(\max_{i\in[N]\setminus\Nn_{N,T}(\Teps)} D_i > \nu N\right) \leq \prob(\Aa_4(\nu)^c) + \prob(\Aa_3^c),\numberthis\label{eq: A4cA3c}
\end{equation}
for $N$ sufficiently large.

Putting~\eqref{eq: epsNcommonAncestor}-\eqref{eq: A4cA3c} together, since we chose $\nu<\eta/M^2$ we have that \eqref{eq: A4c} holds for $N$ sufficiently large. By \eqref{eq: inA2}, \eqref{eq: A3c} and \eqref{eq: A4c}, the result follows.
\end{proof}

We now state the two main intermediate results in the proof of Theorem~\ref{thm}, which say
that, for well-chosen $\gamma$, $\delta$, and $\rho$, the events $\Aa_1$, $\Aa_3$ and~$\Aa_4(\nu)$ occur with high probability. In Sections~\ref{sect: deterministic} and \ref{sect: probabilities} we give the proof of Proposition~\ref{prop: A1A3}, and in Section~\ref{sect: star-shaped} we prove Proposition~\ref{prop: A4}.

\begin{prop}\label{prop: A1A3}
For $\eta\in(0,1]$ there exist $0<\gamma<\delta<\rho<\eta$ such that for $N$ sufficiently large and $t>4\ell_N$,
\begin{equation*}
\prob(\Aa_1\cap\Aa_3) > 1 - \eta,
\end{equation*}
where $\Aa_1(t,N,\eta,\gamma)$ and $\Aa_3(t,N,\delta,\rho,\gamma)$ are defined in \eqref{eq: A1} and \eqref{eq: A3} respectively.
\end{prop}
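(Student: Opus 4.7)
The plan is to choose constants hierarchically --- first $\rho$ small in terms of $\eta$, then $\delta$ small in terms of $\rho$ (roughly $\delta \ll \rho^\alpha$), and finally $\gamma$ small in terms of $\delta$ --- and to split the argument into a deterministic implication (in Section~\ref{sect: deterministic}) and probabilistic estimates (in Section~\ref{sect: probabilities}). Specifically, I would introduce a collection of ``good events'' $\Gg$ depending on these constants, prove $\Gg \subseteq \Aa_1 \cap \Aa_3$ deterministically, and then show $\prob(\Gg^c) < \eta$ for $N$ large.

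For the deterministic step, the good events should include: (i) $T = T(\rho) \in [t_2 + \lceil \delta \ell_N \rceil, t_1 - \lceil \delta \ell_N \rceil]$; (ii) $|B_N| \leq K$ for a constant $K = K(\rho)$; (iii) every $N$-BRW path of length at most $\ell_N$ using only jumps of size $\le \rho a_N$ covers a total distance less than $(\eta/2) a_N$, which will come from Corollary~\ref{cor: path}; and (iv) $\Sbf_N(\rho)$ and $\hat \Sbf_N(\rho)$ agree on $[T, t-1]$, from Corollary~\ref{cor: beatingLeader}. On these events, monotonicity (Lemma~\ref{lemma: descendants_est}) forces the descendants of $(N,T)$ to double at each time step from $T$: inside $[T, t_1-1]$ because (iv) prevents the leader from being beaten and (iii) prevents slow paths from catching the leading tribe, and inside $[t_1, t-1]$ because the only potential competitors are descendants of the at most $K$ big jumps in that window, whose total contribution at time $t$ is at most $N^{1-\gamma}$ for $\gamma$ small enough. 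This yields the $|\Nn_{N,T}(t)| \geq N - N^{1-\gamma}$ half of $\Aa_3$; applying (iii) to the paths inside the tribe of $(N,T)$ itself confines most of the tribe within $\eta a_N$ of $\X_1(t)$, giving $\Aa_1$.

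For the probabilistic step, event (ii) follows from a first-moment bound: the expected number of big jumps in $[t_4, t-1]$ equals $2N \cdot 4\ell_N / h(\rho a_N)$, which is of order $\rho^{-\alpha}$ by \eqref{eq: aN} and regular variation of $h$. Event (iii) comes from Corollary~\ref{cor: path} together with a union bound over the $\leq 2N\ell_N$ possible starting jumps, and (iv) is the content of Corollary~\ref{cor: beatingLeader}. The upper endpoint of (i), $T \leq t_1 - \lceil \delta \ell_N \rceil$, is controlled by observing that any record-breaking jump in $\bbracket{t_1 - \lceil \delta \ell_N \rceil, t_1 - 1}$ is in particular a big jump in that window, and the expected number of big jumps there is of order $\delta/\rho^\alpha$, small once $\delta \ll \rho^\alpha$.

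The remaining bound $T \geq t_2 + \lceil \delta \ell_N \rceil$, namely the existence of some record-breaking big jump in $\bbracket{t_2 + \lceil \delta \ell_N \rceil - 1, t_1 - 1}$, is the main obstacle. Following the heuristic of Section~\ref{sect: proof_heuristics}, I would split according to whether or not a record was broken in a short window $\bbracket{t_2 - \lceil c'\ell_N \rceil, t_2 - 1}$ for a small $c' > 0$. If not, Lemma~\ref{lemma: noRecord} forces $d(\X(t_2)) \ll \rho a_N$, so \emph{every} big jump in $[t_2, t_1-1]$ automatically becomes a record; since the expected number of big jumps there is of order $\rho^{-\alpha}$ with near-independent contributions, a Poisson-type lower bound produces at least one with high probability. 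If such a short-window record does exist, one argues that in only $c'\ell_N$ steps the record-breaking jump is, with high probability, not much larger than $\rho a_N$, so the corresponding lead is overtaken by a strictly larger big jump later in the much longer window $[t_2, t_1-1]$. Making the size-of-leader estimate quantitative and then dovetailing it with a Poisson-type lower tail is the step I expect to be the most delicate.
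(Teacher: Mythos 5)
Your overall architecture --- a deterministic implication from a family of good events followed by probability bounds, with Corollary~\ref{cor: path}, Lemma~\ref{lemma: noRecord} and Corollary~\ref{cor: beatingLeader} as the workhorses, and a case split on whether a record is broken just before $t_2$ --- is essentially the paper's. But there is a genuine gap in both halves of your argument, and it is the same missing idea in each place: you track only whether the leader is \emph{beaten}, whereas the proof needs a quantitative \emph{gap} of prescribed size behind the leading tribe. In the deterministic step: at time $t_1$ the tribe of $(N,T)$ has only $2^{t_1-T}\le 2N^{1-\delta}=o(N)$ members, so to get $|\Nn_{N,T}(t)|\ge N-N^{1-\gamma}$ you must show that every particle \emph{not} in this tribe contributes $o(N)$ descendants at time $t$. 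Your events (ii)--(iv) do not give this: a competitor could make a big jump landing, say, $\tfrac12 c_1 a_N$ \emph{behind} $\XN(t_1)$ without ever beating the leader; by Lemma~\ref{lemma: descendants_est} all $N$ particles at time $t$ lie to the right of $\XN(t_1)$, and such a competitor reaches that level using only small jumps, so its time-$t$ descendants cannot be charged to the $\le K$ big jumps occurring in $[t_1,t]$ and could a priori form a positive fraction of the population. The paper excludes exactly this with the event $\Cc_2$ (no big jump from a particle at distance $\ge c_3a_N$ behind the leader lands within $2c_2a_N$ of the leader's position) together with Corollary~\ref{cor: gaps}, which force a clean gap of order $c_2a_N$ behind the tribe at $t_1$; this ``forbidden landing band'' device appears nowhere in your list of good events, and without it the counting fails.

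The same issue breaks the existence-of-a-record step. After Lemma~\ref{lemma: noRecord} the diameter is only known to be $\le c_1a_N$ with $c_1\gg\rho$, so it is false that ``every big jump in $[t_2,t_1-1]$ automatically becomes a record'': a jump of size barely above $\rho a_N$ made from the back of a cloud of diameter $c_1a_N$ does not reach the leader, and the diameter can regrow after the first record in any case. One needs a jump much larger than the current diameter bound (the paper asks for size $>2c_4a_N$ against a diameter $\le\tfrac32 c_4a_N$), and --- again the missing gap idea --- one must additionally forbid jumps in the narrow band just above that threshold (the paper's event $\Dd_5$), or else the new leader's lead could be too small for the rest of the deterministic argument to run. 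The Poisson-type lower bound then applies to jumps above the larger threshold, whose expected number in $[t_2,t_1]$ is of order $c_4^{-\alpha}$, which is still large; so your strategy is repairable, but with the thresholds as you have written them the step would fail.
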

\begin{prop}\label{prop: A4}
Let $\eta\in(0,1]$ and $\nu>0$. Then for $\rho \in (0, \eta )$ as in Proposition~\ref{prop: A1A3}, for $N$ sufficiently large and~${t > 4\ell_N}$,
\begin{equation*}
\prob(\Aa_4(\nu)) > 1 - 2\eta,
\end{equation*}
where $\Aa_4(t,N,\rho,\nu)$ is defined in \eqref{eq: A4}.
\end{prop}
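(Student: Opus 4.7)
The plan is to reduce the proposition to a concentration estimate on the number of time-$t$ descendants of a single leading-tribe particle at time $\Teps$, via Proposition~\ref{prop: A1A3}. On the event $\Aa_3$, which holds with probability at least $1-\eta$, we have $T=T(\rho)\in[t_2+\ceil{\delta\ell_N},t_1-\ceil{\delta\ell_N}]$ and $|\Nn_{N,T}(t)|\ge N-N^{1-\gamma}$. Since $\eps_N\ell_N\to\infty$ implies $N^{1-\eps_N}=N\cdot 2^{-\eps_N\ell_N}=o(N)$, it will suffice to show that on a high-probability sub-event of $\Aa_3$ every particle $i\in\Nn_{N,T}(\Teps)$ satisfies $D_{i,\Teps}(t)\le C_0 N^{1-\eps_N}$ for some absolute constant $C_0$.

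I would first establish structural properties of the leading tribe at time $\Teps$ by reusing the tools developed for Proposition~\ref{prop: A1A3}, notably Lemma~\ref{lemma: probC} and Corollary~\ref{cor: path}. These should guarantee that with high probability no descendant of $(N,T)$ is killed during $[T,\Teps]$ and none of them performs a big jump in that window, so the leading tribe at time $\Teps$ has deterministic size $|\mathcal L|:=|\Nn_{N,T}(\Teps)|=2^{\eps_N\ell_N}$ and every tribe member lies within $o(a_N)$ of $\XN(T)$. Because all tribe members occupy essentially identical positions on the $a_N$ scale, the dynamics from time $\Teps$ are close to symmetric under permutations of the tribe labels, and in particular $\expect[D_{i,\Teps}(t)\mid \F_\Teps]$ is of order $|\Nn_{N,T}(t)|/|\mathcal L|=O(N^{1-\eps_N})$ for each $i\in\mathcal L$.

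Fixing such an $i$, I would write $D_{i,\Teps}(t)$ as a sum of $2^{t-\Teps}$ indicators, one for each potential descendant of $(i,\Teps)$ in the full binary genealogy, of the event that the corresponding lineage survives every selection step in $[\Teps,t]$. This makes $D_{i,\Teps}(t)$ a measurable function of the jumps $(X_{k,b,m})_{m\in\bbracket{\Teps,t-1}}$, to which I would apply a concentration inequality from \cite{McDiarmid1998} to obtain $\prob(D_{i,\Teps}(t)>C_0 N^{1-\eps_N}\mid \F_\Teps)=o(N^{-\eps_N})$. A union bound over the $|\mathcal L|=N^{\eps_N}$ tribe members then gives $\prob(\Aa_4(\nu)^c\cap\Aa_3)\le \eta$, completing the proof.

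The main obstacle is the concentration step itself: a naive bounded-differences analysis yields a Lipschitz coefficient $c_m=2^{t-m-1}$ for a jump performed at time $m$, which makes the vanilla McDiarmid inequality useless since the variance proxy $\sum c_m^2$ dwarfs the expected deviation. The remedy is to exploit that with high probability only $O(1)$ big jumps occur inside the subtree rooted at $(i,\Teps)$ and that selection outcomes are stable under small perturbations of the remaining, non-big, jumps; one then invokes a version of McDiarmid's inequality tailored to functions with small \emph{typical} differences (outside an atypical-jump event of negligible probability), which is available in \cite{McDiarmid1998} and has differences controlled by $\rho a_N$ rather than by the worst-case exponential factor.
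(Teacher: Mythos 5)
There are two genuine gaps. First, your argument repeatedly conditions on $\F_{\Teps}$ (to compute $\expect[D_{i,\Teps}(t)\mid\F_\Teps]$ and to apply concentration to the jumps in $[\Teps,t-1]$), but $T$ is \emph{not} a stopping time: it is the \emph{last} record-breaking big jump before $t_1$, so knowing $T$ constrains the future (no big jump takes the lead in $(T,t_1)$) and the conditional law of the jumps after $\Teps$ is no longer the i.i.d.\ product law. Your concentration step is therefore not licensed as stated. The paper handles this by replacing $T$ with the finite family of genuine stopping times $T_n$ in \eqref{eq: Tn}, noting that $T=T_n$ for some $n\le K$ with high probability (via $\Cc_7$ and $\Aa_3$), and union bounding over $n$ under $\prob(\cdot\mid\F_{T_n})$.

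Second, and more seriously, the concentration step itself is not carried out, and the proposed repair is unlikely to work in the form sketched. You correctly diagnose that bounded differences for $D_{i,\Teps}(t)$ as a function of the individual jumps has worst-case coefficients $2^{t-m-1}$; but the claim that ``selection outcomes are stable under small perturbations of the remaining, non-big, jumps'' is exactly what fails for this model: perturbing one jump can flip a selection decision and cascade through all subsequent generations, so exhibiting an event of near-full probability on which the differences are uniformly small is itself the whole difficulty, and no such event is identified. The paper avoids concentrating $D_{i}$ directly. Its key quantitative input, absent from your proposal, is the scale $\hat a_{n,N}=h^{-1}(\delta_1 N^{\theta_{n,N}}\ell_N)$ from \eqref{eq: hat_an}: Lemma~\ref{lemma: leftmost} shows $\Xone(t)\ge \XN(T_n)+\hat a_{n,N}$ with high probability (this \emph{is} a McDiarmid application, but to a weighted sum of independent Bernoulli indicators of jumps exceeding $\hat a_{n,N}$, not to $D_i$); Corollary~\ref{cor: path} then forces every surviving lineage to contain a ``medium jump'' of size $\ge\delta_2\hat a_{n,N}$ (Lemma~\ref{lemma: noBigJump}); and finally the number of time-$t$ descendants of medium jumps in the subtree of $(i,\Tneps)$ is bounded by $CN^{1-\eps_N}$ (Lemma~\ref{lemma: withBigJump}), again by McDiarmid applied to a deterministically weighted sum of independent indicators. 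Your exchangeability argument for $\expect[D_{i,\Teps}(t)]$ is also not rigorous as stated (the tribe members' positions agree only up to $o(a_N)$, and exchangeability only controls the average of the $D_i$), but it is in any case not needed once the medium-jump counting argument is in place.
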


\begin{proof}[Proof of Theorem~\ref{thm}]
Lemma~\ref{lemma: rewrite}, Proposition~\ref{prop: A1A3} and Proposition~\ref{prop: A4} immediately imply Theorem~\ref{thm}.
\end{proof}

\subsection{Strategies for the proofs of Propositions~\ref{prop: A1A3} and \ref{prop: A4}}
Our strategy for the proof of Proposition~\ref{prop: A1A3} is based on the picture in Figure~\ref{fig: winningTribe}. For $t>4\ell_N$, 
we will show that the following happens between times $t_2$ and $t$ with probability close to 1.
\begin{enumerate}
	\item There will be particles which lead by a large distance at times in $[t_2,t_1]$. The last such particle will be at time~${T\in[t_2+\ceil{\delta \ell_N},t_1-\ceil{\delta \ell_N}]}$ with position $\XN(T)$.
	
	\item The descendants of this particle are close together and far away from the the rest of the population at time $t_1$, forming a small (size $o(N)$) leader tribe.
	
	\item At time $t$, the descendants of the small leader tribe from time $t_1$ form a big tribe of $N-o(N)$ particles, which descend from particle $(N,T)$ and are close to the leftmost particle. 
\end{enumerate}

The first part of the proof is a deterministic argument given in Section~\ref{sect: deterministic}, which shows that if `all goes well' between times $t_4$ and $t$, then steps 1-2-3 above roughly describe what happens, which will imply that the events~$\Aa_1$ and $\Aa_3$ in Proposition~\ref{prop: A1A3} occur. For the deterministic argument we will introduce a number of events, which will describe sufficient criteria for $\Aa_1$ and $\Aa_3$ to happen. Once we have shown that the intersection of these events is contained in $\Aa_1\cap\Aa_3$, it is enough to prove that the probability of this intersection is close to 1. This part will be carried out in Section~\ref{sect: probabilities}, and consists of checking that `all goes well' with high probability.

We describe our strategy for showing Proposition~\ref{prop: A4} in detail in Section~\ref{sect: star-shaped-strategy}. The main idea is to give a lower bound on the position of the leftmost particle at time $t$ with high probability, and then use concentration inequalities from \cite{McDiarmid1998} to bound the number of time-$t$ descendants of each particle in $\Nn_{N,T}(\Teps)$ which can reach that lower bound by time $t$. 
A key intermediate step will be to see that with high probability, particles can reach the lower bound only if they have an ancestor which made a jump larger than a certain size. 

\section{Deterministic argument for the proof of Proposition~\ref{prop: A1A3} }\label{sect: deterministic}
In this section we provide the main component of the proof of Proposition~\ref{prop: A1A3}. We follow the plan explained in the previous section; we define new events and show that they imply~$\Aa_1$ and~$\Aa_3$. In Section~\ref{sect: probabilities}, we will prove that the new events occur with high probability. The events describe a strategy designed to make sure that the majority of the population at time $t$ has a common ancestor at some time between $t_2$ and $t_1$; that is, to ensure that $\Aa_3$ occurs. The strategy will also show that most of the particles descended from particle $(N,T)$ cannot move too far from position $\XN(T)$ by time $t$. Thus it will be easy to see that these descendants are near the leftmost particle at time $t$, and so $\Aa_1$ must occur. So although the strategy is designed for the event~$\Aa_3$, it will imply $\Aa_1$ too.

In the course of the proof we will use several constants. We first give a guideline, which shows how the constants should be thought of throughout the rest of the paper, then we describe the specific assumptions we need for the rest of this section. Recall that we fixed $\alpha>0$ as in \eqref{eq: poly_tail} and that we have $\eta\in(0,1]$ from the statement of Proposition~\ref{prop: A1A3}. The other constants can be thought of as
\begin{equation}\label{eq: const}
0<\gamma < \delta \ll \rho \ll  c_1  \ll c_2 \ll c_3 \ll c_4 \ll c_5 \ll c_6 \ll \eta < 1 \quad \text{ and } \quad K\gg \rho^{-\alpha}.
\end{equation}
As everything is constant in \eqref{eq: const}, we only use $\ll$ as an informal notation to say that the left-hand side is much smaller than the right-hand side. 

More specifically, for the rest of this section we fix the constants $\gamma,\delta,\rho, c_1 ,c_2,\dots,c_6,\eta$ and $K$, and assume that they satisfy
\begin{equation}\label{eq: const1}
0 < \gamma < \delta < \rho,
\end{equation}
\begin{equation}\label{eq: const2}
10\rho <  c_1 ,
\end{equation}
\begin{equation}\label{eq: const4}
10c_j < c_{j+1}  < \eta < 1, \quad j = 1,\dots,5,
\end{equation}
\begin{equation}\label{eq: const5}
K > \rho^{-\alpha}.
\end{equation}
We will have additional conditions on these constants in Section~\ref{sect: probabilities}, which will be consistent with the assumptions \eqref{eq: const1}-\eqref{eq: const5}.

Every event we introduce below will depend on $N$, $t$ (with $t>4\ell_N$) and on some of the constants above. In the definitions we will not indicate this dependence explicitly. Note furthermore that in the statement of Proposition~\ref{prop: A1A3}, taking $N$ sufficiently large may depend on $\gamma$, $\delta$, or $\rho$. 

\subsection{Breaking down event $\Aa_3$}
We begin by breaking down the event $\Aa_3$ from Proposition~\ref{prop: A1A3} into two other events. Then we will define a strategy for showing that these two events occur. The first event describes the particle system at time~$t_1$; it says that there is a small leader tribe of size less than~$2N^{1-\delta}$, and every other particle is at least~$c_2a_N$ to the left of this tribe. Moreover, each particle in the leading tribe descends from the same particle,~$(N,T)$. The common ancestor $(N,T)$ is the last particle which breaks the record with a big jump before time $t_1$ (see \eqref{eq: T} and also Figure~\ref{fig: winningTribe}). We also require $T\in[t_2+\ceil{\delta\ell_N},t_1-\ceil{\delta\ell_N}]$, which is part of the event $\Aa_3$.

To keep track of the size of the leader tribe we introduce notation for the number of particles which are within distance $\eps a_N$ of the leader at time $n$:
\begin{equation}\label{eq: RepsN}
R_{\eps,N}(n) := \max\bracing{i\in[N]:\: \X_{N-i+1}(n) \geq \XN(n) - \eps a_N}, \text{ for } n\in\Nzero \text{ and } \eps > 0.
\end{equation}
Note that if $R_{\eps,N}(t_1)<N$ then particle $(N-R_{\eps,N}(t_1)+1,t_1)$
is within distance $\eps a_N$ of the leader, but particle $(N-R_{\eps,N}(t_1),t_1)$ is not. In the event we introduce below, we set $\eps=c_1$ and require the distance between these two particles to be at least $c_2a_N$, showing that there is a gap between the leader tribe and the other particles. The event is defined as follows:
\begin{equation}\label{eq: B1}
\Bb_1 := \left\{\begin{array}{l}
R_{ c_1 ,N}(t_1) \leq 
\min\bracing{N-1,
2N^{1-\delta}},\\
\mbox{$\X_{N-R_{ c_1 ,N}(t_1)}(t_1)\leq\X_{N-R_{ c_1 ,N}(t_1)+1}(t_1)  - c_2 a_N$,}
\\
T\in[t_2+\ceil{\delta\ell_N},t_1-\ceil{\delta\ell_N}] \text{ and }
\Nn_{N,T}(t_1) = \bracing{N-R_{ c_1 ,N}(t_1)+1,\dots,N}
\end{array}\right\},
\end{equation}
where $T=T(\rho)$ and $\Nn_{N,T}(t_1)$ are given by \eqref{eq: T} and \eqref{eq: N} respectively.

In the description of Figure~\ref{fig: winningTribe} in Section~\ref{sect: heur_pic}, we explained that the descendants of particle $(N,T)$ are likely to lead at time $t_1$. The event $\Bb_1$ requires more; it also says that the leading tribe leads by a large distance, which is important to ensure that no other tribes can interfere with our heuristic picture and will be useful in Section~\ref{sect: B1B2}. 
The most involved part of the deterministic argument in the remainder of Section~\ref{sect: deterministic} is to break up the event $\Bb_1$ into other events which happen with probability close to 1.

We now define another event which says that particles which are not in the leading tribe at time $t_1$ have at most~$N^{1-\gamma}$ (i.e. much less than $N$ for $N$ large) descendants in total at time $t$. This will imply that the leading tribe at time~$t_1$ will dominate the population at time $t$. We let
\begin{equation}\label{eq: B2}
\Bb_2 := \bracing{
\sum_{j=1}^{N - R_{ c_1 ,N}(t_1)} |\Nn_{j,t_1}(t)| \leq N^{1-\gamma}
},
\end{equation}
where $\Nn_{j,t_1}(t)$ is given by \eqref{eq: N}. The events which we will introduce to break down the event $\Bb_1$ will easily imply $\Bb_2$ as well. Before defining the new events we check that $\Bb_1$ and~$\Bb_2$ indeed imply $\Aa_3$.

\begin{lemma}\label{lemma: A}
Let $\Aa_3$, $\Bb_1$ and $\Bb_2$ be the events given by~\eqref{eq: A3},~\eqref{eq: B1} and~\eqref{eq: B2} respectively. Then for all $N\geq 2$ and $t>4\ell_N$,
\begin{equation*}
\Bb_1\cap\Bb_2 \subseteq \Aa_3.
\end{equation*}
\end{lemma}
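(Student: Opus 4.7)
The plan is to unpack the two events and observe that $\Aa_3$ is made up of two conditions: the timing condition $T(\rho)\in[t_2+\ceil{\delta\ell_N},t_1-\ceil{\delta\ell_N}]$, and the population condition $|\Nn_{N,T(\rho)}(t)|\geq N-N^{1-\gamma}$. The first condition is explicitly part of $\Bb_1$, so the entire content of the lemma is to derive the population lower bound from $\Bb_1\cap\Bb_2$.

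The key identity is that descent at time $t$ factors through time $t_1$: each particle at time $t$ has a unique ancestor at time $t_1\ge T$, and by~\eqref{eq: N} the sets $\Nn_{j,t_1}(t)$ for $j\in[N]$ partition $[N]$. Consequently,
\begin{equation*}
\Nn_{N,T}(t)=\bigsqcup_{j\in \Nn_{N,T}(t_1)}\Nn_{j,t_1}(t),
\qquad
\text{so}
\qquad
|\Nn_{N,T}(t)|=\sum_{j\in\Nn_{N,T}(t_1)}|\Nn_{j,t_1}(t)|.
\end{equation*}

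On the event $\Bb_1$, the last clause identifies $\Nn_{N,T}(t_1)=\{N-R_{c_1,N}(t_1)+1,\dots,N\}$, so the indices \emph{not} in $\Nn_{N,T}(t_1)$ are exactly $\{1,\dots,N-R_{c_1,N}(t_1)\}$. Using the partition $\sum_{j=1}^N|\Nn_{j,t_1}(t)|=N$, I would write
\begin{equation*}
|\Nn_{N,T}(t)|=N-\sum_{j=1}^{N-R_{c_1,N}(t_1)}|\Nn_{j,t_1}(t)|,
\end{equation*}
and then apply the bound from $\Bb_2$ to conclude $|\Nn_{N,T}(t)|\geq N-N^{1-\gamma}$, which together with the timing condition from $\Bb_1$ gives $\Aa_3$.

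There is essentially no obstacle here; the lemma is bookkeeping, so I would just be careful about two minor points. First, to invoke the identity $\Nn_{N,T}(t_1)=\{N-R_{c_1,N}(t_1)+1,\dots,N\}$ one needs $T\le t_1$, which is guaranteed by the timing clause in $\Bb_1$. Second, the condition $R_{c_1,N}(t_1)\le N-1$ in $\Bb_1$ ensures that the index set $\{1,\dots,N-R_{c_1,N}(t_1)\}$ is nonempty, but the argument goes through in either case since empty sums are zero.
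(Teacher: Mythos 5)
Your proposal is correct and follows essentially the same route as the paper: both identify $\Nn_{N,T}(t_1)$ with the top $R_{c_1,N}(t_1)$ indices via the last clause of $\Bb_1$, decompose $\Nn_{N,T}(t)$ as a disjoint union of the sets $\Nn_{j,t_1}(t)$ over those indices, and use $\Bb_2$ together with $\sum_{j=1}^N|\Nn_{j,t_1}(t)|=N$ to obtain the bound $N-N^{1-\gamma}$. The timing condition is read off from $\Bb_1$ exactly as in the paper.
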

\begin{proof}
On the event $\Bb_1$, the descendants of particle $(N,T)$ are the $R_{ c_1 ,N}(t_1)$ rightmost particles at time $t_1$. Thus $\Nn_{N,T}(t)$ is a disjoint union of the sets $\Nn_{j,t_1}(t)$ for $j\in\llbracket N-R_{ c_1 ,N}(t_1)+1, N\rrbracket$.
We deduce that on the event $\Bb_1\cap \Bb_2$,
\begin{equation*}
|\Nn_{N,T}(t)| = \sum_{j=N-R_{ c_1 ,N}(t_1)+1}^{N} |\Nn_{j,t_1}(t)| \ge N - N^{1-\gamma}.
\end{equation*}
Since $T\in[t_2+\ceil{\delta\ell_N},t_1-\ceil{\delta\ell_N}]$ on the event $\Bb_1$, the result follows.
\end{proof}


\subsection{Breaking down events $\Bb_1$ and $\Bb_2$}\label{sect: B1B2}
We now break down the events $\Bb_1$ and $\Bb_2$ into new events $\Cc_1$ to $\Cc_7$ whose probabilities will be easier to estimate. The majority of the work in this section consists of showing that the intersection of the new events implies $\Bb_1$. We can then quickly conclude that the intersection implies both $\Bb_2$ and $\Aa_1$. One of the new events will need to be further broken down in Section~\ref{sect: C1}. 

\subsubsection{New events $\Cc_1$ to $\Cc_7$}
Recall that $\bbracket{s_1,s_2}$ denotes the set of integers in the interval $[s_1,s_2]$ and that the constants $\gamma$, $\delta$, $\rho$, $c_1,c_2, \dots,c_6$, $\eta$ and $K$ satisfy \eqref{eq: const1}-\eqref{eq: const5}. We first introduce $\tau_1$ to denote the first time after~$t_2$ when a gap of size $2c_3a_N$ appears between the leader and the second rightmost particle:
\begin{equation}\label{eq: That}
\tau_1 := \inf\bracing{s \geq t_2+1:\: \XN(s) > \X_{N-1}(s) + 2c_3a_N}.
\end{equation}
The first new event we define says that such a gap appears by time~$t_1$, that is
\begin{equation}\label{eq: C1}
\Cc_1 := \bracing{\tau_1\in \bbracket{t_2+1,t_1}}.
\end{equation}

The next event $\Cc_2$ ensures that the current leading tribe keeps distance from the other tribes during the time interval $[\tau_1,t_1]$. This is important, since $\Bb_1$ requires a gap behind the leading tribe at time $t_1$. 
The event $\Cc_2$ says that if a particle is far away (at least $c_3a_N$) from the leader, then it cannot jump to within distance $2c_2a_N$ of the leader's position with a single big jump (recall from~\eqref{eq: const} that~$c_2\ll c_3$). That is, a particle far from the leader either stays at least~$2c_2a_N$ behind the leader, or it beats the leader by more than~$2c_2a_N$. Jumping close to the leader would require a large jump, of size greater than~$c_3a_N$, restricted to an interval of size $4c_2a_N$, which is much smaller than the size of the jump. We will see in Section~\ref{sect: probabilities} that the probability that such a jump occurs between times~$t_3$ and $t_1$ is small. Let~$Z_i(s)$ denote the gap between the rightmost and the $i$th particle at time $s$:
\begin{equation}\label{eq: Z}
Z_i(s) := \XN(s) - \X_i(s), \quad \text{for }s\in\Nzero \text{ and }i\in[N].
\end{equation}
Now we can define our next event
\begin{equation}\label{eq: C2}
\Cc_2 :=\bracing{\begin{array}{l}
	\nexists(i,b,s)\in\Nonetwo\times\bbracket{t_3,t-1} \text{ such that }\\
	Z_i(s)\geq c_3 a_N \text{ and } X_{i,b,s}\in(Z_i(s) - 2c_2a_N, Z_i(s) + 2c_2a_N]
	\end{array}}.
\end{equation}

We need to introduce several more events to make sure that `all goes well'; that is, particles which we do not expect to make big jumps indeed do not make big jumps, and smaller jumps do not make too much difference on the $a_N$ space scale. The next event says that if a particle makes a big jump, then it will not have a descendant which makes another big jump within~$\ell_N$ time:
\begin{equation}\label{eq: C3}
\Cc_3 :=\bracing{\begin{array}{l}
B_N\cap P_{k_1,s_1}^{k_2,s_2} = \bracing{(k_1,b_1,s_1)} \\
\forall (k_1,b_1,s_1)\in B_N,\; \forall s_2\in \bbracket{s_1+1,\min\bracing{s_1+\ell_N+1, t}},\; \forall k_2\in\Nn_{k_1,s_1}^{b_1}(s_2)
\end{array}},
\end{equation}
where $B_N$, $P_{k_1,s_1}^{k_2,s_2}$ and $\Nn_{k_1,s_1}^{b_1}(s_2)$ are defined in \eqref{eq: BN2}, \eqref{eq: P} and \eqref{eq: Nb} respectively.

The next event says the following. Take any path between two particles in the time interval $[t_4,t]$. If we omit the big jumps from the path then it does not move more than distance $ c_1  a_N$. In particular, if there are no big jumps at all then the path moves at most $ c_1  a_N$. The event is given by
\begin{equation}\label{eq: C4}
\Cc_4 :=\bracing{\begin{array}{l}
\sum_{(i,b,s)\in P_{k_1,s_1}^{k_2,s_2}} X_{i,b,s}\mathds{1}_{\bracing{X_{i,b,s}\leq \rho a_N}} \leq  c_1  a_N \\
\forall (k_1,s_1)\in [N]\times\bbracket{t_4,t-1},\; \forall s_2\in\bbracket{s_1+1,t},\; \forall k_2\in\Nn_{k_1,s_1}(s_2)
\end{array}},
\end{equation}
where $P_{k_1,s_1}^{k_2,s_2}$ and $\Nn_{k_1,s_1}(s_2)$ are defined in \eqref{eq: P} and \eqref{eq: N} respectively.

The last three events are simple. On $\Cc_5$, two big jumps cannot happen at the same time:
\begin{equation}\label{eq: C5}
\Cc_5 := \bracing{|B_N\cap\bracing{(k,b,s):\: (k,b)\in\Nonetwo}|\leq 1\: \forall s\in\bbracket{t_4,t-1}}.
\end{equation}
Then $\Cc_6$ excludes big jumps which happen either right after time $t_2$ or very close to time $t_1$:
\begin{equation}\label{eq: C6}
\Cc_6 := \bracing{
B_N^{[t_2,t_2+\ceil{\delta \ell_N}]} \cup
B_N^{[t_1-\ceil{\delta \ell_N},t_1+\ceil{\delta \ell_N}]} = \emptyset }
\end{equation}
where $B_N^{[s_1,s_2]}$ is defined in \eqref{eq: BN1}. Finally, $\Cc_7$ gives a bound on the number of big jumps:
\begin{equation}\label{eq: C7}
\Cc_7 := \bracing{|B_N|\leq K},
\end{equation}
where we recall that we chose $K$ to be a positive constant at the start of Section~\ref{sect: deterministic}.

Now we can state the main result of this subsection. It says that on the events $\Cc_1$ to $\Cc_7$ the events $\Bb_1$, $\Bb_2$ and $\Aa_1$ occur, and therefore $\Aa_3$ occurs as well. We have an additional event in Proposition~\ref{prop: B} below, which says that the diameter of the particle cloud at time $t_1$ is larger than $\tfrac{3}{2}c_3a_N$. As part of the proposition we also show that $\Cc_1$ to $\Cc_7$ imply this event, because it will be useful in another argument later on in Section~\ref{sect: significance}.

\begin{prop}\label{prop: B}
Let $\eta\in(0,1]$, and assume that the constants~${\gamma,\delta,\rho, c_1 ,c_2,\dots,c_6,K}$ satisfy~\eqref{eq: const1}-\eqref{eq: const5}.
Then for $N$ sufficiently large that $2K N^{-\delta}<N^{-\gamma}<1$ and $t>4\ell_N$,
\begin{equation*}
\bigcap_{j=1}^7 \Cc_j \subseteq \Bb_1\cap\Bb_2\cap\Aa_1\cap \bracing{d(\X(t_1)) \geq \tfrac{3}{2}c_3a_N} \subseteq \Aa_1 \cap \Aa_3\cap \bracing{d(\X(t_1)) \geq \tfrac{3}{2}c_3a_N},
\end{equation*}
where $\Bb_1$, $\Bb_2$, $\Aa_1$ and $\Aa_3$ are defined in \eqref{eq: B1}, \eqref{eq: B2}, \eqref{eq: A1} and \eqref{eq: A3} respectively, and $\Cc_1,\Cc_2,\dots,\Cc_7$ are given by \eqref{eq: C1} and \eqref{eq: C2}--\eqref{eq: C7}.
\end{prop}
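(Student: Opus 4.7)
The plan is: assume $\bigcap_{j=1}^7\Cc_j$ and deduce $\Bb_1$, $\Bb_2$, $\Aa_1$ and $\{d(\X(t_1))\ge\tfrac{3}{2}c_3a_N\}$; then the second inclusion follows at once from Lemma~\ref{lemma: A}, which gives $\Bb_1\cap\Bb_2\subseteq\Aa_3$.

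First, I would locate $T$. By $\Cc_5$ there is at most one big jump at time $\tau_1-1$, so every non-leading offspring at time $\tau_1$ is at position at most $\XN(\tau_1-1)+\rho a_N$, giving $\X_{N-1}(\tau_1)\le\XN(\tau_1-1)+\rho a_N$. Combined with the defining gap $\XN(\tau_1)-\X_{N-1}(\tau_1)>2c_3a_N$, this forces the jump producing $(N,\tau_1)$ to have size larger than $2c_3a_N>\rho a_N$, so $\tau_1-1\in\Sbf_N$. With $\Cc_1$ giving $\tau_1\le t_1$, this yields $T-1\ge\tau_1-1\ge t_2$, and $\Cc_6$ excludes $T-1$ from the neighbourhoods of $t_2$ and $t_1$, placing $T\in[t_2+\ceil{\delta\ell_N},t_1-\ceil{\delta\ell_N}]$.

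Next I would control the descendants of $(N,T)$ at $t_1$. Since $t_1-T\le\ell_N-\ceil{\delta\ell_N}-2<\ell_N$, $\Cc_3$ applied to the big jump at $T-1$ forbids any further big jump along a path from $(N,T)$ to any of its descendants in $[T,t_1]$. Together with $\Cc_4$ this confines every such descendant to $[\XN(T),\XN(T)+c_1a_N]$, and a doubling bound gives at most $2^{t_1-T}\le 2N^{1-\delta}$ of them. The heart of the argument is the complementary bound: every non-descendant of $(N,T)$ at time $t_1$ lies at position at most $\XN(T)-c_2a_N$. The subtlety is ruling out a ``medium-range'' record, where the jumper at time $T-1$ sits at $Z\in[c_2a_N,c_3a_N)$ so that $\Cc_2$ does not directly constrain the jump. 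I would argue that such a jumper's position requires at least one earlier big jump on its lineage going back to $\tau_1$ (otherwise $\Cc_4$ pins it below $\XN(\tau_1)-c_3a_N$, giving $Z\ge c_3a_N$); but then $\Cc_3$ applied to that earlier big jump forbids a second big jump within $\ell_N+1$ time, and since $t_1-\tau_1<\ell_N$ this rules out the jump at $T-1$, a contradiction. Hence the jumper satisfies $Z\ge c_3a_N$, and $\Cc_2$ forces the record to be ``case B'', yielding $\XN(T)>\XN(T-1)+2c_2a_N$. Combined with $\Cc_5$ (so non-$(N,T)$ particles at time $T$ are at position at most $\XN(T-1)+\rho a_N$) and the same ``at most one big jump per path of length $<\ell_N$'' fact plus case~A of $\Cc_2$ for any such big jump on a non-descendant's path, this propagates forward to the claimed $\XN(T)-c_2a_N$ bound at $t_1$.

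Given both bounds, $\Bb_1$ drops out: the top $R_{c_1,N}(t_1)$ particles at $t_1$ coincide with $\Nn_{N,T}(t_1)$, the size is at most $2N^{1-\delta}\le N-1$ (for $N$ large enough that $2N^{-\delta}<1$), and the gap behind is at least $c_2a_N$. For $\Bb_2$ and $\Aa_1$ I would continue the same positional analysis on $[t_1,t]$: Lemma~\ref{lemma: descendants_est} keeps the $(N,T)$-tribe doubling, while $\Cc_2,\Cc_3,\Cc_4,\Cc_7$ maintain the non-descendant barrier, so selection drives the count of non-descendants at time $t$ below $N^{1-\gamma}$ under the assumption $2KN^{-\delta}<N^{-\gamma}$, and the $N-N^{1-\gamma}$ surviving descendants of $(N,T)$ lie in an $o(a_N)$-band around $\XN(T)$, hence within $\eta a_N$ of $\Xone(t)$. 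For the diameter, I would combine $d(\X(\tau_1))>2c_3a_N$ with $\XN(t_1)\ge\XN(\tau_1)$ and use $\Cc_7$ to identify a non-descendant of $(N,T)$ at $t_1$ whose lineage back to $\tau_1$ is big-jump-free (among the $\ge N-2N^{1-\delta}$ non-descendants, only at most $K\cdot 2^{\ell_N}$ lineages can carry big jumps, and the count of such jumps originating outside $\Nn_{N,\tau_1}$ is much smaller), so its position is at most $\X_{N-1}(\tau_1)+c_1a_N\le\XN(\tau_1)-(2c_3-c_1)a_N\le\XN(t_1)-\tfrac{3}{2}c_3a_N$. Lemma~\ref{lemma: A} then provides $\Aa_3$, completing the argument. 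The hardest step is the non-descendant bound; the pleasant surprise is that $\Cc_3$ (rather than any stronger event) rules out the medium-range scenario that would otherwise break the separation.
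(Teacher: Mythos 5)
Your proposal follows essentially the same route as the paper's proof: locate $T\ge\tau_1$ via $\Cc_1$, $\Cc_5$ and $\Cc_6$; show the record-breaking jumper at time $T-1$ is at least $c_3a_N$ behind the leader so that $\Cc_2$ forces a gap of order $c_2a_N$ at time $T$; propagate that gap to $t_1$ using $\Cc_3$, $\Cc_4$ and the same dichotomy for big jumps on non-descendant paths; and then count descendants of big jumps in $[t_1,t-1]$ to obtain $\Bb_2$ and $\Aa_1$. Your reorganisation — pinning each individual big jumper via the gap at $\tau_1$ together with a "$\Cc_4$ or else $\Cc_3$-contradiction" dichotomy on its lineage, instead of first proving the paper's global bound $d(\X(s))\ge\tfrac{3}{2}c_3a_N$ for all $s\in\bbracket{\tau_1,t_1}$ (Step~(ii)) and then invoking Lemma~\ref{lemma: bigJumpLeftmost} — is a legitimate variant built from the same ingredients.

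The one step that fails as written is the diameter bound $d(\X(t_1))\ge\tfrac{3}{2}c_3a_N$. You look for a non-descendant of $(N,T)$ at time $t_1$ whose lineage back to $\tau_1$ is big-jump-free, by pigeonhole against a count of $K\cdot 2^{\ell_N}$ contaminated lineages; but $K\cdot 2^{\ell_N}\ge KN$ exceeds the $N-2N^{1-\delta}$ available non-descendants, so this pigeonhole yields nothing. The count that actually works — and this is the paper's Step~(ii) — is that every big jump in $\bbracket{\tau_1-1,t_1-1}$ occurs at a time at least $t_2+\ceil{\delta\ell_N}$ (because $\tau_1>t_2+\ceil{\delta\ell_N}$, which in turn uses $\Cc_6$), hence has at most $2^{\ell_N-\ceil{\delta\ell_N}}\le 2N^{1-\delta}$ descendants at time $t_1$, giving a total of at most $2KN^{1-\delta}<N$ contaminated particles; your hedge about jumps ``originating outside $\Nn_{N,\tau_1}$'' does not supply this, since the issue is not where the jumps originate but how many descendants each can produce. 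You also need to discard the at most $2N^{1-\delta}$ descendants of $(N,\tau_1)$ itself before concluding that the surviving big-jump-free particle sits below $\X_{N-1}(\tau_1)+c_1a_N$. With that repair — a computation you in effect already perform for $\Bb_2$ via the hypothesis $2KN^{-\delta}<N^{-\gamma}$ — the argument closes.
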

Note that the second inclusion in Proposition~\ref{prop: B} follows directly from Lemma~\ref{lemma: A}.

\subsubsection{$\Cc_1$ to $\Cc_7$ imply $\Bb_1$, $\Bb_2$ and $\Aa_1$: proof of Proposition~\ref{prop: B}} \label{sect:proofpropB}

We start by proving some easy lemmas which hold on the event $\bigcap_{j=1}^7\Cc_j$, and which will be applied in the course of the proof of Proposition~\ref{prop: B}.

The first lemma gives another way of writing the event $\Cc_4$, which will be more convenient to use in this section. (The definition of $\Cc_4$ will be easier to work with when we show, in Section~\ref{sect: probabilities}, that $\Cc_4$ occurs with high probability.)
The lemma says that on the event $\Cc_4$, if a path moves more than $ c_1  a_N$ then it must contain a big jump.

\begin{lemma}\label{lemma: C4}
On the event $\Cc_4$, for all $(k_1,s_1)\in [N]\times\bbracket{t_4,t-1}$, $s_2\in\bbracket{s_1+1,t}$ and~$k_2\in\Nn_{k_1,s_1}(s_2)$, 
\begin{align*}
\X_{k_2}(s_2) > \X_{k_1}(s_1) +  c_1  a_N \Longrightarrow B_N\cap P_{k_1,s_1}^{k_2,s_2} \neq \emptyset,
\end{align*}
where $B_N$, $\Nn_{k_1,s_1}(s_2)$ and $P_{k_1,s_1}^{k_2,s_2}$  are defined in \eqref{eq: BN2}, \eqref{eq: N} and \eqref{eq: P} respectively.
\end{lemma}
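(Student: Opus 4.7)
The plan is to prove the contrapositive: assume that $B_N \cap P_{k_1,s_1}^{k_2,s_2} = \emptyset$ and deduce that $\X_{k_2}(s_2) \leq \X_{k_1}(s_1) + c_1 a_N$. This fits the ``small-jumps-only'' reading of $\Cc_4$: the event $\Cc_4$ is tailor-made to bound the cumulative effect of non-big jumps along any ancestral path, so whenever a path contains no big jumps, every jump in the path is a non-big jump and the path displacement is controlled directly by $\Cc_4$.

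Concretely, suppose $(k_1,s_1) \in [N]\times \bbracket{t_4,t-1}$, $s_2\in \bbracket{s_1+1,t}$ and $k_2 \in \Nn_{k_1,s_1}(s_2)$, and assume $B_N \cap P_{k_1,s_1}^{k_2,s_2} = \emptyset$. By the definition of $B_N$ in \eqref{eq: BN2} (via \eqref{eq: BN1}), the assumption $(i,b,s) \notin B_N$ for every $(i,b,s)\in P_{k_1,s_1}^{k_2,s_2}$ means that $X_{i,b,s}\le \rho a_N$ for all such triples, so $\mathds{1}_{\{X_{i,b,s}\le \rho a_N\}} = 1$ for each jump in the path. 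Therefore, using the definition of $\Cc_4$ from \eqref{eq: C4},
\begin{equation*}
\sum_{(i,b,s)\in P_{k_1,s_1}^{k_2,s_2}} X_{i,b,s} \;=\; \sum_{(i,b,s)\in P_{k_1,s_1}^{k_2,s_2}} X_{i,b,s}\,\mathds{1}_{\{X_{i,b,s}\le \rho a_N\}} \;\le\; c_1 a_N.
\end{equation*}

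Combining this with the path-sum identity \eqref{eq:pathsum}, which gives $\X_{k_2}(s_2) = \X_{k_1}(s_1) + \sum_{(i,b,s)\in P_{k_1,s_1}^{k_2,s_2}} X_{i,b,s}$, we conclude $\X_{k_2}(s_2) \le \X_{k_1}(s_1) + c_1 a_N$, which is the contrapositive of the claim. There is no real obstacle here; the lemma is essentially a reformulation of $\Cc_4$, and the only thing to check is that the indicator is identically $1$ on a path avoiding $B_N$, which is immediate from the two definitions.
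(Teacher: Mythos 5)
Your proof is correct and coincides with the paper's own argument: both prove the contrapositive by noting that if the path avoids $B_N$ then every jump on it satisfies $X_{i,b,s}\le\rho a_N$, so the indicator in the definition of $\Cc_4$ is identically $1$ and the path-sum identity \eqref{eq:pathsum} gives the bound. Nothing is missing.
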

\begin{proof}
Let $(k_1,s_1)\in [N]\times\bbracket{t_4,t-1}$, $s_2\in\bbracket{s_1+1,t}$, and~$k_2\in\Nn_{k_1,s_1}(s_2)$. Assume that $B_N\cap P_{k_1,s_1}^{k_2,s_2} =\emptyset$, and the event $\Cc_4$ occurs. Then by~\eqref{eq:pathsum},
\begin{align*}
\X_{k_2}(s_2) = \X_{k_1}(s_1) + \sum_{(i,b,s)\in P_{k_1,s_1}^{k_2,s_2}} X_{i,b,s} =  \X_{k_1}(s_1) +\sum_{(i,b,s)\in P_{k_1,s_1}^{k_2,s_2}} X_{i,b,s}\mathds{1}_{\bracing{X_{i,b,s}\leq \rho a_N}} \leq \X_{k_1}(s_1) +  c_1  a_N
\end{align*}
by the definition of the event $\Cc_4$,
which completes the proof.
\end{proof}

The next lemma says that if a path of length at most $\ell_N$ starts with a big jump then it moves distance at most~$ c_1  a_N$  after the big jump.
\begin{lemma}\label{lemma: C3C4}
On the event $\Cc_3 \cap \Cc_4$, for all $(k_1,b_1,s_1)\in B_N$, $s_2\in \bbracket{s_1+1,\min\bracing{s_1+\ell_N, t}} $ and $k_2\in\Nn_{k_1,s_1}^{b_1}(s_2)$,
\begin{equation*}
\X_{k_2}(s_2) \leq \X_{k_1}(s_1) + X_{k_1,b_1,s_1} +  c_1  a_N,
\end{equation*}
where $B_N$ and $\Nn_{k_1,s_1}^{b_1}(s_2)$  are defined in \eqref{eq: BN2} and~\eqref{eq: Nb} respectively.
\end{lemma}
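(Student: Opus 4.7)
The strategy is to peel off the initial big jump $(k_1,b_1,s_1)$ and then apply the no-big-jump-on-the-rest-of-the-path bound that comes for free from $\Cc_3$ combined with $\Cc_4$.

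First, let $k_1' \in [N]$ denote the index at time $s_1+1$ of the $b_1$th offspring of particle $(k_1,s_1)$, so that
\[
\X_{k_1'}(s_1+1) \;=\; \X_{k_1}(s_1) + X_{k_1,b_1,s_1}
\]
and $(k_1',s_1+1) \lesssim (k_2,s_2)$. The case $s_2 = s_1+1$ is immediate since then $k_2 = k_1'$ and the claimed inequality holds with slack $c_1 a_N \geq 0$, so I may assume $s_2 \geq s_1+2$.

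Next I check that both $\Cc_3$ and $\Cc_4$ apply on the relevant ranges. Since $(k_1,b_1,s_1) \in B_N = B_N^{[t_4,t-1]}$, we have $s_1 \in \bbracket{t_4,t-1}$, hence $(k_1',s_1+1) \in [N] \times \bbracket{t_4,t-1}$ (using $s_1+1 \leq s_2-1 \leq t-1$). Also, the range $s_2 \in \bbracket{s_1+1,\min\{s_1+\ell_N,t\}}$ sits inside the range $\bbracket{s_1+1,\min\{s_1+\ell_N+1,t\}}$ from the definition of $\Cc_3$, so I may invoke $\Cc_3$ for this particular choice of $(k_1,b_1,s_1)$ and $(k_2,s_2)$.

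Now observe that the path $P_{k_1,s_1}^{k_2,s_2}$ starts with the triple $(k_1,b_1,s_1)$, and the remaining jumps form exactly the path $P_{k_1',s_1+1}^{k_2,s_2}$. The event $\Cc_3$ forces
\[
B_N \cap P_{k_1,s_1}^{k_2,s_2} \;=\; \{(k_1,b_1,s_1)\},
\]
which gives $B_N \cap P_{k_1',s_1+1}^{k_2,s_2} = \emptyset$. Applying the contrapositive of Lemma~\ref{lemma: C4} to the starting particle $(k_1',s_1+1)$ (whose parameters we verified above) yields
\[
\X_{k_2}(s_2) \;\leq\; \X_{k_1'}(s_1+1) + c_1 a_N \;=\; \X_{k_1}(s_1) + X_{k_1,b_1,s_1} + c_1 a_N,
\]
which is the desired bound. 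The only subtlety is the bookkeeping of the time indices so that the hypotheses of Lemma~\ref{lemma: C4} and of $\Cc_3$ are simultaneously satisfied; the window $\min\{s_1+\ell_N,t\}$ in the statement (one step shorter than the $\min\{s_1+\ell_N+1,t\}$ in $\Cc_3$) is exactly what is needed for this alignment.
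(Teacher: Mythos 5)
Your proof is correct and follows essentially the same route as the paper's: peel off the big jump to land at the offspring's position at time $s_1+1$, use $\Cc_3$ to conclude the remaining path $P_{k_1',s_1+1}^{k_2,s_2}$ contains no big jumps, and then apply Lemma~\ref{lemma: C4}. The index bookkeeping you flag (including the reduction to $s_2\ge s_1+2$) matches the paper's argument.
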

\begin{proof}
Let $l\in [N]$ be such that $(k_1,s_1) \lesssim_{b_1} (l,s_1 + 1)$, so that
\begin{equation}\label{eq: XlXk}
\X_l(s_1+1) = \X_{k_1}(s_1) + X_{k_1,b_1,s_1}.
\end{equation}
If $s_2=s_1+1$ then we are done; from now on assume $s_2\ge s_1+2$.
Since $X_{k_1,b_1,s_1}$ is a big jump, on the event $\Cc_3$ there are no further big jumps on the path between particles $(l,s_1+1)$ and $(k_2,s_2)$, that is $B_N\cap P_{l,s_1+1}^{k_2,s_2}=\emptyset$. Therefore, by Lemma~\ref{lemma: C4} we have $\X_{k_2}(s_2) \leq \X_l(s_1+1) +  c_1  a_N$, which, together with \eqref{eq: XlXk}, completes the proof.
\end{proof}

In the next lemma, we describe how we can exploit the fact that on the event $\Cc_5$ there are never two big jumps at the same time. First, the event $\Cc_5$ tells us that if a particle makes a big jump, then the other particles move very little at the time of the jump. Second, it also implies that if a particle significantly beats the current leader with a big jump, then it becomes the new leader, and the gap behind this new leader will be roughly the distance by which it beat the previous leader. Both statements follow immediately from the setup, but will be useful for example in the proofs of Corollaries~\ref{cor: gaps} and \ref{cor: beatingLeader} below, and later on in the proofs of Propositions~\ref{prop: C} and \ref{prop: A2prime} as well.

\begin{lemma}\label{lemma: breakRecordGap}
On the event $\Cc_5$, for all $(k,b,s)\in B_N$, 
\begin{enumerate}[label=(\alph*), ref=\alph*]
\item\label{lemma: breakRecordGapA} $\X_j(s+1) \leq \XN(s) + \rho a_N$ for all $j\in[N]\setminus \mathcal N_{k,s}^b(s+1)$, and
\item\label{lemma: breakRecordGapB} if $\X_k(s)+X_{k,b,s} > \X_N(s) + ca_N$ for some $c>\rho$, then $(k,s)\lesssim_{b} (N,s+1)$ and $\X_N(s+1) - \X_{N-1}(s+1) > (c-\rho)a_N$.
\end{enumerate}
\end{lemma}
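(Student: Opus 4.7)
The proof is short and essentially follows directly from the definition of the event $\Cc_5$. I would structure it as follows.

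For part (\ref{lemma: breakRecordGapA}), pick any $j\in[N]\setminus \Nn_{k,s}^b(s+1)$. After the branching step at time $s$, particle $(j,s+1)$ is produced by some offspring of some particle at time $s$: there exist $i\in[N]$ and $b'\in\{1,2\}$ with $(i,b')\neq (k,b)$ such that
\begin{equation*}
\X_j(s+1) = \X_i(s) + X_{i,b',s}.
\end{equation*}
Since $(k,b,s)\in B_N$ is a big jump and the event $\Cc_5$ holds, there is no other big jump at time $s$, so $X_{i,b',s}\leq \rho a_N$. Combined with the trivial bound $\X_i(s)\leq \XN(s)$, this gives $\X_j(s+1)\leq \XN(s)+\rho a_N$, as required.

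For part (\ref{lemma: breakRecordGapB}), let $l\in[N]$ be such that $(k,s)\lesssim_b (l,s+1)$ (that is, the $b$th offspring of $(k,s)$ is particle $(l,s+1)$, if it survives selection). I would first argue that such $l$ exists: the hypothesis $\X_k(s)+X_{k,b,s} > \XN(s) + ca_N$ together with part (\ref{lemma: breakRecordGapA}) shows that this offspring is strictly to the right of every other offspring at time $s$, so it survives selection. The same comparison shows that it is in fact the rightmost surviving particle, so $l=N$ and $(k,s)\lesssim_b (N,s+1)$. Finally, particle $(N-1,s+1)$ belongs to $[N]\setminus \Nn_{k,s}^b(s+1)$, so part (\ref{lemma: breakRecordGapA}) gives $\X_{N-1}(s+1)\leq \XN(s)+\rho a_N$, and hence
\begin{equation*}
\XN(s+1) - \X_{N-1}(s+1) > \bigl(\XN(s)+ca_N\bigr) - \bigl(\XN(s)+\rho a_N\bigr) = (c-\rho)a_N.
\end{equation*}

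There is no real obstacle here; the main thing to be careful about is to correctly identify, for each $j\in[N]$, the parent-offspring pair that produced particle $(j,s+1)$ and to use $\Cc_5$ to exclude the possibility that this pair coincides with another big jump.
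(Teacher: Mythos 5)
Your proposal is correct and follows essentially the same route as the paper: part (a) is identical, and for part (b) the paper likewise uses the bound from part (a) on all particles outside $\Nn_{k,s}^b(s+1)$ to conclude that the $b$th offspring of $(k,s)$ must be the surviving leader, then takes $j=N-1$. The only cosmetic difference is that you note explicitly that the bound in part (a) really applies to every \emph{offspring} (not just every survivor), which is the cleanest way to justify that the big-jump offspring survives selection.
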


\begin{proof}
Assume that $\Cc_5$ occurs and fix $k,b,s$ as in the statement.  
Let $j\in[N]\setminus \mathcal N_{k,s}^b(s+1)$ be arbitrary. Assume that $i\in[N]$ and $b_i\in \{1,2\}$ are such that $(i,s)\lesssim_{b_i} (j,s+1)$, and so $\X_j(s+1) = \X_i(s) + X_{i,b_i,s}$,
with $(i,b_i)\in(\Nonetwo)\setminus\bracing{(k,b)}$. By the definition of the event $\Cc_5$, $X_{k,b,s}$ is the only big jump at time $s$. Thus we have $X_{i,b_i,s} \leq \rho a_N$, and by  bounding the $i$th particle's position at time $s$ by the rightmost position at time $s$ we get
\begin{equation*}
\X_j(s+1) = \X_i(s) + X_{i,b_i,s} \leq \XN(s) + \rho a_N,
\end{equation*}
which completes the proof of part \eqref{lemma: breakRecordGapA}. Furthermore, if the condition in \eqref{lemma: breakRecordGapB} holds, then we also have
\begin{equation}\label{eq: XjXlc}
\X_j(s+1) \leq \XN(s) + \rho a_N < \X_k(s)+X_{k,b,s} - (c-\rho)a_N.
\end{equation}
Since \eqref{eq: XjXlc} holds for any $j\in[N]\setminus \mathcal N_{k,s}^b(s+1)$ and we are assuming $c>\rho$, we conclude that $(k,s)\lesssim_{b} (N,s+1)$, and the result follows by taking $j=N-1$ in~\eqref{eq: XjXlc}.
\end{proof}

The next lemma says that if $\Cc_3 \cap \Cc_4$ occurs then all big jumps in the time interval $[t_3,t-1]$ come from close to the leftmost particle. Our heuristics suggest this should be true, because we expect most particles to be close to the leftmost particle at a typical time. However, the proof only relies on the assumption that the events $\Cc_3$ and $\Cc_4$ occur.

\begin{lemma}\label{lemma: bigJumpLeftmost}
On the event $\Cc_3 \cap \Cc_4$,
\begin{equation*}
\X_k(s) \leq \Xone(s) +  c_1  a_N  \quad \forall (k,b,s)\in B_N^{[t_3,t-1]}.
\end{equation*}
\end{lemma}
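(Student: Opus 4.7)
The plan is to argue by contradiction: suppose there exists $(k,b,s) \in B_N^{[t_3,t-1]}$ with $\X_k(s) > \Xone(s) + c_1 a_N$, and derive a conflict with $\Cc_3$ by combining Lemma~\ref{lemma: descendants_est} and Lemma~\ref{lemma: C4}.

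First I would identify the time-$(s-\ell_N)$ ancestor of $(k,s)$, which exists because $s-\ell_N \geq t_4 \geq 0$; call it $(k_0, s-\ell_N)$. Applying Lemma~\ref{lemma: descendants_est} with $x = \X_{k_0}(s-\ell_N)$ and time increment $\ell_N$, the bound $2^{\ell_N} \geq N$ combined with $k_0 \in G_x(s-\ell_N)$ yields $|G_x(s)| = N$, so every time-$s$ particle sits at position $\geq \X_{k_0}(s-\ell_N)$. In particular $\Xone(s) \geq \X_{k_0}(s-\ell_N)$, and the contradiction hypothesis then forces $\X_k(s) > \X_{k_0}(s-\ell_N) + c_1 a_N$. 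By Lemma~\ref{lemma: C4}, the ancestral path $P_{k_0,s-\ell_N}^{k,s}$ must therefore contain at least one big jump, say $(k', b', s') \in B_N$ with $s' \in \bbracket{s-\ell_N, s-1}$; in particular $(k,s) \in \Nn_{k',s'}^{b'}(s)$ and $s-s' \leq \ell_N$.

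The contradiction will come from $\Cc_3$ applied to $(k',b',s')$. In the main case, the $b$-th offspring of $(k,s)$ survives selection at time $s+1$ with some index $k^\dagger \in [N]$; then $(k^\dagger, s+1) \in \Nn_{k',s'}^{b'}(s+1)$ and both big jumps $(k',b',s')$ and $(k,b,s)$ lie in $P_{k',s'}^{k^\dagger, s+1}$. Since $s+1 \leq s'+\ell_N+1$, the event $\Cc_3$ forces $B_N \cap P_{k',s'}^{k^\dagger, s+1} = \{(k',b',s')\}$, which contradicts $(k,b,s) \in B_N$ with $s \neq s'$.

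The main obstacle I anticipate is the auxiliary sub-case in which the $b$-th offspring of $(k,s)$ is killed at time $s+1$: the jump $(k,b,s)$ then lies on no surviving ancestral path, so $\Cc_3$ cannot be invoked on $(k,b,s)$ along any descendant of $(k',s')$. To close this sub-case I would use that non-survival forces at least $N$ other offspring to land at positions $\geq \X_k(s) + X_{k,b,s} > \Xone(s) + (c_1+\rho) a_N$, whence $\Xone(s+1) > \Xone(s) + (c_1+\rho) a_N$. Re-running the Lemma~\ref{lemma: descendants_est}/Lemma~\ref{lemma: C4} step on the time-$(s+1)$ leftmost then locates a big jump on its ancestral path in the window $\bbracket{s-\ell_N+1, s}$, which together with $(k',b',s')$ and $\Cc_3$ supplies the required contradiction.
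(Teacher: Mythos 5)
Your main line — Lemma~\ref{lemma: descendants_est} to get $\Xone(s)\ge \X_{k_0}(s-\ell_N)$, Lemma~\ref{lemma: C4} to force a big jump $(k',b',s')$ onto the ancestral path of $(k,s)$, then $\Cc_3$ applied to $(k',b',s')$ together with the surviving $b$-offspring of $(k,s)$ at time $s+1$ to obtain two big jumps on one path — is essentially the paper's proof, run in the contrapositive direction: the paper takes the time-$(s-\ell_N)$ ancestor $i_k$, asserts $B_N\cap P^{k,s}_{i_k,s-\ell_N}=\emptyset$ directly from $\Cc_3$, and then applies Lemma~\ref{lemma: C4} and Lemma~\ref{lemma: descendants_est}. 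So your ``main case'' is correct and matches the intended argument; you have also correctly noticed a point the paper passes over silently, namely that extending the path to time $s+1$ needs the $b$-offspring of $(k,s)$ to survive selection.

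The patch you propose for that sub-case does not work, however. The deduction $\Xone(s+1)>\Xone(s)+(c_1+\rho)a_N$ is fine, and Lemma~\ref{lemma: C4} does then produce a big jump $(j'',b'',s'')$ in the ancestry of the time-$(s+1)$ leftmost particle (though you should take its time-$(s-\ell_N)$ ancestor, not its time-$(s+1-\ell_N)$ ancestor: only for the former does Lemma~\ref{lemma: descendants_est} give $\Xone(s)\ge \X_{j_0}(s-\ell_N)$ and hence a displacement exceeding $c_1 a_N$; your stated window $\bbracket{s-\ell_N+1,s}$ suggests the latter). The fatal step is the last one: $(k',b',s')$ lies on the ancestral line of $(k,s)$, while $(j'',b'',s'')$ lies on the ancestral line of $(1,s+1)$, and these two particles need not be related at all. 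The event $\Cc_3$ only forbids two big jumps occurring along a single line of descent within $\ell_N$ generations; it places no constraint on two big jumps at nearby times on different lineages, so no contradiction follows. As written, the sub-case remains open (and it is not clear it can be closed from $\Cc_3\cap\Cc_4$ alone without some additional input), so the proof is incomplete at exactly the point you flagged as the obstacle.
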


\begin{proof}
Take $s\in\bbracket{t_3,t-1}$, $k\in[N]$ and $b\in \{1,2\}$, and assume that we have $X_{k,b,s} > \rho a_N$. Let $i_k = \zeta_{k,s}(s - \ell_N)$ be the time-$(s - \ell_N)$ ancestor of particle $(k,s)$ (recall \eqref{eq: zeta}). Since $(k,b,s)\in B_N$, by the definition of the event $\Cc_3$, we must have $B_N \cap P^{k,s}_{i_k,s-\ell_N}=\emptyset$. Then by Lemma~\ref{lemma: C4} we have
\begin{equation}\label{eq: XkXi}
\X_k(s) \leq \X_{i_k}(s - \ell_N) +  c_1  a_N.
\end{equation}
Furthermore, at time $s$ every particle is to the right of $\X_{i_k}(s - \ell_N)$, by Lemma~\ref{lemma: descendants_est}. This means $\X_{i_k}(s - \ell_N)\leq \X_1(s)$, and so $\X_k(s) \leq \Xone(s) +  c_1  a_N$ by~\eqref{eq: XkXi}.
\end{proof}

We will use Lemma~\ref{lemma: bigJumpLeftmost} to prove the next result, which says that if the diameter of the cloud of particles is large and a particle makes a big jump, then either it takes the lead and will be significantly ahead of the second rightmost particle, or it stays significantly behind the leader. 

\begin{corollary}\label{cor: gaps}
On the event $\bigcap_{j=2}^5 \Cc_j$, if $(k,b,s)\in B_N^{[t_3,t-1]}$ and $d(\X(s)) \geq (c_3+c_1)a_N$ then
\begin{enumerate}[label=(\alph*), ref=\alph*]
\item\label{cor: gapsA} if $X_{k,b,s} > Z_k(s)$ then $\XN(s+1) = \X_k(s) + X_{k,b,s} > \X_{N-1}(s+1) + (2c_2 - \rho)a_N$, and
\item\label{cor: gapsB} if $X_{k,b,s} \leq Z_k(s)$ then $\X_k(s) + X_{k,b,s} \leq \XN(s) - 2c_2a_N$,
\end{enumerate}
where $Z_k(s)$ and $\Cc_2,\dots,\Cc_5$ are given by \eqref{eq: Z}--\eqref{eq: C5}.
\end{corollary}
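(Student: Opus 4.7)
\textbf{Proof plan for Corollary~\ref{cor: gaps}.}

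The key observation is that the hypothesis $d(\X(s))\geq(c_3+c_1)a_N$ combined with Lemma~\ref{lemma: bigJumpLeftmost} forces the particle making the big jump to be far behind the leader. Specifically, on $\Cc_3\cap\Cc_4$, for $(k,b,s)\in B_N^{[t_3,t-1]}$ we have $\X_k(s)\leq\Xone(s)+c_1a_N$, so
\begin{equation*}
Z_k(s) = \XN(s)-\X_k(s) \geq \XN(s)-\Xone(s)-c_1a_N = d(\X(s))-c_1a_N \geq c_3 a_N.
\end{equation*}
This is the trigger we need to invoke the event $\Cc_2$.

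Next, since $Z_k(s)\geq c_3a_N$ and $(k,b,s)$ is a big jump (so $X_{k,b,s}>\rho a_N$), the event $\Cc_2$ forbids $X_{k,b,s}\in(Z_k(s)-2c_2a_N,\,Z_k(s)+2c_2a_N]$. Hence there are only two possibilities: either
\begin{equation*}
X_{k,b,s}\leq Z_k(s)-2c_2a_N \quad\text{or}\quad X_{k,b,s}>Z_k(s)+2c_2a_N.
\end{equation*}
In the first case, rearranging immediately gives $\X_k(s)+X_{k,b,s}\leq\XN(s)-2c_2a_N$; in the second, $\X_k(s)+X_{k,b,s}>\XN(s)+2c_2a_N$.

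Now I would dispatch the two parts as follows. For part~(b), the hypothesis $X_{k,b,s}\leq Z_k(s)$ rules out the second alternative above (which would give $X_{k,b,s}>Z_k(s)$), so we are in the first case and part~(b) follows directly. For part~(a), the hypothesis $X_{k,b,s}>Z_k(s)$ rules out the first alternative, so we are in the second case: $\X_k(s)+X_{k,b,s}>\XN(s)+2c_2a_N$. Since the constants~\eqref{eq: const2}--\eqref{eq: const4} give $2c_2>\rho$, I can apply Lemma~\ref{lemma: breakRecordGap}\eqref{lemma: breakRecordGapB} with $c=2c_2$ (using $\Cc_5$), which yields simultaneously that $(k,s)\lesssim_b(N,s+1)$ (so $\XN(s+1)=\X_k(s)+X_{k,b,s}$) and that $\XN(s+1)-\X_{N-1}(s+1)>(2c_2-\rho)a_N$, completing part~(a).

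There is no real obstacle here: the entire argument is a short chain of quotations from Lemmas~\ref{lemma: bigJumpLeftmost} and \ref{lemma: breakRecordGap} together with the dichotomy imposed by $\Cc_2$. The only point requiring mild care is verifying the inequality $2c_2>\rho$ needed to apply Lemma~\ref{lemma: breakRecordGap}\eqref{lemma: breakRecordGapB}, which is immediate from the standing assumptions~\eqref{eq: const2}--\eqref{eq: const4} on the hierarchy $\rho\ll c_1\ll c_2$.
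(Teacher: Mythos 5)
Your proof is correct and follows essentially the same route as the paper: both derive $Z_k(s)\ge c_3a_N$ from Lemma~\ref{lemma: bigJumpLeftmost} and the diameter hypothesis, use the dichotomy imposed by $\Cc_2$ to split into the two cases, and close part~(a) via Lemma~\ref{lemma: breakRecordGap}(\ref{lemma: breakRecordGapB}) with $2c_2>\rho$. No gaps.
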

\begin{proof}
Since $X_{k,b,s}$ is a big jump, by Lemma~\ref{lemma: bigJumpLeftmost} and the fact that $d(\X(s))=\X_N(s)-\X_1(s)\ge (c_3+c_1)a_N$,
\begin{equation*}
\X_k(s) \leq \Xone(s) +  c_1  a_N \le \XN(s) - c_3a_N.
\end{equation*}
Hence the gap between the $k$th particle and the rightmost particle is bounded below by $c_3 a_N$:
\begin{equation}\label{eq: Zk}
Z_k(s) \ge c_3a_N. 
\end{equation}
It follows that if $X_{k,b,s}>Z_k(s)$, by the definition of the event $\Cc_2$ we have
$X_{k,b,s} > Z_k(s) + 2c_2a_N$, which implies that
\begin{equation*}
X_{k,b,s} + \X_k(s) > \XN(s) + 2c_2a_N.
\end{equation*}
Since $2c_2>\rho$ by~\eqref{eq: const2} and~\eqref{eq: const4}, Lemma~\ref{lemma: breakRecordGap}(b) implies the statement of part (a). If instead $X_{k,b,s}\le Z_k(s)$, then by~\eqref{eq: Zk} and the definition of $\Cc_2$, we have $X_{k,b,s}\leq Z_k(s)-2c_2 a_N$, which completes the proof.
\end{proof}

The next result says that if the diameter of the cloud of particles is big at some time $s$, then if at time $s$ or $s-1$ a particle makes a big jump which beats the current leader, this particle becomes the new leader.
\begin{corollary}\label{cor: beatingLeader}
On the event $\bigcap_{j=2}^5 \Cc_j$, for all $s\in\bbracket{t_3+1,t-1}$,
if $d(\X(s)) \geq \frac{3}{2}c_3a_N$ then 
\begin{equation*}
s\in \Sbf_N \Longleftrightarrow s\in \hat \Sbf_N \quad \text{ and } \quad s-1\in \Sbf_N \Longleftrightarrow s-1\in \hat \Sbf_N,
\end{equation*}
where
$\Sbf_N$ and $\hat \Sbf_N$ are defined in \eqref{eq: RN} and \eqref{eq: RNhat}.
\end{corollary}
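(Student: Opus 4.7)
The strategy is to reduce each of the four implications either to a direct application of Corollary~\ref{cor: gaps}\eqref{cor: gapsA} or to a forbidden scenario ruled out by $\Cc_2$, relying throughout on the constant hierarchy~\eqref{eq: const1}--\eqref{eq: const5}; in particular $\rho,c_1\ll c_2\ll c_3$ ensures both $\tfrac{3}{2}c_3 a_N\ge (c_3+c_1)a_N$ and $\rho a_N<2c_2 a_N$.

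\textit{Biconditional at time $s$.} If $s\in\hat\Sbf_N$, let $(k,b)$ be a big jump with $X_{k,b,s}>Z_k(s)$. Since $(k,b,s)\in B_N^{[t_3,t-1]}$ and $d(\X(s))\ge\tfrac{3}{2}c_3a_N\ge(c_3+c_1)a_N$, Corollary~\ref{cor: gaps}\eqref{cor: gapsA} gives $\X_N(s+1)=\X_k(s)+X_{k,b,s}$, hence $(k,s)\lesssim_b(N,s+1)$ and $s\in\Sbf_N$. Conversely, if $s\in\Sbf_N$, fix the big jump $(k,b)$ with $(k,s)\lesssim_b(N,s+1)$; then $\X_N(s+1)=\X_k(s)+X_{k,b,s}\ge\X_N(s)$ by monotonicity, so $s\in\hat\Sbf_N$ unless $X_{k,b,s}=Z_k(s)$. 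But Lemma~\ref{lemma: bigJumpLeftmost} gives $\X_k(s)\le\X_1(s)+c_1a_N$, whence $Z_k(s)\ge d(\X(s))-c_1a_N>c_3a_N$, so $X_{k,b,s}=Z_k(s)$ would lie in the forbidden interval $(Z_k(s)-2c_2a_N,Z_k(s)+2c_2a_N]$, contradicting $\Cc_2$.

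\textit{Implication $\Sbf_N\Rightarrow\hat\Sbf_N$ at time $s-1$.} The argument is the same with $s$ replaced by $s-1$: in the equality case $\X_N(s)=\X_N(s-1)$, monotonicity $\X_1(s-1)\le\X_1(s)$ yields $d(\X(s-1))\ge d(\X(s))\ge\tfrac{3}{2}c_3a_N$, so Lemma~\ref{lemma: bigJumpLeftmost} at $s-1$ combined with $\Cc_2$ delivers the same contradiction.

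\textit{Implication $\hat\Sbf_N\Rightarrow\Sbf_N$ at time $s-1$ (main obstacle).} This is the only direction in which Corollary~\ref{cor: gaps}\eqref{cor: gapsA} requires a diameter hypothesis, $d(\X(s-1))\ge(c_3+c_1)a_N$, that is not among our assumptions. Suppose for contradiction that $s-1\in\hat\Sbf_N$ but $s-1\notin\Sbf_N$, and let $(k,b)$ be the (unique, by $\Cc_5$) big jump at time $s-1$. Since the $(k,b)$-offspring fails to be the leader at time $s$, the position $\X_N(s)$ is attained by some other offspring $(i,b')\ne(k,b)$, whose jump satisfies $X_{i,b',s-1}\le\rho a_N$ by $\Cc_5$; therefore $\X_N(s)\le\X_N(s-1)+\rho a_N$. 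Combined with $\X_1(s-1)\le\X_1(s)$, this gives
\[
d(\X(s-1))\ge d(\X(s))-\rho a_N\ge \tfrac{3}{2}c_3a_N-\rho a_N\ge (c_3+c_1)a_N.
\]
Corollary~\ref{cor: gaps}\eqref{cor: gapsA} at time $s-1$ now applies and forces $\X_N(s)=\X_k(s-1)+X_{k,b,s-1}$, so $(k,s-1)\lesssim_b(N,s)$, contradicting $s-1\notin\Sbf_N$ and completing the proof.
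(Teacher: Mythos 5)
Your proof is correct and follows essentially the same route as the paper: the same four implications, resolved via Corollary~\ref{cor: gaps}, Lemma~\ref{lemma: bigJumpLeftmost}, $\Cc_2$ and $\Cc_5$ (your inline exclusion of the equality case $X_{k,b,s}=Z_k(s)$ is just the content of Corollary~\ref{cor: gaps}(\ref{cor: gapsB}), which the paper cites directly). The only, harmless, organizational difference is in the direction $s-1\in\hat\Sbf_N\Rightarrow s-1\in\Sbf_N$: the paper splits on the size of $d(\X(s-1))$ and in the small-diameter case argues directly via Lemma~\ref{lemma: breakRecordGap}(a) that every particle not descended from the big jump lies within $\tfrac32 c_3 a_N$ of the leftmost at time $s$, whereas you argue by contradiction, using $\Cc_5$ to force $\XN(s)\le\XN(s-1)+\rho a_N$ and hence $d(\X(s-1))\ge(c_3+c_1)a_N$, after which Corollary~\ref{cor: gaps}(\ref{cor: gapsA}) applies — both rest on the same facts.
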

\begin{proof}
Take $s\in\bbracket{t_3+1,t-1}$ and suppose $d(\X(s))\ge \frac 32 c_3 a_N$.

If $s\in \Sbf_N$, then there exists $(k,b,s)\in B_N$ such that $\X_k(s) + X_{k,b,s} = \XN(s+1) \geq \XN(s)$, where we used monotonicity for the inequality. To show that $s\in \hat \Sbf_N$, we need to show that in fact $\X_k(s) + X_{k,b,s} > \XN(s)$, i.e.~the inequality is strict, but this follows from Corollary~\ref{cor: gaps}(b), which applies since $d(\X(s)) \geq \frac{3}{2}c_3a_N \geq (c_1+c_3)a_N$ by \eqref{eq: const4}.

Now suppose $s\in\hat\Sbf_N$. Since $d(\X(s)) \geq \frac{3}{2}c_3a_N \geq (c_1+c_3)a_N$, and by the definition of $\hat \Sbf_N$, the conditions of Corollary~\ref{cor: gaps}(a) hold for $(k,b,s)$, for some $(k,b)\in\Nonetwo$. Then Corollary~\ref{cor: gaps}(a) implies that $s\in \Sbf_N$, and therefore the first equivalence in the statement holds. 

If $d(\X(s-1))\geq (c_3+c_1)a_N$, then we can repeat the proof of the first equivalence to show that $s-1\in \Sbf_N \Longleftrightarrow s-1\in \hat \Sbf_N$. 

If instead $d(\X(s-1))< (c_3+c_1)a_N$ we argue as follows. Suppose $s-1\in\Sbf_N$. Then there exists $(k,b,s-1)\in B_N$ such that
\begin{equation*}
\X_k(s-1) + X_{k,b,s-1} = \XN(s) \geq \XN(s-1),
\end{equation*} 
which means $X_{k,b,s-1} \geq Z_{k}(s-1)$. Now $X_{k,b,s-1} = Z_{k}(s-1)$ is impossible because, with the assumption that $d(\X(s-1))< (c_3+c_1)a_N$, it would imply
\begin{equation*}
\XN(s) = \X_k(s-1) + Z_{k}(s-1) = \XN(s-1)< \Xone(s-1) + (c_3+c_1)a_N < \Xone(s) + \tfrac 32 c_3a_N
\end{equation*}
by monotonicity and \eqref{eq: const4}. This contradicts the assumption $d(\X(s)) \geq \frac 32 c_3 a_N$ from the statement of this corollary. Hence, we must have $X_{k,b,s-1} > Z_{k}(s-1)$, and so $s-1\in \hat\Sbf_N$.

Now suppose $s-1\in \hat \Sbf_N$, and take $(k,b,s-1)\in B_N$ such that $\X_k(s-1)+X_{k,b,s-1}>\X_N(s-1)$.
Then by Lemma~\ref{lemma: breakRecordGap}(a) and the assumption on $d(\X(s-1))$, for all $j\in [N]\setminus \mathcal N^b_{k,s-1}(s)$ we have
\begin{equation*}
\X_j(s) \leq \XN(s-1) + \rho a_N < \Xone (s-1) + (c_3+c_1 + \rho)a_N.
\end{equation*}
By monotonicity, \eqref{eq: const2} and \eqref{eq: const4} this is strictly smaller than $\Xone(s) + \tfrac{3}{2}c_3a_N$. 
Thus, at time $s$, all particles not in $\mathcal N^b_{k,s-1}(s)$ are closer than distance $\frac{3}{2}c_3a_N$ to the leftmost particle. Hence, since we assumed that $d(\X(s))\ge \frac{3}{2}c_3a_N$, we must have $(k,s-1)\lesssim_b (N,s)$, which means that $s-1\in \Sbf_N$.
\end{proof}

The last property we state before the proof of Proposition~\ref{prop: B} says the following. First, if no particle beats the leader with a big jump for a time interval of length at most $\ell_N$, then the leader's position does not change much during this time. We will use the extra condition that the diameter is not too small to prove this easily; if the diameter is too small then jumps that are ``almost big'' could complicate matters. Second, the lemma says that if the diameter becomes small at some point, then it cannot become too large within $\ell_N$ time, if there is no particle which beats the leader with a big jump.
Recall the definition of $\hat \Sbf_N$ from~\eqref{eq: RNhat}.

\begin{lemma}\label{lemma: noRecord}
On the event $\bigcap_{j=2}^5 \Cc_j$, for all $s\in\bbracket{t_3,t_1-1}$ and $\Delta s\in[ \ell_N]$, if~$s+\Delta s \leq t_1$ and $\bbracket{s,s+\Delta s-1} \subseteq \hat \Sbf_N^c$ then the following statements hold:
\begin{enumerate}[label=(\alph*), ref=\alph*]
\item If $d(\X(r)) \geq \frac{3}{2}c_3a_N$ for all $r\in\bbracket{s,s+\Delta s-1}$, then $\XN(s + \Delta s) \leq \XN(s) +  c_1  a_N$. In particular, if $\Delta s = \ell_N$ then $d(\X(s + \ell_N )) \leq  c_1  a_N$.
\item If there exists $r\in\bbracket{s,s+\Delta s-1}$ such that $d(\X(r)) \leq \frac{3}{2}c_3a_N$, then $d(\X(s+\Delta s)) \leq \tfrac{3}{2}c_3a_N + 2 c_1  a_N$.
\end{enumerate}
\end{lemma}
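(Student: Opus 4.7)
For part (a), the plan is to trace the leader at time $s+\Delta s$ back to its time-$s$ ancestor, say $(j,s)$, and argue that no big jump can appear on this path. Suppose for contradiction that $(k,b,r)\in B_N \cap P_{j,s}^{N,s+\Delta s}$ for some $r\in\bbracket{s,s+\Delta s-1}$. Since $r\in\hat\Sbf_N^c$, this big jump fails to beat the current leader, so $X_{k,b,r}\le Z_k(r)$. Because $d(\X(r))\ge \tfrac{3}{2}c_3 a_N\ge (c_3+c_1)a_N$ (using $c_1\ll c_3$ from \eqref{eq: const4}), Corollary~\ref{cor: gaps}(\ref{cor: gapsB}) upgrades this to $\X_k(r)+X_{k,b,r}\le \XN(r)-2c_2 a_N$. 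Since $\Delta s\le\ell_N$, we have $s+\Delta s\le r+\ell_N$, and Lemma~\ref{lemma: C3C4} applied to the descendant path from $(k,r)$ via its $b$-th offspring to $(N,s+\Delta s)$ gives
\begin{equation*}
\XN(s+\Delta s)\le \X_k(r)+X_{k,b,r}+c_1 a_N\le \XN(r)-2c_2 a_N+c_1 a_N.
\end{equation*}
Monotonicity $\XN(r)\le\XN(s+\Delta s)$ then yields $(2c_2-c_1)a_N\le 0$, which contradicts $c_2\gg c_1$. So the path contains no big jump, and Lemma~\ref{lemma: C4} gives $\XN(s+\Delta s)\le \X_j(s)+c_1 a_N\le \XN(s)+c_1 a_N$. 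For the ``in particular'' assertion, Lemma~\ref{lemma: descendants_est} applied with $x=\XN(s)$ and $k=\ell_N$ shows $|G_{\XN(s)}(s+\ell_N)|\ge \min(N,2^{\ell_N})=N$, so every particle at time $s+\ell_N$ sits at or to the right of $\XN(s)$; combining with the bound on $\XN(s+\ell_N)$ gives $d(\X(s+\ell_N))\le c_1 a_N$.

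For part (b), I would introduce $r^*:=\max\{r'\in\bbracket{r,s+\Delta s-1}:d(\X(r'))\le \tfrac{3}{2}c_3 a_N\}$, which exists because $r$ itself lies in the set. The key preliminary bound is that $\XN(r^*+1)\le \XN(r^*)+\rho a_N$: since $r^*\in\hat\Sbf_N^c$, any big jump at time $r^*$ lands at position $\le \XN(r^*)$, while every small jump moves by at most $\rho a_N$. If $r^*=s+\Delta s-1$, then combining this with monotonicity $\X_1(r^*+1)\ge\X_1(r^*)$ gives $d(\X(s+\Delta s))\le d(\X(r^*))+\rho a_N\le \tfrac{3}{2}c_3 a_N+\rho a_N$. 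Otherwise $r^*+1\le s+\Delta s-1$, and by the maximality of $r^*$ we have $d(\X(r'))>\tfrac{3}{2}c_3 a_N$ for all $r'\in\bbracket{r^*+1,s+\Delta s-1}$, so part (a) applies to the window $[r^*+1,s+\Delta s]$ to give $\XN(s+\Delta s)\le \XN(r^*+1)+c_1 a_N\le \XN(r^*)+\rho a_N+c_1 a_N$. Combining with monotonicity of $\X_1$ and using $\rho<c_1$ from \eqref{eq: const2}, we conclude
\begin{equation*}
d(\X(s+\Delta s))\le d(\X(r^*))+\rho a_N+c_1 a_N\le \tfrac{3}{2}c_3 a_N+2c_1 a_N.
\end{equation*}

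The hard part is the contradiction in part (a): it needs four ingredients stitched together in the right order---$\hat\Sbf_N^c$ forcing the would-be big jump below the leader, Corollary~\ref{cor: gaps}(\ref{cor: gapsB}) promoting this to a gap of at least $2c_2 a_N$, Lemma~\ref{lemma: C3C4} propagating the deficit forward over the remaining $\le\ell_N$ generations with slack only $c_1 a_N$, and monotonicity to close the loop. The constants in \eqref{eq: const1}--\eqref{eq: const5} are calibrated so that $2c_2-c_1>0$ and $\tfrac{3}{2}c_3>c_1+c_3$, which is exactly what makes the argument close.
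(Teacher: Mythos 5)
Your proof is correct and follows essentially the same route as the paper's: part (a) hinges on the chain $\hat\Sbf_N^c$ $\to$ Corollary~\ref{cor: gaps}(\ref{cor: gapsB}) $\to$ Lemma~\ref{lemma: C3C4} $\to$ monotonicity to rule out big jumps on the leader's path, and part (b) isolates the last time the diameter is small and then invokes part (a) on the remaining window. The only (harmless) deviation is in part (b), where you bound $\XN(r^*+1)\le\XN(r^*)+\rho a_N$ directly from the definition of $\hat\Sbf_N$, whereas the paper routes this step through Corollary~\ref{cor: beatingLeader}; your version is slightly more economical and equally valid.
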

\begin{proof}
First we prove part (a). Let $i,j\in[N]$ with $(i,s)\lesssim(j,s+\Delta s)$. Assume that there is a big jump on the path between $\X_i(s)$ and $\X_j(s+\Delta s)$ at time $s'\in\bbracket{s,s+\Delta s-1}$, i.e.~there exists $(k',b',s')\in B_N\cap P_{i,s}^{j,s+\Delta s}$. Since we assume $s'\in \hat \Sbf_N^c$, we have $\X_{k'}(s') + X_{k',b',s'} \leq \XN(s')$. Then since we assume $d(\X(s'))\ge \frac 32 c_3 a_N >(c_3+c_1)a_N$ by \eqref{eq: const4},  we can apply Corollary~\ref{cor: gaps}(b) to obtain
\begin{equation}\label{eq: XN2c1}
\X_{k'}(s') + X_{k',b',s'} \leq \XN(s') - 2c_2a_N.
\end{equation}
Therefore, first by Lemma~\ref{lemma: C3C4}, second by \eqref{eq: XN2c1}, and third by monotonicity and \eqref{eq: const4} we get
\begin{equation*}
\X_j(s+\Delta s) \leq \X_{k'}(s') +  X_{k',b',s'} +  c_1  a_N \leq \XN(s') - 2c_2a_N +  c_1  a_N < \XN(s+\Delta s).
\end{equation*}
Hence $j\neq N$, which means that the leader at time $s+\Delta s$ must be a particle which does not have an ancestor which made a big jump in the time interval $[s,s+\Delta s-1]$. That is, $B_N\cap P_{i,s}^{N,s+\Delta s} = \emptyset$ for all~$i\in[N]$. But then by Lemma~\ref{lemma: C4} we must have
\begin{equation*}
\XN(s+\Delta s) \leq \XN(s) +  c_1  a_N,
\end{equation*}
which shows the first statement of part (a). By Lemma~\ref{lemma: descendants_est} we also have $\Xone(s+\ell_N) \geq \XN(s)$, and the second statement of part (a) follows.

Now we prove part (b). Let $\tau_d$ denote the last time before $s+\Delta s$ when the diameter is at most~$\frac{3}{2}c_3a_N$, that is
\begin{equation*}
\tau_d = \sup\bracing{r \leq s + \Delta s:\: d(\X(r))\leq \tfrac{3}{2}c_3a_N}.
\end{equation*}
By our assumption in part (b) we have $\tau_d \geq s$.

If $\tau_d = s + \Delta s$ then we are done. Assume instead that $\tau_d<s + \Delta s$. Then we can estimate the leftmost particle position at time $s+\Delta s$ using monotonicity and the definition of $\tau_d$:
\begin{equation}\label{eq: X1spDs}
\Xone(s + \Delta s) \geq \Xone(\tau_d) \geq \XN(\tau_d) - \tfrac{3}{2}c_3a_N.
\end{equation}
To estimate the rightmost position, we first use the fact that $\tau_d\in \llbracket s,s+\Delta s-1 \rrbracket \subseteq \hat \Sbf _N^c$ and $d(\X(\tau_d+1)) > \tfrac{3}{2}c_3a_N$ by the definition of $\tau_d$. Hence, the second equivalence of Corollary~\ref{cor: beatingLeader} implies that $\tau_d\in \Sbf_N^c$; that is, no big jump takes the lead at time $\tau_d+1$. Thus, for 
some $(k,b)\in\Nonetwo$ we have
\begin{equation}\label{eq: XNtaud}
\XN(\tau_d+1) = \X_k(\tau_d) + X_{k,b,\tau_d} \leq \XN(\tau_d) + \rho a_N.
\end{equation}
Now \eqref{eq: X1spDs}, \eqref{eq: XNtaud} and \eqref{eq: const2} show that if $\tau_d = s+\Delta s - 1$ then we are done. Assume instead that~$\tau_d<s + \Delta s-1$. 
Then we can apply part (a) for the time interval $[\tau_d+1,s+\Delta s]$, because $d(\X(r))> \frac{3}{2}c_3a_N$ $\forall r\in \llbracket \tau_d+1,s+\Delta s \rrbracket$ by the definition of $\tau_d$. So by part (a) and then by~\eqref{eq: XNtaud} we have
\begin{equation}\label{eq: XNspDs}
\XN(s+\Delta s) \leq \XN(\tau_d + 1) +  c_1  a_N \leq \XN(\tau_d) + (\rho +  c_1 )a_N.
\end{equation}
Now \eqref{eq: XNspDs}, \eqref{eq: X1spDs} and \eqref{eq: const2} yield part (b).
\end{proof}

\begin{proof}[Proof of Proposition~\ref{prop: B}]

The main effort of this proof is in showing that the $\Cc_i$ events imply $\Bb_1$. So we want to see a leader tribe at time $t_1$ in which all the particles are descended from particle $(N,T)$, and are significantly to the right of all the particles not descended from particle $(N,T)$. We begin by giving an outline of how this will be proved. 

\vspace{3mm}

\noindent \textbf{Outline of proof that $\Cc_1$ to $\Cc_7$ imply $\Bb_1$}

\noindent
Assume the event $\cap _{j=1}^7 \Cc_j$ occurs. On the event $\Cc_1$ there will be a time $\tau_1\in [t_2+1,t_1]$ when the leader, particle $(N,\tau_1)$, is a distance more than $2c_3a_N$ ahead of the second rightmost (and every other) particle. Having this gap at time $\tau_1$ will ensure that the back of the population is further than $\frac{3}{2}c_3a_N$ away from the leader at all times up to $t_1$. That is, the diameter cannot be too small after time $\tau_1$, and so we will be able to apply Corollary~\ref{cor: gaps}.

It is a possibility that on the time interval $[\tau_1,t_1]$, every particle not descended from $(N,\tau_1)$ stays further than roughly $2c_2a_N$ to the left of the tribe descending from $(N,\tau_1)$. Then we will have the desired leader tribe with a gap behind it at time~$t_1$. Alternatively, the tribe of particle $(N,\tau_1)$ may be surpassed by other particles. But then, by Corollary~\ref{cor: gaps}(a), the leader must be beaten by at least roughly $2c_2a_N$. The new leader's descendants might be surpassed too, but again by at least $2c_2a_N$. Then, after the last time~$T$ when a tribe is surpassed before~$t_1$~(i.e.~the last time when a big jump takes the lead, see \eqref{eq: T}), no particle will make a big jump that gets closer to the leader tribe than~$2c_2a_N$, by Corollary~\ref{cor: gaps}(b). We will see that this implies that at time $t_1$, the leader tribe will be further away than $c_2a_N$ from all the other particles. This argument works if the particles of the tribes do not move far from the position of their ancestor which made a big jump. We have this property due to Lemma~\ref{lemma: C3C4}.

Therefore, the proof will expand on the following steps:
\begin{enumerate}[label=(\roman*), ref=\roman*]
\item The record is broken by a big jump at time $\tau_1$. Therefore time $T$, the last time  when the record is broken by a big jump before time $t_1$, is either at time $\tau_1$ or later.
	
\item The diameter is at least $\frac{3}{2}c_3a_N$ between times $\tau_1$ and $t_1$.

We will show that the back of the population stays far behind $\XN(\tau_1)$, because of the small number of big jumps compared to the number of particles. This is useful, because most of the lemmas and corollaries above will apply if the diameter is not too small.

\item At time $T$, the last time before $t_1$ when a particle takes the lead with a big jump, there will be a gap of size at least $\frac{3}{2}c_2a_N$ between the leader $(N,T)$ and the second rightmost particle $(N-1,T)$. 

This step follows by Corollary~\ref{cor: gaps}(a), which we can apply because of step (ii). If the diameter is big and the leader is beaten, then the new leader will lead by a large distance. 

\item Every other particle stays at least distance $c_2 a_N$ behind the descendants of particle $(N,T)$ until time $t_1$.

This is mainly due to steps (ii) and (iii) and Corollary~\ref{cor: gaps}(b): if the diameter is big and the leader is not beaten by a big jump, then big jumps will arrive far behind the leader. Therefore, the gap behind the leader tribe created in step~(iii) will remain until time $t_1$.

\item The leading tribe has the size required by the event $\Bb_1$, and thus the event $\Bb_1$ occurs.
\end{enumerate}

\noindent \textbf{Proof that $\Cc_1$ to $\Cc_7$ imply $\Bb_1$}

\noindent
We now give a detailed proof, following steps (i)-(v) above, that
\begin{equation}\label{eq: CB1}
\bigcap_{j=1}^7 \Cc_j \subseteq \Bb_1.
\end{equation}
Assume that $\bigcap_{j=1}^7 \Cc_j$ occurs. We first check that we have $T\in [t_2+\ceil{\delta\ell_N},t_1-\ceil{\delta\ell_N}]$ by proving the following statement.

\begin{enumerate}[label=\emph{Step (\roman*).}, ref=\roman*, itemindent=6.5mm]
\item \label{step: B1tau1T} \emph{We have $t_2 + \ceil{\delta \ell_N} < \tau_1 \leq T \leq t_1 - \ceil{\delta \ell_N}$, where $\tau_1$ and $T$ are defined in \eqref{eq: That} and \eqref{eq: T}.}
\end{enumerate}

\noindent
In order to see this, we will use the following simple property:
\begin{equation}\label{eq: XNsm1}
\X_j(s-1) \leq \X_{N-1}(s) \quad \forall s\in \N \text{ and } j\in[N].
\end{equation}
Indeed, since all jumps are non-negative, and particle $(N,s-1)$ has two offspring, there are at least two particles to the right of (or at) position $\XN(s-1)$ at time $s$. Thus $\XN(s-1)\leq \X_{N-1}(s)$, which shows \eqref{eq: XNsm1}.

By the definition of the event $\Cc_1$,
we have $\tau_1\in\bbracket{t_2+1,t_1}$. Let $(\hat{J},\hat b)\in\Nonetwo$ be such that $(\hat J,\tau_1-1)\lesssim_{\hat b}(N,\tau_1)$, and so $\XN(\tau_1) = \X_{\hat{J}}(\tau_1-1) + X_{\hat{J},\hat b,\tau_1-1}$.
It also follows from \eqref{eq: XNsm1} that $\X_{\hat{J}}(\tau_1-1)\leq\X_{N-1}(\tau_1)$. Hence the definition of $\tau_1$ in~\eqref{eq: That} implies that $X_{\hat{J},\hat b,\tau_1-1}>2c_3a_N$, 
which means that $X_{\hat{J},\hat b,\tau_1-1}$ is a big jump, and so cannot happen on the time interval $[t_2,t_2+\ceil{\delta\ell_N}]$ by the definition of $\Cc_6$. This implies the first inequality in Step~(\ref{step: B1tau1T}). We also notice that~$X_{\hat{J},\hat b,\tau_1-1}$ is a big jump which takes the lead at time $\tau_1$, that is $\tau_1-1\in \Sbf_N$ (see \eqref{eq: RN}). Then we have~$T\geq \tau_1$ by the definition of $T$ in~\eqref{eq: T}, which shows the second inequality of Step~(\ref{step: B1tau1T}). Furthermore, the definition of $T$ also shows that $T>t_1 - \ceil{\delta \ell_N}$ is not possible on~$\Cc_6$, which concludes the third inequality and the proof of Step~(\ref{step: B1tau1T}).

\vspace{3mm}

\noindent Since we now know that $T\neq 0$, particle $(N,T)$ is the last particle which broke the record with a big jump before time $t_1$. Take $(J,b^*)\in\Nonetwo$ such that $(J,T-1)\lesssim_{b^*}(N,T)$, so
\begin{equation}\label{eq: XNTXJ}
\XN(T) = \X_J(T-1) + X_{J,b^*,T-1},
\end{equation}
with $X_{J,b^*,T-1} > \rho a_N$. That is, at time $T-1$ the $J$th particle's $b^*$th offspring performed a big jump~$X_{J,b^*,T-1}$, with which it became the leader at time $T$ at position $\XN(T)$. We will show that at time $t_1$ there is a leader tribe in which every particle descends from particle $(N,T)$. Our next step towards this statement is to show that the diameter is large between times $\tau_1$ and $t_1$.

\begin{enumerate}[label=\emph{Step (\roman*).}, ref=\roman*, itemindent=7.5mm]
\setcounter{enumi}{1}
\item\label{step: B1BigJumps} \emph{We have } $d(\X(s)) \geq \tfrac{3}{2}c_3a_N$ \emph{ for all } $s \in \bbracket{\tau_1,t_1}$.
\end{enumerate}

\noindent
We prove Step~(\ref{step: B1BigJumps}) by showing that the number of particles within distance $\frac{3}{2}c_3a_N$ of the leader is strictly smaller than $N$ at all times in $\bbracket{\tau_1,t_1}$.

Let $s\in\bbracket{\tau_1,t_1}$. Consider an arbitrary particle $(i,s)$ in the population at time $s$. We first claim that if 
\begin{equation}\label{eq: PBi1}
\X_i(s) > \XN(\tau_1) - \tfrac{3}{2}c_3a_N,
\end{equation}
then particle $(i,s)$ has an ancestor which made a big jump at some time $\tilde{s}\in\llbracket \tau_1-1,s-1\rrbracket$. That is, if \eqref{eq: PBi1} holds then
\begin{equation}\label{eq: bigJumpAncestor}
B_N\cap P_{j,\tau_1-1}^{i,s} \neq \emptyset, \quad \text{for some $j\in[N]$.}
\end{equation}
To see this, we notice that
\begin{align}\label{eq: c2gap}
\X_j(\tau_1-1) \leq \X_{N-1}(\tau_1) < \XN(\tau_1) - 2c_3a_N \quad \forall j\in[N],
\end{align}
where the first inequality follows by \eqref{eq: XNsm1}, and the second from the definition  of $\tau_1$. Therefore, by~\eqref{eq: PBi1}, \eqref{eq: c2gap} and \eqref{eq: const4}, we have
\begin{align}
\X_i(s) > \X_j(\tau_1 - 1) +  c_1  a_N \quad \forall j\in[N].
\end{align} 
In particular, this holds for $j\in [N]$ such that $(j,\tau_1-1)\lesssim (i,s)$. Therefore \eqref{eq: bigJumpAncestor} must hold by Lemma~\ref{lemma: C4}, showing that our claim is true.

Thus, every particle which is to the right of $\XN(\tau_1) - \frac{3}{2}c_3a_N$ at time $s$ has an ancestor which made a big jump between times $\tau_1-1$ and $s-1$. This gives us the following bound:
\begin{align}\label{eq: bigJumpDesc}
\#\bracing{i\in[N]:\: \X_i(s) > \XN(\tau_1) - \tfrac{3}{2}c_3a_N} \leq \sum_{(l,b,r)\in B_N^{[\tau_1 - 1,s-1]}} |\Nn_{l,r}^b(s)|,
\end{align}
where $\Nn_{l,r}^b(s)$ and $B_N^{[\tau_1 - 1,s-1]}$ are defined in \eqref{eq: Nb} and \eqref{eq: BN1} respectively. On the right-hand side we sum the number of descendants of all particles which made a big jump between times~$\tau_1-1$ and $s-1$. We want to show that this is smaller than $N$, because that means that there must be at least one particle to the left of (or at) $\XN(\tau_1) - \frac{3}{2}c_3a_N$ at time $s$. 

Since $[\tau_1-1,s]\subseteq [t_2 + \ceil{\delta \ell_N},t_1]$ by Step~(\ref{step: B1tau1T}), any particle at a time in $[\tau_1-1,s-1]$ has at most $2^{t_1 - (t_2 + \ceil{\delta \ell_N})}$ descendants at time $s$. Furthermore, the number of big jumps in the time interval $[\tau_1-1,s-1]$ is at most $K$, by the definition of $\Cc_7$. Hence, by \eqref{eq: bigJumpDesc} and then since $t_1-t_2=\ell_N$,
\begin{align}\label{eq: bigJumpDesc2}
\#\bracing{i\in[N]:\: \X_i(s) > \XN(\tau_1) - \tfrac{3}{2}c_3a_N} \leq K2^{t_1 - (t_2 + \ceil{\delta \ell_N})} \leq 2KN^{1-\delta} < N,
\end{align}
by our assumption on $N$ in the statement of Proposition~\ref{prop: B}. Therefore, by \eqref{eq: bigJumpDesc2} and monotonicity we must have $\Xone(s) \leq \XN(\tau_1) - \tfrac{3}{2}c_3a_N \leq \XN(s) - \tfrac{3}{2}c_3a_N$, 
which concludes the proof of Step~(\ref{step: B1BigJumps}). 

\vspace{3mm}

\noindent
Next we show that there is a gap between the two rightmost particles at time $T$.

\begin{enumerate}[label=\emph{Step (\roman*).}, ref=\roman*, itemindent=8.5mm]
\setcounter{enumi}{2}
\item\label{step: B1Tgap} \emph{We have $\X_{N-1}(T) + \frac{3}{2}c_2a_N < \XN(T)$.}
\end{enumerate}

\noindent
Note that we have $\tau_1 \leq T$ by Step~(\ref{step: B1tau1T}). If $T = \tau_1$ then the statement of Step~(\ref{step: B1Tgap}) holds by the definition of $\tau_1$ and \eqref{eq: const4}. 

Suppose instead that $T>\tau_1$. We now check the conditions of Corollary~\ref{cor: gaps}(a). Recall from~\eqref{eq: XNTXJ} that~$X_{J,b^*,T-1}$ is a big jump. Since the particle performing the jump $X_{J,b^*,T-1}$ becomes the leader at time~$T$, we have  $X_{J,b^*,T-1} \ge Z_J(T-1)$, where $Z_J(T-1)$ is the gap between the~$J$th particle and the leader at time $T-1$. Also note that ${(J,b^*,T-1)\in B_N^{[t_2,t_1]}}$, and that by Step~(\ref{step: B1BigJumps}) and \eqref{eq: const4} we have~$d(\X(T-1)) > (c_3+c_1)a_N$.
Therefore Corollary~\ref{cor: gaps}(a) and (b) imply
\begin{equation*}
\XN(T) = \X_J(T-1) + X_{J,b^*,T-1} > \X_{N-1}(T) + (2c_2 - \rho)a_N,
\end{equation*}
which together with \eqref{eq: const2} and~\eqref{eq: const4} shows the statement of Step~(\ref{step: B1Tgap}).

\vspace{3mm}

\noindent
In Step~(\ref{step: B1t1gap}) we show that every particle which does not descend from particle $(N,T)$ is to the left of $\XN(T) - c_2a_N$ at time $t_1$.

\begin{enumerate}[label=\emph{Step (\roman*).}, ref=\roman*, itemindent=8mm]
\setcounter{enumi}{3}
\item \label{step: B1t1gap} \emph{Let $i\in[N-1]$ and $j\in[N]$. If $(i,T)\lesssim (j,t_1)$ then $\X_j(t_1) \leq \XN(T) - c_2a_N$.}
\end{enumerate}

\noindent
First we will use Lemma~\ref{lemma: noRecord}(a) to bound $\XN(t_1)$. Since $T$ is the last time when a particle took the lead with a big jump before time $t_1$, we have $\bbracket{T,t_1-1}\subseteq \Sbf_N^c$, where $\Sbf_N$ is defined in \eqref{eq: RN}. By Corollary~\ref{cor: beatingLeader} and Steps~(\ref{step: B1tau1T}) and~(\ref{step: B1BigJumps}), it follows that $\llbracket T, t_1-1 \rrbracket \subseteq \hat \Sbf^c_N$. Therefore the conditions of Lemma~\ref{lemma: noRecord}(a) hold with $s = T$ and $\Delta s = t_1 - T$. Then Lemma~\ref{lemma: noRecord}(a) yields
\begin{equation}\label{eq: XNt1XNT}
\XN(t_1) \leq \XN(T) +  c_1  a_N.
\end{equation}

Now we prove the upper bound on $\X_j(t_1)$ in the statement of Step~(\ref{step: B1t1gap}). Let us first consider the case in which there is no big jump in the path between particles $(i,T)$ and~$(j,t_1)$, i.e. $B_N\cap P_{i,T}^{j,t_1} = \emptyset$. Then, by Lemma~\ref{lemma: C4}, Step~(\ref{step: B1Tgap}) and \eqref{eq: const4} we have 
$$\X_j(t_1) \leq \X_i(T) +  c_1  a_N < \XN(T) - \tfrac{3}{2}c_2a_N +  c_1  a_N < \XN(T) - c_2a_N,$$
which shows that the statement of Step~(\ref{step: B1t1gap}) holds in this case.

Now suppose instead that there exists a big jump on the path between particles $(i,T)$ and~$(j,t_1)$, so assume we have some $(l,b,r)\in B_N\cap P_{i,T}^{j,t_1}$. We will show that, even with the big jump $X_{l,b,r}$, particle $(j,t_1)$ cannot arrive close to the leader particle $(N,t_1)$ at time $t_1$. This fact together with~\eqref{eq: XNt1XNT} will imply Step~(\ref{step: B1t1gap}).

We know that $\llbracket T, t_1-1 \rrbracket \subseteq \hat \Sbf^c_N$, and so, in particular, the leader at time $r$ is not beaten by the big jump $X_{l,b,r}$. Hence by the definition of~$Z_l(r)$ in~\eqref{eq: Z} we have ${X_{l,b,r}\leq Z_l(r)}$. Therefore, because of Steps~(\ref{step: B1tau1T}) and~(\ref{step: B1BigJumps}) and by~\eqref{eq: const4}, Corollary~\ref{cor: gaps}(b) applies, which implies
\begin{equation}\label{eq: XkiXNc1}
\X_{l}(r) + X_{l,b,r} \leq \XN(r) - 2c_2a_N.
\end{equation}
Now by Lemma~\ref{lemma: C3C4} and since $t_1-T <\ell_N$ by Step~(\ref{step: B1tau1T}), then by~\eqref{eq: XkiXNc1}, and finally by monotonicity,
\begin{align}\label{eq: XjXNt1}
\X_j(t_1) \leq \X_{l}(r) + X_{l,b,r} +  c_1  a_N \leq \XN(r) - 2c_2a_N +  c_1  a_N \leq  \XN(t_1) - 2c_2a_N +  c_1  a_N.
\end{align}
Putting \eqref{eq: XjXNt1} and \eqref{eq: XNt1XNT} together and then using \eqref{eq: const4}, we obtain
\begin{align*}
\X_j(t_1) \leq \XN(T) - 2c_2a_N + 2 c_1  a_N \leq \XN(T) - c_2 a_N,
\end{align*}	
which finishes the proof of Step~(\ref{step: B1t1gap}).

\begin{enumerate}[label=\emph{Step (\roman*).}, ref=\roman*, itemindent=7mm]
\setcounter{enumi}{4}
\item \label{step: B1conclude} \emph{The event $\Bb_1$, as defined in \eqref{eq: B1}, occurs.}
\end{enumerate} 

\noindent
Let us simplify the notation by writing $R = R_{ c_1 ,N}(t_1)$, where $R_{ c_1 ,N}(t_1)$ is given by \eqref{eq: RepsN}. To prove that $\Bb_1$ occurs, we first show that
\begin{align}\label{eq: NNTt1}
\Nn_{N,T}(t_1) =\{j\in [N]:\X_j(t_1)\ge \X_N(t_1)-c_1 a_N\}= \bracing{N-R+1,\dots,N}.
\end{align}
The second equality follows directly from the definition of $R$; we will prove the first equality.

Note that Step~(\ref{step: B1t1gap}) implies that every descendant of particle $(N,T)$ survives until time $t_1$, that is~$|\Nn_{N,T}(t_1)| = 2^{t_1-T} > 1$. Indeed, by Step~(\ref{step: B1tau1T}) and our assumption on $N$ we have $2^{t_1-T} \leq 2N^{1-\delta} < N$, thus at time $t_1$ there are at least $2^{t_1-T}$ particles to the right of (or at) position $\XN(T)$ by Lemma~\ref{lemma: descendants_est}. By Step~(\ref{step: B1t1gap}), particles not descended from particle $(N,T)$ are to the left of position $\XN(T)$ at time $t_1$. Therefore, particle $(N,T)$ must have $2^{t_1-T}$ surviving descendants at time $t_1$, since otherwise there would not be $2^{t_1-T}$ particles to the right of (or at) position $\XN(T)$. 

The above argument also implies that the leader at time $t_1$ must be a descendant of particle $(N,T)$, i.e.~$N\in\Nn_{N,T}(t_1)$. Furthermore, as all jumps are non-negative, and by \eqref{eq: XNt1XNT}, we have
\begin{align}\label{eq: XNdescinterval}
\X_k(t_1) \in [\XN(T), \XN(T)+ c_1  a_N] \quad \forall k\in \Nn_{N,T}(t_1).
\end{align}
Hence, we must have $\X_k(t_1) \geq  \XN(t_1) -  c_1  a_N$ for all $k\in\Nn_{N,T}(t_1)$. 

By Step~(\ref{step: B1t1gap}) and then by monotonicity and~\eqref{eq: const4}, 
\begin{align*}
\X_j(t_1) \leq \XN(T) - c_2a_N < \XN(t_1) - c_1 a_N \quad \forall j\in[N]\setminus \Nn_{N,T}(t_1),
\end{align*}
and~\eqref{eq: NNTt1} follows.

Next we check that 
\begin{align}\label{eq: tribeGap}
\X_{N-R}(t_1) \leq \X_{N-R+1}(t_1) - c_2a_N.
\end{align}
By \eqref{eq: NNTt1} we see that $N-R+1 \in \Nn_{N,T}(t_1)$ and $N-R \notin \Nn_{N,T}(t_1)$. Therefore, Step~(\ref{step: B1t1gap}) and \eqref{eq: XNdescinterval} imply \eqref{eq: tribeGap}. 

Finally, we need to show that
\begin{align}\label{eq: RNdelta}
R \leq 2N^{1-\delta}.
\end{align}
We have that
\begin{align*}
R =|\bracing{N-R + 1,\dots,N}| = |\Nn_{N,T}(t_1)| \leq 2^{t_1 - (t_2+\ceil{\delta \ell_N})} \leq 2N^{1-\delta},
\end{align*}
where in the second equality we used \eqref{eq: NNTt1}, and the inequality follows since $T> t_2 + \ceil{\delta \ell_N}$ by Step~(\ref{step: B1tau1T}). Therefore by Step~(\ref{step: B1tau1T}), \eqref{eq: NNTt1}, \eqref{eq: tribeGap} and \eqref{eq: RNdelta}, $\Bb_1$ occurs, which concludes Step~(\ref{step: B1conclude}).

\vspace{3mm}

\noindent This completes the proof of \eqref{eq: CB1}.

\vspace{3mm}

\noindent \textbf{Proof that $\Cc_1$ to $\Cc_7$ imply $\Bb_2$}

\noindent Recall the definition of the event $\Bb_2$ in \eqref{eq: B2}.
We now prove that 
\begin{equation}\label{eq: CB2}
\Bb_1\cap\Cc_4\cap\Cc_6\cap\Cc_7 \subseteq \Bb_2,
\end{equation}
which implies $\bigcap_{j=1}^7 \Cc_j\subseteq \Bb_2$ because of \eqref{eq: CB1}.

Assume that $\Bb_1\cap\Cc_4\cap\Cc_6\cap\Cc_7$ occurs.
Again write $R = R_{ c_1 ,N}(t_1)$, where $R_{ c_1 ,N}(t_1)$ is defined using \eqref{eq: RepsN}. Take $j\in [N-R]$ and consider particle $(j,t_1)$. Then, by the definition of the event $\Bb_1$ in~\eqref{eq: B1}, and since the leader at time $t_1$ is to the right of every particle at time $t_1$, we have
\begin{equation}\label{eq: XNjXNR}
\X_j(t_1) \leq \X_{N-R+1}(t_1) - c_2a_N \leq \XN(t_1) - c_2a_N.
\end{equation}
Now suppose that the $i$th particle at time $t$ is a descendant of particle $(j,t_1)$, i.e. $i\in \Nn_{j,t_1}(t)$. Lemma~\ref{lemma: descendants_est} implies that every particle at time $t$ is to the right of (or at) $\XN(t_1)$. Thus we have $\X_i(t) \geq \XN(t_1)$, which together with \eqref{eq: XNjXNR} and~\eqref{eq: const4} implies
\begin{align*}
\X_i(t) > \X_j(t_1) +  c_1  a_N.
\end{align*}
Thus, by Lemma~\ref{lemma: C4}, there must be a big jump in the path between particles $(j,t_1)$ and $(i,t)$; that is, we must have $B_N\cap P_{j,t_1}^{i,t} \neq \emptyset$.

Therefore we can bound the number of time-$t$ descendants of particles $(1,t_1),(2,t_1),\ldots , (N-R,t_1)$ by the number of descendants of particles which made a big jump between times $t_1$ and $t-1$:
\begin{align}\label{eq: bigJumpDesc3}
\sum_{j=1}^{N - R} |\Nn_{j,t_1}(t)|\leq \sum_{(k,b,s)\in B_N^{[t_1,t-1]}} |\Nn_{k,s}^b(t)|.
\end{align}
By the definition of the event $\Cc_6$, no particle makes a big jump in the time interval $[t_1-\ceil{\delta \ell_N},t_1+\ceil{\delta \ell_N}]$. Hence, any particle which made a big jump between times $t_1$ and $t-1$ can have at most $2^{t - (t_1+\ceil{\delta\ell_N})}$ descendants at time~$t$. Furthermore, by the definition of~$\Cc_7$, $|B_N^{[t_1,t-1]}|\le K$. Putting these observations together with~\eqref{eq: bigJumpDesc3} we obtain
\begin{align} \label{eq:bounddescbig}
\sum_{j=1}^{N - R} |\Nn_{j,t_1}(t)|\leq 2KN^{1-\delta} < N^{1-\gamma},
\end{align}
by our assumption on $N$ in the statement of the proposition. This completes the proof of \eqref{eq: CB2}.

\vspace{3mm}

\noindent \textbf{Proof that $\Cc_1$ to $\Cc_7$ imply $\Aa_1$}

\noindent Recall the definition of
$\Aa_1$ in \eqref{eq: A1}. We now complete the proof of Proposition~\ref{prop: B} by showing that 
\begin{equation}\label{eq: CA1}
\bigcap_{j=1}^{7}\Cc_j \subseteq \Aa_1.
\end{equation}
Assume $\bigcap_{j=1}^{7}\Cc_j$ occurs.
Let $i,j\in[N]$ be such that $(j,t_1) \lesssim (i,t)$. 
Assume first that $B_N\cap P_{j,t_1}^{i,t} = \emptyset$. Then, by Lemma~\ref{lemma: C4} and using the leader's position as an upper bound, we obtain
\begin{equation*}
\X_i(t) \leq \X_j(t_1) +  c_1  a_N \leq \XN(t_1) +  c_1  a_N \le \X_1(t)+c_1 a_N,
\end{equation*}
where the last inequality follows by Lemma~\ref{lemma: descendants_est}.
Thus, recalling the definition  of $L_{ c_1 ,N}(t)$ in~\eqref{eq: LepsN}, we have $i\in[L_{ c_1 ,N}(t)]$. Therefore, if 
$i>L_{c_1,N}(t)$ then we must have $B_N\cap P_{j,t_1}^{i,t} \neq \emptyset$.
It follows that
$$
N-L_{c_1,N}(t) \le \sum_{(k,b,s)\in B_N^{[t_1,t-1]}} |\mathcal N^b_{k,s}(t)|<N^{1-\gamma}
$$
by the same argument as for~\eqref{eq:bounddescbig}.
Since we took $c_1<\eta$ in~\eqref{eq: const4}, we now have $L_{\eta,N}(t)\ge N-N^{1-\gamma}$, which
finishes the proof of~\eqref{eq: CA1}. The proof of Proposition~\ref{prop: B} then follows from \eqref{eq: CB1}, \eqref{eq: CB2},~\eqref{eq: CA1} and Step~(\ref{step: B1BigJumps}).
\end{proof}

\subsection{Breaking down event $\Cc_1$}\label{sect: C1}
We have now broken down the events $\Bb_1$, $\Bb_2$ and $\Aa_1$ into simpler events $\Cc_1$ to $\Cc_7$. In Section~\ref{sect: probabilities} we will be able to show directly that the events $\Cc_2$ to $\Cc_7$ occur with high probability. However, we will need to break $\Cc_1$ down further, into simpler events that we will show occur with high probability in Section \ref{sect: probabilities}. In this section we carry out the task of breaking down $\Cc_1$, which says that a gap of size $2c_3a_N$ appears behind the rightmost particle at some point during the time interval $[t_2+1, t_1]$ (see~\eqref{eq: C1}), into simpler events. 
Recall that we assumed $t>4\ell_N$, and that the constants $\eta\in(0,1]$,  $\gamma$, $\delta$, $\rho$, $c_1,c_2,\dots,c_6$, $K$ satisfy \eqref{eq: const1}-\eqref{eq: const5}.

The first event we introduce is the same as the event $\Cc_2$ in \eqref{eq: C2}, except with larger gaps and jumps. That is, if a particle is more than $c_4a_N$ away from the leader, then it does not jump to within distance $3c_3a_N$ of the leader's position with a single big jump (recall that $c_3\ll c_4$). We let
\begin{equation}\label{eq: D1}
\Dd_1 := \bracing{\begin{array}{l}
\nexists(i,b,s)\in\Nonetwo\times\bbracket{t_3,t-1} \text{ such that } \\
X_{i,b,s}\in (Z_i(s) - 3c_3a_N, Z_i(s) + 3c_3a_N] \text{ and }Z_i(s)\geq c_4 a_N
\end{array}},
\end{equation}
where $Z_i(s)$ is the gap between the $i$th and the rightmost particle. The reason behind the definition of $\Dd_1$ is the following. Assume that a big jump beats the leader at a time when the diameter is fairly big ($>\frac{3}{2}c_4a_N$). Then the event $\Dd_1$, together with the events $\Cc_3$, $\Cc_4$ and $\Cc_5$, implies that this particle must become the new leader and it will lead by at least $(3c_3 -\rho) a_N$, which will be enough to show that $\Cc_1$ occurs. We state this as a corollary below, which we will use later on in this section.

\begin{corollary}\label{cor: gaps2}
On the event~$\Dd_1\cap \Cc_3 \cap \Cc_4\cap \Cc_5$, if $(k,b,s)\in B_N^{[t_3,t-1]}$, $d(\X(s)) \geq (c_4+c_1)a_N$ and $X_{k,b,s} > Z_k(s)$, then
\begin{equation*}
\XN(s+1) = \X_k(s) + X_{k,b,s} > \X_{N-1}(s+1) + (3c_3 - \rho)a_N,
\end{equation*}
where $Z_k(s),\Dd_1$ and $\Cc_3,\Cc_4,\Cc_5$ are given by \eqref{eq: Z}, \eqref{eq: D1} and \eqref{eq: C3}-\eqref{eq: C5} respectively, and $B_N^{[t_3,t-1]}$ is defined in \eqref{eq: BN1}.
\end{corollary}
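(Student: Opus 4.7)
The plan is to argue in direct parallel with Corollary~\ref{cor: gaps}(a), substituting the event $\Dd_1$ for $\Cc_2$ in order to upgrade the gap from $(2c_2-\rho)a_N$ to $(3c_3-\rho)a_N$, and using the stronger hypothesis $d(\X(s))\ge (c_4+c_1)a_N$ in order to be in the regime where $\Dd_1$ can be triggered.

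The first step is to locate particle $(k,s)$ near the leftmost particle. Since $(k,b,s)\in B_N^{[t_3,t-1]}$ is a big jump, Lemma~\ref{lemma: bigJumpLeftmost} (which holds on $\Cc_3\cap \Cc_4$) gives $\X_k(s)\le \Xone(s)+c_1 a_N$. Combining this with the hypothesis $d(\X(s))\ge (c_4+c_1)a_N$ yields
\begin{equation*}
Z_k(s)=\XN(s)-\X_k(s)\ge c_4 a_N,
\end{equation*}
which is exactly the regime in which $\Dd_1$ prohibits jumps from $(k,s)$ of size lying in the window $(Z_k(s)-3c_3 a_N,\, Z_k(s)+3c_3 a_N]$.

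The second step is to activate $\Dd_1$. By hypothesis $X_{k,b,s}>Z_k(s)$, so the only way $X_{k,b,s}$ can avoid the forbidden window is to overshoot it, i.e.\ $X_{k,b,s}>Z_k(s)+3c_3 a_N$. Rewriting, this says $\X_k(s)+X_{k,b,s}>\XN(s)+3c_3 a_N$.

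For the final step, note that $3c_3>\rho$ by \eqref{eq: const2} and \eqref{eq: const4}, so Lemma~\ref{lemma: breakRecordGap}(b) (which uses $\Cc_5$) applies with $c=3c_3$. It delivers simultaneously that $(k,s)\lesssim_b (N,s+1)$ (hence $\XN(s+1)=\X_k(s)+X_{k,b,s}$) and that $\XN(s+1)-\X_{N-1}(s+1)>(3c_3-\rho)a_N$, which is the conclusion. No real obstacle is anticipated: the result is a strict strengthening of Corollary~\ref{cor: gaps}(a) whose proof already contains every ingredient needed, and the only new input is replacing the pair $(\Cc_2,c_3)$ with $(\Dd_1,c_4)$ so that the wider prohibited window of $\Dd_1$ produces the wider post-jump gap of $3c_3 a_N$.
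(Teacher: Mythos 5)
Your proof is correct and is exactly the argument the paper intends: the paper's own proof of Corollary~\ref{cor: gaps2} simply states that one repeats the proof of Corollary~\ref{cor: gaps}(a) with $\Cc_2$ replaced by $\Dd_1$, $c_3$ by $c_4$ and $2c_2$ by $3c_3$, which is precisely the substitution you carry out (Lemma~\ref{lemma: bigJumpLeftmost} to get $Z_k(s)\ge c_4 a_N$, the forbidden window of $\Dd_1$ to force the overshoot, and Lemma~\ref{lemma: breakRecordGap}(b) with $c=3c_3$).
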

\begin{proof}
The statement follows by exactly the same argument as for Corollary~\ref{cor: gaps}(a), if we replace $\Cc_2$ by $\Dd_1$, $c_3$ by~$c_4$ and $2c_2$ by $3c_3$.
\end{proof}

The next two events will ensure that the record is broken in the time interval $[t_2+1,t_1]$. The first event says that there is a jump of size greater than $2c_4a_N$ in every interval of length $c_5\ell_N$ in $[t_3, t_1]$ (recall $c_4 \ll c_5$). We define
\begin{equation}\label{eq: D2}
\Dd_2 := \bracing{\forall s\in\bbracket{t_3,t_1-c_5\ell_N},\; \exists(k,b,\hat{s})\in\Nonetwo\times\bbracket{s,s+c_5\ell_N}:\: X_{k,b,\hat{s}} > 2c_4a_N}.
\end{equation}
The event $\Dd_2$ will be useful if at some point in the time interval $[t_2,t_1]$ the diameter is not too large ($\le \frac{3}{2}c_4a_N$). If $\Dd_2$ occurs then shortly after this point a jump of size larger than $2c_4a_N$ happens. We will show that this jump breaks the record, and the particle performing this jump will lead by at least $2c_3a_N$. The reason for this is that the jump size~($> 2c_4a_N$) is much greater than the preceding diameter ($\le \frac{3}{2}c_4a_N$), and that $c_3\ll c_4$.

The next event says that there will be a jump of size greater than $2c_6a_N$ between times $t_2$ and $t_2 + \ceil{\ell_N/2}$ (recall~$c_6\gg c_5$). Let
\begin{equation}\label{eq: D3}
\Dd_3 := \bracing{\exists (i,b,s)\in\Nonetwo \times \bbracket{t_2, t_2 + \ceil{\ell_N/2}}:\: X_{i,b,s} > 2c_6a_N}.
\end{equation}
The next event says that there is no jump of size greater than $c_6a_N$ shortly before time $t_2$. We let
\begin{equation}\label{eq: D4}
\Dd_4 := \bracing{\nexists (i,b,s)\in\Nonetwo\times \bbracket{t_2 - \ceil{c_5\ell_N}, t_2}:\: X_{i,b,s} > c_6a_N}
\end{equation}
(recall $c_5\ll c_6$). 
Our last event excludes jumps of size in a certain small range in a certain short time interval. The starting point of this time interval will be the first time after $t_2$ when the diameter is at most $\frac{3}{2} c_4a_N$:
\begin{equation}\label{eq: tau}
\tau_2 := \inf\bracing{s\geq t_2:\: d(\X(s)) \leq \tfrac{3}{2}c_4a_N},
\end{equation}
and we define the event
\begin{equation}\label{eq: D5}
\Dd_5 := \bracing{\begin{array}{l}
\nexists (k,b,s)\in\Nonetwo \times \bbracket{\tau_2,\tau_2 + c_5\ell_N}:\: \\ X_{k,b,s}\in (2c_4a_N,2c_4a_N + 3c_3a_N]
\end{array}}.
\end{equation}

We can now state the main result of this subsection.

\begin{prop}\label{prop: C}
Let $\eta\in(0,1]$, and assume that the constants $\gamma,\delta,\rho, c_1 ,c_2,\dots,c_6,K$ satisfy \eqref{eq: const1}-\eqref{eq: const5}. For all $N\geq 2$ sufficiently large that $\ell_N-\lceil c_5 \ell_N \rceil \ge \lceil \ell_N/2\rceil $ and $t>4\ell_N$,
\begin{align*}
\bigcap_{j=2}^7 \Cc_j \cap \bigcap_{i=1}^5 \Dd_i \subseteq \Cc_1,
\end{align*}
where $\Dd_1,\dots,\Dd_5$
are defined in \eqref{eq: D1}-\eqref{eq: D4} and \eqref{eq: D5} respectively, and $\Cc_1,\dots,\Cc_7$ are defined in~\eqref{eq: C1} and \eqref{eq: C2}-\eqref{eq: C7} respectively.
\end{prop}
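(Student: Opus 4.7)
Under the hypothesis I aim to produce a time $\tau\in\bbracket{t_2+1,t_1}$ with $\XN(\tau)>\X_{N-1}(\tau)+2c_3a_N$, forcing $\tau_1\in\bbracket{t_2+1,t_1}$ and hence $\Cc_1$. The case split is on whether $\tau_2$ from~\eqref{eq: tau} satisfies $\tau_2\le t_1-\lceil c_5\ell_N\rceil$ (Case A) or not (Case B). The hypothesis $\ell_N-\lceil c_5\ell_N\rceil\ge\lceil\ell_N/2\rceil$ ensures that in Case B the diameter exceeds $\tfrac{3}{2}c_4a_N$ throughout $\bbracket{t_2,t_2+\lceil\ell_N/2\rceil}$, the window supplied by $\Dd_3$.

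In Case A, apply $\Dd_2$ at $s=\tau_2$ to produce a jump of size larger than $2c_4a_N$ at some time in $\bbracket{\tau_2,\tau_2+c_5\ell_N}\subseteq\bbracket{t_2,t_1}$. Taking $s'$ to be the earliest such time and $(k',b')$ the corresponding pair, minimality forces every jump at a time in $\bbracket{\tau_2,s'-1}$ to have size at most $2c_4a_N$, and $\Dd_5$ then promotes the size of $X_{k',b',s'}$ to more than $2c_4a_N+3c_3a_N$. I bound $d(\X(s'))$ by a variant of Lemma~\ref{lemma: noRecord}(b) with threshold $\tfrac{3}{2}c_4$ in place of $\tfrac{3}{2}c_3$ throughout: the proof goes through verbatim after replacing the calls to Corollary~\ref{cor: gaps} and $\Cc_2$ by calls to Corollary~\ref{cor: gaps2} and $\Dd_1$ (both variants use a diameter threshold exceeding $(c_4+c_1)a_N$). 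This gives $d(\X(s'))\le\tfrac{3}{2}c_4a_N+2c_1a_N$, so by Lemma~\ref{lemma: bigJumpLeftmost} the excess $X_{k',b',s'}-Z_{k'}(s')$ exceeds $(2c_3+\rho)a_N$ by~\eqref{eq: const2}--\eqref{eq: const4}, and Lemma~\ref{lemma: breakRecordGap}(b) delivers a gap strictly larger than $2c_3a_N$ at time $s'+1\in\bbracket{t_2+1,t_1}$.

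In Case B, combining $\Dd_3$ with $\Cc_6$ produces a jump $X_{k^*,b^*,s^*}>2c_6a_N$ at some $s^*\in\bbracket{t_2+\lceil\delta\ell_N\rceil+1,t_2+\lceil\ell_N/2\rceil}\subseteq\bbracket{t_3,t-1}$. Lemma~\ref{lemma: bigJumpLeftmost} gives $Z_{k^*}(s^*)\ge d(\X(s^*))-c_1a_N>c_4a_N$, so $\Dd_1$ forbids $X_{k^*,b^*,s^*}\in(Z_{k^*}(s^*)-3c_3a_N,Z_{k^*}(s^*)+3c_3a_N]$. If $X_{k^*,b^*,s^*}>Z_{k^*}(s^*)+3c_3a_N$, Corollary~\ref{cor: gaps2} yields a gap strictly larger than $(3c_3-\rho)a_N>2c_3a_N$ at time $s^*+1$. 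The alternative $X_{k^*,b^*,s^*}\le Z_{k^*}(s^*)-3c_3a_N$ would force $Z_{k^*}(s^*)>2c_6a_N+3c_3a_N$, and hence $d(\X(s^*))>2c_6a_N+3c_3a_N$; I rule this out by an upper bound on $\XN(s^*)-\Xone(s^*)$ obtained by combining Lemma~\ref{lemma: descendants_est} (which gives $\Xone(s^*)\ge\XN(s^*-\ell_N)$) with control of $\XN(s^*)-\XN(s^*-\ell_N)$ via the threshold-adapted Lemma~\ref{lemma: noRecord}(a) applied on sub-intervals of $\bbracket{s^*-\ell_N,s^*}$ that are free of record-beating big jumps, together with $\Dd_4$ capping jumps in $\bbracket{t_2-\lceil c_5\ell_N\rceil,t_2}$ at $c_6a_N$, $\Cc_6$ excluding big jumps on $\bbracket{t_2,t_2+\lceil\delta\ell_N\rceil}$, and $\Cc_3$, $\Cc_5$, $\Cc_7$ bounding the number and spacing of big jumps contributing to the leader's ancestral path in the remaining sub-intervals.

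The main obstacle is the diameter bound at the decisive time in each case. Case A's bound is essentially the threshold-adapted Lemma~\ref{lemma: noRecord}(b), and the adaptation is a routine rerun of the original argument since $\tfrac{3}{2}c_4>(c_4+c_1)a_N>(c_3+c_1)a_N$ and $\Dd_1$ delivers a wider forbidden band ($3c_3a_N$) than $\Cc_2$ ($2c_2a_N$). Case B's bound is the principal technical hurdle: the diameter has no small-anchor time nearby, so one must trace $\XN$ backwards through an interval of length $\ell_N$ straddling $t_2$, accounting for every intervening big jump by its size (via $\Dd_4$ and $\Cc_6$) and its count (via $\Cc_7$), and then converting the bound on $\XN(s^*)-\XN(s^*-\ell_N)$ into a bound on $d(\X(s^*))$ via Lemma~\ref{lemma: descendants_est}. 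Getting a usable upper bound on this quantity is what ultimately ties all of $\Dd_3$, $\Dd_4$, $\Cc_3$--$\Cc_7$ and Lemma~\ref{lemma: noRecord} together.
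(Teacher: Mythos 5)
Your overall architecture matches the paper's: a dichotomy on whether $\tau_2$ falls before or after (roughly) $t_1-c_5\ell_N$, with $\Dd_2$ and $\Dd_5$ producing the record-breaking jump in the first case and $\Dd_3$, $\Dd_1$ and Corollary~\ref{cor: gaps2} in the second. But both of your diameter bounds --- the step you yourself identify as the crux --- have gaps. In Case A you invoke a threshold-adapted Lemma~\ref{lemma: noRecord}(b) to get $d(\X(s'))\le\tfrac32 c_4a_N+2c_1a_N$, yet Lemma~\ref{lemma: noRecord} (in any variant) carries the hypothesis $\bbracket{s,s+\Delta s-1}\subseteq\hat\Sbf_N^c$, which you never verify and which can fail here: minimality of $s'$ only caps the jumps in $\bbracket{\tau_2,s'-1}$ at $2c_4a_N$, and a jump of size in $(\rho a_N,2c_4a_N]$ is still a big jump that may beat the leader. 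The paper instead bounds the diameter at $s^*$ by decomposing each particle's path back to time $\tau_2$: either the path contains no big jump (Lemma~\ref{lemma: C4}), or it contains exactly one, of size at most $2c_4a_N$, launched from within $c_1a_N$ of the leftmost particle (Lemma~\ref{lemma: bigJumpLeftmost}, Lemma~\ref{lemma: C3C4}, $\Cc_3$); this gives $d(\X(s^*))\le(2c_4+c_2)a_N$, which suffices for the rest of your argument.

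The Case B gap is more serious. To exclude $X_{k^*,b^*,s^*}\le Z_{k^*}(s^*)-3c_3a_N$ you need $d(\X(s^*))<2c_6a_N+3c_3a_N$, and you propose to obtain it from $\Xone(s^*)\ge\XN(s^*-\ell_N)$ together with control of $\XN(s^*)-\XN(s^*-\ell_N)$ via $\Dd_4$, $\Cc_6$ and $\Cc_3,\Cc_5,\Cc_7$. But the window $\bbracket{s^*-\ell_N,\,t_2-\lceil c_5\ell_N\rceil}$, of length up to about $\ell_N/2$, carries no size control at all: $\Cc_7$ bounds only the \emph{number} of big jumps, not their sizes, and $\Cc_2$, $\Dd_1$ only forbid landing \emph{near} the leader, not far past it. A single record-beating jump of size $100\,a_N$ at a time in that window is consistent with every event in your hypothesis and destroys any such bound. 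The paper's resolution is to anchor not at $s^*-\ell_N$ but at $\tau_3$, the start of the maximal record-free run ending at $t_2$: if $\hat\Sbf_N$ misses all of $\bbracket{t_2-\lceil c_5\ell_N\rceil,\,t_1-c_5\ell_N}$ then Lemma~\ref{lemma: noRecord} already contradicts $\tau_2>t_1-c_5\ell_N$; otherwise $\tau_3-1\in\bbracket{t_2-\lceil c_5\ell_N\rceil,t_2-1}$, so $\Dd_4$ caps that one record-breaking jump at $c_6a_N$, and Lemma~\ref{lemma: noRecord}(a) forward from $\tau_3$ gives $\XN(\tilde s)\le\Xone(\tau_3-1)+(2c_1+c_6)a_N$, which the $\Dd_3$ jump then beats by monotonicity. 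Without this finer split on $\hat\Sbf_N$ your contradiction does not go through.
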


Before giving a precise proof of Proposition~\ref{prop: C}, we give an outline of the argument, which is divided into four separate cases.
Suppose $\bigcap_{j=2}^7 \Cc_j \cap \bigcap_{i=1}^5 \Dd_i$ occurs.\\

\noindent
\textbf{Case 1:} Suppose there is a time $\tau_2\in[t_2,t_1-c_5\ell_N]$ when the diameter is not too large (at most $\frac{3}{2}c_4a_N$). 
Then shortly after time $\tau_2$, there will be a jump of size larger than $2c_4a_N$, by the definition of the event $\Dd_2$. We will show that the particle making this jump breaks the record and will lead by a distance larger than $2c_3a_N$. The proof will also use the definition of the event $\Dd_5$.\\

\noindent
\textbf{Case 2(a):} Suppose the diameter is larger than $\frac{3}{2}c_4a_N$ at all times in $[t_2,t_1-c_5\ell_N]$, but the record is broken by a big jump at some point in this time interval.
Then Corollary~\ref{cor: gaps2} tells us that there will be a gap of size greater than $2c_3a_N$ behind the new record.\\

\noindent
\textbf{Case 2(b):} Suppose the diameter is larger than $\frac{3}{2}c_4a_N$ at all times in $[t_2,t_1-c_5\ell_N]$. If the record is not broken on the time interval $[t_2-\ceil{c_5\ell_N},t_1-c_5\ell_N]$,
then using Lemma~\ref{lemma: noRecord}, we can show that the diameter is less than $\frac 32 c_4  a_N$ at time $t_1-\ceil{c_5\ell_N}$, giving us a contradiction. Thus this case is impossible.\\

\noindent
\textbf{Case 2(c):} Suppose the diameter is larger than $\frac{3}{2}c_4a_N$ at all times in $[t_2,t_1-c_5\ell_N]$. Now consider the case that the record is not broken on the time interval $[t_2,t_1-c_5\ell_N]$, but is broken shortly before $t_2$, during the time interval $[t_2-\ceil{c_5\ell_N},t_2-1]$. By the definition of the event $\Dd_4$, this jump cannot be very big. Therefore, we will see that the new leader will be beaten by the first jump of size greater than $2c_6a_N$, if the record has not already been broken before that. There will be a jump of size greater than $2c_6a_N$ before time $t_2 + \ceil{\ell_N/2}$ because of the event $\Dd_3$, so the record must be broken by a big jump before time $t_1-c_5 \ell_N$. This again gives us a contradiction, meaning that Case~2(c) is also impossible.\\

We now prove Proposition~\ref{prop: C}, using cases 1, 2(a), 2(b) and 2(c) as described above.

\begin{proof}[Proof of Proposition~\ref{prop: C}]
Fix $\eta\in(0,1]$ and take constants $\gamma,\delta,\rho, c_1 ,c_2,\dots,c_6,K$ as in \eqref{eq: const1}-\eqref{eq: const5}. Let us assume that $\bigcap_{j=2}^7 \Cc_j \cap \bigcap_{i=1}^5 \Dd_i$ occurs.\\
	
\noindent	
\textbf{Case 1:} $t_2\leq\tau_2 \leq t_1-c_5 \ell_N$.\\
\noindent
In this case, by the definition of $\tau_2$ we have
\begin{equation}\label{eq: dXtau}
d(\X(\tau_2))\leq\tfrac{3}{2}c_4a_N.
\end{equation}
Let us now consider the first jump of size greater than $2c_4a_N$ after time $\tau_2$; that is, let
\begin{align}\label{eq: s*}
s^* = \inf\bracing{s \geq \tau_2: \: \exists (k,b)\in\Nonetwo \text{ such that }X_{k,b,s} > 2c_4a_N}
\in \llbracket \tau_2 , \tau_2 +c_5 \ell_N \rrbracket 
\end{align}
by the definition of the event $\Dd_2$ in~\eqref{eq: D2}.
Take $(k^*,b^*)\in\Nonetwo$ such that $X_{k^*,b^*,s^*} > 2c_4a_N$ (there is a unique choice of the pair $(k^*,b^*)$ by the definition of the event~$\Cc_5$).
We will show that the jump $X_{k^*,b^*,s^*}$ creates a gap of size larger than $2c_3a_N$ behind the leader. We do this in two steps. First we show that the diameter is not too large right before the jump $X_{k^*,b^*,s^*}$ occurs; then we show that a gap is created.

\begin{enumerate}[label=(\roman*), ref=\roman*]
\item We claim that
\begin{equation}\label{eq: dXs1Case1}
d(\X(s^*)) \leq 2c_4 a_N + c_2 a_N.
\end{equation} 
Now we prove the claim. 
By~\eqref{eq: dXtau} we can assume $s^*>\tau_2$.
Let $j\in [N]$ be arbitrary, and then take $i\in [N]$ such that $(i,\tau_2)\lesssim (j,s^*)$. We will show that particle $(j,s^*)$ is within distance $(2c_4+c_2)a_N$ of the leftmost particle at time $s^*$. We consider two cases, depending on whether there is a big jump on the path between $\X_i(\tau_2)$ and $\X_j(s^*)$.
\begin{itemize}
\item If $B_N\cap P_{i,\tau_2}^{j,s^*} = \emptyset$, then by Lemma~\ref{lemma: C4}, \eqref{eq: dXtau} and monotonicity,
\begin{align*}
\X_j(s^*)  \leq \X_i(\tau_2) +  c_1  a_N  &
\leq \Xone(\tau_2) + \tfrac{3}{2}c_4a_N +  c_1  a_N \leq \Xone(s^*) + \tfrac{3}{2}c_4a_N +  c_1  a_N.\numberthis\label{eq: XjX1Case1a}
\end{align*}

\item  If $B_N\cap P_{i,\tau_2}^{j,s^*}\neq \emptyset$, then take $(k',b',s')\in B_N\cap P_{i,\tau_2}^{j,s^*}$. Then $\X_{k'}(s')$ is the position of the parent of the particle that makes the jump $X_{k',b',s'}$. Since (by~\eqref{eq: s*}) $X_{k^*,b^*,s^*}$ is the first jump of size greater than $2c_4a_N$ after time~$\tau_2$, and since $s'<s^*$, we have $X_{k',b',s'} \leq 2c_4a_N$. 
Then since $s^*-s'\le s^* -\tau_2 \le c_5 \ell_N$, by Lemma~\ref{lemma: C3C4} we have
\begin{equation*}
\X_j(s^*) \leq \X_{k'}(s') + X_{k',b',s'} +  c_1  a_N \leq \X_{k'}(s') + 2c_4a_N + c_1  a_N.
\end{equation*}
Now Lemma~\ref{lemma: bigJumpLeftmost} and monotonicity imply that this is at most
\begin{equation}
\Xone(s') + 2c_4a_N + 2 c_1  a_N \le \Xone(s^*) + 2c_4a_N + 2 c_1  a_N.\label{eq: XjCase1b}
\end{equation}
\end{itemize} 
By~\eqref{eq: XjX1Case1a},~\eqref{eq: XjCase1b} and our choice of constants in \eqref{eq: const4}, we conclude that for any particle position $\X_j(s^*)$ in the population at time $s^*$, $\X_j(s^*) \leq \Xone(s^*) + 2c_4a_N + c_2a_N$, which implies \eqref{eq: dXs1Case1}.

\item 
We claim that
\begin{equation} \label{eq:propCstepii} 
\X_{N-1}(s^*+1) + 2c_3a_N < \XN(s^*+1).
\end{equation}
By the definition of $(k^*,b^*,s^*)$, we have $X_{k^*,b^*,s^*} > 2c_4a_N$, and we also know~$X_{k^*,b^*,s^*}\notin(2c_4a_N, 2c_4a_N+3c_3a_N]$ by the definition of the event $\Dd_5$, because $s^*\in\bbracket{\tau_2,\tau_2+c_5\ell_N}$. Therefore we have
\begin{equation}\label{eq: Xks1}
X_{k^*,b^*,s^*} > 2c_4a_N+3c_3a_N.
\end{equation}
Then by~\eqref{eq: Xks1} and \eqref{eq: dXs1Case1},
\begin{equation}\label{eq: XlXNs1}
 \X_{k^*}(s^*) + X_{k^*,b^*,s^*} > \Xone(s^*) + (2c_4 + 3c_3)a_N \geq \XN(s^*) + (3c_3-c_2)a_N.
\end{equation}
Note that $3c_3-c_2>\rho$ by \eqref{eq: const2}-\eqref{eq: const4}. Hence by~\eqref{eq: Xks1}, \eqref{eq: XlXNs1} and Lemma~\ref{lemma: breakRecordGap}(b), we have $(k^*,s^*)\lesssim_{b^*}(N,s^*+1)$ and
\begin{equation*}
\XN(s^*+1) > \X_{N-1}(s^* + 1) + (3c_3 - c_2 - \rho)a_N,
\end{equation*}
which is larger than $\X_{N-1}(s^* + 1) + 2c_3a_N$ by \eqref{eq: const2}-\eqref{eq: const4}.
This finishes the proof of~\eqref{eq:propCstepii}.
\end{enumerate}

\noindent Recall from~\eqref{eq: s*} that $s^*\in\bbracket{\tau_2,\tau_2+c_5\ell_N}$. Furthermore, event $\Cc_6$ tells us that $s^*\notin [ t_1 - \ceil{\delta\ell_N},t_1]$. Therefore, by the assumption of Case 1 that $\tau_2 \in [t_2, t_1-c_5 \ell_N]$,  we conclude $t_2+1 \leq s^* + 1 \leq t_1$, which together with~\eqref{eq:propCstepii} shows that $\Cc_1$ occurs. We conclude that Proposition~\ref{prop: C} holds in Case 1.\\




\noindent
\textbf{Case 2(a):} $\tau_2 > t_1 - c_5\ell_N$ and $[t_2,t_1 - c_5\ell_N] \cap \hat \Sbf_N \neq \emptyset$, where $\hat \Sbf_N$ is defined in \eqref{eq: RNhat}.

\noindent This means that there exists $(\hat{k},\hat{b},\hat{s})\in B_N^{[t_2,t_1 - c_5\ell_N]}$ with $X_{\hat{k},\hat{b},\hat{s}} > Z_{\hat k}(\hat s)$ (recall \eqref{eq: Z}). Since $\tau_2 > t_1 - c_5\ell_N$, we have $d(\X(\hat s)) > \frac{3}{2}c_4a_N$. Then by~\eqref{eq: const4}, we can apply Corollary~\ref{cor: gaps2} to obtain
\begin{align*}
\XN(\hat s + 1) = \X_{\hat k}(\hat s) + X_{\hat{k},\hat{b},\hat{s}} > \X_{N-1}(\hat s+1) + (3c_3 - \rho)a_N.
\end{align*}
By our choice of constants in \eqref{eq: const2}-\eqref{eq: const4}, and because $\hat s + 1\in\bbracket{t_2+1,t_1}$, this shows that~$\Cc_1$ occurs. Therefore we are done with the proof of Proposition~\ref{prop: C} in Case 2(a).\\

\noindent
\textbf{Case 2(b): } $\tau_2 > t_1 - c_5\ell_N$ and $[t_2-\lceil c_5\ell_N\rceil,t_1 -  c_5\ell_N] \cap \hat \Sbf_N = \emptyset$.

\noindent
We will apply Lemma~\ref{lemma: noRecord} with $s = t_2 - \lceil c_5\ell_N\rceil$ and $\Delta s = \ell_N$. By assumption we have $[s,s+\Delta s-1]\subseteq \hat \Sbf_N^c$, and therefore applying either part (a) or part (b) of Lemma~\ref{lemma: noRecord} as appropriate, we have
\begin{equation*}
d(\X(s+\Delta s)) = d(\X(t_1-\lceil c_5 \ell_N\rceil)) \leq  \max\big\{c_1 a_N, \tfrac{3}{2} c_3 a_N + 2 c_1 a_N\big\}
\end{equation*}
which is smaller than $\tfrac{3}{2}c_4a_N$ by~\eqref{eq: const4}, contradicting the assumption that $\tau_2 > t_1 - c_5\ell_N$. This shows that Case 2(b) cannot occur.\\

\noindent
\textbf{Case 2(c): } $\tau_2 > t_1 - c_5\ell_N$ and $[t_2,t_1 - c_5\ell_N ] \cap \hat \Sbf_N = \emptyset$, but $[t_2-\lceil c_5\ell_N\rceil,t_2 - 1 ] \cap \hat \Sbf_N \neq \emptyset$.

\noindent
Define
\begin{equation}\label{eq: T2}
\tau_3 :=\inf \big\{s\leq t_2 :\bbracket{s,t_2} \subseteq \hat \Sbf_N^c\big\} \in (t_2-\lceil c_5\ell_N\rceil,t_2].
\end{equation}
Suppose, aiming for a contradiction, that there exists~$r\in \bbracket{{\tau_3},{t_2}}$ such that $d(\X(r)) \leq \frac{3}{2}c_3a_N$. Then since $\llbracket \tau_3,t_2 \rrbracket \subseteq \hat \Sbf_N^c$, Lemma~\ref{lemma: noRecord}(b) applies and says that~$d(\X(t_2))\leq \frac{3}{2}c_3a_N + 2 c_1  a_N$. By~\eqref{eq: const4}, this contradicts the assumption that $\tau_2>t_1-c_5\ell_N$. Thus (by~\eqref{eq: const4} again for $r>t_2$) we must have
\begin{align}\label{eq: dXr}
d(\X(r)) \geq \tfrac{3}{2}c_3a_N \quad \forall r\in \llbracket \tau_3,t_1-c_5\ell_N \rrbracket .
\end{align}
Now note that $\tau_3 - 1 \in \hat \Sbf_N$. Then by~\eqref{eq: dXr}, the second equivalence in Corollary~\ref{cor: beatingLeader} implies that in fact $\tau_3 - 1 \in \Sbf_N$. Hence, by the definition of $\Sbf_N$ in~\eqref{eq: RN}, there exists $(k,b)\in\Nonetwo$ such that
\begin{equation}\label{eq: XNtau3}
\XN(\tau_3) = \X_k(\tau_3 - 1) + X_{k,b,\tau_3 -1},
\end{equation}
where $X_{k,b,\tau_3 -1} > \rho a_N$. Now Lemma~\ref{lemma: bigJumpLeftmost} provides a bound on $\X_k(\tau_3 - 1)$, and the definition of $\Dd_4$ together with the fact that $\tau_3 - 1 \in [ t_2 - \ceil{c_5\ell_N},t_2]$ gives us a bound on $X_{k,b,\tau_3-1}$, so that we obtain
\begin{equation}\label{eq: XNT2}
\XN(\tau_3) \leq \Xone(\tau_3 - 1) + ( c_1  + c_6)a_N.
\end{equation}

Now, on the event $\Dd_3$, there exists $(\tilde i,\tilde b,\tilde s)\in [N]\times\{1,2\}\times\llbracket t_2,t_2+\lceil \ell_N/2\rceil\rrbracket$ such that 
\begin{equation}\label{eq: stilde}
X_{\tilde i,\tilde b,\tilde s}>2c_6 a_N>\rho a_N
\end{equation}
by~\eqref{eq: const2}-\eqref{eq: const4}.
We show that the particle performing this big jump beats the leader at time $\tilde s$. By our assumption that $\ell_N-\lceil c_5 \ell_N \rceil \ge \lceil \ell_N/2\rceil $ and by~\eqref{eq: T2}, we have $\bbracket{\tau_3,\tilde s} \subseteq \hat \Sbf_N^c$ and $\tilde s - \tau_3 \leq \ell_N$. Therefore, by~\eqref{eq: dXr} we can apply Lemma~\ref{lemma: noRecord}(a) with $s=\tau_3$ and $\Delta s=\tilde s - \tau_3$, and then by \eqref{eq: XNT2} we have 
\begin{equation}\label{eq: XNT3}
\XN(\tilde s) \le \XN(\tau_3)+c_1 a_N \le \X_1(\tau_3-1) + (2c_1 + c_6)a_N.
\end{equation}
By~\eqref{eq: const4}, it follows that
\begin{equation*}
\XN(\tilde{s}) < \Xone(\tau_3-1) + 2c_6a_N  <  \Xone(\tilde{s}) + X_{\tilde{i},\tilde{b},\tilde{s}} \leq \X_{\tilde{i}}(\tilde{s}) + X_{\tilde{i},\tilde{b},\tilde{s}},
\end{equation*}
where in the second inequality we use monotonicity and~\eqref{eq: stilde}. Therefore, by the assumptions that $\tilde s\in\llbracket t_2,t_2+\lceil \ell_N/2\rceil\rrbracket$ and $\ell_N-\lceil c_5 \ell_N \rceil \ge \lceil \ell_N/2\rceil $, and by the definition of $\hat \Sbf_N$ in~\eqref{eq: RNhat}, we have ${\tilde{s}\in \hat \Sbf_N\cap[t_2,t_1-c_5\ell_N]}$, which contradicts the assumption of Case 2(c).

We have now shown that if $\bigcap_{j=2}^7 \Cc_j \cap \bigcap_{i=1}^5 \Dd_i $ occurs then Cases 2(b) and 2(c) are impossible, whereas Cases 1 and 2(a) imply that $\Cc_1$ must occur. This concludes the proof of Proposition~\ref{prop: C}.
\end{proof}

\section{Probabilities of the events from the deterministic argument}\label{sect: probabilities}
In the deterministic argument in Section~\ref{sect: deterministic} we have provided a strategy which ensures that the events $\Aa_1$ and $\Aa_3$ occur. In this section we check that the events $\Cc_2$ to $\Cc_7$ and $\Dd_1$ to $\Dd_5$ which make up this strategy all occur with high probability, and use this to finish the proof of Proposition~\ref{prop: A1A3}.

When bounding the probabilities of these events, it will be useful to consider branching random walks (BRWs) without selection, where at each time step all particles have two offspring, the offspring particles make i.i.d.~jumps from their parents' locations, and every offspring particle survives. Below we describe a construction of the $N$-BRW from $N$ independent BRWs, which will allow us to consider our events on the probability space on which the BRWs are defined.
(A similar construction was used in~\cite{Berard2010}.)

\subsection{Construction of the $N$-BRW from $N$ independent BRWs}\label{sect: BRW}
Consider a binary tree with the following labelling. Let 
\begin{align*}
\U_0 := \bigcup_{n=0}^\infty \bracing{1,2}^n,
\end{align*}
and for convenience we write e.g. $121$ instead of $(1,2,1)$. Then the root of the binary tree has label $\emptyset$, and for all $u\in\U_0$ the two children of vertex $u$ have labels $u1$ and $u2$. We will use the partial order $\preceq$ on the set $\U_0$; we write $u\preceq v$ if either $u=v$ or the vertex with label $u$ is an ancestor of the vertex with label $v$ in the binary tree. We also write $u\prec v$ if $u\preceq v$ and $u\neq v$.

The particles of the $N$ independent BRWs will have labels from the set $[N]\times\U_0$, and we have a lexicographical order on the set of labels. We also let $\U := \U_0\setminus\bracing{\emptyset}$. The jumps of the BRWs will be given by random variables $(Y_{j,u})_{j\in[N], u\in\U}$, which are i.i.d.~with common law given by \eqref{eq: poly_tail}.

The $N$ initial particles of the $N$ independent BRWs are labelled with the pairs $(j,\emptyset)$ with $j\in[N]$. For each $j\in[N]$, we let $\Y_j(\emptyset)\in\R$ be the initial location of particle $(j,\emptyset)$. Then, at each time step $n\in \N_0$, each particle $(j,u)$ with $j\in [N]$ and $u\in \{1,2\}^n$ has two offspring labelled $(j,u1)$ and $(j,u2)$, which make jumps $Y_{j,u1}$, $Y_{j,u2}$ from the location $\Y_j(u)$. The locations of the offspring particles $(j,u1)$ and $(j,u2)$ will be $\Y_j(u1) = \Y_j(u) + Y_{j,u1}$ and $\Y_j(u2) = \Y_j(u) + Y_{j,u2}$. Note that for $u\prec v$, the path between particles $(j,u)$ and $(j,v)$ is given by the jumps $Y_{j,w}$ with $u\prec w \preceq v$, i.e.~ $\Y_j(v) - \Y_j(u) = \sum_{u \prec w \preceq v} Y_{j,w}$.

Now we construct the $N$-BRW by defining the surviving set of particles for each time $n\in\Nzero$ as the $N$-element set $H_n \subseteq [N]\times \bracing{1,2}^n$, constructed iteratively as follows. Let $H_0 := \bracing{(1,\emptyset),\dots,(N,\emptyset)}$. Given $H_n$ for some $n\in\N_0$, we let $H'_n$ denote the set of offspring of the particles in the set $H_n$:
\begin{align*}
H'_n := \bigcup_{(j,u)\in H_n}\bracing{(j,u1),(j,u2)}.
\end{align*}
Then $H_{n+1}\subseteq H_n'$ consists of the particles with the $N$ largest values in the collection $(\Y_j(u))_{(j,u)\in H_n'}$, where ties are broken based on the lexicographical order of the labels. In this way an $N$-BRW is constructed from the initial configuration $(\Y_j(\emptyset))_{j\in[N]}$ and the jumps $(Y_{j,u})_{j\in [N], u\in \U}$.

For $n\in \N$, we let $\mathcal F '_n$ denote the $\sigma$-algebra generated by $(Y_{j,u})_{j\in [N], u \in \cup_{m=1}^n \{1,2\}^m}$. Note that $H_n$ is $\mathcal F'_n$-measurable for each $n$.

Returning to our original notation in Section~\ref{sect: NBRWdef}, we can say the following. For all $n\in\Nzero$, let $\X(n)$ denote the ordered set which contains the values $(\Y_j(u))_{(j,u)\in H_n}$ in ascending order:
\begin{align}\label{eq: XnY}
\X(n) = \bracing{\Xone(n)\leq\dots\leq\XN(n)} := \bracing{\Y_{j_1}(u_1)\leq\dots\leq\Y_{j_N}(u_N)},
\end{align}
where $H_n=\{(j_i,u_i):i\in [N]\}$, and again ties are broken based on the lexicographical order of the labels. Then we define the map $\sigma$ which associates the pair $(i,n)\in [N]\times \Nzero$ with particle $(j_i,u_i)\in H_n$, where  $\Y_{j_i}(u_i)$ has the $i$th position in the ordered set $\X(n)$. That is, for $(i,n)\in [N]\times \Nzero$ we let
\begin{equation}\label{eq: sigma}
\sigma(i,n) = (j_i,u_i)\in H_n \subset [N]\times\U_0,
\end{equation}
where $(j_i,u_i)$ is as in \eqref{eq: XnY}. The jumps in our original notation are then given by
\begin{equation}\label{eq: XY}
X_{i,1,n} := Y_{j_i,u_i1} \quad \text{ and } \quad X_{i,2,n} := Y_{j_i,u_i2},
\end{equation}
if $\sigma(i,n) = (j_i,u_i)$. 

Finally, recall that we introduced the partial order $\lesssim$ in \eqref{eq: ancestor} in Section~\ref{sect: notation} to denote that two particles are related in the $N$-BRW. This partial order corresponds to the partial order $\preceq$ in the $N$ independent BRWs as follows. For all $n,k\in\Nzero$ and $i_0,i_k\in[N]$, we have $(i_0,n) \lesssim (i_k,n+k)$ if and only if 
for some $j\in[N]$ and $u,v\in\U_0$, we have $\sigma(i_0,n) = (j,u)$, $\sigma(i_k,n+k) = (j,v)$, and $u\preceq v$. Furthermore, for $b\in\bracing{1,2}$ we have $(i_0,n)\lesssim_b (i_k,n+k)$ if and only if the above holds and additionally $k\geq 1$ and $ub \preceq v$.

Now we can consider the $N$-BRW constructed from $N$ independent BRWs with the notation introduced in Sections~\ref{sect: NBRWdef} and \ref{sect: notation}. It follows from our construction that for any path in the $N$-BRW, there is a path in one of the $N$ independent BRWs that consists of the same sequence of jumps as the path in the $N$-BRW. We state and prove this simple property below. Recall the notation $P_{i_1,n_1}^{i_2,n_2}$ from \eqref{eq: P}.

\begin{lemma}\label{lemma: BRWpath}
For all $k\in\N$, $i_0,i_k\in[N]$ and $n\in\Nzero$, if $(i_0,n)\lesssim (i_k,n+k)$ with $P_{i_0,n}^{i_k,n+k}=\{(i_l,b_l,n+l):l\in \{0,\ldots,k-1\}\}$, then there exists $j\in [N]$ and $(u_l)_{l=0}^k \subseteq \U_0$ such that
\begin{enumerate}[label=(\arabic*), ref=\arabic*]
	\item $(j,u_l) \in H_{n+l}$, for all $l \in \bracing{0,\dots,k}$,
	\item $u_l b_l\preceq u_k$, for all $l \in \bracing{0,\dots,k-1}$, and
	\item $X_{i_l,b_l,n+l} = Y_{j,u_lb_l}$, for all $l \in \bracing{0,\dots,k-1}$.
\end{enumerate}
\end{lemma}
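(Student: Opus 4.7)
The strategy will be to read off the labels $(j_l, u_l) := \sigma(i_l, n+l)$ from the correspondence between the $N$-BRW and the $N$ independent BRWs, and show that the BRW index $j_l$ is the same for every $l \in \{0, \ldots, k\}$. Once this common $j$ is identified, properties (1), (2), and (3) will each follow essentially from the definitions in the paragraph immediately preceding the lemma.

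More precisely, I would proceed as follows. For each $l \in \{0, \ldots, k\}$, let $(j_l, u_l) := \sigma(i_l, n+l)$; by \eqref{eq: sigma} we have $(j_l, u_l) \in H_{n+l}$. Since $P_{i_0,n}^{i_k,n+k}$ is a valid path, $(i_l, n+l) \lesssim (i_k, n+k)$ for every $l \in \{0, \ldots, k\}$. The correspondence between $\lesssim$ and $\preceq$ stated just after \eqref{eq: XY} then yields, for each $l$, an index $j^{(l)} \in [N]$ and labels $u',v' \in \U_0$ with $\sigma(i_l, n+l) = (j^{(l)}, u')$, $\sigma(i_k, n+k) = (j^{(l)}, v')$, and $u' \preceq v'$. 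Because $\sigma$ is a function, $j^{(l)} = j_k =: j$ and $u' = u_l$, $v' = u_k$, so $j_l = j$ for every $l$ and $u_l \preceq u_k$. This gives property (1).

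For property (2), I would apply the $\lesssim_b$ version of the same correspondence to each relation $(i_l, n+l) \lesssim_{b_l} (i_{l+1}, n+l+1)$: this yields $u_l b_l \preceq u_{l+1}$, and chaining with $u_{l+1} \preceq u_k$ (already established) gives $u_l b_l \preceq u_k$. Property (3) is then immediate from \eqref{eq: XY}, since $\sigma(i_l, n+l) = (j, u_l)$ forces $X_{i_l, b_l, n+l} = Y_{j, u_l b_l}$.

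There is no real obstacle here: the lemma is essentially a translation between the two labelling schemes, and all three conclusions reduce to one application of the $\lesssim$/$\preceq$ correspondence per particle on the path, together with the fact that $\sigma$ is single-valued so the BRW index $j$ is forced to be consistent along the path.
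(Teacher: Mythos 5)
Your proposal is correct and follows essentially the same route as the paper: both translate the path into the independent-BRW labelling via $\sigma$ and the stated correspondence between $\lesssim$, $\lesssim_b$ and $\preceq$, with the common index $j$ forced by $\sigma(i_k,n+k)$. The only cosmetic difference is that the paper obtains $u_l b_l \preceq u_k$ in one step by applying the $\lesssim_{b_l}$ correspondence directly to the pair $(i_l,n+l)$, $(i_k,n+k)$ (which is exactly what membership in $P_{i_0,n}^{i_k,n+k}$ provides), whereas you chain through consecutive particles; both are valid.
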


\begin{proof}
Take $(i_l,b_l,n+l) \in P_{i_0,n}^{i_k,n+k}$ (with $l\in\bracing{0,\dots,k-1}$). Then $(i_l,n + l) \lesssim_{b_l} (i_k,n+k)$. Thus, there exist $j\in[N]$ and $u_l,u_k\in\U_0$ such that $\sigma(i_l,n+l) = (j,u_l)$, $\sigma(i_k,n+k) = (j,u_k)$, and $u_l b_l\preceq u_k$. This implies $X_{i_l,b_l,n+l} = Y_{j,u_lb_l}$ (see \eqref{eq: XY}) and also $(j,u_l)\in H_{n+l}$ and $(j,u_k)\in H_{n+k}$ by the definition \eqref{eq: sigma} of $\sigma$. Since $(i_l,b_l,n+l) \in P_{i_0,n}^{i_k,n+k}$ was arbitrary, the result follows.
\end{proof}

\subsection{Paths with regularly varying jump distribution}

One of the most important components of the deterministic argument in Section~\ref{sect: deterministic} is that paths cannot move very far without big jumps; this is the meaning of the event $\Cc_4$ defined in \eqref{eq: C4}. Corollary \ref{cor: path} is the main result of this section and will be used to bound from below the probability that the event $\Cc_4$ occurs.

As in~\cite{Berard2014}, we use Potter's bounds to give useful estimates on the regularly varying function $h$ (with index $\alpha$) defined in~\eqref{eq: poly_tail}. We will use the following elementary consequence of Potter's bounds.

\begin{lemma}\label{lemma: Potter}
For $\epsilon>0$, there exist $B(\epsilon)>1$ and $C_1(\epsilon), C_2(\epsilon)>0$ such that 
\begin{equation*}
\frac{1}{h(x)} \leq C_1 x^{\epsilon - \alpha} \quad \text{ and } \quad h(x) \leq C_2 x^{\alpha + \epsilon} \quad
\forall x \geq B.
\end{equation*}	
\end{lemma}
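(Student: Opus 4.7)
The plan is to derive both bounds from the classical Potter bound on slowly varying functions. Since $h$ is regularly varying with index $\alpha$, we may write $h(x) = x^\alpha L(x)$ where $L(x) := h(x)/x^\alpha$ is slowly varying at infinity (this is immediate from~\eqref{eq: regvar}). The standard form of Potter's theorem that I would quote says: for any slowly varying $L$ and any $\delta>0$ and $A>1$, there exists $X_0 = X_0(\delta,A) > 1$ such that for all $x,y \geq X_0$,
\begin{equation*}
\frac{1}{A}\min\!\left(\frac{x}{y},\frac{y}{x}\right)^{\delta} \leq \frac{L(x)}{L(y)} \leq A\,\max\!\left(\frac{x}{y},\frac{y}{x}\right)^{\delta}.
\end{equation*}

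First I would fix $\epsilon > 0$ and apply the theorem with $\delta = \epsilon$ and $A = 2$, and specialise to $y = X_0$. For $x \geq X_0$ this gives
\begin{equation*}
\tfrac{1}{2} L(X_0)\, X_0^{\,\epsilon}\, x^{-\epsilon} \leq L(x) \leq 2 L(X_0)\, X_0^{-\epsilon}\, x^{\epsilon}.
\end{equation*}
Multiplying through by $x^\alpha$ and taking reciprocals respectively, this immediately yields
\begin{equation*}
h(x) = x^\alpha L(x) \leq \bigl(2 L(X_0) X_0^{-\epsilon}\bigr) x^{\alpha+\epsilon},
\qquad
\frac{1}{h(x)} = \frac{x^{-\alpha}}{L(x)} \leq \bigl(2 L(X_0)^{-1} X_0^{-\epsilon}\bigr) x^{\epsilon-\alpha},
\end{equation*}
valid for $x \geq X_0$. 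Setting $B(\epsilon) := X_0$, $C_1(\epsilon) := 2 L(X_0)^{-1} X_0^{-\epsilon}$ and $C_2(\epsilon) := 2 L(X_0) X_0^{-\epsilon}$ gives the claimed bounds. Note that $L(X_0) \in (0,\infty)$ because $L(X_0) = h(X_0) X_0^{-\alpha}$, and $h$ takes values in $(0,\infty)$ for $x$ large enough by~\eqref{eq: poly_tail} together with $\prob(X \geq 0) = 1$ and the regular variation of $h$ (which forces $h \to \infty$).

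There is essentially no obstacle here: the only subtlety is to verify that $L(X_0)$ is strictly positive and finite, which is automatic from the definition of $h$ in~\eqref{eq: poly_tail} once $X_0$ is taken large enough that $\prob(X > X_0) < 1$. If one prefers not to invoke Potter's theorem as a black box, an equivalent route is the Karamata representation $L(x) = c(x)\exp\bigl(\int_1^x \eta(u)/u\, du\bigr)$ with $c(x) \to c \in (0,\infty)$ and $\eta(u) \to 0$, from which the same polynomial envelope bounds fall out directly by bounding $|\eta(u)| \leq \epsilon$ for $u$ large.
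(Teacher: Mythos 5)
Your proof is correct and takes essentially the same route as the paper: both fix one argument of Potter's bound at a base point $X_0$ (the paper applies the regularly varying form of Potter's theorem directly to $h$, while you factor $h(x)=x^\alpha L(x)$ and apply the slowly varying form to $L$, which is an equivalent formulation). The constants you obtain match the paper's up to trivial rearrangement, and your check that $L(X_0)\in(0,\infty)$ is a harmless extra precaution.
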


\begin{proof}
Let $\epsilon > 0$ be arbitrary. By Potter's bounds \cite[Theorem~1.5.6(iii)]{bingham_goldie_teugels_1987}, there exists $x_0>0$ depending only on $\epsilon$ such that
\begin{equation}\label{eq: Potter}
\frac{h(y)}{h(x)} \leq 2\max\left((y/x)^{\alpha + \epsilon}, (y/x)^{\alpha - \epsilon}\right)
\quad \forall x,y \ge x_0.
\end{equation}
Let $x \geq x_0$ be arbitrary and let $y = x_0$ in~\eqref{eq: Potter}. Then we have $y/x \le 1$ and so $(y/x)^{\alpha + \epsilon} \le (y/x)^{\alpha - \epsilon}$, and the first inequality in the statement of the lemma holds with $C_1 = 2 x_0^{\alpha - \epsilon}h(x_0)^{-1}$ and $B = x_0+1$. Similarly, since we have~$x/y \ge 1$, we have  $(x/y)^{\alpha - \epsilon} \le (x/y)^{\alpha + \epsilon}$, and hence by~\eqref{eq: Potter} (with $x$ and $y$ exchanged) the second inequality holds with $C_2 = 2 h(x_0) x_0^{-(\alpha + \epsilon)}$ and~$B=x_0+1$.
\end{proof}

 In order to show that~$\Cc_4$ occurs with high probability, we prove a lemma about a random walk with the same jump distribution as our $N$-BRW, but in which jumps larger than a certain size are discarded and count as a jump of size zero. The lemma gives an upper bound on the probability that this random walk moves a large distance $x_N$ in of order $\ell_N$ steps, if the jumps larger than $rx_N$ are discarded (for some $r\in (0,1)$). For an arbitrarily large $q>0$, the parameter $r$  can be taken sufficiently small that the above probability is smaller than $N^{-q}$ (for large $N$). Our lemma is similar to the lemma on page 168 of~\cite{Durrett1983MaximaOB}, where the jump distribution is truncated; jumps greater than a threshold value are not allowed at all, instead of being counted as zero. We use ideas from the proof of Theorem~3 in~\cite{gantert2000}, which is a large deviation result for sums of random variables with stretched exponential tails. 

Recall that $\prob(X>x)=h(x)^{-1}$ for $x\ge 0$, where $h$ is regularly varying with index $\alpha>0$.
\begin{lemma}\label{lemma: durrett}
Let $X_1,X_2,\dots$ be i.i.d.~random variables with $X_1 \stackrel{d}{=}X$.  For any $m\in\mathbb{N}$, $q>0$, $\lambda>0$, $0< r < 1\wedge\frac{\lambda(1\wedge\alpha)}{8q}$, for $N$ sufficiently large, if $x_N>N^\lambda$ then
\begin{equation*}
\prob\Bigg(\sum_{j=1}^{m\ell_N}X_j\mathds{1}_{\bracing{X_j\leq r x_N}} \ge x_N \Bigg) \leq N^{-q}.
\end{equation*}
\end{lemma}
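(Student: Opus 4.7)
The plan is a Chernoff-style exponential moment bound in which the tilting parameter $\theta$ is matched to both the truncation level $rx_N$ and the tail index $\alpha$. Set $Y_j := X_j\mathds{1}_{\{X_j \le rx_N\}}$, so $Y_j\in[0,rx_N]$. Markov's inequality applied to $e^{\theta\sum Y_j}$ gives, for any $\theta>0$,
\[
\prob\Bigl(\textstyle\sum_{j=1}^{m\ell_N} Y_j \ge x_N\Bigr) \le e^{-\theta x_N}\,\expect\bigl[e^{\theta Y_1}\bigr]^{m\ell_N},
\]
and since $y\mapsto e^{\theta y}$ lies below its secant on the interval $[0,rx_N]$, one has $\expect[e^{\theta Y_1}] \le 1 + (e^{\theta rx_N}-1)\,\expect[Y_1]/(rx_N)$.

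The first step I would carry out is the control of $\expect[Y_1]$ via Lemma~\ref{lemma: Potter}. Writing $\expect[Y_1] \le \int_0^{rx_N}\prob(X>t)\,dt$ and splitting at the Potter threshold $B(\epsilon)$, the bound $\prob(X>t)\le C_1 t^{\epsilon-\alpha}$ for $t \ge B(\epsilon)$ together with a direct integration yields, for any sufficiently small $\epsilon>0$ and for $N$ large enough that $rx_N\ge B(\epsilon)$,
\[
\expect[Y_1] \le C\bigl(1 + (rx_N)^{1-\alpha+\epsilon}\bigr), \qquad \text{whence} \qquad \expect[Y_1]/(rx_N) \le 2C(rx_N)^{-(1\wedge\alpha)+\epsilon}.
\]
The two cases $\alpha\le 1$ and $\alpha>1$ are handled uniformly by this bound; in the second regime one can alternatively just use $\expect[Y_1]\le\expect[X]<\infty$.

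The second step is to choose $\theta := 2q(\ln N)/x_N$, so that $e^{-\theta x_N} = N^{-2q}$ and $e^{\theta rx_N} \le N^{2qr}$. Using $(1+u)^{m\ell_N}\le e^{m\ell_N u}$ together with the MGF bound above gives
\[
\prob\Bigl(\textstyle\sum_{j=1}^{m\ell_N} Y_j \ge x_N\Bigr) \le N^{-2q}\exp\!\bigl(2Cm\ell_N N^{2qr}(rx_N)^{-(1\wedge\alpha)+\epsilon}\bigr).
\]
The hypothesis $r < \lambda(1\wedge\alpha)/(8q)$ gives $2qr < \lambda(1\wedge\alpha)/4$, so with $x_N\ge N^\lambda$ the argument of the exponential is of order at most $\ell_N N^{-3\lambda(1\wedge\alpha)/4 + \lambda\epsilon}$. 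Choosing $\epsilon$ small (depending only on $\alpha$ and $\lambda$) makes this exponent strictly negative, so the exponential factor tends to $1$ as $N\to\infty$; hence $\prob(\sum Y_j \ge x_N) \le 2N^{-2q} \le N^{-q}$ for all $N$ sufficiently large.

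The main obstacle is holding three scales in balance: $\theta$ must be large enough to make $e^{-\theta x_N}$ polynomially small in $N$, yet small enough that $e^{\theta rx_N}$ does not cancel the smallness of $\expect[Y_1]/(rx_N)$ produced by regular variation. The factor $8$ in the hypothesis $r < \lambda(1\wedge\alpha)/(8q)$ provides precisely the quantitative slack that makes this balance achievable while also absorbing the inevitable $\epsilon$-loss coming from Potter's inequality.
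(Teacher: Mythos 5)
Your proof is correct, and at its core it is the same Chernoff argument as the paper's: exponential Markov with tilting parameter of order $(\log N)/x_N$, Potter's bounds to quantify the regularly varying tail, and the observation that $r$ small enough (the factor $8$ in the hypothesis) keeps $e^{\theta r x_N}=N^{O(qr)}$ from overwhelming the polynomial smallness of order $(rx_N)^{-(1\wedge\alpha)+\epsilon}$ coming from the tail. Where you genuinely diverge is in the estimate of the truncated moment generating function: the paper invokes the exact integration-by-parts identity of Lemma~\ref{lemma: Gantert_identity} (borrowed from Gantert's large deviation argument) to write $\expect[e^{\theta Y_1}]$ as $\int_B^{rx_N}\theta e^{\theta u}\prob(X>u)\,du$ plus boundary terms and then bounds $e^{\theta u}\le e^{\theta rx_N}$ inside the integral, whereas you use the secant (Bennett-type) inequality $e^{\theta y}\le 1+(e^{\theta rx_N}-1)y/(rx_N)$ on $[0,rx_N]$, which reduces everything to a bound on the first moment $\expect[Y_1]$ via a single application of Lemma~\ref{lemma: Potter}. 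The two routes produce the same final estimate, of order $\ell_N N^{2qr}(rx_N)^{\epsilon-(1\wedge\alpha)}$ in the exponent, so neither is sharper; your version is somewhat more elementary in that it dispenses with the auxiliary identity and with the case split on $\alpha\lessgtr 1$ that the paper performs when integrating $u^{-\alpha}$, at the cost of a slightly cruder (but entirely sufficient) treatment of the exponential weight inside the expectation.
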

Before proving Lemma~\ref{lemma: durrett}, we
 now state and prove an elementary identity which will be used in the proof. This identity was also used in the proof of Theorem~3 in \cite{gantert2000}.

\begin{lemma}\label{lemma: Gantert_identity}
	Suppose $Y$ is a non-negative random variable. For $v>0$ and $0<K_1<K_2<\infty$, 
	\begin{equation} \label{eq:Gantert}
	\expect[\exp(vY\mathds{1}_{\bracing{Y\leq K_2}})\mathds{1}_{\bracing{Y\geq K_1}}] = \int_{K_1}^{K_2} v e^{vu} \prob(Y>u)du + e^{vK_1}\prob(Y\geq K_1) - (e^{vK_2}-1)\prob(Y > K_2).
	\end{equation}
\end{lemma}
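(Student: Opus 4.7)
The identity is an elementary manipulation, and the plan is to compute the left-hand side by splitting on the range of $Y$ and then expressing $e^{vY}$ as an integral.

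First I would split the event $\{Y\geq K_1\}$ into $\{K_1\le Y\le K_2\}$ and $\{Y>K_2\}$. On the former the indicator $\mathds{1}_{\{Y\le K_2\}}$ in the exponent equals $1$, giving a contribution $\expect[e^{vY}\mathds{1}_{\{K_1\le Y\le K_2\}}]$; on the latter the indicator is $0$, so the exponent vanishes and the contribution is simply $\prob(Y>K_2)$. Thus
\begin{equation*}
\text{LHS} = \expect\!\left[e^{vY}\mathds{1}_{\{K_1\le Y\le K_2\}}\right] + \prob(Y>K_2).
\end{equation*}

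Next I would rewrite $e^{vY}$ using $e^{vY}=e^{vK_1}+\int_{K_1}^{Y} v e^{vu}\,du$ valid for $Y\ge K_1$. Substituting and exchanging the order of integration by Fubini (everything is non-negative, so this is legitimate),
\begin{equation*}
\expect\!\left[e^{vY}\mathds{1}_{\{K_1\le Y\le K_2\}}\right]
= e^{vK_1}\prob(K_1\le Y\le K_2) + \int_{K_1}^{K_2} v e^{vu}\,\prob(u<Y\le K_2)\,du,
\end{equation*}
and then I would write $\prob(u<Y\le K_2)=\prob(Y>u)-\prob(Y>K_2)$ to separate the integral into the desired $\int_{K_1}^{K_2} v e^{vu}\prob(Y>u)\,du$ plus a correction $-\prob(Y>K_2)(e^{vK_2}-e^{vK_1})$ coming from $\int_{K_1}^{K_2}v e^{vu}\,du$.

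Finally I would collect terms. The coefficient of $e^{vK_1}$ becomes $\prob(K_1\le Y\le K_2)+\prob(Y>K_2)=\prob(Y\ge K_1)$, and the remaining $\prob(Y>K_2)$ terms combine to $-(e^{vK_2}-1)\prob(Y>K_2)$, matching the right-hand side of \eqref{eq:Gantert}. There is no real obstacle here; the only mild care needed is to avoid double-counting the point $\{Y=K_1\}$ versus $\{Y>K_1\}$ when passing between the distribution function and tail probabilities, which is handled cleanly by the identity $e^{vY}=e^{vK_1}+\int_{K_1}^{Y}v e^{vu}\,du$ together with Fubini rather than a Stieltjes integration by parts.
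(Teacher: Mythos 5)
Your proof is correct and follows essentially the same route as the paper's: both first split off the event $\{Y>K_2\}$, where the truncated exponent vanishes, and then use Fubini to identify $\int_{K_1}^{K_2}ve^{vu}\prob(\cdot)\,du$ with an expectation. The only cosmetic difference is that you expand $e^{vY}=e^{vK_1}+\int_{K_1}^{Y}ve^{vu}\,du$ starting from the left-hand side, whereas the paper starts from the integral on the right-hand side and evaluates $\int_{K_1}^{K_2\wedge Y}ve^{vu}\,du$; the computations are equivalent.
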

\begin{proof}
	First note that the random variable in the expectation on the left-hand side of~\eqref{eq:Gantert} takes the value $1$ if $Y>K_2$. The expectation can be written as
	\begin{align*}
	\expect[\exp(vY\mathds{1}_{\bracing{Y\leq K_2}})\mathds{1}_{\bracing{Y\geq K_1}}] 
	 =
	\expect\left[ e^{vY}\mathds{1}_{\bracing{K_1\leq Y \leq K_2}} \right] + \prob(Y>K_2). \numberthis \label{eq: Gantert_expect}
	\end{align*}	
	Now we will work on the integral on the right-hand side of~\eqref{eq:Gantert}. First, by Fubini's theorem we have
	\begin{align*}
	\int_{K_1}^{K_2} v e^{vu} \prob(Y>u)du
	 = \expect\left[\int_{K_1}^{K_2} v e^{vu} \mathds{1}_{\bracing{Y>u}}du\right] = \expect\left[\int_{K_1}^{K_2\wedge Y} v e^{vu}du \mathds{1}_{\bracing{Y\geq K_1}} \right].
	\end{align*}
	By calculating the integral, it follows that
	\begin{align*}
	\int_{K_1}^{K_2} v e^{vu} \prob(Y>u)du & = 
	\expect\left[\left( e^{v(K_2\wedge Y)} - e^{vK_1}\right)\mathds{1}_{\bracing{Y\geq K_1}} \right] \\ & 
	= \expect\left[ e^{vY}\mathds{1}_{\bracing{K_1\leq Y \leq K_2}} \right] + \expect\left[e^{vK_2}\mathds{1}_{\bracing{Y > K_2}} \right] - \expect\left[ e^{vK_1}\mathds{1}_{\bracing{Y\geq K_1}} \right].
	\end{align*}
	The result follows by \eqref{eq: Gantert_expect}.
\end{proof}
\begin{proof}[Proof of Lemma~\ref{lemma: durrett}]	
Let
$\tilde X := X\mathds{1}_{\bracing{X\leq r x_N}}$ and $\tilde X_j := X_j\mathds{1}_{\bracing{X_j\leq r x_N}}$ for all $j\in\N$.
Take $N$ sufficiently large that $\ell_N \le 2 \log_2 N$.
 Then by Markov's inequality and since $\tilde X_1, \tilde X_2, \ldots $ are i.i.d.~with $\tilde X_1 \stackrel{d}{=} \tilde X$, for $c>0$,
\begin{align*}
\prob\left(\sum_{j=1}^{m\ell_N}\tilde X_j \ge  x_N \right) 
& =
\prob\left(\exp\left(c \ell_N x_N^{-1}\sum_{j=1}^{m\ell_N}\tilde X_j\right) \ge e^{c\ell_N}\right) 
\\ &
\leq 
e^{-c\ell_N} \expect\left[ e^{c \ell_N x_N^{-1} \tilde X} \right]^{m\ell_N}
\\ &
\leq
N^{-\frac{c}{\log 2} + \frac{2m}{\log 2}\log \expect\left[e^{c \ell_N x_N^{-1} \tilde X} \right]}, \numberthis\label{eq: expMarkov}
\end{align*}
since $\log_2 N \leq \ell_N \leq 2\log_2 N$. We will show that with an appropriate choice of $c>0$, for $N$ sufficiently large, the right-hand side of \eqref{eq: expMarkov} is smaller than~$N^{-q}$. First we require
\begin{align}\label{eq: cr1}
c > 2q \log 2.
\end{align}
Second, we will have another condition on $c$ which ensures that 
$\expect [e^{c \ell_N x_N^{-1} \tilde X} ] \le 1+O(N^{-\epsilon})$ as $N \to \infty$ for some $\epsilon>0$.
We now estimate this expectation and determine the choice of $c$.

Take $0< \epsilon < \frac{\lambda(1\wedge\alpha)}{2(\lambda + 1)}$, and take $B=B(\epsilon)>1$ and $C_1=C_1(\epsilon)>0$ as in Lemma~\ref{lemma: Potter}. 
Suppose $N$ is sufficiently large that $rx_N>B$.
We apply Lemma~\ref{lemma: Gantert_identity} with $Y=X$, $v=c\ell_N x_N^{-1}$, $K_1 = B$ and~$K_2 = rx_N$, and then use~\eqref{eq: poly_tail},  to obtain
\begin{align*}
\expect\left[e^{c \ell_N x_N^{-1} \tilde X} \right] 
& \le \expect\left[e^{c \ell_N x_N^{-1}  X\mathds{1}_{\bracing{X\leq r x_N}}} \mathds{1}_{\bracing{X\ge B}}\right] + e^{Bc\ell_N x_N^{-1}}\prob(X < B)
\\ &
\leq 
\int_{B}^{rx_N} c\ell_N x_N^{-1} e^{c\ell_N x_N^{-1}u} h(u)^{-1} du + e^{Bc\ell_N x_N^{-1}}. \numberthis\label{eq: GantertIntegral}
\end{align*}

We will choose $c$ such that the first term on the right-hand side of \eqref{eq: GantertIntegral} is close to zero. 
By Lemma~\ref{lemma: Potter}, and then since $r<1$, we have
 \begin{align*}
\int_{B}^{rx_N} c\ell_N x_N^{-1} e^{c\ell_N x_N^{-1}u} h(u)^{-1} du & \leq
\int_{B}^{rx_N}C_1c\ell_N x_N^{-1} e^{c\ell_N x_N^{-1}u} u^{-\alpha + \epsilon}du 
\\ &
\leq C_1 c \ell_N x_N^{-1} \int_{B}^{rx_N}e^{c\ell_N x_N^{-1}(rx_N)}x_N^\epsilon u^{-\alpha}du .
\end{align*}
Integrating the right-hand side, since we took $N$ sufficiently large that $\ell_N \le 2 \log_2 N$, we conclude
\begin{align}\label{eq: GantertInt1}
\int_{B}^{rx_N} c\ell_N x_N^{-1} e^{c\ell_N x_N^{-1}u} h(u)^{-1} du \leq 
\left\{\begin{array}{ll}
\frac{C_1 c}{1-\alpha}\ell_N N^{\frac{2cr}{\log 2}} \left(r^{1-\alpha} x_N^{\epsilon - \alpha} - B^{1-\alpha} x_N^{\epsilon - 1}\right), & \text{ if }\alpha\neq 1,
\vspace{10pt}\\
C_1 c \ell_N x_N^{\epsilon - 1} N^{\frac{2cr}{\log 2}}\log x_N, & \text{ if }\alpha=1,
\end{array}
\right.
\end{align}
where in the $\alpha = 1$ case we use that $B > 1$ and that $r<1$.

Now, since $x_N > N^\lambda$ and $\epsilon<1\wedge \alpha$, the right-hand side of \eqref{eq: GantertInt1} is at most of order $N^{-\epsilon}$ if
\begin{align}\label{eq: cr2}
\frac{2cr}{\log 2} + \lambda(\epsilon - (1\wedge\alpha)) < -\epsilon.
\end{align}
Since $r < \frac{\lambda(1\wedge\alpha)}{8q}$ by the assumptions of the lemma,  we can find $c$ such that
\begin{align*}
2q\log 2 < c < \frac{\lambda(1\wedge\alpha)\log 2}{4r}.
\end{align*}
Then since we chose $\epsilon < \frac{\lambda(1\wedge\alpha)}{2(\lambda + 1)}$,
$c$ satisfies~\eqref{eq: cr1} and \eqref{eq: cr2}. Note furthermore that since $x_N>N^\lambda$, the second term on the right-hand side of \eqref{eq: GantertIntegral} is close to $1$ for $N$ large; for $N$ sufficiently large we have
\begin{equation}\label{eq: GantertInt2}
e^{Bc\ell_N x_N^{-1}} \leq e^{Bc\ell_N N^{-\lambda}} \leq 1 + 2Bc\ell_N N^{-\lambda}.
\end{equation}

Hence, \eqref{eq: GantertIntegral}, \eqref{eq: GantertInt1} and the choice of $c$, and \eqref{eq: GantertInt2} with the fact that $\epsilon<\lambda$ show that there exists a constant $A>0$ such that
\begin{equation*}
\expect\left[e^{c \ell_N x_N^{-1} \tilde X} \right]  \leq 1 + A N^{-\epsilon}
\end{equation*}
for $N$ sufficiently large and $x_N>N^\lambda$. Therefore, by \eqref{eq: expMarkov} and \eqref{eq: cr1} we have
\begin{align*}
\prob\left(\sum_{j=1}^{m\ell_N}\tilde X_j \ge  x_N \right) \leq N^{-2q + \frac{2m}{\log 2}\log(1 + A N^{-\epsilon})} \leq N^{-2q + \frac{2m}{\log 2}A N^{-\epsilon}} < N^{-q},
\end{align*}
for $N$ sufficiently large, which concludes the proof.
\end{proof}
We now apply Lemma~\ref{lemma: durrett} to the $N$-BRW, to give us a convenient form of the result which we will use later in this section and also in Section~\ref{sect: star-shaped}.
\begin{corollary}\label{cor: path}
Let $\lambda > 0$ and $0<r<1\wedge \frac{\lambda(1\wedge\alpha)}{48}$. Then there exists~${C>0}$ such that for $N$ sufficiently large, if $x_N > N^\lambda$,
\begin{align*}
&\prob\left(
\begin{array}{l}
\exists (k_1,s_1)\in [N]\times \llbracket t_4,t-1 \rrbracket ,\;s_2\in \llbracket s_1+1,t \rrbracket \text{ and } k_2\in\Nn_{k_1,s_1}(s_2):\: \\
\sum_{(i,b,s)\in P_{k_1,s_1}^{k_2,s_2}} X_{i,b,s}\mathds{1}_{\bracing{X_{i,b,s}\leq r x_N}} \ge x_N
\end{array}
\right) 
\leq CN^{-1},
\end{align*}
where $P_{k_1,s_1}^{k_2,s_2}$ and $\Nn_{k_1,s_1}(s_2)$ are defined in \eqref{eq: P} and \eqref{eq: N} respectively.
\end{corollary}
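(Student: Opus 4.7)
The plan is to exploit the coupling with $N$ independent BRWs from Section~\ref{sect: BRW} to lift every path in the $N$-BRW to a path in one of the BRW trees via Lemma~\ref{lemma: BRWpath}, and then combine a union bound with Lemma~\ref{lemma: durrett}. Concretely, if the event in the corollary holds, then by Lemma~\ref{lemma: BRWpath} there exist $j\in [N]$, $s_1\in\bbracket{t_4, t-1}$, $s_2\in\bbracket{s_1+1, t}$, $u\in\{1,2\}^{s_1}$ with $(j,u)\in H_{s_1}$, and $v\in\{1,2\}^{s_2}$ with $u\prec v$, such that $\sum_{u\prec w\preceq v}Y_{j,w}\1_{\{Y_{j,w}\le rx_N\}}\ge x_N$. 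Since $r<\lambda(1\wedge\alpha)/48=\lambda(1\wedge\alpha)/(8\cdot 6)$, I would first fix a number $q>6$ with $r<\lambda(1\wedge\alpha)/(8q)$, which is possible because the interval $(6,\lambda(1\wedge\alpha)/(8r))$ is non-empty.

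For a fixed tuple $(s_1,s_2,j,u,v)$, the crucial observation is that the event $\{(j,u)\in H_{s_1}\}$ is $\mathcal F'_{s_1}$-measurable, while the sum $\sum_{u\prec w\preceq v}Y_{j,w}\1_{\{Y_{j,w}\le rx_N\}}$ depends only on the i.i.d.~jumps $Y_{j,w}$ with $|w|>s_1$, which are independent of $\mathcal F'_{s_1}$. Conditioning on $\mathcal F'_{s_1}$ therefore factorises the joint probability. The sum consists of $s_2-s_1\le 4\ell_N$ non-negative i.i.d.~terms with the law of $X\1_{\{X\le rx_N\}}$; by non-negativity the sum can only grow if I extend to $4\ell_N$ terms, and Lemma~\ref{lemma: durrett} with $m=4$ and my chosen $q$ then bounds the probability that this extended sum exceeds $x_N$ by $N^{-q}$ for all $N$ large enough.

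Next I would carry out the union bound. For each fixed pair $(s_1,s_2)$, linearity of expectation gives $\sum_{j\in[N],\,u\in\{1,2\}^{s_1}}\prob((j,u)\in H_{s_1})=\expect[|H_{s_1}|]=N$, and for each such $(j,u)$ there are $2^{s_2-s_1}$ choices of $v$ with $u\prec v$. Since there are at most $(4\ell_N+1)^2$ pairs $(s_1,s_2)$, and $\ell_N\le\log_2 N+1$ gives $2^{s_2-s_1}\le 2^{4\ell_N}\le 16N^4$, the total union bound is at most
\begin{equation*}
(4\ell_N+1)^2\cdot N\cdot 16N^4\cdot N^{-q}\;\le\; C(\log N)^2 N^{5-q},
\end{equation*}
which is $\le C'N^{-1}$ for $N$ sufficiently large because $q>6$.

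The main obstacle is the dependence between the $N$-BRW selection and the jumps along the very path whose displacement we are trying to control: within the $N$-BRW, which particles are alive at times $s_1$ and $s_2$ depends on the path's own jumps through the selection mechanism, so Lemma~\ref{lemma: durrett} cannot be invoked directly on an $N$-BRW path. Lifting to the $N$ independent BRWs sidesteps this entirely, because in the BRW trees no selection is performed and the BRW path is a fixed sequence of i.i.d.~jumps. The cost of this reduction is that one must union-bound over all $2^{s_2-s_1}$ descendants of each starting particle, contributing the factor $N^4$ that forces $q>6$ in Lemma~\ref{lemma: durrett}, and this is precisely why the threshold in the statement is $r<\lambda(1\wedge\alpha)/48$ rather than something looser.
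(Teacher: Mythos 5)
Your proof is correct and follows essentially the same route as the paper: lift every $N$-BRW path to one of the $N$ independent BRWs via Lemma~\ref{lemma: BRWpath}, union-bound over starting particles and their descendants, and apply Lemma~\ref{lemma: durrett} with $m=4$. The only difference is bookkeeping: the paper first extends each path (using non-negativity of the jumps) to span all of $[t_4,t]$, which removes the sum over the pairs $(s_1,s_2)$ and allows $q=6$ exactly, whereas you keep the $O(\ell_N^2)$ pairs and absorb that factor by choosing $q>6$, which is available precisely because the hypothesis $r<1\wedge\frac{\lambda(1\wedge\alpha)}{48}$ is strict — both versions are valid.
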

\begin{proof}
Take $(k_1,s_1),(k_2,s_2) \in [N]\times \llbracket t_4,t-1 \rrbracket	$ with
$(k_1,s_1)\lesssim(k_2,s_2)$, and let $k' = \zeta_{k_1,s_1}(t_4)$ be the index of the time-$t_4$ ancestor of $(k_1,s_1)$ (see \eqref{eq: zeta} for the notation). 
If the path between particles $(k_1,s_1)$ and $(k_2,s_2)$ moves at least $x_N$ even with discarding jumps greater than $r x_N$, then the path between $(k',t_4)$ and $(k_2,s_2)$ does the same, because all jumps are non-negative. Therefore we only need to consider paths starting with the $N$ particles of the population at time $t_4$:
\begin{multline*}
\prob\left(
\begin{array}{l}
\exists (k_1,s_1)\in [N]\times \llbracket t_4,t-1 \rrbracket ,\;s_2\in \llbracket s_1+1,t \rrbracket \text{ and } k_2\in\Nn_{k_1,s_1}(s_2):\: \\
\sum_{(i,b,s)\in P_{k_1,s_1}^{k_2,s_2}} X_{i,b,s}\mathds{1}_{\bracing{X_{i,b,s}\leq r x_N}} \ge x_N
\end{array}
\right) \\
\leq
\prob\left(
\begin{array}{l}
\exists k'\in[N],\;s_2\in \llbracket t_4+1,t \rrbracket \text{ and } k_2\in\Nn_{k',t_4}(s_2):\: \\
\sum_{(i,b,s)\in P_{k',t_4}^{k_2,s_2}} X_{i,b,s}\mathds{1}_{\bracing{X_{i,b,s}\leq r x_N}} \ge x_N
\end{array}
\right).\numberthis\label{eq: path1}
\end{multline*}
Now consider the $N$-BRW constructed from $N$ independent BRWs (see Section~\ref{sect: BRW}). Assume that $k'\in[N]$, $s_2\in  \llbracket t_4+1,t \rrbracket$ and $k_2\in\Nn_{k',t_4}(s_2)$ are such that
\begin{align*}
\sum_{(i,b,s)\in P_{k',t_4}^{k_2,s_2}} X_{i,b,s}\mathds{1}_{\bracing{X_{i,b,s}\leq r x_N}} \ge x_N.
\end{align*}
Then by Lemma~\ref{lemma: BRWpath} there exists a path in one of the $N$ independent BRWs that contains the same jumps as the path $P_{k',t_4}^{k_2,s_2}$. Thus Lemma~\ref{lemma: BRWpath} implies that there exist $(j,u)\in H_{t_4}$ and $(j,v)\in H_{s_2}$ such that $u\prec v$ and
\begin{equation*}
\sum_{u \prec w \preceq v} Y_{j,w} \1_{\bracing{Y_{j,w}\leq rx_N}} \ge x_N.
\end{equation*}
That is, there is a path in the $N$ independent BRWs between times $t_4$ and $s_2$ which moves at least $x_N$ even with discarding jumps of size greater than $rx_N$. This means that there must be a path with the same property between times $t_4$ and $t$ as well, because all jumps are non-negative. Therefore
\begin{align*}
& \prob\left(
\begin{array}{l}
\exists k'\in[N],\;s_2\in \llbracket t_4+1,t \rrbracket \text{ and } k_2\in\Nn_{k',t_4}(s_2):\: \\
\sum_{(i,b,s)\in P_{k',t_4}^{k_2,s_2}} X_{i,b,s}\mathds{1}_{\bracing{X_{i,b,s}\leq r x_N}} \ge x_N
\end{array}
\right) \\ &
\leq 
\prob \bigg(
\exists (j,u)\in H_{t_4} \text{ and } v\in\bracing{1,2}^t \text{ with } v \succ u:\: 
\sum_{u \prec w \preceq v} Y_{j,w} \1_{\bracing{Y_{j,w}\leq rx_N}} \ge x_N
\bigg).\numberthis\label{eq: path2}
\end{align*}

Let~$X_i$,~$i=1,2,\dots$ be i.i.d.~with distribution given by \eqref{eq: poly_tail}, and take $\lambda > 0$, $x_N>N^\lambda$, and $0<r < 1\wedge \frac{\lambda(1\wedge\alpha)}{48}$. Note that the random variables $Y_{j,w}$ are all distributed as the $X_i$ random variables, and that there are $4\ell_N$ terms in the sum on the right-hand side of \eqref{eq: path2}. We will give a union bound for the probability of the event on the right-hand side of \eqref{eq: path2}, using that $H_{t_4}$ is a set of $N$ elements and that a particle in the set $H_{t_4}$ has $2^{4\ell_N}$ descendants in a BRW (without selection) at time $t$, which means $2^{4\ell_N}$ possible labels for $v$ for each $(j,u)\in H_{t_4}$. Then by \eqref{eq: path1}, \eqref{eq: path2} and by conditioning on $\mathcal F'_{t_4}$ and using a union bound,
\begin{multline*}
\prob\left(
\begin{array}{l}
\exists (k_1,s_1)\in [N]\times \llbracket t_4,t-1 \rrbracket ,\;s_2\in  \llbracket s_1+1,t \rrbracket \text{ and } k_2\in\Nn_{k_1,s_1}(s_2):\: \\
\sum_{(i,b,s)\in P_{k_1,s_1}^{k_2,s_2}} X_{i,b,s}\mathds{1}_{\bracing{X_{i,b,s}\leq r x_N}} \ge x_N
\end{array}
\right) \\
\leq
N2^{4\ell_N}\prob\Bigg(\sum_{j = 1}^{4\ell_N} X_j\mathds{1}_{\bracing{X_j\leq r  x_N}} \ge  x_N\Bigg).\numberthis\label{eq: existsPath}
\end{multline*}
Then by Lemma~\ref{lemma: durrett} with~${m=4}$ and ${q=6}$, we have that for $N$ sufficiently large,
	\begin{align*}
	\prob\Bigg(\sum_{j = 1}^{4\ell_N} X_j\mathds{1}_{\bracing{X_j\leq r  x_N}} \ge  x_N\Bigg) \leq N^{-6}.
	\end{align*}
	The result follows by \eqref{eq: existsPath}.
\end{proof}

\subsection{Simple properties of the regularly varying function $h$}\label{subsec: h_properties}
In order to bound the probabilities of the events $\Cc_2$ to $\Cc_7$ and $\Dd_1$ to $\Dd_5$, we will need to use several properties of the function $h$ from \eqref{eq: poly_tail}. Recall that $h$ is regularly varying with index $\alpha>0$, and that it determines the jump distribution of the $N$-BRW in the sense that for each jump $(i,b,s)$,
\begin{equation} \label{eq:Xibstailh}
\prob (X_{i,b,s}>x )=h(x)^{-1} \quad \forall x \ge 0.
\end{equation}
Recall that $a_N=h^{-1}(2N \ell_N)$, and note that $a_N \to \infty$ as $N \to \infty$. 
Indeed, by the definition of $h^{-1}$ in~\eqref{eq: hinv}, $a_N$ is non-decreasing, and since $h$ is non-decreasing by~\eqref{eq: poly_tail}, $a_N$ cannot converge to a finite limit $a\in \R$, because this would imply $h(a+1)\ge 2N\ell_N$ $\forall N$. Moreover, letting $C_2=C_2(\alpha)$ as in Lemma~\ref{lemma: Potter}, for $N$ sufficiently large that $a_N+1 \ge B=B(\alpha)$,
\begin{equation}\label{eq: aNhaN}
2N\ell_N < h(a_N + 1) \leq C_2(a_N+1)^{2 \alpha},
\end{equation}
where in the first inequality we use the definition \eqref{eq: hinv} of $h^{-1}$ and that $h$ is non-decreasing, and the second inequality follows by the second inequality of Lemma~\ref{lemma: Potter}.

Since $h$ is regularly varying with index $\alpha$, we have
\begin{equation}\label{eq: haNasymp}
\frac{2N\ell_N}{h(a_N)} \to 1 \quad \text{ as } N\to\infty.
\end{equation}
Indeed, since $h$ is non-decreasing, for any $\eps \in (0,1)$, by \eqref{eq: regvar} and by the definition of $a_N$ we have
\begin{equation*}
(1-\eps)^\alpha - \eps \leq  \frac{h(a_N(1-\eps))}{h(a_N)} \leq \frac{2N\ell_N}{h(a_N)} \leq \frac{h(a_N(1+\eps))}{h(a_N)} \leq (1+\eps)^\alpha + \eps,
\end{equation*}
for $N$ sufficiently large. Often in our proofs it will be enough to use that \eqref{eq: haNasymp} implies
\begin{equation}\label{eq: haN}
\frac{1}{2} < \frac{2N\ell_N}{h(a_N)} < 2,
\end{equation}
for $N$ sufficiently large.

For convenience we state a few other simple properties of $h$, which we will apply several times. Let $r\in (0,1)$ and $\eta<1/1000$. First, we have
\begin{equation}\label{eq: hr}
\frac{1}{h(ra_N)} < \frac{1}{h(a_N)}(r^{-\alpha} + \eta^4) < \frac{1}{h(a_N)}2r^{-\alpha},
\end{equation}
for $N$ sufficiently large, by \eqref{eq: regvar}. Second, for $N$ sufficiently large, we also have
\begin{equation}\label{eq: hr2Nl}
\frac{2N\ell_N}{h(ra_N)} < \frac{2N\ell_N}{h(a_N)}(r^{-\alpha} + \eta^4) < (1 + \eta^4)(r^{-\alpha} + \eta^4) < 2r^{-\alpha},
\end{equation}
by \eqref{eq: hr} and \eqref{eq: haNasymp}. Furthermore, by the same argument as for~\eqref{eq: hr2Nl}, for $N$ sufficiently large,
\begin{equation}\label{eq: hr12Nl}
\frac{2N\ell_N}{h(ra_N)} > \frac{r^{-\alpha}}{2}.
\end{equation}

\subsection{Probabilities and proof of Proposition~\ref{prop: A1A3}}\label{sect: probCD}

Next we will go through the events $(\Cc_j)_{j=2}^7$ and $(\Dd_i)_{i=1}^5$, which we defined in Section~\ref{sect: deterministic}, one by one. We will prove upper bounds on the probabilities of their complement events, which will then allow us to prove Proposition~\ref{prop: A1A3}. Recall that the events $(\Cc_j)_{j=2}^7$ and $(\Dd_i)_{i=1}^5$ all depend on the constants $\eta, K, \gamma, \delta, \rho, c_1 \ldots, c_6$ introduced in~\eqref{eq: const1}-\eqref{eq: const5}, and  Propositions~\ref{prop: B} and~\ref{prop: C} hold when the constants satisfy the conditions~\eqref{eq: const1}-\eqref{eq: const5}. In order to show that the events in question occur with high probability, the constants need to satisfy some extra conditions which are consistent with~\eqref{eq: const1}-\eqref{eq: const5}. We now specify these choices.

Recall that $\alpha >0$. First we assume that $\eta\in(0,1]$ is very small; in particular, that it is small enough to satisfy
\begin{equation}\label{eq: eta}
\eta^2 < \min\left(\left(2^{\alpha + 2}\log\left(\frac{1000}{\eta}\right)\right)^{-1/ \alpha}, \frac{\eta}{1000\cdot 2^{\alpha}}\right).
\end{equation}
Then we choose the remaining constants as follows:
\begin{enumerate}[label=(\alph*), ref=\alph*]
	\item\label{c5} $c_6:=\eta^2 $,
	\item\label{c4} $c_5:=\eta^{6(1\vee\alpha)}$,
	\item\label{c3} $c_4 := c_5^{4/(1\wedge\alpha)}$,
	\item\label{c2} take $c_3>0$ small enough to satisfy $c_3 < c_4^{4(1\vee\alpha)}$ and $\left(1- 6c_3/c_4\right)^\alpha \geq 1 - 12\alpha c_3/c_4$,
	\item\label{c1} take $c_2>0$ small enough to satisfy $c_2 < c_3^{4(1\vee\alpha)}$ and $\left(1- 4c_2/c_3\right)^\alpha \geq 1 - 8\alpha c_2/c_3$,
	\item\label{sigtilde} $ c_1  := c_2^2$,
	\item\label{sigma} $\rho :=  c_1 (1\wedge\alpha)^2 / (100\alpha)$,
	\item\label{delta} $\delta := \rho^{\alpha + 1}$ ,
	\item\label{gamma} $\gamma := \delta / 2$,
	\item\label{K} $K := \rho^{-\alpha - 1}$.
\end{enumerate}
Note that the constants with the choices above can be thought of as in \eqref{eq: const}. We state a few simple consequences of these choices, which will be useful in proving upper bounds on the probabilities of the complement events of $\Cc_2$ to $\Cc_7$ and $\Dd_1$ to $\Dd_5$. First, by \eqref{eq: eta}, we have
\begin{equation}\label{eq: eta1000}
\eta < \frac{1}{1000\cdot 2^\alpha} < \frac{1}{1000},
\end{equation}
and note that all constants $\gamma, \delta, \rho, c_1 \ldots, c_6$ and $1/K$ are at most $\eta^2$. Thus, from \eqref{c5}-\eqref{sigtilde} and \eqref{eq: eta1000}, for $j=1,\dots,5$, we have
\begin{equation}\label{eq: cj}
c_j \leq c_{j+1}^2 \leq c_{j+1}\eta^2 < \frac{c_{j+1}}{10^6\cdot 2^{2\alpha}},
\end{equation}
which also means 
\begin{equation}\label{eq: cjeta}
c_j < \frac{\eta^2}{10^6}
\end{equation}
for $j=1,\dots , 5$. In particular we will need that
\begin{equation}\label{eq: c2c3}
\frac{c_2}{c_3} < \frac{1}{10^6(1\vee\alpha)}
\end{equation}
and
\begin{equation}\label{eq: c3c4}
\frac{c_3}{c_4} < \frac{1}{10^6(1\vee\alpha)},
\end{equation} 
which both follow by \eqref{eq: cj} and by the fact that $2^{2\alpha}\geq e^\alpha \ge 1\vee\alpha$ for $\alpha>0$. We will also use that from \eqref{c1} we have
\begin{equation}\label{eq: c3alpha}
c_3^{-\alpha-1}c_2 < c_3^{-2(1\vee\alpha) + 4(1\vee\alpha)} \leq c_3^2 < \frac{c_4}{10^6\cdot 2^{2\alpha}}\frac{c_4}{10^6\alpha} < \frac{\eta^4}{16\alpha 2^\alpha},
\end{equation}
where we applied \eqref{eq: cj} and that $2^{2\alpha}\geq\alpha$, and then that $c_4<\eta^2$. Then similarly, from \eqref{c2} we have
\begin{equation}\label{eq: c4alpha}
c_4^{-\alpha-1}c_3 < \frac{\eta^4}{24\alpha 2^\alpha}.
\end{equation}
Finally, from \eqref{sigma} and \eqref{eq: cjeta} we have
\begin{equation}\label{eq: rho}
\rho < c_1 < \frac{\eta}{10^6}.
\end{equation}

Considering the choices \eqref{c5}-\eqref{K} together with the consequences \eqref{eq: eta1000} and \eqref{eq: cj}, and noticing that \eqref{sigma} implies $\rho \leq c_1 /100$, we conclude that the constants $\eta,K, \gamma, \delta, \rho, c_1 \ldots, c_6$ satisfy \eqref{eq: const1}-\eqref{eq: const5}, so we will be able to apply Propositions~\ref{prop: B} and~\ref{prop: C} with this choice of constants. 

We can now show that the events $\Cc_2$ to $\Cc_7$ and $\Dd_1$ to $\Dd_5$ occur with high probability.
\begin{lemma}\label{lemma: probC}
Suppose the constants $\eta$, $K$, $\gamma$, $\delta$, $\rho$, $c_1,\dots,c_6>0$ satisfy \eqref{eq: eta} and \eqref{c5}-\eqref{K}. Then for $N$ sufficiently large and $t>4\ell_N$,
\begin{align*}
\prob(\Cc_j^c) < \frac{\eta}{1000} \quad \text{and} \quad \prob(\Dd_i^c) < \frac{\eta}{1000}
\end{align*}
for all $j\in\bracing{2,\dots,7}$ and $i\in\bracing{1,\dots,5}$,
where the events $(\Cc_j)_{j=2}^7$ and $(\Dd_i)_{i=1}^5$ are defined in~\eqref{eq: C2}-\eqref{eq: C7} and \eqref{eq: D1}-\eqref{eq: D5} respectively.
\end{lemma}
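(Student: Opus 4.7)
For each of the twelve events I would prove an upper bound of the form $<\eta/1000$ by a direct union bound combined with the regular-variation estimates assembled in Section~\ref{subsec: h_properties}. The essential inputs are~\eqref{eq: hr},~\eqref{eq: hr2Nl} and~\eqref{eq: hr12Nl}, which translate the polynomial tail of the jump distribution into clean powers of the geometric parameters $c_j$ and~$\rho$. The hierarchy~\eqref{eq: cj}--\eqref{eq: rho} together with the smallness condition~\eqref{eq: eta} is calibrated so that each resulting bound, although at best polynomial in $\eta$, beats $\eta/1000$ uniformly in~$N$. It is convenient to split the twelve events into three groups.

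\textbf{Group 1: first-moment bounds on rare jumps.} The events $\Cc_5$, $\Cc_6$, $\Cc_7$, $\Dd_3$, $\Dd_4$ and $\Cc_3$ all reduce to estimates on the number of jumps of a specified minimum size. The key input is $h(\rho a_N)^{-1}\le 2\rho^{-\alpha}/(N\ell_N)$, so the expected number of big jumps in $\bbracket{t_4,t-1}$ is at most $16\rho^{-\alpha}$, and Markov yields $\prob(\Cc_7^c)\le 16\rho^{-\alpha}/K=16\rho<\eta/1000$ by the choices of $K$ and~$\rho$. The same estimate restricted to a shorter window gives $\prob(\Cc_6^c)\le 12\delta\rho^{-\alpha}=12\rho$, $\prob(\Dd_4^c)\le 4c_5 c_6^{-\alpha}\le 4\eta^4$, and $\prob(\Cc_5^c)\le 32\rho^{-2\alpha}/\ell_N\to 0$. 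For $\Dd_3$, independence of the $2N(\lceil\ell_N/2\rceil+1)$ jumps and~\eqref{eq: hr12Nl} give $\prob(\Dd_3^c)\le \exp(-(2c_6)^{-\alpha}/4)=\exp(-\eta^{-2\alpha}/2^{\alpha+2})$, and the hypothesis~\eqref{eq: eta} was imposed exactly so that this is $<\eta/1000$. The event $\Cc_3$ requires a pairwise flavour: passing to the $N$ independent BRWs via Lemma~\ref{lemma: BRWpath}, each initiating big jump has at most $2^{\ell_N+1}\le 4N$ candidate descendant jumps in its subtree, so the expected number of bad (big, descendant-big) pairs is $O(\rho^{-2\alpha}/\ell_N)$.

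\textbf{Group 2: anti-concentration events $\Cc_2,\Dd_1,\Dd_5$.} Each requires the estimate that for $z\ge \xi a_N$ and $w\ll \xi$,
\[
\prob\bigl(X\in(z-wa_N,z+wa_N]\bigr)=h(z-wa_N)^{-1}-h(z+wa_N)^{-1}\le \frac{C\alpha\, w\, \xi^{-\alpha-1}}{N\ell_N}.
\]
This follows from~\eqref{eq: haNasymp} and~\eqref{eq: regvar} together with the observation that $\zeta\mapsto(\zeta-w)^{-\alpha}-(\zeta+w)^{-\alpha}$ is decreasing, so the supremum over $z/a_N\ge \xi$ is attained at $\zeta=\xi$; the Taylor-type inequalities built into the definitions of $c_2$ and $c_3$ (items (d),(e)), namely $(1-2c_j/c_{j+1})^{-\alpha}-(1+2c_j/c_{j+1})^{-\alpha}\le 8\alpha c_j/c_{j+1}$, then deliver the constant. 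For $\Cc_2$, since $Z_i(s)$ is $\F_s$-measurable while $X_{i,b,s}$ is independent of $\F_s$, conditioning on $\F_s$ legitimises replacing $Z_i(s)$ by the uniform supremum; a union bound over the $O(N\ell_N)$ triples then yields $\prob(\Cc_2^c)=O(\alpha c_2/c_3^{\alpha+1})<\eta/1000$ by~\eqref{eq: c3alpha}, and the parallel argument for $\Dd_1$ uses~\eqref{eq: c4alpha}. For $\Dd_5$, the strong Markov property at the stopping time $\tau_2$ reduces the analysis to a deterministic window of length $c_5\ell_N$, on which the single-jump bound times $2N(c_5\ell_N+1)$ produces $\prob(\Dd_5^c)=O(c_5\alpha c_3/c_4^{\alpha+1})\le \eta^{10}$.

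\textbf{Group 3 and main obstacle.} $\Cc_4$ is an immediate consequence of Corollary~\ref{cor: path}, applied with $x_N=c_1 a_N$ and discard level $\rho a_N=(\rho/c_1)x_N$; by~\eqref{eq: aNhaN} we have $a_N\ge N^\lambda$ for some $\lambda$ that can be taken arbitrarily close to $1/\alpha$, and the explicit factor $(1\wedge\alpha)^2/(100\alpha)$ in the definition of $\rho$ was chosen exactly so that $\rho/c_1<\lambda(1\wedge\alpha)/48$. The main technical obstacle in the lemma is $\Dd_2$: a naive union bound over the $O(\ell_N)$ candidate sliding windows $\bbracket{s,s+c_5\ell_N}$ produces a logarithmic-in-$N$ prefactor that cannot be absorbed by any fixed $\eta$-dependent bound. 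The remedy is to partition $\bbracket{t_3,t_1}$ into at most $4/c_5$ non-overlapping blocks of length $\lceil c_5\ell_N/2\rceil$, observe that if $\Dd_2^c$ holds then some such block contains no jump larger than $2c_4 a_N$, and union-bound over those (now $N$-free) blocks. By~\eqref{eq: hr12Nl} each block contributes at most $\exp(-c_5(2c_4)^{-\alpha}/4)$, and the comparison $c_4^\alpha\le c_5^4$ forced by the definitions of $c_4$ and $c_5$ (items (b),(c)) turns this into $\exp(-c_5^{-3}/2^{\alpha+2})=\exp\bigl(-\eta^{-18(1\vee\alpha)}/2^{\alpha+2}\bigr)$; the total bound $(4/c_5)\exp(-c_5^{-3}/2^{\alpha+2})$ is smaller than $\eta/1000$ by a second application of~\eqref{eq: eta}. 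Once Lemma~\ref{lemma: probC} is in hand, Proposition~\ref{prop: A1A3} follows by combining it with Propositions~\ref{prop: B} and~\ref{prop: C}.
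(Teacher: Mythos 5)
Your proposal is correct and follows essentially the same route as the paper's proof: first-moment/union bounds with the Potter-type estimates \eqref{eq: hr2Nl}--\eqref{eq: hr12Nl} for $\Cc_5,\Cc_6,\Cc_7,\Dd_3,\Dd_4$, the BRW embedding of Lemma~\ref{lemma: BRWpath} for $\Cc_3$, conditioning on $\F_s$ (resp.\ the strong Markov property at $\tau_2$) plus the interval-probability estimate for $\Cc_2,\Dd_1,\Dd_5$, Corollary~\ref{cor: path} for $\Cc_4$, and the same non-overlapping half-length block decomposition (the paper's $\tilde\Dd_2$) to avoid the $O(\ell_N)$ prefactor for $\Dd_2$. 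The only cosmetic difference is that you justify the Group~2 interval bound by monotonicity of $\zeta\mapsto(\zeta-w)^{-\alpha}-(\zeta+w)^{-\alpha}$, whereas the paper bounds $h(z-wa_N)^{-1}-h(z+wa_N)^{-1}$ by factoring out $h((\xi-w)a_N)^{-1}$ and controlling the ratio $h(z-wa_N)/h(z+wa_N)$ uniformly via regular variation; this is a presentational, not substantive, difference.
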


\begin{proof}
Assume that $\eta>0$ satisfies \eqref{eq: eta}. We consider the events $(\Cc_j)_{j=2}^7$ and $(\Dd_i)_{i=1}^5$ with the constants $K$, $\gamma$, $\delta$, $\rho$, $c_1,\dots,c_6$, and we assume that these constants satisfy \eqref{c5}-\eqref{K}. We will upper bound the probabilities of the events $(\Cc_j^c)_{j=2}^7$ and $(\Dd_i^c)_{i=1}^5$ using \eqref{eq: cj}-\eqref{eq: rho} above, and the properties of the regularly varying function $h$ described in Section \ref{subsec: h_properties}.\\

\noindent
\textbf{The event $\Cc_2^c$} (see \eqref{eq: C2}) says that there is a time $s\in[t_3,t-1]$ when a particle at distance at least~$c_3a_N$ behind the leader  jumps to within distance $2c_2a_N$ of the leader's position. We use Markov's inequality, and sum over all the jumps happening between times $t_3$ and $t-1$ to  bound the probability of this event. We have
\begin{align*}
\prob(\Cc_2^c) & \leq \expect \left[\#\bracing{ \begin{array}{l}
(i,b,s)\in\Nonetwo\times \llbracket t_3,t-1 \rrbracket \text{ such that }\\
Z_i(s)\geq c_3 a_N \text{ and } X_{i,b,s}\in (Z_i(s) - 2c_2a_N, Z_i(s) + 2c_2a_N]
\end{array}}\right] \\ &
= \sum_{(i,b,s)\in\Nonetwo\times \llbracket t_3,t-1 \rrbracket } \expect \left[ \mathds{1}_{\{Z_i(s)\geq c_3 a_N\}} \mathds{1}_{\bracing{X_{i,b,s}\in (Z_i(s) - 2c_2a_N, Z_i(s) + 2c_2a_N]}}  
\right].
\end{align*}
Recall from Section~\ref{sect: notation} that for $s\in \N$ and $i\in [N]$,
the distance $Z_i(s)$ of the $i$th particle from the leader is $\F_s$-measurable, but the jumps performed at time $s$, $X_{i,1,s}$ and $X_{i,2,s}$, are independent of $\F_s$. Hence by~\eqref{eq:Xibstailh},
\begin{align*} 
\prob(\Cc_2^c) 
& \le \sum_{(i,b,s)\in\Nonetwo\times \llbracket t_3,t-1 \rrbracket }\expect \left[ \expect \left[ \left. \mathds{1}_{\{Z_i(s)\geq c_3 a_N\}} \mathds{1}_{\bracing{X_{i,b,s}\in (Z_i(s) - 2c_2a_N, Z_i(s) + 2c_2a_N]}}    \right| \mathcal{F}_s
\right]
\right] \\ &
= \sum_{(i,b,s)\in\Nonetwo\times \llbracket t_3,t-1 \rrbracket }\expect \left[ \mathds{1}_{\{Z_i(s) \geq c_3a_N\}} \left(h(Z_i(s) - 2c_2a_N)^{-1} - h(Z_i(s) + 2c_2a_N)^{-1}\right) 
\right].\numberthis\label{eq: Eh}
\end{align*}
Since $h$ is monotone non-decreasing, for any $z \geq c_3 a_N$ we have
\begin{align}\label{eq: PC2a}
h(z - 2c_2a_N)^{-1} - h(z + 2c_2a_N)^{-1} \leq h((c_3 - 2c_2)a_N)^{-1}\left(1 - \frac{h(z-2c_2a_N)}{h(z+2c_2a_N)}\right).
\end{align}
Take $\epsilon >0$.
For the fraction on the right-hand side of~\eqref{eq: PC2a} we have that for $N$ sufficiently large, for $z\ge c_3 a_N$,
\begin{align} \label{eq:hfracs}
1 \geq \frac{h(z-2c_2a_N)}{h(z+2c_2a_N)} \geq 
\frac{h\left((z+2c_2a_N)\cdot\frac{(c_3-2c_2)a_N}{(c_3+2c_2)a_N}\right)}{h(z+2c_2a_N)}
\geq \left(1- \frac{4c_2}{c_3+2c_2}\right)^\alpha - \epsilon \geq 1 - 8\alpha\frac{c_2}{c_3} - \epsilon,
\end{align}
where we first use the monotonicity of $h$, and in the second inequality we use that $z\geq c_3a_N$, that the function $y\mapsto (y-2c_2a_N)/(y+2c_2a_N)$ is increasing in $y$, and we again use the monotonicity of~$h$. The third inequality follows by \eqref{eq: regvar}, and the fourth holds by the definition of $c_2$ in \eqref{c1}. Then, by~\eqref{eq: hr} and the lower bound in~\eqref{eq:hfracs} with $\epsilon = \eta^4(c_3-2c_2)^\alpha$,  we see from \eqref{eq: PC2a} that for $N$ sufficiently large, for $z\ge c_3 a_N$,
\begin{align*}
h(z - 2c_2a_N)^{-1} - h(z + 2c_2a_N)^{-1} &\leq\; 2(c_3-2c_2)^{-\alpha}h(a_N)^{-1}\left(8\alpha\frac{c_2}{c_3}+\eta^4(c_3-2c_2)^\alpha\right)
\\ & 
\leq\;
h(a_N)^{-1}(16\alpha2^\alpha c_3^{-\alpha-1}c_2 + 2\eta^4)\numberthis\label{eq: hzc1},
\end{align*}
where for the first term of the second inequality we used the fact that $(c_3-2c_2)^{-\alpha} < (c_3/2)^{-\alpha}$, because $2c_2<c_3/2$ by \eqref{eq: cj}.

Now let us return to \eqref{eq: Eh} and notice that we sum over $6N\ell_N$ jumps. Therefore, by~\eqref{eq: hzc1} we conclude that for $N$ sufficiently large,
\begin{align*}
\prob(\Cc_2^c) & \leq \frac{6N\ell_N}{h(a_N)}(16\alpha2^\alpha c_3^{-\alpha-1}c_2 + 2\eta^4) 
\leq 6(16\alpha2^\alpha c_3^{-\alpha-1}c_2 + 2\eta^4) 
< 18\eta^4
<\frac{\eta}{1000},
\end{align*}
where we used \eqref{eq: haN} in the second inequality, \eqref{eq: c3alpha} in the third, and~\eqref{eq: eta1000} in the fourth.\\

\noindent
\textbf{The event $\Cc_3^c$} (see \eqref{eq: C3}) says that there exists a big jump in the time interval $[t_4,t-1]$ such that a descendant also performs a big jump during the time interval $[t_4,t-1]$, within time $\ell_N$ of the first big jump. 

Consider the $N$-BRW constructed from $N$ independent BRWs (see Section~\ref{sect: BRW}). 
If $\Cc_3^c$ occurs then there must be
two big jumps in the $N$-BRW as above; that is, we must have $(i_1,s_1)\lesssim_{b_1} (i_2,s_2)$ with $s_1\in\bbracket{t_4,t-2}$ and $s_2\in\bbracket{s_1+1,\min\bracing{s_1+\ell_N,t-1}}$,
and $(i_1,b_1,s_1),(i_2,b_2,s_2)\in B_N$, where $B_N$ is the set of big jumps defined in \eqref{eq: BN2}. Then by Lemma~\ref{lemma: BRWpath} there are two big jumps with the same properties in the $N$ independent BRWs; that is,
there exist $j\in[N]$, $u_1,u_2\in\U_0$ such that $(j,u_1)\in H_{s_1}$, $(j,u_2)\in H_{s_2}$, $u_1 b_1\preceq u_2$, $X_{i_1,b_1,s_1} = Y_{j,u_1b_1}$ and $X_{i_2,b_2,s_2} = Y_{j,u_2b_2}$. Therefore, since  $s_2\in\bbracket{s_1+1,\min\bracing{s_1+\ell_N,t-1}}\subseteq \bbracket{s_1+1,{s_1+\ell_N}}$ and $H_{s_2}\subseteq [N] \times \bracing{1,2}^{s_2}$, we have
\begin{align}\label{eq: C3BRW}
\prob(\Cc_3^c) \leq \prob\left(
\begin{array}{l}
\exists s_1\in\bbracket{t_4,t-2}, (j,u_1)\in H_{s_1}, b_1\in\bracing{1,2} \text{ and } \\
s_2\in \bbracket{s_1+1,{s_1+\ell_N}}, u_2\in \bracing{1,2}^{s_2}, u_2\succ u_1, b_2\in\bracing{1,2}: \\
Y_{j,u_1b_1} > \rho a_N \text{ and } Y_{j,u_2b_2} > \rho a_N
\end{array}
\right).
\end{align}
Recall the definition of $\mathcal F'_n$ in Section~\ref{sect: BRW}.
By a union bound over the possible values of $s_1$, $s_2$, $b_1$ and $b_2$, and then conditioning on $\mathcal F'_{s_1}$ and applying another union bound over the possible values of $(j,u_1)$ and $u_2$,
\begin{align*}
\prob(\Cc_3^c) \leq
\sum_{\substack{s_1\in\bbracket{t_4,t-2},s_2\in \bbracket{s_1+1,{s_1+\ell_N}},\\ b_1,b_2\in \{1,2\}}}
\expect \left[\sum_{(j,u_1)\in H_{s_1}, u_2\in \bracing{1,2}^{s_2}, u_2\succ u_1}\prob\left(
Y_{j,u_1b_1} > \rho a_N, Y_{j,u_2b_2} > \rho a_N \big| \mathcal F'_{s_1} \right) \right].
\end{align*}
Then since $(Y_{j,u})_{j\in [N], u\in \cup_{m>s_1}\{1,2\}^m}$ are independent of $\mathcal F'_{s_1}$, for $(j,u_1)\in H_{s_1}$ and $u_2\in \bracing{1,2}^{s_2}$ we have
$$
\prob\left(
Y_{j,u_1b_1} > \rho a_N, Y_{j,u_2b_2} > \rho a_N \big| \mathcal F'_{s_1} \right)=h(\rho a_N)^{-2}.
$$
Hence by summing over the $4\ell_N-1$ possible values for $s_1$, and the two possible values for $b_1$ and $b_2$,
and since $|H_{s_1}|=N$, and for $u_1\in \{1,2\}^{s_1}$ there are $2^{s_2-s_1}$ possible values of $u_2\in \{1,2\}^{s_2}$ with $u_2 \succ u_1$, for $N$ sufficiently large we have
\begin{align*}
\prob(\Cc_3^c) \leq 
4\ell_N \cdot 4 \sum_{s_2\in \bbracket{s_1+1,{s_1+\ell_N}}}N 2^{s_2-s_1} h(\rho a_N)^{-2} 
&\leq 16 N\ell_N\cdot 2\cdot 2^{\log_2 N +1}h(\rho a_N)^{-2}
\\ &=\left(\frac{2N\ell_N }{h(\rho a_N)}\right)^2 16 \ell_N^{-1}
 \leq 4\rho^{-2\alpha}\cdot  16 \ell_N^{-1}
<\: \frac{\eta}{1000},\numberthis\label{eq: PC3}
\end{align*}
where in the third inequality we used \eqref{eq: hr2Nl}.\\

\noindent
\textbf{The event $\Cc_4^c$} (see \eqref{eq: C4}) can be bounded using Corollary~\ref{cor: path}. We apply the corollary with ${x_N =  c_1  a_N}$, $r = \rho /  c_1 $ and $\lambda = 1 / (2\alpha)$. We can make this choice for $\lambda$, because we have
\begin{align}\label{eq: aN2alpha}
 c_1  a_N > N^{1/(2\alpha)}
\end{align}
for all $N$ sufficiently large by \eqref{eq: aNhaN}. By our choice of $\rho$ in \eqref{sigma}, we have $r<1\wedge \frac{\lambda (1\wedge\alpha)}{48}$, and so Corollary~\ref{cor: path} tells us that for some constant $C>0$, for $N$ sufficiently large,
\begin{align}\label{eq: PC4}
\prob(\Cc_4^c) \leq CN^{-1} < \frac{\eta}{1000}.
\end{align}

\noindent
\textbf{The event $\Cc_5^c$} (see \eqref{eq: C5}) says that two big jumps occur at the same time, that is
$$
\Cc_5^c = \left\{\exists s\in \llbracket t_4,t-1 \rrbracket,\;(k_1,b_1) \neq (k_2,b_2)\in\Nonetwo:\: X_{k_1,b_1,s} > \rho a_N \text{ and }X_{k_2,b_2,s} > \rho a_N
\right\}.
$$
By a union bound over the $4\ell_N$ time steps and the possible pairs of jumps at each time step,
\begin{align*}
\prob(\Cc_5^c) 
& \leq  4 \ell_N{2N \choose 2} h(\rho a_N)^{-2}
\leq \left(\frac{2N\ell_N}{h(\rho a_N)}\right)^2 2\ell_N^{-1}
\leq 4\rho^{-2\alpha}\cdot 2 \ell_N^{-1}
< \frac{\eta}{1000}
\numberthis\label{eq: PC5}
\end{align*}
for $N$ sufficiently large, where the third inequality follows by~\eqref{eq: hr2Nl}.\\

\noindent
\textbf{The event $\Cc_6^c$} (see \eqref{eq: C6}) says that a big jump happens in (at least) one of two very short time intervals, $[t_2,t_2+\ceil{\delta\ell_N}]$ and  $[t_1 - \ceil{\delta\ell_N} ,t_1+\ceil{\delta\ell_N}]$. In total there are  $2N\cdot(3\ceil{\delta\ell_N} + 2)$ jumps performed during these two time intervals. By a union bound over these jumps, we get
\begin{align*}
\prob(\Cc_6^c) & = \prob(\exists (i,b,s)\in\Nonetwo \times (\llbracket t_2,t_2+\ceil{\delta\ell_N} \rrbracket \cup \llbracket t_1-\ceil{\delta \ell_N}, t_1 + \ceil{\delta \ell_N} \rrbracket ):\: X_{i,b,s} > \rho a_N) \\ &
\leq 2N(3\delta \ell_N +5) h(\rho a_N)^{-1} 
\leq 6\delta\rho^{-\alpha} (1+2\delta^{-1} \ell_N^{-1}) 
< \frac{\eta}{1000},\numberthis\label{eq: PC6}
\end{align*}
for $N$ sufficiently large,
where in the second inequality we used  \eqref{eq: hr2Nl}, and the last inequality follows by the choice of $\delta$ in \eqref{delta} and by \eqref{eq: rho}.\\

\noindent
\textbf{The event $\Cc_7$} gives an upper bound on the number of big jumps (see \eqref{eq: C7}). There are $8N\ell_N$ jumps performed in the time interval $[t_4,t-1]$; by Markov's inequality and then by~\eqref{eq: hr2Nl}, we have
\begin{align*}
\prob(\Cc_7^c) & = \prob(\#\bracing{(i,b,s)\in \Nonetwo \times \llbracket t_4, t-1 \rrbracket : X_{i,b,s}> \rho a_N} > K) \\ &
\leq \frac{8N \ell_N h(\rho a_N)^{-1}}{K} 
\leq \frac{8}{K}\rho^{-\alpha} 
< \frac{\eta}{1000}\numberthis\label{eq: PC7}
\end{align*}
for $N$ sufficiently large,
where the last inequality follows by the choice of $K$ in \eqref{K} and by \eqref{eq: rho}.
\\

\noindent
\textbf{The event $\Dd_1$} (see \eqref{eq: D1}) has the same definition as that of $\Cc_2$ (see \eqref{eq: C2}), except with different constants. By the same argument as for \eqref{eq: hzc1}, using the definition of $c_3$ in~\eqref{c2}, for $N$ sufficiently large we have
\begin{align}\label{eq: hzc2}
h(z-3c_3a_N)^{-1} - h(z+3c_3a_N)^{-1} \leq h(a_N)^{-1}(24\alpha\cdot 2^\alpha c_4^{-\alpha-1}c_3 + 2\eta^4) \quad \forall z \geq c_4a_N.
\end{align}
Then continuing in the same way as after \eqref{eq: hzc1} we obtain 
\begin{align}\label{eq: PD1}
\prob(\Dd_1^c) \leq 6 (24\alpha 2^\alpha c_4^{-\alpha-1}c_3 + 2\eta^4)  < 18\eta^4 < \frac{\eta}{1000},
\end{align}
for $N$ sufficiently large, by \eqref{eq: c4alpha} and \eqref{eq: eta1000}.\\

\noindent
\textbf{The event $\Dd_2$} in \eqref{eq: D2} says that in every interval of length $c_5\ell_N$ in $[t_3,t_1]$ there is a particle which performs a jump of size greater than~$2c_4a_N$. We introduce a slightly different event to show that $\Dd_2$ happens with high probability. Let us divide the interval $[t_3,t_1]$ into subintervals of length
$\frac{1}{2}c_5 \ell_N$, to get $\ceil{4c_5^{-1}}$ subintervals (the last subinterval may end after time $t_1$). If a jump of size greater than $2c_4a_N$ happens in each of these subintervals then $\Dd_2$ occurs. We describe this formally by the following event:
\begin{equation*}
\tilde \Dd_2 := \bracing{ \begin{array}{l}
\forall m \in\bracing{1,2,\dots,\ceil{4c_5^{-1}}}, \\ \exists(k,b,s)\in\Nonetwo\times \llbracket t_3+(m-1)\frac{1}{2}c_5 \ell_N,t_3+m\frac{1}{2}c_5 \ell_N \rrbracket :\: \\
X_{k,b,s} > 2c_4a_N
\end{array}
 };
\end{equation*}
as mentioned above, if $\tilde \Dd_2$ occurs then $\Dd_2$ occurs.
The complement event of $\tilde \Dd_2$ is that there is a subinterval in which every jump made by a particle has size at most $2c_4a_N$. 
Note that in each subinterval $\llbracket t_3+(m-1)\frac{1}{2}c_5 \ell_N,t_3+m\frac{1}{2}c_5 \ell_N \rrbracket$, there are at least $2N \cdot \frac 12 c_5 \ell_N$ jumps.
Therefore, by a union bound, we have
\begin{align*}
\prob(\Dd_2^c) \leq \prob(\tilde{\Dd}_2^c)  
\leq \ceil{4c_5^{-1}} \left(1 - \frac{1}{h(2c_4a_N)}\right)^{c_5 \ell_N N}
& 
\leq (4c_5^{-1}+1) \exp\left(-\frac{c_5N \ell_N}{h(2c_4a_N)}\right)
\\ &
\leq
5c_5^{-1} \exp\left(-\frac{c_5(2c_4)^{-\alpha}}{4}\right),\numberthis\label{eq: PD2}
\end{align*}
where in the third inequality we use that $1-x\le e^{-x}$ for $x\geq 0$, and the fourth inequality follows by~\eqref{eq: hr12Nl} for $N$ sufficiently large and since $c_5<1$. Now note that by \eqref{c3},
\begin{equation*}
c_5 c_4^{-\alpha} = c_5^{1-4\alpha/(1\wedge\alpha)} \ge c_5^{-3} > 2^{2+\alpha}\log\Big(\frac{5000}{c_5\eta}\Big),
\end{equation*}
where the last inequality holds because $c_5^{-1} > 2^{2+\alpha}$ by \eqref{eq: cj}, $0 < \log x < x$ for $x>1$, and $c_5^{-1} > \frac{5000}{\eta}$ by \eqref{eq: cjeta}. Substituting this into \eqref{eq: PD2} shows that $\prob(\Dd_2^c) < \eta/1000$.\\

\noindent
\textbf{The event $\Dd_3^c$} defined in \eqref{eq: D3} says that every jump in the time interval $[t_2, t_2 + \ceil{\ell_N/2}]$ has size at most $2c_6 a_N$. There are at least $N\ell_N$ jumps in this time interval, and so for $N$ sufficiently large, since $e^{-x}\ge 1-x$ for $x\ge 0$, and then by~\eqref{eq: hr12Nl},
\begin{align*}
\prob(\Dd_3^c) \leq \left(1 - \frac{1}{h(2c_6a_N)}\right)^{N\ell_N} \leq  \exp\left(-\frac{N\ell_N}{h(2c_6a_N)}\right) 
\leq \exp\left(-\frac{(2c_6)^{-\alpha}}{4}\right). \numberthis\label{eq: PD3}
\end{align*}
Now \eqref{c5} and \eqref{eq: eta} tell us that $c_6^{-\alpha} = \eta^{-2\alpha} > 2^{\alpha+2}\log(\frac{1000}{\eta})$, and substituting this into \eqref{eq: PD3} shows that $\prob(\Dd_3^c)<\eta/1000$.\\

\noindent
\textbf{The event $\Dd_4^c$} 
(see \eqref{eq: D4}) says that in the time interval $[t_2-\ceil{c_5\ell_N},t_2]$, a particle performs a jump of size greater than $c_6a_N$ (recall from~\eqref{c5} and~\eqref{c4} that $c_5 \ll c_6$). 
Since there are at most $2N(\ceil{c_5 \ell_N}+1)\leq 2N(c_5 \ell_N+2)$ jumps in the time interval $[t_2-\ceil{c_5 \ell_N}, t_2]$, by a union bound,
\begin{align*}
\prob(\Dd_4^c) & = \prob(\exists (i,b,s)\in\Nonetwo \times \llbracket t_2 - \lceil c_5 \ell_N \rceil , t_2 \rrbracket :\: X_{i,b,s} > c_6a_N) \\ &
\leq \frac{2N(c_5 \ell_N +2)}{h(c_6 a_N)} 
\leq 2c_5c_6^{-\alpha}(1+2c_5^{-1} \ell_N^{-1}) 
\leq 4\eta^{6(1\vee\alpha)}\eta^{-2\alpha} 
< \frac{\eta}{1000},\numberthis\label{eq: PD4}
\end{align*}
for $N$ sufficiently large, where in the second inequality we use \eqref{eq: hr2Nl}, the third inequality holds by the choices in \eqref{c4} and \eqref{c5} for $N$ sufficiently large, and the fourth follows by \eqref{eq: eta1000}.\\

\noindent
\textbf{The event $\Dd_5^c$} (see \eqref{eq: D5}) says that in a short time interval after time $\tau_2$ (defined in \eqref{eq: tau}) a jump is performed whose size falls into a small interval, $(2c_4a_N,(2c_4+3c_3)a_N]$. We can see from the definition of $\tau_2$ as the first time after $t_2$ when the diameter is at most $\frac{3}{2}c_4a_N$, that $\tau_2$ is a stopping time. Therefore we can condition on $\F_{\tau_2}$, and apply the strong Markov property. By Markov's inequality we have
\begin{align*}
\prob(\Dd_5^c) & = \prob(\exists (k,b,s)\in\Nonetwo \times \llbracket \tau_2,\tau_2 + c_5\ell_N \rrbracket :\:  X_{k,b,s}\in (2c_4a_N,(2c_4+3c_3)a_N]) \\ &
\leq \expect\left[\expect[\#\bracing{(k,b,s)\in\Nonetwo \times \llbracket \tau_2,\tau_2 + c_5\ell_N \rrbracket :\:  X_{k,b,s}\in (2c_4a_N,(2c_4+3c_3)a_N]}\left| \mathcal{F}_{\tau_2} \right.] \right]. 
\end{align*}
Note that if $\tau_2<\infty$ then during the time interval $[\tau_2,\tau_2 + c_5\ell_N]$ there are at most $2N(c_5\ell_N+1)$ jumps; it follows that
\begin{align*}
\prob(\Dd_5^c)&\le \expect\Bigg[ \sum_{(k,b,s)\in\Nonetwo\times \llbracket \tau_2,\tau_2 + c_5\ell_N \rrbracket} \prob\left(\left. X_{k,b,s}\in (2c_4a_N, (2c_4+3c_3)a_N] \right| \mathcal F_{\tau_2} \right)\mathds{1}_{\bracing{\tau_2<\infty}}\Bigg]
\\ &
\leq 2N(c_5\ell_N+1)\left(h(2c_4a_N)^{-1} - h((2c_4+3c_3)a_N)^{-1}\right)
\numberthis\label{eq: PD5c2c3}
\end{align*}
by the strong Markov property.
Now we can use the monotonicity of $h$ and then the upper bound~\eqref{eq: hzc2} to get
\begin{align*}
h(2c_4a_N)^{-1} - h((2c_4+3c_3)a_N)^{-1} & \leq h((2c_4-3c_3)a_N)^{-1} - h((2c_4+3c_3)a_N)^{-1}
\\ &
\leq 
h(a_N)^{-1}(24\alpha\cdot 2^\alpha c_4^{-\alpha-1}c_3 + 2\eta^4)\numberthis\label{eq: hc3}
\end{align*}
for $N$ sufficiently large.
Therefore, by \eqref{eq: PD5c2c3}, \eqref{eq: hc3}, and \eqref{eq: haNasymp}, we have that for $N$ sufficiently large,
\begin{equation}\label{eq: PD5}
\prob(\Dd_5^c) \leq (1+c_5^{-1} \ell_N^{-1})c_5(1 + \eta^4) (24\alpha 2^\alpha c_4^{-\alpha-1}c_3 + 2\eta^4) < 4c_5\cdot 3\eta^4 < \frac{\eta}{1000},
\end{equation}
where in the second inequality we use \eqref{eq: c4alpha} and that $(1+c_5^{-1} \ell_N^{-1})(1 + \eta^4)<4$ for $N$ sufficiently large, and the last inequality follows by \eqref{eq: cjeta} and \eqref{eq: eta1000}. This concludes the proof of Lemma~\ref{lemma: probC}.
\end{proof}

We have seen in Lemma~\ref{lemma: probC} above that with an appropriate choice of constants, the probabilities of the events $\Cc_2$ to $\Cc_7$ and $\Dd_1$ to $\Dd_5$ which imply $\Aa_1$ and $\Aa_3$ are close to 1. We can now use this to prove Proposition~\ref{prop: A1A3}.

\begin{proof}[Proof of Proposition~\ref{prop: A1A3}]
Take $\eta \in (0,1]$. Without loss of generality, we can assume that $\eta$ is sufficiently small that it satisfies~\eqref{eq: eta}. Then choose $K$, $\gamma$, $\delta$, $\rho$, $c_1,\dots,c_6$ as in \eqref{c5}-\eqref{K} (at the beginning of Section~\ref{sect: probCD}). Note that before stating Lemma~\ref{lemma: probC} we checked that these constants also satisfy~{\eqref{eq: const1}-\eqref{eq: const5}}. Therefore by Proposition~\ref{prop: B} and Proposition~\ref{prop: C},
for $N$ sufficiently large and $t>4\ell_N$,
$$
\bigcap_{j=2}^7 \Cc_j \cap \bigcap_{i=1}^5 \Dd_i \subseteq \Aa_1 \cap \Aa_3.
$$
Therefore, for $N$ sufficiently large and $t>4\ell_N$, by a union bound,
\begin{align*}
\prob((\Aa_1 \cap \Aa_3)^c) & \leq \prob\left(\left(\bigcap_{j=2}^7 \Cc_j \cap \bigcap_{i=1}^5 \Dd_i \right)^c\right)   
\leq \sum_{j=2}^{7}\prob(\Cc_j^c) + \sum_{i=1}^{5 } \prob(\Dd_i^c)
< \eta
\end{align*}
by Lemma~\ref{lemma: probC}, which completes the proof.
\end{proof}

\section{Proof of Proposition~\ref{prop: A4}: star-shaped coalescence}\label{sect: star-shaped}

We will prove Proposition~\ref{prop: A4} in this section. So far we have proved Proposition~\ref{prop: A1A3}, which says that with high probability the common ancestor of the majority of the population at time $t$ is particle $(N,T)$, where $T$ is given by \eqref{eq: T}; in particular, $T$ is between times $t_2$ and~$t_1$. Now recall the notation introduced in \eqref{eq: Teps}-\eqref{eq: A4}. Proposition~\ref{prop: A4} says that for $\nu>0$, with high probability, every particle in the set $\Nn_{N,T}(T+\eps_N\ell_N)$ has at most $\nu N$ surviving descendants at time $t$, where we may assume that $(\eps_N)_{N\in \Nzero}$ satisfies
\begin{align}\label{eq: epsN}
\eps_N\ell_N\in\N_0\:\:\forall N\ge 1, \quad\quad \eps_N\ell_N \to \infty \text{ as }N\to\infty \quad \text{ and } \quad \eps_N \leq \frac{1}{4}\frac{\log_2 \ell_N}{\ell_N}\:\: \forall N \geq 1.
\end{align}
The first two of these assumptions on $\eps_N$ are from \eqref{eq: epsN1}. The third can be made without loss of generality, because if $\eps_N'>\eps_N$, and every particle in $\Nn_{N,T}(T+\eps_N\ell_N)$ has at most $\nu N$ surviving descendants at time $t$, then certainly every particle in $\Nn_{N,T}(T+\eps_N'\ell_N)$ has at most $\nu N$ surviving descendants at time $t$.

Fix $\eta \in (0,1]$ sufficiently small that it satisfies~\eqref{eq: eta}. Then choose $K$, $\gamma$, $\delta$, $\rho$, $c_1,\dots,c_6$ as in \eqref{c5}-\eqref{K}. Then take $N$ sufficiently large that Proposition~\ref{prop: A1A3} and Lemma~\ref{lemma: probC} hold for our chosen constants, and take $t > 4\ell_N$. Let $\nu > 0$ be fixed and let us write $\Aa_4 := \Aa_4(\nu)$ from now on.

\subsection{Strategy}\label{sect: star-shaped-strategy}
Our strategy for showing Proposition~\ref{prop: A4} is to give a lower bound on the position of the leftmost particle at time $t$ with high probability, and then bound the number of time-$t$ descendants of each particle in $\Nn_{N,T}(\Teps)$ which can reach that lower bound by time $t$. We will be able to control the number of such descendants because of Corollary~\ref{cor: path}. Assume that we know $\Xone(t)\geq \XN(T)+\hat a_{T,N}$, where $\hat a_{T,N} > N^\lambda$ for some $\lambda>0$, but $\hat a_{T,N}\ll a_N$. Then Corollary~\ref{cor: path} implies that with high probability all surviving particles at time $t$ must have an ancestor which made a jump of size greater than $r\hat a_{T,N}$ for an appropriate choice of~$r\in (0,1)$. So given a particle $i\in\Nn_{N,T}(\Teps)$, we can find an upper bound for the number of its time-$t$ descendants with high probability, by considering the number of its descendants which made a jump of size greater than $r\hat a_{T,N}$ before time $t$. Thus, we should choose $\hat a_{T,N}$ such that we have $\Xone(t)\geq \XN(T)+\hat a_{T,N}$ with high probability, and also such that we can get a good enough upper bound for each $D_i$ (see \eqref{eq: Di}) from Corollary~\ref{cor: path} to conclude Proposition~\ref{prop: A4}.

We now give a sketch argument to motivate our choice of lower bound on $\Xone(t)$. Assume that $T\in[t_2+\ceil{\delta\ell_N}, t_1-\ceil{\delta\ell_N}]$. We also assume that the record set at time $T$ is not broken by a big jump before time $t_1+\delta\ell_N$, and so almost all the descendants of particle $(N,T)$ survive between times $T$ and $T+\ell_N$. This all happens with high probability, as we saw in Section \ref{sect: probabilities}; in particular recall the event $\Cc_6$ from \eqref{eq: C6}. Set $\theta_{T,N} := (t_1-T)/\ell_N$.

Note that if a descendant of particle $(N,T)$ makes a jump of size greater than $\hat a_{T,N}$ at time $T+k$ for some $k\in[(1-\delta)\ell_N,\ell_N]$, then it can have $2^{(1+\theta_{T,N})\ell_N - k}$ descendants at time $t$, and all of these descendants are to the right of $\XN(T)+\hat a_{T,N}$. Also, there are approximately $2^k$ particles in the leading tribe descending from $(N,T)$ at time $T+k$. Therefore, we expect that jumps of size greater than $\hat a_{T,N}$, performed by the descendants of $(N,T)$ in the time interval $[T+(1-\delta)\ell_N, T+\ell_N]$, contribute to the number of particles to the right of $\XN(T)+\hat a_{T,N}$ at time $t$ by roughly
\begin{equation*}
\sum_{k\in \llbracket (1-\delta)\ell_N, \ell_N \rrbracket} 2^k\cdot 2^{(1+\theta_{T,N})\ell_N - k}\frac{1}{h(\hat a_{T,N})}\approx \delta \ell_N 2^{(1+\theta_{T,N})\ell_N }\frac{1}{h(\hat a_{T,N})}.
\end{equation*}
If we want to make sure that all the $N$ particles are to the right of $\XN(T)+\hat a_{T,N}$ at time $t$, then the above should be approximately $N$, and so $\hat a_{T,N}$ should be roughly $h^{-1}(\delta\ell_N N^{\theta_{T,N}})$.

There are several potential inaccuracies in this argument. For example, the descendants of a particle making a jump of size greater than $\hat a_{T,N}$ do not necessarily all survive until time $t$. We will use a reasoning similar to Lemma~\ref{lemma: descendants_est} to clarify this issue. Another problem might occur if a particle $(i,T+k)$ makes a jump of size greater than $\hat a_{T,N}$, and then at time $T+k+1$, its offspring does the same. In this case our sketch argument double counts the time-$t$ descendants of particle $(i,T+k)$. We will therefore make some adjustments in the rigorous proof to avoid double counting.  

In Sections~\ref{sect:stoppingtimes} to \ref{sect: withBigJump} below, we will make the sketch argument precise, then use Corollary~\ref{cor: path} to see that with high probability, particles must have at least one jump greater than a certain size~(roughly but not exactly $h^{-1}(\delta\ell_N N^{\theta_{T,N}})$) in their ancestry to survive until time $t$. Finally, for each particle~$(i,\Teps)$, we upper bound the number of particles at time $t$ which descend from particle $(i,\Teps)$ and have a jump greater than this certain size in their ancestry between times $\Teps$ and $t$. 

\subsection{Sequence of stopping times} \label{sect:stoppingtimes}
In the strategy above we suggested that $h^{-1}(\delta\ell_N N^{\theta_{T,N}})$ should be a good lower bound for $\X_1(t)-\X_N(T)$. A problem with this lower bound is that it depends on $T$, and conditioning on~$T$ would change the distribution of the process, as $T$ is not a stopping time; see the definition in~\eqref{eq: T}.

Note however, that the first, second, $\dots, n$th times after time $t_2$ at which a jump of size greater than~$\rho a_N$ breaks the record between times $t_2$ and $t_1$, are stopping times, and $T$ is equal to one of these times with high probability. Furthermore, the number of such times is at most~$K$ with high probability, by Lemma~\ref{lemma: probC} and the definition of the event $\Cc_7$. Therefore, we can define a finite set of stopping times in such a way that~$T$ is in the set with high probability. Then we can prove a similar statement to Proposition~\ref{prop: A4} for each stopping time in the finite set with the strategy described in the previous section. This will be enough to prove Proposition~\ref{prop: A4}.

Recall the definition of $\Sbf_N$ in~\eqref{eq: RN}.
Define a sequence of stopping times by setting $T_0 := t_2+\ceil{\delta \ell_N}-1$, and
\begin{equation}\label{eq: Tn}
T_n := 1 + \inf\bracing{\Sbf_N(\rho) \cap [T_{n-1}, t_1 - \ceil{\delta \ell_N}-1]}, 
\end{equation}
for $n \in \N$; let $T_n := t_1$ if the intersection above is empty.

For all $n\in \N$, we introduce some new notation which will be frequently used in the course of the proof. First we let
\begin{equation}\label{eq: Tneps}
\Tneps := T_n + \eps_N \ell_N.
\end{equation}
The set and number of time-$t$ descendants of the $i$th particle at time $\Tneps$ will be denoted by
\begin{equation}\label{eq: Nin}
\Nn_{i,n} := \Nn_{i,\Tneps}(t) \quad \text{ and } \quad D_{i,n} := |\Nn_{i,n}|.
\end{equation}
We also introduce
\begin{equation}\label{eq: thetan}
\theta_{n,N} := \frac{(t_1 - T_n)}{\ell_N}\ge 0.
\end{equation}
Take $0<\delta_1<\delta / 8$ and set
\begin{equation}\label{eq: hat_an}
\hat a_{n,N} := h^{-1}(\delta_1 N^{\theta_{n,N}}\ell_N),
\end{equation}
where $h^{-1}$, defined in \eqref{eq: hinv}, is the generalised inverse of $h$ from \eqref{eq: poly_tail}. We explained the motivation for this definition of $\hat a_{n,N}$ in Section~\ref{sect: star-shaped-strategy}. By the same argument as for \eqref{eq: haNasymp} (and since $\delta_1 N^{\theta_{n,N}}\ell_N \ge \delta_1 \ell_N$)
 we have that for $\epsilon>0$, for $N$ sufficiently (deterministically) large, for each $n\in \N$,
\begin{equation}\label{eq: hhataNasymp}
\frac{\delta_1 N^{\theta_{n,N}}\ell_N}{h(\hat a_{n,N})} \in [1-\epsilon,1+\epsilon].
\end{equation}
We note that $\hat a_{n,N}$ is roughly $N^{\theta_{n,N}/\alpha}$; in particular, if $h(x) = x^\alpha$ for $x \ge 1$ then $\hat a_{n,N} = (\delta_1 N^{\theta_{n,N}}\ell_N)^{1 / \alpha}$.

Take $0<\delta_2<\delta^2$.
Throughout Section~\ref{sect: star-shaped} we will use the term `medium jump' for jumps of size greater than $\delta_2 \hat a_{n,N}$, as the relevant space scale in this section is $\hat a_{n,N}$. We denote the set of medium jumps on a time interval~${[s_1,s_2]\subseteq [t_2,t-1]}$ by
\begin{align}\label{eq: hat_Bn1}
\Mm_{n,N}^{[s_1,s_2]} := \bracing{(k,b,s)\in\Nonetwo\times \llbracket s_1,s_2 \rrbracket :\: X_{k,b,s}> \delta_2 \hat a_{n,N}},
\end{align}
and we let
\begin{equation}\label{eq: hat_Bn2}
\Mm_{n,N} := \Mm_{n,N}^{[t_2,t-1]}.
\end{equation}

The stopping times $(T_n)_{n\in \N}$ allow us to give an upper bound on the probability of $\Aa_4^c$. 
Suppose $|B_N^{[t_2,t_1]}|\le K$ and $T\in[t_2 + \ceil{\delta \ell_N}, t_1 - \ceil{\delta \ell_N}]$.
Then $|\Sbf_N(\rho)\cap [t_2,t_1]|\le K$ by the definition of $\Sbf_N$ in~\eqref{eq: RN}, and so by the definition of $T$ in~\eqref{eq: T} and the definition of $T_n$ in~\eqref{eq: Tn}, it follows that $T=T_n$ for some $n\in [K]$.
Hence, by the definition of $\Aa_4$ in~\eqref{eq: A4} and then by a union bound,
\begin{align} \label{eq:A4cunion}
\prob(\Aa_4^c) &=\prob\bigg(\max_{i\in\Nn_{N,T}(\Teps)} D_i > \nu N\bigg) \notag
\\ 
&\leq 
\prob\bigg(\exists n\in[K]:\:T_n \leq t_1 - \ceil{\delta \ell_N}  \text{ and } \max_{i\in\Nn_{N,T_n}(\Tneps)} D_{i,n}> \nu N \bigg) \notag 
\\  &\quad 
\hspace{35mm} + \prob\big(|B_N^{[t_2,t_1]}| > K\big) + \prob\big(T\notin[t_2 + \ceil{\delta \ell_N}, t_1 - \ceil{\delta \ell_N}]\big).
\end{align}
By the definition of the event $\Cc_7$ in~\eqref{eq: C7} and by Lemma~\ref{lemma: probC}, 
$$\prob(|B_N^{[t_2,t_1]}| > K)\le \prob(\Cc_7^c)< \frac{\eta}{1000}.$$
Then by the definition of the event $\Aa_3$ in~\eqref{eq: A3} and by Proposition~\ref{prop: A1A3},
$$
\prob(T\notin[t_2 + \ceil{\delta \ell_N}, t_1 - \ceil{\delta \ell_N}]) \le \prob(\Aa_3^c)<\eta.
$$
Therefore, applying a union bound for the first term on the right-hand side of~\eqref{eq:A4cunion}, we obtain
\begin{align*}
\prob(\Aa_4^c) & 
\leq
\expect\left[\sum_{n = 1}^{K}
\mathds{1}_{\bracing{T_n \leq t_1 - \ceil{\delta \ell_N}}}
\prob\bigg(\max_{i\in\Nn_{N,T_n}(\Tneps)} D_{i,n}> \nu N\; \bigg|\; \F_{T_n}\bigg)\right] + \frac{1001}{1000}\eta .\numberthis\label{eq: DiTn}
\end{align*}
From now on we aim to show that each term of the sum inside the expectation is small. For all $n\in \N$, we let $\prob_{T_n}$ denote the law of the~$N$-BRW conditioned on~$\F_{T_n}$:
\begin{align}\label{eq: PTn}
\prob_{T_n}(\cdot) := \prob(\left.\cdot\;\right|\;\F_{T_n}) \quad \text{ and }\quad \expect_{T_n}[\cdot] := \expect[\left.\cdot\;\right|\;\F_{T_n}].
\end{align}

\subsection{Proof of Proposition~\ref{prop: A4}}
We now state the most important intermediate results in the proof of Proposition~\ref{prop: A4}, and show that they imply the result. We then prove these intermediate results in Sections~\ref{sect: noBigJump} and \ref{sect: withBigJump}. 


Our first main intermediate result says that the probability that a particle in $\Nn_{N,T_n}(\Tneps)$ has a descendant at time $t$ such that there is no medium jump on the path between the particle and the descendant is small. We prove this result in Section~\ref{sect: noBigJump}. 
\begin{lemma}\label{lemma: noBigJump}
For all $N$ sufficiently large, $t>4\ell_N$, and $n\in\N$ with $T_n < t_1$,
\begin{align*}
\probT\left( \exists i\in\Nn_{N,T_n}(\Tneps), k\in\Nn_{i,n}:\: P_{i,\Tneps}^{k,t}\cap \Mm_{n,N} = \emptyset \right) < \frac{\eta}{100K},
\end{align*}
where $T_n$, $\Tneps$ and $\probT$ are given by \eqref{eq: Tn}, \eqref{eq: Tneps} and \eqref{eq: PTn}, $\Nn_{N,T_n}(\Tneps)$ and $\Nn_{i,n}$ are defined in~\eqref{eq: N} and~\eqref{eq: Nin},~$P_{i,\Tneps}^{k,t}$ in \eqref{eq: P}, and $\Mm_{n,N}$ in \eqref{eq: hat_Bn2}.
\end{lemma}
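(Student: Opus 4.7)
The plan is to combine a lower bound on $\Xone(t)$ (envisaged in Section~\ref{sect: star-shaped-strategy}) with Corollary~\ref{cor: path}, and then separately rule out that the required medium jump could occur in the short window $[T_n,\Tneps-1]$. I would work under $\probT$ throughout and introduce three events:
\begin{align*}
\mathcal E_1&:=\{\Xone(t)\ge \XN(T_n)+\hat a_{n,N}\},\\
\mathcal E_2&:=\{\text{no jump of size}>\delta_2\hat a_{n,N}\text{ is made by any ancestor of any }i\in\Nn_{N,T_n}(\Tneps)\},\\
\mathcal E_3&:=\text{the conclusion of Corollary~\ref{cor: path} with }x_N=\hat a_{n,N},\ r=\delta_2.
\end{align*}
Suppose $\mathcal E_1\cap \mathcal E_2\cap \mathcal E_3$ holds and the event of the lemma occurs for some $(i,k)$. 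The path $P_{N,T_n}^{k,t}$ has sum $\X_k(t)-\XN(T_n)\ge \Xone(t)-\XN(T_n)\ge \hat a_{n,N}$, so by $\mathcal E_3$ it must contain a jump of size $>\delta_2\hat a_{n,N}$. By $\mathcal E_2$ this medium jump cannot lie in $[T_n,\Tneps-1]$, so it must lie in $[\Tneps,t-1]$, i.e., in $P_{i,\Tneps}^{k,t}$, contradicting the assumption $P_{i,\Tneps}^{k,t}\cap \Mm_{n,N}=\emptyset$. Hence it suffices to bound $\probT(\mathcal E_j^c)$ by $\eta/(300K)$ for $j=1,2,3$.

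The event $\mathcal E_3$ is immediate: since $\hat a_{n,N}\ge N^{\delta/(2\alpha)}$ for $N$ large (by regular variation and $\theta_{n,N}\ge \delta$) and $\delta_2<\delta^2$, Corollary~\ref{cor: path} applies with $\lambda=\delta/(2\alpha)$ and $r=\delta_2$, giving $\probT(\mathcal E_3^c)\le C/N$. For $\mathcal E_2$, note that $|\Nn_{N,T_n}(\Tneps)|\le 2^{\eps_N\ell_N}\le \ell_N^{1/4}$ by the choice of $\eps_N$, so the total number of jumps made by ancestors of these particles in $[T_n,\Tneps-1]$ is $O(\ell_N^{1/4})$. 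Each such jump has probability of order $1/(N^{\theta_{n,N}}\ell_N)$ of exceeding $\delta_2\hat a_{n,N}$, and Markov's inequality then yields $\probT(\mathcal E_2^c)=O(\ell_N^{-3/4}/N^{\theta_{n,N}})=o(1)$.

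The main obstacle is $\mathcal E_1$. The strategy is to use the BRW-without-selection construction of Section~\ref{sect: BRW}: count descendants of $(N,T_n)$ in the sub-BRW rooted at $(N,T_n)$ whose positions are $\ge \XN(T_n)+\hat a_{n,N}$ at time $t$. Having at least $N$ such descendants forces $\Xone(t)\ge \XN(T_n)+\hat a_{n,N}$ in the $N$-BRW, because the $N$-BRW retains the $N$ rightmost particles across all sub-BRWs. The heuristic of Section~\ref{sect: star-shaped-strategy} gives expected count of order $N(1+\theta_{n,N})/\delta_1$, which exceeds $N$ by a large factor since $\delta_1<\delta/8$ can be chosen small. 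The hard part is upgrading this expectation to a high-probability bound, because path sums in the sub-BRW are strongly positively correlated through their shared ancestry, so a naive Chebyshev-type approach fails. I would instead employ a careful second-moment analysis that partitions contributions according to the depth of the first large jump on each path, together with Lemma~\ref{lemma: descendants_est}, which ensures that a single jump of size $\ge \hat a_{n,N}$ by a descendant of $(N,T_n)$ at any time $\le t_1$ already forces all $N$ particles past $\XN(T_n)+\hat a_{n,N}$ by time $t$.
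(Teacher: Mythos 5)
Your reduction to the three events $\mathcal E_1,\mathcal E_2,\mathcal E_3$ is exactly the paper's decomposition (compare \eqref{eq: X1smallbig} and \eqref{eq: noBigJumpX1big}), and your bounds on $\probT(\mathcal E_2^c)$ and $\probT(\mathcal E_3^c)$ match the paper's union bound \eqref{eq: rBigJump} and its application of Corollary~\ref{cor: path}. The gap is $\mathcal E_1$, which is the substance of the lemma and which the paper isolates as Lemma~\ref{lemma: leftmost}: you correctly flag it as the hard part, but you do not prove it, and the route you sketch has two concrete problems.

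First, a second-moment bound on the number of time-$t$ descendants of $(N,T_n)$ beyond $\XN(T_n)+\hat a_{n,N}$ fails if you count contributions from all generations: a jump of size $>\hat a_{n,N}$ made at time $T_n+j$ carries weight roughly $2^{(1+\theta_{n,N})\ell_N-j}$, and the $j=O(1)$ terms alone contribute variance of order $N^{2+\theta_{n,N}}/(\delta_1\ell_N)$ against $\mu^2$ of order $N^2/\delta_1^2$, so $\mathrm{Var}/\mu^2\sim \delta_1 N^{\theta_{n,N}}/\ell_N\to\infty$ and no Chebyshev-type argument closes. The paper's fix is to count only jumps made in the last $\delta_3\ell_N$ generations before $T_n+\ell_N$ (the times $S_k$), where the weights $2^{k+\theta_{n,N}\ell_N-1}$ are small enough for the weighted sum to concentrate, while the expectation over this short window is still at least of order $\delta_3 N/\delta_1\gg N$ since $\delta_1<\delta_3/8$; you never identify this restriction. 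Second, the correlation problem you raise is not to be solved by analysing path sums but is dissolved by changing what is counted: the paper counts, for each \emph{single} jump of size $>\hat a_{n,N}$ made at time $S_k$ by a particle already at position $\ge\XN(T_n)$ (there are at least $2^{\ell_N-k}$ candidates by Lemma~\ref{lemma: descendants_est}, modulo case (b) of the paper's proof), the deterministic number $2^{k+\theta_{n,N}\ell_N-1}$ of its potential time-$t$ descendants; the resulting Bernoulli indicators are genuinely independent and \cite[Theorem 2.3(c)]{McDiarmid1998} applies. Relatedly, your claim that $N$ such descendants in the unselected sub-BRW forces $\Xone(t)\ge\XN(T_n)+\hat a_{n,N}$ is not immediate, since an ancestor of a counted particle may be killed by selection while still to the \emph{left} of the threshold; the correct dichotomy (used around \eqref{eq: G1}) is that either every descendant of every such large jump survives, in which case all are beyond the threshold by non-negativity of jumps, or some particle already beyond the threshold is killed, in which case $\Xone\ge\XN(T_n)+\hat a_{n,N}$ follows at once by monotonicity. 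Without the late-window restriction and this decoupling, your argument for $\mathcal E_1$ does not go through.
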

Our second intermediate result 
says that with high probability, for each $i\in\Nn_{N,T_n}(\Tneps)$, there cannot be more than $\nu N$ time-$t$ descendants of particle $(i,\Tneps)$ if each descendant has a medium jump on their path. We prove this result in Section~\ref{sect: withBigJump}.

\begin{lemma}\label{lemma: withBigJump}
For all $N$ sufficiently large, $t>4\ell_N$, and $n\in \N$ with $T_n < t_1$,
\begin{align*}
\probT\left(\exists i\in\Nn_{N,T_n}(\Tneps):\: D_{i,n} > \nu N \text{ and }P_{i,\Tneps}^{k,t}\cap \Mm_{n,N} \neq \emptyset \;\, \forall k\in\Nn_{i,n}\right) < \frac{\eta}{100K},
\end{align*}
where $T_n$, $\Tneps$ and $\probT$ are given by \eqref{eq: Tn}, \eqref{eq: Tneps} and \eqref{eq: PTn}, $\Nn_{N,T_n}(\Tneps)$, $\Nn_{i,n}$ and $D_{i,n}$ are defined in \eqref{eq: N} and \eqref{eq: Nin},~$P_{i,\Tneps}^{k,t}$ in \eqref{eq: P}, and $\Mm_{n,N}$ in \eqref{eq: hat_Bn2}.
\end{lemma}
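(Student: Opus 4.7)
The plan is to reduce the question, via the coupled BRW construction of Section~\ref{sect: BRW}, to bounding the size of a random subset of a deterministic binary tree, and then to apply a concentration result from~\cite{McDiarmid1998}.

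First I would translate to the BRW picture: for each $i\in\Nn_{N,T_n}(\Tneps)$, the set $\Nn_{i,n}$ of time-$t$ descendants in the $N$-BRW embeds, via the map $\sigma$ of~\eqref{eq: sigma}, into the set of BRW descendants of $\sigma(i,\Tneps)$ at time $t$, which has deterministic cardinality $2^{t-\Tneps}$. On the event in the statement, each element of $\Nn_{i,n}$ has a medium jump (i.e.\ a jump exceeding $\delta_2\hat a_{n,N}$) on its path from $(i,\Tneps)$, so
\[
D_{i,n}\;\leq\;\tilde f_{i,n}\;:=\;2^{t-\Tneps}-\tilde g_{i,n},
\]
where $\tilde g_{i,n}$ is the number of BRW descendants of $\sigma(i,\Tneps)$ at time $t$ whose entire path from time $\Tneps$ consists of jumps of size at most $\delta_2\hat a_{n,N}$.

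Second I would estimate $\expect[\tilde f_{i,n}\mid\F_{T_n}]$. Writing $p=h(\delta_2\hat a_{n,N})^{-1}$ for the per-jump probability of being medium, the paths are i.i.d.\ sequences of $t-\Tneps$ jumps, so each path contains a medium jump with probability $1-(1-p)^{t-\Tneps}\leq p(t-\Tneps)$, and therefore
\[
\expect[\tilde f_{i,n}\mid\F_{T_n}]\;\leq\;p\cdot(t-\Tneps)\cdot 2^{t-\Tneps}.
\]
Using $t-\Tneps=(1+\theta_{n,N}-\eps_N)\ell_N$ and hence $2^{t-\Tneps}\leq 4 N^{1+\theta_{n,N}-\eps_N}$, combined with Potter's estimate (Lemma~\ref{lemma: Potter}) to bound $h(\delta_2\hat a_{n,N})$ below by a constant multiple of $\delta_2^{\alpha}h(\hat a_{n,N})$ and~\eqref{eq: hhataNasymp} for $h(\hat a_{n,N})$, this expectation is bounded by a constant multiple of $N^{1-\eps_N}$, which is at most $\nu N/2$ for $N$ sufficiently large (since $N^{\eps_N}\to\infty$).

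Third I would apply a concentration inequality from~\cite{McDiarmid1998} to $\tilde f_{i,n}$ to show that $\tilde f_{i,n}>\nu N$ with conditional probability at most $\eta/(200K\cdot 2^{\eps_N\ell_N})$. A union bound over $i\in\Nn_{N,T_n}(\Tneps)$, which has cardinality at most $2^{\eps_N\ell_N}$, then completes the proof. The hard step is this concentration estimate: the vanilla bounded-differences inequality is insufficient because, viewing $\tilde f_{i,n}$ as a function of the independent BRW jumps, the Lipschitz constant of a single jump at time $s$ is $c_s=2^{t-s-1}$, and $\sum_e c_e^2\sim 4^{t-\Tneps}$ is too large relative to $(\nu N)^2$ whenever $\theta_{n,N}>\eps_N$. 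One needs a sharper inequality from~\cite{McDiarmid1998} that exploits the sparsity of medium jumps; concretely, one can bound $\tilde f_{i,n}\leq\sum_s B_s\cdot 2^{t-s-1}$, where $B_s$ is the binomial number of medium jumps at time $s$ by BRW descendants of $\sigma(i,\Tneps)$, truncate away a short initial window after $\Tneps$ (in which medium jumps are very rare), and apply a Bernstein-type concentration to the remaining weighted sum where the per-jump weights are small.
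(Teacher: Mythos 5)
Your proposal is correct and follows essentially the same route as the paper: the paper likewise splits time at $t_1+2\eps_N\ell_N$, shows that with probability $1-O(\ell_N^{-1/2})$ no descendant of $(N,T_n)$ makes a medium jump before that time (this uses the standing assumption $\eps_N\le\tfrac14\log_2\ell_N/\ell_N$), and then applies \cite[Theorem~2.3(b)]{McDiarmid1998} to the weighted sum of medium-jump indicators over the later window, where the weights $2^{t-s-1}\le N^{1-2\eps_N}$ are small enough that the resulting tail bound $e^{-cN^{\eps_N}}$ beats the union bound over the at most $2^{\eps_N\ell_N}$ particles. The one imprecision is calling the discarded window `short': it is $[\Tneps,t_1+2\eps_N\ell_N)$, of length $(\theta_{n,N}+\eps_N)\ell_N$, i.e.\ of order $\ell_N$, and it must extend beyond $t_1$ by order $\eps_N\ell_N$ so that the remaining weights are $o(N^{1-\eps_N})$ --- but your two stated requirements (medium jumps rare before the cut, per-jump weights small after it) do pin down the correct cut point.
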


\begin{proof}[Proof of Proposition~\ref{prop: A4}]
Suppose $N$ is sufficiently large that Lemmas~\ref{lemma: noBigJump} and~\ref{lemma: withBigJump} hold.
Take $n\in \N$ and suppose $T_n<t_1$ (which also implies $T_n\leq t_1 - \ceil{\delta\ell_N}$ by the definition \eqref{eq: Tn} of $T_n$). Suppose a particle in $\Nn_{N,T_n}(\Tneps)$ has more than $\nu N$ surviving descendants at time $t$. Then either all the descendants have an ancestor which performed a medium jump between times $\Tneps$ and $t$, or there is at least one particle which survives without a medium jump in its ancestry. Therefore we have
\begin{align*}
&\probT\bigg(\max_{i\in\Nn_{N,T_n}(\Tneps)} D_{i,n}> \nu N\bigg) \\
&\leq\: 
\probT\left( \exists i\in\Nn_{N,T_n}(\Tneps), k\in\Nn_{i,n}:\: P_{i,\Tneps}^{k,t}\cap \Mm_{n,N} = \emptyset \right)
\\ &\quad + \probT\left(\exists i\in\Nn_{N,T_n}(\Tneps):\: D_{i,n} > \nu N \text{ and }P_{i,\Tneps}^{k,t}\cap \Mm_{n,N} \neq \emptyset \;\, \forall k\in\Nn_{i,n}\right)\\
&< \frac{\eta}{50 K} \numberthis\label{eq: DinSplit}
\end{align*}
by Lemmas~\ref{lemma: noBigJump} and~\ref{lemma: withBigJump}.
Then by~\eqref{eq: DiTn}, it follows that
$$
\prob(\Aa_4^c)< K \cdot \frac{\eta}{50 K} +\frac{1001}{1000}\eta <2\eta,
$$
which completes the proof.
\end{proof}

\subsection{Leaders must take medium jumps to survive: proof of Lemma~\ref{lemma: noBigJump}}\label{sect: noBigJump}

There are two key ideas in the proof. First we show that for a fixed $n\in \N$ with $T_n<t_1$, the whole population is to the right of position $\XN(T_n) + \hat a_{n,N}$ at time $t$, with high probability. Second, we prove that with high probability paths cannot reach position $\XN(T_n) + \hat a_{n,N}$ without having a medium jump on the path. 

\begin{lemma}\label{lemma: leftmost}
	For all $N$ sufficiently large, $t>4\ell_N$, and $n\in \N$ with $T_n < t_1$, 
	\begin{align*}
	\probT(\Xone(t) <  \XN(T_n) + \hat a_{n,N}) < \frac{\eta}{200K},
	\end{align*}
	where $T_n$ and $\hat a_{n,N}$ are given by \eqref{eq: Tn} and \eqref{eq: hat_an} respectively.
\end{lemma}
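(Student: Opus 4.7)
The plan is to couple the $N$-BRW with the $N$ independent BRWs constructed in Section~\ref{sect: BRW}. Conditionally on $\F_{T_n}$, let $(j^*,u^*):=\sigma(N,T_n)$, so that $\Y_{j^*}(u^*)=\X_N(T_n)$, and define
\begin{equation*}
W:=\#\bigl\{v\in\{1,2\}^t:\,u^*\prec v,\ \Y_{j^*}(v)\ge\X_N(T_n)+\hat a_{n,N}\bigr\}.
\end{equation*}
Since $H_t$ consists of the $N$ BRW particles at time $t$ with the largest positions, the event $\{W\ge N\}$ forces every element of $H_t$, and thus $\X_1(t)$, to lie at position at least $\X_N(T_n)+\hat a_{n,N}$; so it suffices to prove that $\probT(W<N)<\eta/(200K)$ for $N$ large.

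For each $v$ with $u^*\prec v$ at depth $t-T_n=(1+\theta_{n,N})\ell_N$, the displacement $S_v:=\Y_{j^*}(v)-\Y_{j^*}(u^*)$ is a sum of $t-T_n$ i.i.d.\ copies of $X$; subexponentiality of $X$ (a consequence of regular variation of $h$), together with~\eqref{eq: hhataNasymp} and the fact that $(t-T_n)/h(\hat a_{n,N})\to 0$, gives $\prob(S_v\ge\hat a_{n,N})=(1+o(1))(t-T_n)/h(\hat a_{n,N})$, whence
\begin{equation*}
\expectT[W]\ge (1-o(1))\cdot\tfrac{2(1+\theta_{n,N})}{\delta_1}\,N\ \ge\ 8N
\end{equation*}
for $N$ large, using $\delta_1<\delta/8$ and $\theta_{n,N}\ge\delta$.

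The central difficulty is upgrading this first-moment bound to concentration: because the jump distribution is heavy-tailed, the second moment of $W$ is dominated by pairs of leaves whose most recent common ancestor is shallow, and a direct Chebyshev bound on $W$ fails. My plan is to decompose the subtree of $(j^*,u^*)$ at an intermediate depth $D:=\ceil{(\theta_{n,N}+\delta/2)\ell_N}$. Conditional on the jumps on $[T_n,T_n+D]$, the $2^D\approx N^{\theta_{n,N}+\delta/2}$ sub-subtrees rooted at depth $D$ are independent BRWs of depth $t-T_n-D\approx(1-\delta/2)\ell_N$; writing $Y_w$ for the contribution of the sub-subtree rooted at $w$ to $W$, one has $W\ge\sum_w Y_w$ and each $Y_w$ is bounded by the subtree size, of order $N^{1-\delta/2}$. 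On the high-probability event --- controlled by Corollary~\ref{cor: path} applied with $x_N=\hat a_{n,N}$ and $r$ sufficiently small, together with Potter's bound (Lemma~\ref{lemma: Potter}) --- that the displacements $S_w$ of the sub-subtree roots are small compared to $\hat a_{n,N}$, a direct calculation gives $\expectT[\sum_w Y_w]$ of order $N/\delta_1\ge 4N$ and
\begin{equation*}
\frac{\mathrm{Var}_{T_n}(\sum_w Y_w)}{\expectT[\sum_w Y_w]^2}\ =\ O\bigl(\delta_1\,N^{-\delta/2}\bigr)\ \longrightarrow\ 0.
\end{equation*}
Chebyshev's inequality then yields $\sum_w Y_w\ge 2N$, and hence $W\ge N$ and $\X_1(t)\ge\X_N(T_n)+\hat a_{n,N}$, with probability at least $1-\eta/(200K)$. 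The hardest step is the joint control of the per-sub-subtree variance and the rare event that some $S_w$ approaches $\hat a_{n,N}$; the key observation making both work is that choosing $D$ slightly above $\theta_{n,N}\ell_N$ simultaneously makes the number $2^D$ of independent pieces large enough for concentration and keeps each piece's remaining depth large enough for its mean contribution to be at least a positive power of $N$.
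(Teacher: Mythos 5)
There is a genuine gap at the very first step, the reduction from $\X_1(t)$ to the free-BRW count $W$. You assert that ``$H_t$ consists of the $N$ BRW particles at time $t$ with the largest positions,'' but that is not how the coupling of Section~\ref{sect: BRW} works: $H_t$ is defined iteratively as the $N$ largest values among the \emph{offspring of $H_{t-1}$}, not among all $N\cdot 2^t$ leaves of the free forest at depth $t$. Consequently $\{W\ge N\}$ does not force $\X_1(t)\ge \XN(T_n)+\hat a_{n,N}$: a leaf $v$ counted in $W$ typically acquires its displacement $\ge \hat a_{n,N}$ through a single large jump made late in the interval $[T_n,t]$, and before that jump its ancestors sit near $\XN(T_n)$. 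If such an ancestor is eliminated at an intermediate selection step, the only conclusion available (non-negativity of jumps) is that $\X_1$ at that time is at least the ancestor's position, i.e.\ at least $\XN(T_n)$ --- not at least $\XN(T_n)+\hat a_{n,N}$ --- and the leaf $v$ simply does not exist in the $N$-BRW. So $W$ can be huge while $\X_1(t)<\XN(T_n)+\hat a_{n,N}$. This is precisely the difficulty the paper's proof is organised around: it counts jumps of size $>\hat a_{n,N}$ made by \emph{actual surviving $N$-BRW particles} in the sets $A_k$ (which already lie to the right of $\XN(T_n)$), so that the jumping particle lands beyond the threshold \emph{at the moment of the jump}; from then on the dichotomy of Lemma~\ref{lemma: descendants_est} applies --- either all its descendants survive and contribute to $G$, or one is killed, which forces $\X_1\ge \XN(T_n)+\hat a_{n,N}$ immediately. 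Your argument has no analogue of this monotone ``already past the threshold'' property and therefore cannot rule out the killing scenario.

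The remainder of the proposal --- first moment of order $N/\delta_1$, and a second-moment/Chebyshev argument made workable by decomposing at an intermediate depth $D$ so that each conditionally independent piece is bounded by $N^{1-\delta/2}$ --- is a reasonable concentration scheme for the free BRW, but it is applied to the wrong random variable. To repair the proof along your lines you would have to either (i) count only leaves whose path contains a single jump $>\hat a_{n,N}$ and track the survival of the post-jump subtree in the $N$-BRW, which essentially reconstructs the paper's argument (where concentration is then obtained from a weighted sum of independent Bernoullis via \cite[Theorem 2.3(c)]{McDiarmid1998} rather than Chebyshev), or (ii) prove a separate comparison between the free BRW and the selected system, which is not supplied here. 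You would also need the paper's case split on the event $\Ee$, since the number of $N$-BRW particles available to attempt the large jump at time $S_k$ is itself random.
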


\begin{proof}
	Recall the definition of $G_x(n)$ in~\eqref{eq: G}. Let $G := G_{\XN(T_n) + \hat a_{n,N}}(t)$; then, to prove the statement of the lemma, we aim to show that for $N$ sufficiently large and $t>4\ell_N$,
	\begin{equation}\label{eq: GeqN}
	\probT(|G| < N) < \frac{\eta}{200K}.
	\end{equation}

	Recall the definition of $\delta_1>0$ in \eqref{eq: hat_an}; fix $\delta '\in (8\delta_1,\delta)$ and then take $\delta_3 \in (8\delta_1 , \delta' )$ such that $\delta_3\ell_N$ is an integer (this is possible for $N$ sufficiently large). Let $S_k := T_n + \ell_N - k$ for $k\in \llbracket 1,\delta_3\ell_N \rrbracket $. Then for each $k\in \llbracket 1,\delta_3\ell_N \rrbracket $, at time $S_k$ there are at least $2^{\ell_N-k}$ particles to the right of (or at) position $\XN(T_n)$, by Lemma~\ref{lemma: descendants_est}. These particles are either in the interval $[\XN(T_n),\XN(T_n)+\hat a_{n,N})$ or to the right of this interval. Let us denote the set of particles in $[\XN(T_n),\XN(T_n)+\hat a_{n,N})$ at time $S_k$ by $A_k$,
	i.e.~for $k\in \llbracket 1,\delta_3\ell_N \rrbracket $ let
	\begin{align*}
	A_k := \bracing{i\in[N]:\: \X_i(S_k) \in [\XN(T_n),\XN(T_n)+\hat a_{n,N})}.
	\end{align*}
	We will handle the following two cases separately:
	\begin{enumerate}[label=(\alph*), ref=\alph*]
		\item the event $\Ee := \bracing{|A_k| \geq \frac{1}{2}2^{\ell_N-k}\;\, \forall k\in \llbracket 1,\delta_3\ell_N \rrbracket }$ occurs,
		\item the event $\Ee^c = \bracing{\exists k\in \llbracket 1,\delta_3\ell_N \rrbracket :\;| G_{\XN(T_n) + \hat a_{n,N}}(S_k)| > \frac{1}{2}2^{\ell_N-k}}$ occurs.
	\end{enumerate}

	First we deal with case (a). We give a lower bound on $|G|$ using a similar argument to the proof of Lemma~\ref{lemma: descendants_est}. First note that jumps of size greater than $\hat a_{n,N}$ from particles in $A_k$ arrive to the right of position $\XN(T_n) + \hat a_{n,N}$ for all $k\in \llbracket 1,\delta_3\ell_N \rrbracket $. Thus all time-$t$ descendants of a particle that makes such a jump will be in the set $G$. For $k\in \llbracket 1,\delta_3\ell_N \rrbracket $, let $\Mm_k'$ denote the set of such jumps:
	\begin{equation*}
	\Mm_k' := \bracing{(i,b,S_k):\: X_{i,b,S_k}> \hat a_{n,N} \text{ and } i\in A_k}.
	\end{equation*}
	Suppose for all $k\in \llbracket 1,\delta_3\ell_N \rrbracket $, all particles descending from the set $\Mm_k'$ survive until time $t$. Then the total number of such descendants will be
	\begin{equation}\label{eq: BAkdesc}
	\Bigg|\bigcup_{k\in \llbracket 1,\delta_3\ell_N \rrbracket } \bigcup_{(i,b,S_k)\in \Mm_k'} \Nn_{i,S_k}^b(t) \Bigg| = \sum_{k=1}^{\delta_3\ell_N}2^{k+\theta_{n,N}\ell_N-1}
	\sum_{i\in A_k, b\in\bracing{1,2}} \mathds{1}_{\bracing{X_{i,b,S_k}> \hat a_{n,N}}}.
	\end{equation}
	The first term in the sum is the number of time-$t$ descendants of a particle at time $S_k+1=T_n+\ell_N-k+1$, and the second sum gives the number of jumps of size greater than $\hat a_{n,N}$ from particles in $A_k$.
	
	If instead there exists $k\in \llbracket 1,\delta_3\ell_N \rrbracket $ such that not every particle descending from a jump in $\Mm_k'$ survives until time $t$, then there must be $N$ particles to the right of (or at) $\XN(T_n) + \hat a_{n,N}$ at some time~$s\le t$ (and therefore at time $t$, by monotonicity). We conclude the following lower bound:
	\begin{align*}
	|G| & \geq \min\Bigg( N, \sum_{k=1}^{\delta_3\ell_N}2^{k+\theta_{n,N}\ell_N-1}
	\sum_{i\in A_k, b\in\bracing{1,2}} \mathds{1}_{\bracing{X_{i,b,S_k}> \hat a_{n,N}}}\Bigg). \numberthis\label{eq: G1}
	\end{align*}
	Let $\xi_{j,k}\sim$ Ber$(h(\hat a_{n,N})^{-1})$ be i.i.d.~random variables, by which we mean that 
	\begin{align*}
	\probT(\xi_{j,k} = 1) = \frac{1}{h(\hat a_{n,N})} = 1-\probT(\xi_{j,k} = 0) \quad \text{ for all } k,j\in\N.
	\end{align*}
	The indicator random variables in \eqref{eq: G1} all have this distribution. Thus by~\eqref{eq: G1},
	\begin{align*}
	\probT(\bracing{|G| < N}\cap\Ee) &\leq \probT\Bigg(\Bigg\{\sum_{k=1}^{\delta_3\ell_N}2^{k+\theta_{n,N}\ell_N-1}
		\sum_{i\in A_k, b\in\bracing{1,2}} \mathds{1}_{\bracing{X_{i,b,S_k}> \hat a_{n,N}}} < N\Bigg\}\cap\Ee\Bigg)
	\\ &
	\leq \probT\Bigg(\sum_{k=1}^{\delta_3\ell_N}2^{k+\theta_{n,N}\ell_N-1}
	\sum_{j=1}^{2^{\ell_N-k}} \xi_{j,k} < N\Bigg),
	\numberthis\label{eq: GXi}
	\end{align*}
	since on the event $\Ee$ there are at least~$2^{\ell_N-k}$ jumps from the set $A_k$ for each $k\in \llbracket 1,\delta_3\ell_N \rrbracket $.

	We will use the concentration inequality from \cite[Theorem 2.3(c)]{McDiarmid1998} to estimate the right-hand side of \eqref{eq: GXi}. As the inequality applies for independent random variables taking values in $[0,1]$, we consider the random variables $2^{-\delta_3\ell_N+k}\xi_{j,k}\in[0,1]$ for $k\in \llbracket 1,\delta_3\ell_N \rrbracket $ and~$j\in [2^{\ell_N-k}]$. 
	Let $\mu$ denote the expectation of the sum of these random variables over $k$ and $j$:
	\begin{align*}
	\mu & := \expectT\Bigg[\sum_{k=1}^{\delta_3\ell_N} 2^{-\delta_3\ell_N+k} \sum_{j=1}^{2^{\ell_N-k}} \xi_{j,k}\Bigg]
	= \sum_{k=1}^{\delta_3\ell_N} 2^{-\delta_3\ell_N+k} \frac{2^{\ell_N-k}}{h(\hat a_{n,N})}
	\geq \frac{\delta_3\ell_N N^{1-\delta_3}}{h(\hat a_{n,N})}
	\geq 4N^{1-\delta_3-\theta_{n,N}}
	\numberthis\label{eq: muT}
	\end{align*}
	for $N$ sufficiently large,
	where the last inequality holds because $h(\hat a_{n,N})\leq 2\delta_1 N^{\theta_{n,N}}\ell_N$ by \eqref{eq: hhataNasymp} for $N$ sufficiently large, and because we chose $\delta_3 / \delta_1 \geq 8$. Thus
	\begin{align*}
	\probT\Bigg(\sum_{k=1}^{\delta_3\ell_N}2^{k+\theta_{n,N}\ell_N-1}
	\sum_{j=1}^{2^{\ell_N-k}} \xi_{j,k} < N\Bigg)
	& \leq \probT\Bigg( \sum_{k=1}^{\delta_3\ell_N} 2^{-\delta_3\ell_N+k} \sum_{j=1}^{2^{\ell_N-k}} \xi_{j,k} < 2N^{1-\delta_3 - \theta_{n,N}}\Bigg)
	\\&
	\leq \probT\Bigg( \sum_{k=1}^{\delta_3\ell_N} 2^{-\delta_3\ell_N+k}  \sum_{j=1}^{2^{\ell_N-k}} \xi_{j,k} < \frac{1}{2}\mu\Bigg)
	\end{align*}
	for $N$ sufficiently large, where in the first inequality we multiply by $2^{1-(\delta_3+\theta_{n,N})\ell_N}$ to get terms in $[0,1]$ in the sum and notice that $2^{-\ell_N}\leq N^{-1}$, and the second inequality holds by~\eqref{eq: muT}. We now apply the concentration inequality from \cite[Theorem 2.3(c)]{McDiarmid1998} to the independent random variables $2^{-\delta_3\ell_N+k}\xi_{j,k}\in[0,1]$ on the right-hand side above, giving that
	\begin{equation}\label{eq: McDiarmid}
	\probT\Bigg(\sum_{k=1}^{\delta_3\ell_N}2^{k+\theta_{n,N}\ell_N-1}\sum_{j=1}^{2^{\ell_N-k}} \xi_{j,k} < N\Bigg) \le e^{-\mu / 8} \le e^{-\frac{1}{2}N^{\delta - \delta_3}},
	\end{equation}
	where in the second inequality we use \eqref{eq: muT} again and that~$\theta_{n,N}\leq 1 - \delta$ by~\eqref{eq: thetan} and since $T_n\ge t_2+\delta \ell_N$ by~\eqref{eq: Tn}. Now putting \eqref{eq: GXi} and \eqref{eq: McDiarmid} together, since $\delta-\delta_3>\delta - \delta ' >0$ we conclude that
	\begin{align}\label{eq: Ga}
	\probT(\bracing{|G| < N}\cap\Ee) < \frac{\eta}{200K}
	\end{align}
	for $N$ sufficiently large.
	
	In case (b), $\Ee^c$ deterministically implies that $|G| = N$. Indeed, if $\Ee^c$ occurs then it follows that there exists $k_0\in \llbracket 1,\delta_3\ell_N \rrbracket $ such that $| G_{\XN(T_n) + \hat a_{n,N}}(S_{k_0})| > \frac{1}{2}2^{\ell_N-k_0}$. Recall that $S_{k_0}=T_n+\ell_N -k_0$. Then by Lemma~\ref{lemma: descendants_est} we have
	\begin{align}\label{eq: Gb}
	|G| \geq \min\left(N,\tfrac{1}{2}2^{\ell_N-k_0}2^{k_0+\theta_{n,N}\ell_N}\right) = N
	\end{align}
	for $N$ sufficiently large,
	because $\theta_{n,N} \ge \delta$ by~\eqref{eq: thetan} and~\eqref{eq: Tn}, and since we are assuming $T_n<t_1$. Thus for $N$ sufficiently large,
	\begin{equation*}
	\probT(\bracing{|G| < N}\cap\Ee^c) = 0,
	\end{equation*}
	which together with \eqref{eq: Ga} and~\eqref{eq: GeqN} concludes the proof.
\end{proof}

Now we are ready to prove Lemma~\ref{lemma: noBigJump}. Corollary~\ref{cor: path} tells us that paths cannot move a large distance without having jumps which have size at least the order of magnitude of that large distance. So Lemma~\ref{lemma: leftmost} and Corollary~\ref{cor: path} together will show that paths without medium jumps cannot survive until time $t$ with high probability.

\begin{proof}[Proof of Lemma~\ref{lemma: noBigJump}]
	We partition the event in Lemma~\ref{lemma: noBigJump} based on the position of the leftmost particle:
	\begin{align*}
	&\probT\left( \exists i\in\Nn_{N,T_n}(\Tneps), k\in\Nn_{i,n}:\: P_{i,\Tneps}^{k,t}\cap \Mm_{n,N} = \emptyset \right) \\
	&=
	\probT\left( \bracing{\exists i\in\Nn_{N,T_n}(\Tneps), k\in\Nn_{i,n}: P_{i,\Tneps}^{k,t}\cap \Mm_{n,N} = \emptyset} \cap \bracing{\Xone(t) <  \XN(T_n) + \hat a_{n,N}}\right)
	\\ &
	\hspace{3mm}+ \probT\left(\bracing{\exists i\in\Nn_{N,T_n}(\Tneps), k\in\Nn_{i,n}: P_{i,\Tneps}^{k,t}\cap \Mm_{n,N} = \emptyset} \cap \bracing{\Xone(t) \geq  \XN(T_n) + \hat a_{n,N}}\right).\numberthis\label{eq: X1smallbig}
	\end{align*}
	This will be useful, because from Lemma~\ref{lemma: leftmost} we know that the leftmost particle at time $t$ is to the right of (or at) $\XN(T_n) + \hat a_{n,N}$ with high probability. Hence it is enough to focus on the second term on the right-hand side of~\eqref{eq: X1smallbig}, and show that with high probability, paths cannot move beyond $\XN(T_n) + \hat a_{n,N}$ without medium jumps.
	
	Assume that the event in the second term on the right-hand side of \eqref{eq: X1smallbig} occurs with $i\in\Nn_{N,T_n}(\Tneps)$ and $k\in\Nn_{i,n}$, and so we have $P_{i,\Tneps}^{k,t}\cap \Mm_{n,N} = \emptyset$ and $\X_{k}(t) \geq \Xone(t) \geq \XN(T_n) + \hat a_{n,N}$. Note that particle~$(k,t)$ is a descendant of particle $(N,T_n)$ as well.	The path between these two particles has to move distance at least $\hat a_{n,N}$. Thus one of the following must happen. Either the path between particles $(N,T_n)$ and~$(k,t)$ moves $\hat a_{n,N}$ even without medium jumps, or there must be a medium jump on this path. In the latter case the medium jump must be in the time interval $[T_n,\Tneps-1]$, because we assumed $P_{i,\Tneps}^{k,t}\cap \Mm_{n,N} = \emptyset$. This leads to the following upper bound:
	\begin{align*}
	&\probT\left(\bracing{\exists i\in\Nn_{N,T_n}(\Tneps), k\in\Nn_{i,n}:\: P_{i,\Tneps}^{k,t}\cap \Mm_{n,N} = \emptyset} \cap \bracing{\Xone(t) \geq  \XN(T_n) + \hat a_{n,N}}\right)
	\\ 
	&\leq \probT\Bigg(
	\exists k\in\Nn_{N,T_n}(t):\:
	\sum_{(i,b,s)\in P_{N,T_n}^{k,t}} X_{i,b,s}\mathds{1}_{\bracing{X_{i,b,s}\leq \delta_2 \hat a_{n,N}}} \ge \hat a_{n,N}\Bigg) 
	\\ &
	\hspace{10mm}+ \probT\left( \exists s\in \llbracket T_n,\Tneps-1 \rrbracket,\; i\in\Nn_{N,T_n}(s) \text{ and } b\in\bracing{1,2}:\: X_{i,b,s} > \delta_2\hat a_{n,N} \right)\\
	&\leq CN^{-1} +  \probT\left( \exists s\in \llbracket T_n,\Tneps-1 \rrbracket,\; i\in\Nn_{N,T_n}(s) \text{ and } b\in\bracing{1,2}:\: X_{i,b,s} > \delta_2\hat a_{n,N} \right)\numberthis\label{eq: noBigJumpX1big}
	\end{align*}	
	for $N$ sufficiently large,
	where the second inequality holds for some constant $C>0$ because of Corollary~\ref{cor: path} applied with $x_N = \hat a_{n,N}$, $r = \delta_2$ and $\lambda = \delta/ (2\alpha)$. To check the conditions of Corollary~\ref{cor: path} we first notice that we chose $\delta_2 < \delta^2$, and claim that $\delta^2 < 1\wedge \frac{\delta(1\wedge\alpha)}{96\alpha}$. Indeed, at the beginning of Section~\ref{sect: star-shaped} we chose $\delta$ together with the other constants $\eta$, $K$, $\gamma$, $\rho$, $c_1,\dots,c_6$ satisfying \eqref{c5}-\eqref{K}. Then \eqref{delta}, \eqref{sigma}, \eqref{eq: cjeta} and \eqref{eq: eta1000} (using the fact that $2^{2\alpha} > \alpha$ for $\alpha>0$) easily imply the claim. Regarding the condition that $x_N>N^\lambda$,	we have $\hat a_{n,N} > N^{\theta_{n,N} / 2\alpha} \geq N^{\delta / 2\alpha}$ for $N$ sufficiently large, where the first inequality follows by~\eqref{eq: hhataNasymp} and Lemma~\ref{lemma: Potter} by the same argument as for~\eqref{eq: aNhaN} and \eqref{eq: aN2alpha}, and the second inequality holds because $\theta_{n,N} \geq \delta$ by \eqref{eq: thetan}, \eqref{eq: Tn} and since we are assuming $T_n<t_1$.
	
	Next we use a union bound to control the second term on the right-hand side of \eqref{eq: noBigJumpX1big}, using that there are at most $2\cdot2^k$ jumps descending from particle $(N,T_n)$ at time $T_n+k$. We have
	\begin{multline*}
	\probT\left( \exists s\in \llbracket T_n,\Tneps -1 \rrbracket,\; i\in\Nn_{N,T_n}(s) \text{ and } b\in\bracing{1,2}:\: X_{i,b,s} > \delta_2\hat a_{n,N} \right)\\
	\leq \sum_{k=0}^{\eps_N\ell_N-1} \frac{2\cdot 2^k}{h(\delta_2\hat a_{n,N})} < \frac{2^{1+\eps_N\ell_N}}{h(\delta_2\hat a_{n,N})} \leq \frac{8N^{\eps_N}}{\delta_2^\alpha\delta_1N^{\theta_{n,N}}\ell_N} \leq \frac{8}{\delta_2^\alpha\delta_1} N^{\eps_N - \delta}\numberthis\label{eq: rBigJump}
	\end{multline*}
	for $N$ sufficiently large, where in the third inequality we use that~$2^{\eps_N\ell_N}\le 2N^{\eps_N}$ for $N$ sufficiently large, and that~$h(\delta_2\hat a_{n,N})\ge \delta_2^\alpha\delta_1N^{\theta_{n,N}}\ell_N / 2$ for $N$ sufficiently large because of \eqref{eq: regvar} and \eqref{eq: hhataNasymp}, and in the fourth inequality we use that $\theta_{n,N} \geq \delta$ by~\eqref{eq: thetan}, \eqref{eq: Tn} and since we are assuming $T_n<t_1$.
		
Note that we have $\eps_N<\delta /2$ for $N$ sufficiently large by our assumptions in~\eqref{eq: epsN}.
Therefore, by~\eqref{eq: X1smallbig}, Lemma~\ref{lemma: leftmost}, \eqref{eq: noBigJumpX1big}, and \eqref{eq: rBigJump} we conclude that
\begin{align*}
\probT\left( \exists i\in\Nn_{N,T_n}(\Tneps), k\in\Nn_{i,n}:\: P_{i,\Tneps}^{k,t}\cap \Mm_{n,N} = \emptyset \right) < \frac{\eta}{100K}
\end{align*}
for $N$ sufficiently large.
\end{proof}

\subsection{The number of descendants of medium jumps: proof of Lemma~\ref{lemma: withBigJump}}\label{sect: withBigJump}

\begin{proof}[Proof of Lemma~\ref{lemma: withBigJump}]
	We partition the time interval $[\Tneps,t-1]$ into two subintervals, and look at the number of medium jumps and the number of time-$t$ descendants of the medium jumps. Let
	\begin{align*}
	&I_1 := [\Tneps, t_1 + 2\eps_N\ell_N - 1] \quad \text{ and }\quad I_2 := [t_1 + 2\eps_N\ell_N,t-1]
	\end{align*}
	be the two intervals, and let $A_j^i$ denote the set of particles in $\Nn_{i,n}$ which have a medium jump in their ancestral lines which happened in the time interval $I_j$:
	\begin{align}
	A_j^i := \bracing{k\in\Nn_{i,n}:\: P_{i,\Tneps}^{k,t}\cap \Mm_{n,N}^{I_j} \neq \emptyset}, \quad i\in\Nn_{N,T_n}(\Tneps), \quad j\in \{1,2\}.
	\end{align}
	If there is a medium jump in $I_1$, then there may be many, possibly of order $N$, particles at time~$t$ descending from this medium jump. However, we will see that with high probability there are no medium jumps at all in $I_1$: particle $(N,T_n)$ does not have enough descendants by the end of $I_1$ for any to have made a medium jump. In contrast, in the second interval there are many particles to make medium jumps (although not more than~$N$ at any one time), but there is less time to produce many descendants by time $t$. Indeed, for each $ i\in\Nn_{N,T_n}(\Tneps)$ the expected number of time-$t$ descendants of $(i,\Tneps)$ whose path has a medium jump in~$I_2$ is of order $N^{1-\eps_N}$. Using a concentration result from~\cite{McDiarmid1998}, we will see that the number of descendants itself (rather than the expected number) is of order $N^{1-\eps_N}$ with high probability, and therefore for each $i$, the total contribution of $A_1^i$ and $A_2^i$ is $o(N)$ with high probability. With the above strategy in mind, we give the following upper bound on the probability in the statement of Lemma~\ref{lemma: withBigJump}, using~\eqref{eq: Nin}:
	\begin{align*}
	& \probT\left( \exists i\in\Nn_{N,T_n}(\Tneps):\: D_{i,n} > \nu N \text{ and }P_{i,\Tneps}^{k,t}\cap \Mm_{n,N} \neq \emptyset \;\, \forall k\in\Nn_{i,n}\right)
	\\ & \hspace{10mm}
	\leq \probT\left( \exists i\in\Nn_{N,T_n}(\Tneps):\: \#\bracing{k\in\Nn_{i,n}:\: P_{i,\Tneps}^{k,t}\cap \Mm_{n,N} \neq \emptyset} > \nu N\right)
	\\ & \hspace{10mm}
	= \probT(\exists i\in\Nn_{N,T_n}(\Tneps):\: |A_1^i \cup A_2^i | > \nu N)
	\\ &\hspace{10mm}
	\leq \probT(\exists i\in\Nn_{N,T_n}(\Tneps):\: A_1^i \neq \emptyset) + \probT\left(\exists i\in\Nn_{N,T_n}(\Tneps):\: |A_2^i| > CN^{1-\eps_N}\right)\numberthis\label{eq: withBigJump1}
	\end{align*}
	for $N$ sufficiently large and any constant $C$, since $\eps_N \ell_N \to \infty$ as $N\to \infty$ by our choice of $\eps_N$ in~\eqref{eq: epsN}.
	
	We let $\tilde I_1 := [T_n,t_1 + 2\eps_N\ell_N -1] \supset I_1$. It is enough to bound the first term on the right-hand side of~\eqref{eq: withBigJump1} by the probability that any of the descendants of particle $(N,T_n)$ makes a medium jump by time $t_1 + 2\eps_N\ell_N -1$:
	\begin{equation}\label{eq: A1i1}
	\probT(\exists i\in\Nn_{N,T_n}(\Tneps):\: A_1^i \neq \emptyset) \leq \probT\left(\exists (j,b,s)\in\Mm_{n,N}^{\tilde I_1}:\: (N,T_n)\lesssim (j,s)\right).
	\end{equation}
	 This probability will be very small, as the total number of descendants of $(N,T_n)$ in the time interval $\tilde I_1$ is not large enough to see jumps of order $\hat a_{n,N}$. Indeed, applying a union bound over the jumps made by descendants of $(N,T_n)$ at times $T_n+k$  shows that the right-hand side of \eqref{eq: A1i1} is at most
	\begin{equation}\label{eq: A1i2}
	\sum_{k = 0}^{(\theta_{n,N} + 2\eps_N)\ell_N-1} \frac{2\cdot 2^k}{h(\delta_2\hat a_{n,N})} \le 2\cdot 2^{(\theta_{n,N}+2\eps_N)\ell_N } \frac{2}{\delta_2^\alpha\delta_1 N^{\theta_{n,N}} \ell_N} \le \frac{8}{\delta_2^\alpha\delta_1} \ell_N^{-1/2}
	\end{equation}
	for $N$ sufficiently large, where in the first inequality we use that $h(\delta_2 \hat a_{n,N}) \ge \delta_2^\alpha\delta_1N^{\theta_{n,N}}\ell_N / 2$ for $N$ sufficiently large by \eqref{eq: regvar} and \eqref{eq: hhataNasymp}, and in the second inequality we use the assumption on $\eps_N$ in~\eqref{eq: epsN}, and that $2^{\theta_{n,N}\ell_N}\le 2N^{\theta_{n,N}}$.
	
	For the second term on the right-hand side of \eqref{eq: withBigJump1} we will give an upper bound using the concentration inequality from \cite[Theorem 2.3(b)]{McDiarmid1998}. First we bound $|A_2^i|$ for any $i\in\Nn_{N,T_n}(\Tneps)$: 
	\begin{align}\label{eq: withBigJump2}
	|A_2^i| \leq \sum_{k = (\theta_{n,N} + 2\eps_N)\ell_N\:\:}^{(1 + \theta_{n,N})\ell_N-1}  \sum_{\:\:j\in \Nn_{i,\Tneps}(T_n+k), b\in\bracing{1,2}} \mathds{1}_{\bracing{X_{j,b,T_n+k} > \delta_2\hat a_{n,N}}} |\Nn_{j,T_n+k}^b(t)|,
	\end{align}
	where we sum up the number of time-$t$ descendants of every particle descended from $(i,\Tneps)$ which made a jump of size greater than~$\delta_2 \hat a_{n,N}$ at a time $T_n+k$ in the time interval $I_2$. Now let $\xi^i_{j,k}\sim$ Ber$(h(\delta_2\hat a_{n,N})^{-1})$ be i.i.d.~random variables, by which we mean that 
	\begin{align*}
	\probT(\xi^i_{j,k} = 1) = h(\delta_2 \hat a_{n,N})^{-1} = 1 - \probT(\xi^i_{j,k} = 0) ,
	\end{align*}
	for all $i,j,k\in\N$. The indicator random variables in \eqref{eq: withBigJump2} all have this distribution. Considering that we have $|\Nn_{i,\Tneps}(T_n+k)| \leq \min(N, 2^{k - \eps_N\ell_N})$ and $|\Nn_{j,T_n+k}^b(t)| \leq 2^{(1 + \theta_{n,N})\ell_N - k -1}$ for all $k\in \llbracket (\theta_{n,N} + 2\eps_N)\ell_N, (1 + \theta_{n,N})\ell_N -1\rrbracket$, and since $\Nn_{N,T_n}(\Tneps)\leq 2^{\eps_N\ell_N}$, we obtain the following upper bound from \eqref{eq: withBigJump2}:
	\begin{align*}
	&\probT\left(\exists i\in\Nn_{N,T_n}(\Tneps):\: |A_2^i| > CN^{1-\eps_N}\right) 
	\\ &\hspace{5mm}
	\leq 
	\probT\Bigg( \exists i\in [2^{\eps_N \ell_N}]: \: \sum_{k = (\theta_{n,N} + 2\eps_N)\ell_N}^{(1 + \theta_{n,N})\ell_N}  2^{(1 + \theta_{n,N})\ell_N - k} \sum_{j = 1}^{2\min(N, 2^{k - \eps_N\ell_N})} \xi^i_{j,k} > CN^{1-\eps_N}\Bigg)
	\\ & \hspace{5mm}
	\leq
	2^{\eps_N\ell_N}\probT\Bigg(\sum_{k = (\theta_{n,N} + 2\eps_N)\ell_N}^{(1 + \theta_{n,N})\ell_N}  2^{(1 + \theta_{n,N})\ell_N - k} \sum_{j = 1}^{2\min(N, 2^{k - \eps_N\ell_N})} \xi^1_{j,k} > CN^{1-\eps_N}\: \Bigg) \numberthis\label{eq: withBigJump3}
	\end{align*}
	by a union bound.
	
	Now \cite[Theorem 2.3(b)]{McDiarmid1998} applies for independent random variables taking values in $[0,1]$, so we consider the random variables $2^{(2\eps_N + \theta_{n,N})\ell_N - k}\xi^1_{j,k}\in[0,1]$ for each $k$ and $j$ in the sum. Let $\mu$ denote the expectation of the sum of these random variables over $k$ and~$j$:
	\begin{align*}
	\mu & := \expectT\Bigg[\sum_{k = (\theta_{n,N} + 2\eps_N)\ell_N}^{(1 + \theta_{n,N})\ell_N} 2^{(2\eps_N + \theta_{n,N})\ell_N - k} \sum_{j = 1}^{2\min(N, 2^{k - \eps_N\ell_N})} \xi^1_{j,k} \Bigg] 
	\\ 
	& =\:  \expectT\Bigg[\sum_{k = (\theta_{n,N} + 2\eps_N)\ell_N}^{(1 + \eps_N)\ell_N-1} 2^{(2\eps_N + \theta_{n,N})\ell_N - k}\frac{2^{k - \eps_N\ell_N + 1}}{h(\delta_2 \hat a_{n,N})}\Bigg] 
	+ \expectT\Bigg[\sum_{k = (1 + \eps_N)\ell_N}^{(1 + \theta_{n,N})\ell_N} 2^{(2\eps_N + \theta_{n,N})\ell_N - k} \frac{2N}{h(\delta_2\hat a_{n,N})}\Bigg].\numberthis\label{eq: withBigJump4}
	\end{align*}
	Now considering that for $N$ sufficiently large,
	$\delta_2^\alpha\delta_1N^{\theta_{n,N}}\ell_N / 2 \leq h(\delta_2 \hat a_{n,N}) \leq 2\delta_2^\alpha\delta_1N^{\theta_{n,N}}\ell_N$ by \eqref{eq: regvar} and \eqref{eq: hhataNasymp}, that $N^{2\eps_N + \theta_{n,N}}\leq 2^{(2\eps_N + \theta_{n,N})\ell_N} \leq 4N^{2\eps_N + \theta_{n,N}}$, that $ \delta \leq \theta_{n,N} \leq 1-\delta$ and that $\eps_N<\delta/4$ for $N$ sufficiently large,	it can be seen that we have
	\begin{align}\label{eq: mu}
	K_1 N^{\eps_N} \leq \mu \leq K_2 N^{\eps_N},
	\end{align}
	for some constants $K_1,K_2>0$. Then, if we multiply both sides of the sum in \eqref{eq: withBigJump3} by $2^{(2\eps_N - 1)\ell_N}$ and use that $2^{(2\eps_N - 1)\ell_N} \ge N^{2\eps_N-1}/2$, we get
	\begin{multline*}
	\probT\left(\exists i\in\Nn_{N,T_n}(\Tneps):\: |A_2^i| > CN^{1-\eps_N}\right) 
	\\ 
	\leq 
	2^{\eps_N\ell_N}\probT\Bigg(\sum_{k = (\theta_{n,N} + 2\eps_N)\ell_N}^{(1 + \theta_{n,N})\ell_N} 2^{(2\eps_N+ \theta_{n,N})\ell_N - k} \sum_{j = 1}^{2\min(N, 2^{k - \eps_N\ell_N})} \xi^1_{j,k} > \frac{1}{2}CN^{\eps_N} \Bigg).
	\end{multline*}
	By \eqref{eq: mu} we have $\mu \geq K_1 N^{\eps_N}$, and
	we can choose $C>3K_2$ so that $\frac{1}{2}CN^{\eps_N} \geq \frac{3}{2}\mu$ for $N$ sufficiently large. Then by \cite[Theorem 2.3(b)]{McDiarmid1998} we have for $N$ sufficiently large,
	\begin{equation*}
	\probT\left(\exists i\in\Nn_{N,T_n}(\Tneps):\: |A_2^i| > CN^{1-\eps_N}\right) 
	\leq 2N^{\eps_N} \exp\left( - \frac{\frac{1}{4}K_1 N^{\eps_N}}{2(1 + \frac{1}{6})} \right),\numberthis\label{eq: A2i}
	\end{equation*}
	which is small if $N$ is large, by our choice of $\eps_N$ in \eqref{eq: epsN}. Then by \eqref{eq: withBigJump1}, \eqref{eq: A1i1}, \eqref{eq: A1i2} and \eqref{eq: A2i} we conclude Lemma~\ref{lemma: withBigJump}.
\end{proof}

\section{Proofs of Propositions \ref{prop: A2prime} and \ref{prop: diameter}}\label{sect: significance}

In Proposition~\ref{prop: A2prime} we need to prove that for any interval of the form $[t_2+\ceil{s_1 \ell_N},t_2+\ceil{s_2 \ell_N}]$ with $0<s_1<s_2<1$, the probability that the time of the common ancestor $T$ is in this interval is bounded away from 0 for large $N$. The main idea of the proof is that if there is a big jump in the time interval $[t_2+\ceil{s_1 \ell_N},t_2+\ceil{s_2 \ell_N}]$ which is much larger than any other jump in the time interval $[t_3,t_1]$, then that big jump will break the record, and we will have $T\in[t_2+\ceil{s_1 \ell_N},t_2+\ceil{s_2 \ell_N}]$.

More precisely, let $r>0$ be as in Proposition~\ref{prop: A2prime}. We will ask that a particle performs a jump larger than $(r+3)a_N$ at some time $s^*\in[t_2+\ceil{s_1\ell_N},t_2+\ceil{s_2\ell_N})$, and all the other jumps in the time interval $[t_3,t_1]$ are smaller than $a_N$. We will show that this happens with a probability bounded below by a positive constant (independent of $N$).

Suppose the above event occurs, and also the events $\Cc_3$ and $\Cc_4$ occur. Then we will also see that $d(\X(s^*))\leq (1+c_1)a_N$. This will imply that the particle which makes the jump larger than $(r+3)a_N$ at time $s^*$ breaks the record, and it will lead by more than roughly $(r+2)a_N$ at time $s^*+1$. As a result, the tribe of this particle will lead between times $s^*+1$ and $t_1$, because we assumed that all jumps in $[s^*+1,t_1)$ are smaller than $a_N$. Moreover, particles not in the leading tribe cannot get closer than $ra_N$ to the leading tribe by time $t_1$; therefore, we will conclude $d(\X(t_1)) \geq ra_N$ as well. 

The following lemma will be useful for proving the above statements.
\begin{lemma}\label{lemma: diam_raN}
Take $\rho,c_1>0$.
Then for $N\ge 2$ and $t>4\ell_N$, for all $s_0\in[t_4,t_1]$ and $r_0>0$,
on the event $\Cc_3\cap\Cc_4$, 
\begin{align*}
\bracing{X_{i,b,s}\leq r_0a_N\; \forall (i,b,s)\in\Nonetwo\times\bbracket{s_0,s_0+\ell_N-1}} \subseteq \bracing{d(\X(s_0+\ell_N))\leq (r_0+c_1)a_N},
\end{align*}
where the events $\Cc_3$ and $\Cc_4$ are defined in \eqref{eq: C3} and \eqref{eq: C4} respectively.
\end{lemma}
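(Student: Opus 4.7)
The strategy is to control the diameter at time $s_0+\ell_N$ by bounding $\XN(s_0+\ell_N)$ above and $\Xone(s_0+\ell_N)$ below, using the structure of paths of length $\ell_N$ under $\Cc_3 \cap \Cc_4$ together with the hypothesis that every jump in $\llbracket s_0, s_0+\ell_N-1\rrbracket$ has size at most $r_0 a_N$.

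First I would handle the leftmost particle. Applying Lemma~\ref{lemma: descendants_est} with $x=\XN(s_0)$ and $k=\ell_N$, and using the definition $\ell_N = \lceil \log_2 N\rceil$ so that $2^{\ell_N}\ge N$, gives $|G_{\XN(s_0)}(s_0+\ell_N)|\ge N$, hence
\begin{equation*}
\Xone(s_0+\ell_N)\ge \XN(s_0).
\end{equation*}

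Next, fix the index $i\in[N]$ of the time-$s_0$ ancestor of $(N, s_0+\ell_N)$, and consider the path $P:=P_{i,s_0}^{N,s_0+\ell_N}$, which consists of exactly $\ell_N$ jumps at times in $\llbracket s_0, s_0+\ell_N-1\rrbracket$. The key observation is that on $\Cc_3$, at most one jump in $P$ is big (i.e.\ larger than $\rho a_N$). Indeed, if two big jumps occurred on $P$, say at times $s_1<s_2$ with both in $\llbracket s_0, s_0+\ell_N-1\rrbracket$, then $s_2-s_1\le \ell_N-1$, so $s_2+1\le s_1+\ell_N+1$ and $s_2+1\le s_0+\ell_N\le t$. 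Applying $\Cc_3$ to the first big jump with endpoint the appropriate descendant at time $s_2+1$ yields that no other big jump lies on the sub-path, contradicting the presence of the second big jump.

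Now I combine this with the definition of $\Cc_4$, which bounds the sum of jumps of size at most $\rho a_N$ on any path of the $N$-BRW between times $t_4$ and $t$ by $c_1 a_N$. Since the at most one big jump on $P$ has size at most $r_0 a_N$ by hypothesis, I obtain
\begin{equation*}
\XN(s_0+\ell_N)=\X_i(s_0)+\!\!\sum_{(j,b,s)\in P}\!\! X_{j,b,s}
\le \X_i(s_0)+c_1 a_N + r_0 a_N \le \XN(s_0)+(r_0+c_1)a_N.
\end{equation*}
Subtracting the lower bound on $\Xone(s_0+\ell_N)$ gives $d(\X(s_0+\ell_N))\le (r_0+c_1)a_N$. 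The main subtlety to verify carefully is the ``at most one big jump on $P$'' claim, which is the one place the time-window $\ell_N+1$ appearing in the definition of $\Cc_3$ matters; otherwise the argument is a direct combination of Lemma~\ref{lemma: descendants_est}, the definition of $\Cc_4$, and the hypothesis.
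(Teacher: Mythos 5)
Your proof is correct and follows essentially the same route as the paper's: a lower bound $\Xone(s_0+\ell_N)\ge \XN(s_0)$ from Lemma~\ref{lemma: descendants_est}, plus an upper bound on positions at time $s_0+\ell_N$ obtained by splitting the path into its at most one big jump (controlled by $\Cc_3$ and the hypothesis that jumps in the window are at most $r_0a_N$) and its small jumps (controlled by $\Cc_4$). The only cosmetic difference is that you bound just $\XN(s_0+\ell_N)$ while the paper bounds $\X_j(s_0+\ell_N)$ for arbitrary $j$, and you spell out the ``at most one big jump'' deduction from $\Cc_3$ in more detail; both are fine.
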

\begin{proof}
Let $\Gg_1$ denote the event on the left-hand side in the statement of the lemma:
\begin{align*}
\Gg_1 := \bracing{X_{i,b,s}\leq r_0a_N\; \forall (i,b,s)\in\Nonetwo\times\bbracket{s_0,s_0+\ell_N-1}}.
\end{align*}
Let $j\in [N]$ be arbitrary, and let $i = \zeta_{j,s_0+\ell_N}(s_0)$. Then, on the event $\Cc_3$, we have $|B_N\cap P_{i,s_0}^{j,s_0+\ell_N}|\leq 1$, and on the event $\Cc_4$, no particle moves further than $c_1 a_N$ once big jumps have been removed from its path. Thus, on the event $\Cc_3\cap\Cc_4\cap\Gg_1$,
\begin{align*}
\X_j(s_0+\ell_N) \leq \X_i(s_0) + c_1a_N + \sum_{(i',b',s')\in B_N \cap P_{i,s_0}^{j,s_0+\ell_N}} X_{i',b',s'} \leq \XN(s_0) + (r_0+c_1)a_N.
\end{align*}
But by Lemma~\ref{lemma: descendants_est}, we have $\XN(s_0) \leq  \Xone(s_0+\ell_N)$, and the result follows.
\end{proof}

\begin{proof}[Proof of Proposition~\ref{prop: A2prime}]
Recall the definition of $\Aa_2'$ from \eqref{eq: A2prime}, and consider a uniform sample of $M$ particles at time $t$ with indices $\Pp_1,\dots,\Pp_M$. Also recall the definitions of $T(\rho)$ in \eqref{eq: T} and $\Teps(\rho)$ in~\eqref{eq: Teps}. For any $\rho>0$ we have
\begin{align*}
&\bracing{T(\rho)\in[t_2+\ceil{s_1 \ell_N},t_2+\ceil{s_2 \ell_N}]}\cap\bracing{\zeta_{\Pp_{j},t}(T(\rho)) = N\: \forall j\in[M]}
\\ & \quad \qquad \qquad \qquad 
\cap
\bracing{
	\zeta_{\Pp_{j},t}(\Teps(\rho)) \neq \zeta_{\Pp_{l},t}(\Teps(\rho))\: \forall j,l\in[M],\: j\neq l}
\subseteq \Aa_2'.\numberthis\label{eq: inA2prime}
\end{align*}
For $r>0$, we define $\Aa_3'$ as a modification of the event $\Aa_3$ from \eqref{eq: A3}:
\begin{align*}
\Aa_3' = \Aa_3'(t,N,\rho,\gamma,r,s_1,s_2) & := \bracing{T(\rho)\in[t_2+\ceil{s_1 \ell_N},t_2+\ceil{s_2 \ell_N}]}
\\ & \quad \qquad
\cap\bracing{|\Nn_{N,T(\rho)}(t)| \geq N - N^{1-\gamma}}\cap\bracing{d(\X(t_1))\geq ra_N}.\numberthis\label{eq: A3prime}
\end{align*}
We also define the set of jumps in the time interval $[t_2+\ceil{s_1 \ell_N},t_2+\ceil{s_2 \ell_N})$ which are larger than $(r+3)a_N$:
\begin{align}
B_N'(t,r,s_1,s_2) := \bracing{ \begin{array}{l}
(i,b,s)\in\Nonetwo\times\bbracket{t_2+\ceil{s_1 \ell_N},t_2+\ceil{s_2 \ell_N}-1}:\\
X_{i,b,s} > (r+3)a_N
\end{array}
},
\end{align}
and the event $\Gg$, which says that there is only one jump in the set $B_N'$, and every other jump is smaller than $a_N$ during the time interval $[t_3,t_1-1]$:
\begin{align}
\Gg = \Gg(t,N,r,s_1,s_2) := \bracing{
\begin{array}{l}
|B_N'| = 1 \; \text{ and } \; X_{i,b,s}\leq a_N,
\\
\forall (i,b,s)\in(\Nonetwo\times[t_3,t_1-1])\setminus B_N'
\end{array}	}.
\end{align}

Fix $0 < s_1 < s_2 < 1$, $M\in\N$ and $r>0$. Choose $\pi_{r,s_2-s_1}>0$ such that
\begin{align}\label{eq: etaprime1}
\pi_{r,s_2-s_1} < \frac{s_2-s_1}{8(r+3)^\alpha}\cdot e^{-8},
\end{align}
and then $\eta>0$ sufficiently small that it satisfies \eqref{eq: eta} and
\begin{align}\label{eq: etaprime2}
5\eta < \frac{s_2-s_1}{8(r+3)^\alpha}\cdot e^{-8} - \pi_{r,s_2-s_1}.
\end{align}
Then choose the constants ${\gamma,\delta,\rho, c_1 ,c_2,\dots,c_6,K}$ such that they satisfy~\eqref{c5}-\eqref{K}.
Recall from Section~\ref{sect: probCD}
that this implies the properties in \eqref{eq: const1}-\eqref{eq: const5} and \eqref{eq: eta1000}-\eqref{eq: rho} also hold for $\eta$ and $\gamma,\delta,\rho, c_1 ,$ $c_2,\dots,c_6,K$. 
Let $0<\nu < \eta / M^2$. 

In the course of the proof we will use the events $\Aa_3$ and $\Aa_4$ from \eqref{eq: A3} and \eqref{eq: A4}, and we will show the following for $N$ sufficiently large and $t>4\ell_N$:
\begin{enumerate}
\item\label{step1} 
$
\prob\left((\Aa_2')^c \cup \bracing{d(\X(t_1))< ra_N}\right) \leq \prob((\Aa_3')^c) + \prob(\Aa_3^c) + \prob(\Aa_4(\nu)^c) + \eta
$
\item\label{step2}
$
\bigcap_{j=2}^7 \Cc_j \cap \bigcap_{i=1}^5 \Dd_i \cap \Gg \subseteq \Aa_3'
$
\item\label{step3}
$
\prob(\Gg) \ge \frac{s_2-s_1}{8(r+3)^\alpha}\cdot e^{-8}
$
\item\label{step4}
$
\prob\left((\Aa_2')^c \cup \bracing{d(\X(t_1))< ra_N}\right) \leq 1 - \pi_{r,s_2-s_1}.
$
\end{enumerate}
We start by proving step~\ref{step1}. Notice that with our choices of constants, the conditions of Lemma~\ref{lemma: rewrite} hold. Therefore, we know
\begin{equation}\label{eq: A4c2}
\prob(\exists j,l\in[M],\: j\neq l:\:\zeta_{\Pp_{j},t}(\Teps) = \zeta_{\Pp_{l},t}(\Teps)) \leq \prob(\Aa_3^c) + \prob(\Aa_4(\nu)^c) + \eta/2,
\end{equation}
for $N$ sufficiently large. Hence, because of \eqref{eq: inA2prime}, in order to prove step~\ref{step1} it remains to show that
\begin{align*}
&\prob\left(\bracing{T(\rho)\notin[t_2+\ceil{s_1 \ell_N},t_2+\ceil{s_2 \ell_N}]} \cup \bracing{\exists j\in[M]:\:\zeta_{\Pp_{j},t}(T) \neq N}\cup\bracing{d(\X(t_1)) < ra_N} \right)
\\ &
\leq \prob((\Aa_3')^c) + \eta/2,\numberthis\label{eq: A3cprime}
\end{align*}
for $N$ sufficiently large. This follows similarly to the proof of \eqref{eq: A3c}. Partitioning the event on the left-hand side of \eqref{eq: A3cprime} using the event $\Aa_3'$, and then conditioning on $\mathcal F_t$, we obtain
\begin{multline*}
\prob\left(\bracing{T(\rho)\notin[t_2+\ceil{s_1 \ell_N},t_2+\ceil{s_2 \ell_N}]} \cup \bracing{\exists j\in[M]:\:\zeta_{\Pp_{j},t}(T) \neq N}\cup\bracing{d(\X(t_1)) < ra_N} \right) 
\\
\leq\: 
\expect\left[ \1_{\Aa_3'} \prob(\exists j\in[M]:\:\zeta_{\Pp_{j},t}(T) \neq N\: |\: \F_t) \right]
+ \prob\left((\Aa_3')^c\right)\numberthis\label{eq: A3cprimeM}
\end{multline*}
where we use that if $\Aa_3'$ occurs then $T(\rho)\in[t_2+\ceil{s_1 \ell_N},t_2+\ceil{s_2 \ell_N}]$ and $d(\X(t_1))\geq ra_N$, and that $\Aa_3'$ is $\F_t$-measurable. Now, on the event $\Aa_3'$, at most $N^{1-\gamma}$ time-$t$ particles are not descended from $(N,T)$, and therefore a union bound on the uniformly chosen sample (which is not $\F_t$-measurable) shows that the right-hand side of~\eqref{eq: A3cprimeM} is at most $M N^{1-\gamma}/N +  \prob\left((\Aa_3')^c\right)$. This implies \eqref{eq: A3cprime} for~$N$ sufficiently large, and by \eqref{eq: A4c2} and \eqref{eq: A3cprime} we are done with step~\ref{step1}.

We next prove step~\ref{step2}. Assume the event $\bigcap_{j=2}^7 \Cc_j \cap \Gg$ occurs. Then there exists $(i^*,b^*,s^*)\in B_N'$ with $s^*\in\bbracket{t_2+\ceil{s_1 \ell_N},t_2+\ceil{s_2 \ell_N}-1}$. We notice that every jump in the time interval $[t_3,s^*-1]$ has size at most $a_N$ on the event $\Gg$. Thus, we can apply Lemma~\ref{lemma: diam_raN} with $s_0 = s^* - \ell_N > t_3$, $\rho$ and $c_1$ as chosen at the beginning of the proof, and with $r_0=1$. We then obtain
\begin{align}\label{eq: diamsstar}
d(\X(s^*)) \leq (1+c_1)a_N.
\end{align}
This means that a particle that makes a jump larger than $(r+3)a_N$ at time $s^*$ must take the lead at time $s^*+1$. Indeed, 
\begin{align}
 \X_{i^*}(s^*) + X_{i^*,b^*,s^*} > \Xone(s^*) + (r+3)a_N \geq \XN(s^*) + (r+2-c_1)a_N,
\end{align}
where in the first inequality we use that $\X_{i^*}(s^*)\geq \Xone(s^*)$ and that $(i^*,b^*,s^*)\in B_N'$, and the second inequality follows by \eqref{eq: diamsstar}. Note that our choice of constants means that $\rho < r+2-c_1 < r+3$ holds (see e.g.~\eqref{eq: eta1000} and~\eqref{eq: rho}); thus we have $B_N'\subseteq B_N$, and Lemma~\ref{lemma: breakRecordGap}(b) applies. Therefore, by Lemma~\ref{lemma: breakRecordGap}(b), we have $(i^*,s^*)\lesssim_{b^*}(N,s^*+1)$ and 
\begin{align}\label{eq: XNsstar}
\X_{i^*}(s^*) + X_{i^*,b^*,s^*} = \XN(s^*+1) >\X_{N-1}(s^*+1) + (r+2-c_1-\rho)a_N,
\end{align}
which also shows that $s^*\in \Sbf_N(\rho)$, where $\Sbf_N(\rho)$ is the set of times when the record is broken by a big jump (see \eqref{eq: RN}).

Now we prove that $s^*+1 = T(\rho)$ and $d(\X(t_1)) \geq ra_N$. Let $\hat s\in\bbracket{ s^*+1,t_1-1}$ be arbitrary (and note that $\bbracket{ s^*+1,t_1-1}$ is not empty for $N$ sufficiently large). We will see that $\hat s\notin \Sbf_N(\rho)$, and therefore $T(\rho)\notin\bbracket{s^*+2,t_1}$, i.e. $T(\rho) = s^*+1$. 

Take $k\in [N-1]$, and assume that $j\in\Nn_{k,s^*+1}(\hat s+1)$. Note that $|B_N \cap P_{k,s^*+1}^{j,\hat s+1}| \leq 1$ by the definition of the event $\Cc_3$, and that every jump in the time interval $[s^*+1,t_1-1]$ is at most of size $a_N$ by the definition of the event $\Gg$. Hence, by the definition of the event $\Cc_4$ we have
\begin{align*}
\X_j(\hat s + 1) & \leq \X_k(s^*+1) + c_1a_N + \sum_{(i,b,s)\in B_N \cap P_{k,s^*+1}^{j,\hat s+1}} X_{i,b,s} 
\\ &
\leq \X_{N-1}(s^*+1) + (c_1+1)a_N
\\ &
< \XN(s^*+1) - (r+1-2c_1-\rho)a_N
\\ &
\leq \XN(\hat s + 1) - (r+1-2c_1-\rho)a_N, \numberthis\label{eq: XjXN}
\end{align*}
where in the second inequality we also use that $k\leq N-1$, the third inequality follows by \eqref{eq: XNsstar}, and the fourth by monotonicity.

Then \eqref{eq: XjXN} has two consequences. First, it shows that $\X_j(\hat s + 1) < \XN(\hat s + 1)$ (see e.g.~\eqref{eq: eta1000} and~\eqref{eq: rho}); thus the leader at time $\hat s+1$ must descend from particle $(N,s^*+1)$; that is, $\zeta_{N,\hat s + 1}(s^*+1) = N$. Note that we also have $X_{i,b,\hat s}\leq \rho a_N$ for all $i\in\Nn_{N,s^*+1}(\hat s)$ and $b\in\bracing{1,2}$ by the definition of the event $\Cc_3$. We conclude that the record is not broken by a big jump at time $\hat s +1$, which means that $\hat s \notin \Sbf_N(\rho)$. Since $\hat s\in\bbracket{ s^*+1,t_1-1}$ was arbitrary, and $s^*\in \Sbf_N(\rho)$, we must have $T(\rho) = s^*+1$, by the definition \eqref{eq: T} of $T(\rho)$. Hence,
\begin{align}\label{eq: GT}
\bigcap_{i=2}^7 \Cc_i \cap \Gg \subseteq \bracing{T(\rho)\in[t_2+\ceil{s_1 \ell_N},t_2+\ceil{s_2 \ell_N}]}.
\end{align}
The second consequence of \eqref{eq: XjXN} is that $d(\X(\hat s+1))> ra_N$, since $2c_1+\rho<1$. Indeed, we notice that since $s^*+1>t_2$ and $\hat s+1 \leq t_1$,
the number of descendants of particle $(N,s^*+1)$ is strictly less than $N$ at time $\hat s + 1$. Thus, there exists $k\in[N-1]$ such that $\Nn_{k,s^*+1}(\hat s + 1)\neq\emptyset$, and for such a $k$ and for some $j\in\Nn_{k,s^*+1}(\hat s + 1)$ the bound in \eqref{eq: XjXN} holds, and shows that $d(\X(\hat s+1))> ra_N$. Since $\hat s\in\bbracket{ s^*+1,t_1-1}$ was arbitrary we conclude
\begin{align}\label{eq: Gd}
\bigcap_{i=2}^7 \Cc_i \cap \Gg \subseteq \bracing{d(\X(t_1))\geq ra_N}.
\end{align}

As Propositions~\ref{prop: C} and \ref{prop: B} (and the definition of $\mathcal A_3$ in~\eqref{eq: A3}) imply for $N$ sufficiently large that
\begin{align*}
\bigcap_{j=2}^7 \Cc_j \cap \bigcap_{i=1}^5 \Dd_i \cap \Gg \subseteq \bigcap_{i=1}^7 \Cc_i \cap \Gg \subseteq \Aa_3 \subseteq \bracing{|\Nn_{N,T(\rho)}(t)| \geq N - N^{1-\gamma}},
\end{align*}
step~\ref{step2} follows by \eqref{eq: GT} and \eqref{eq: Gd}.

For step~\ref{step3}, the event $\Gg$ says that out of the $4N\ell_N$ jumps occurring in the time interval $[t_3,t_1-1]$, there are $4N\ell_N-1$ jumps of size at most $a_N$, and there is one larger than $(r+3)a_N$, which can happen any time during the time interval $[t_2+\ceil{s_1 \ell_N},t_2+\ceil{s_2 \ell_N})$. Using that $\ceil{s_2 \ell_N}-1 - \ceil{s_1 \ell_N} \geq (s_2-s_1)\ell_N/2$ for large $N$, we have for $N$ sufficiently large,
\begin{align*}
\prob(\Gg) & \geq 2N\frac{(s_2-s_1)}{2}\ell_N\cdot h((r+3)a_N)^{-1}\left( 1 - h(a_N)^{-1} \right)^{4N\ell_N - 1}
\\ &
\geq \frac{(s_2-s_1)}{2}\frac{h(a_N)}{h((r+3)a_N)}\cdot\frac{2N\ell_N}{h(a_N)}\cdot e^{-2\frac{4N\ell_N}{h(a_N)}}
\\ &
\geq \frac{s_2-s_1}{8(r+3)^\alpha}\cdot e^{-8},
\end{align*}
where the second inequality holds if $N$ is sufficiently large that $1-h(a_N)^{-1}>e^{-2h(a_N)^{-1}}$, which is possible because $h(a_N)\to\infty$ as $N\to\infty$ by \eqref{eq: haNasymp}. In the third inequality we use that
$h(a_N)/h((r+3)a_N)\geq (r+3)^{-\alpha} / 2$ for $N$ large enough by \eqref{eq: regvar} and~\eqref{eq: aNhaN}, and that $1/2 \leq 2N\ell_N / h(a_N) \leq 2$ for $N$ large enough by \eqref{eq: haN}. This completes step 3.

For step~\ref{step4}, we note that we chose the constants $\eta$, $\gamma$, $\delta$, $\rho$, $c_1,c_2,\dots,c_6$, $K$ and $\nu$ in such a way that the probability bounds in Propositions~\ref{prop: A1A3} and \ref{prop: A4} and Lemma~\ref{lemma: probC} hold for $N$ sufficiently large and $t>4\ell_N$. Hence, putting steps~\ref{step1} to \ref{step3} together we conclude
\begin{align*}
\prob\left((\Aa_2')^c \cup \bracing{d(\X(t_1))< ra_N}\right) & \leq \sum_{j=2}^{7}\prob(\Cc_j^c) + \sum_{i=1}^{5}\prob(\Dd_i^c) + \prob(\Gg^c) + \prob(\Aa_3^c) + \prob(\Aa_4(\nu)^c) + \eta
\\ & 
\leq 1 - \frac{s_2-s_1}{8(r+3)^\alpha}\cdot e^{-8} + 5\eta
\\ & 
< 1 - \pi_{r,s_2-s_1},
\end{align*}
where in the last inequality we used \eqref{eq: etaprime2}. This finishes the proof of Proposition~\ref{prop: A2prime}.
\end{proof}

The proof of Proposition~\ref{prop: diameter} involves some of our previous results. We will use the statement of Proposition~\ref{prop: A2prime} about the diameter to prove that for any fixed $r>0$, $\prob\left(d(\X(n)) \geq ra_N\right)$ can be lower bounded by a positive constant. Then the statement of Proposition~\ref{prop: B} about the diameter shows that on the events $\Cc_1$ to $\Cc_7$ the diameter at time $t_1$ is greater than $c_3a_N$, so, considering Lemma~\ref{lemma: probC}, we will see that the diameter is at least of order $a_N$ at a typical time with high probability. Finally, we will conclude that the diameter is at most of order $a_N$ with high probability using Lemma~\ref{lemma: diam_raN}, and also using that jumps of size $ra_N$ are unlikely to happen in $\ell_N$ time if $r$ is very large.

\begin{proof}[Proof of Proposition~\ref{prop: diameter}]
Take ${\eta,\gamma,\delta,\rho, c_1 ,c_2,\dots,c_6,K}$ such that they satisfy \eqref{eq: eta}, \eqref{c5}-\eqref{K}, and therefore also \eqref{eq: const1}-\eqref{eq: const5} and \eqref{eq: eta1000}-\eqref{eq: rho} (and $\eta$ may be arbitrarily small). 
Let $r>0$ be arbitrary. 
Let $s_1 = 1/4$, $s_2 = 1/2$, $M=3$. Then we take $\pi_{r,s_2-s_1}>0$ and $N\in\N$ sufficiently large that the bounds in Proposition~\ref{prop: A2prime} and Lemma~\ref{lemma: probC} and the inclusions in Propositions~\ref{prop: B}~and~\ref{prop: C} and in Lemma~\ref{lemma: diam_raN} hold with the above constants and for all $t>4\ell_N$. Furthermore, we assume that $N$ is sufficiently large that 
\begin{equation}\label{eq: h1}
e^{-2h(ra_N/2)^{-1}} < 1 - h(ra_N/2)^{-1},
\end{equation}
\begin{equation}\label{eq: h2}
\frac{h(a_N)}{h(ra_N /2)} \leq 2(r/2)^{-\alpha},
\end{equation}
and
\begin{equation}\label{eq: h3}
\frac{2N\ell_N}{h(a_N)} \leq 2.
\end{equation}
We can take $N$ sufficiently large that \eqref{eq: h1}, \eqref{eq: h2} and \eqref{eq: h3} hold because of~\eqref{eq: haNasymp}, \eqref{eq: aNhaN} (i.e. $a_N\to\infty$ as $N\to\infty$), \eqref{eq: regvar} and~\eqref{eq: haN}. Having fixed $N$ with these properties, take $n>3\ell_N$. 

First we apply Proposition~\ref{prop: A2prime} in the above setting with $t=n+\ell_N$ (and $t_1=n$). The proposition implies that
\begin{equation}\label{eq: pir}
0< \pi_{r,s_2-s_1} < \prob\left(d(\X(n)) \geq ra_N\right).
\end{equation}
Now we prove that if $r$ is sufficiently small then we have
\begin{align}\label{eq: diam_rtozero}
\prob\left(d(\X(n)) < ra_N\right) < \eta.
\end{align}
Assume that $r<c_3$, where $c_3$ was specified at the beginning of this proof.

Consider the events $(\Cc_j)_{j=2}^7$ and $(\Dd_i)_{i=1}^5$ with the constants ${\gamma,\delta,\rho, c_1 ,c_2,\dots,c_6,K}$ and with $t=n+\ell_N$. By Propositions~\ref{prop: C}~and~\ref{prop: B} we have 
\begin{equation*}
\bigcap_{j=2}^7 \Cc_j \cap \bigcap_{i=1}^5 \Dd_i \subseteq \bigcap_{j=1}^7 \Cc_j \subseteq \bracing{d(\X(n)) \geq \tfrac{3}{2}c_3a_N}.
\end{equation*}
Therefore, since $r<c_3$, and then by Lemma~\ref{lemma: probC}, we have
\begin{equation*}
\prob(d(\X(n)) < ra_N) \leq \prob(d(\X(n)) < \tfrac{3}{2}c_3a_N) \leq \sum_{j=2}^7\prob(\Cc_j^c) + \sum_{i=1}^5\prob(\Dd_i^c) < \eta,
\end{equation*}
which establishes \eqref{eq: diam_rtozero}.

Next we prove that if $r$ is sufficiently large then 
\begin{align}\label{eq: diam_rtoinfty}
\prob\left(d(\X(n)) \geq ra_N\right) < \eta.
\end{align} 
Assume $r>1$. We apply Lemma~\ref{lemma: diam_raN} with $t=n+\ell_N$, $s_0 = n-\ell_N$ and $r_0 = r/2$. Note that by~\eqref{eq: eta1000} and \eqref{eq: cjeta} we have $r_0 + c_1 < r$. Then Lemma~\ref{lemma: diam_raN} implies
\begin{align*}
\prob(d(\X(n)) \geq ra_N) & \leq \prob(\exists (i,b,s)\in\Nonetwo\times\bbracket{n-\ell_N,n-1}:\: X_{i,b,s}> \tfrac{r}{2}a_N)
\\ & 
= 1 - (1-h(ra_N/2)^{-1})^{2N\ell_N}
\\ &
\leq 1 - \exp\left(-2\frac{2N\ell_N}{h(ra_N/2)}\right)
\\ &
= 1 - \exp\left(-2\frac{2N\ell_N}{h(a_N)}\frac{h(a_N)}{h(ra_N/2)}\right)
\\ & 
\leq 1 - \exp\left(-8(r/2)^{-\alpha}\right),\numberthis\label{eq: diam_exp}
\end{align*}
where in the equality we use the tail distribution \eqref{eq: poly_tail} for the $2N\ell_N$ jumps in the time interval $\bbracket{n-\ell_N,n-1}$, the second inequality holds by \eqref{eq: h1}, and in the third we use \eqref{eq: h2} and \eqref{eq: h3}. Then~\eqref{eq: diam_exp} shows that~\eqref{eq: diam_rtoinfty} holds for $r$ sufficiently large.

Since $\eta>0$ was arbitrarily small, \eqref{eq: pir} and \eqref{eq: diam_rtozero} show the existence of $p_r$ and \eqref{eq: diam_rtoinfty} proves the existence of $q_r$ as in the statement of Proposition~\ref{prop: diameter}, and therefore we have finished the proof of this result.
\end{proof}

\section{Glossary of notation}\label{sect: glossary}
Below we list the most frequently used notation of this paper. In the second column of the table we give a brief description, and in the third column we refer to the section or equation where the notation is defined or first appears.

\noindent
\begin{longtable}{ p{3cm} p{10cm} r }
\midrule[0.08em]
\textbf{Notation} & \textbf{Meaning} & \textbf{Def./Sect.} 
\\
\midrule[0.08em] 
$N$ & number of particles & Sect.~\ref{sect: NBRW} \\  
$(i,n)$ & refers to the $i$th particle from the left at time $n$ & Sect.~\ref{sect: NBRW}
\\
$\X_i(n)$ & location of the $i$th particle from the left at time $n$ & Sect.~\ref{sect: NBRW}
\\
$h$ & the function $1/h$ defines the tail of the jump distribution & \eqref{eq: poly_tail}
\\
$\alpha$ & $h$ is regularly varying with index $\alpha>0$ & \eqref{eq: regvar}, \eqref{eq: poly_tail}
\\
$\ell_N$ & time scale: $\ell_N = \ceil{\log_2 N}$ & \eqref{eq: LN}
\\
$a_N$ & space scale: $a_N = h^{-1}(2N\ell_N)$, $h(a_N) \sim 2N\ell_N$ & \eqref{eq: aN}
\\
$t$ & $t\in\N$ is an arbitrary time, we assume $t>4\ell_N$ & Sect.~\ref{sect: intro_result}
\\
$t_i$ & $t_i=t-i\ell_N$, we use $t_1,t_2,t_3,t_4$ & \eqref{eq: ti}
\\
$X_{i,b,n}$ & jump size of the $b$th offspring of particle $(i,n)$ & Sect.~\ref{sect: NBRWdef}
\\
$(i,b,n)$ & refers to the jump $X_{i,b,n}$ of the $b$th offspring of particle $(i,n)$ & Sect.~\ref{sect: notation}
\\
$d(\X(n))$ & diameter of the particle cloud at time $n$ & \eqref{eq:diameterdefn}
\\
$(i,n) \lesssim (j,n+k)$ & particle $(i,n)$ is the time-$n$ ancestor of particle $(j,n+k)$ & \eqref{eq: ancestor}
\\
$(i,n) \lesssim_b (j,n+k)$ & the $b$th offspring of particle $(i,n)$ is the time-$(n+1)$ ancestor of particle $(j,n+k)$ & Sect.~\ref{sect: notation}
\\
$\zeta_{i,n+k}(n)$ & $\zeta_{i,n+k}(n)\in[N]$ is the index of the time-$n$ ancestor of the particle $(i,n+k)$ & \eqref{eq: zeta}
\\
$P_{i_0,n}^{i_k,n+k}$ & path (sequence of jumps) between particles $(i_0,n)$ and $(i_k,n+k)$, if $(i_0,n)\lesssim (i_k,n+k)$ & \eqref{eq: P}
\\
$\Nn_{i,n}(n+k)$ & $\Nn_{i,n}(n+k)\subseteq [N]$ is the set of time-$(n+k)$ descendants of particle $(i,n)$ & \eqref{eq: N}
\\
$\Nn_{i,n}^b(n+k)$ & $\Nn_{i,n}^b(n+k)\subseteq [N]$ is the set of time-$(n+k)$ descendants of the $b$th offspring of particle $(i,n)$ & \eqref{eq: Nb}
\\
$\rho a_N$ & jumps of size greater than $\rho a_N$ are called big jumps & Sect.~\ref{sect: bigjumps}
\\
$B_N$ & set of big jumps & \eqref{eq: BN1}, \eqref{eq: BN2}
\\
$\Sbf_N$ & set of times when the record is broken by a big jump & \eqref{eq: RN}
\\
$\hat\Sbf_N$ & times when the leader is surpassed by a big jump & \eqref{eq: RNhat}
\\
$T$ & time of the common ancestor of almost every particle at time $t$ & Sect.~\ref{sect: intro_result} \\
$T=T(\rho)$ & the last time before $t_1$ when a particle breaks the record with a big jump & \eqref{eq: T}
\\
$(N,T)$ & the leader (rightmost) particle at time $T$ &  Sect.~\ref{sect: heur_pic}
\\
$Z_i(s)$ & distance between the $i$th and the rightmost particle & \eqref{eq: Z}
\\
\midrule[0.08em]
  
\end{longtable}

Next, we list the events which appear throughout our main argument. We give a brief explanation of each event and refer to the equation where the event is defined. We also include short descriptions of the main results involving these events to give a summary of the major steps of the proof of Theorem~\ref{thm}. We write ``whp'' as shorthand for ``with high probability''.

\noindent
\begin{longtable}{ p{1cm} p{12.4cm} r } 
\toprule
\textbf{Event} & \textbf{Meaning} & \textbf{Def./Sect.} 
\\ 
\midrule[0.08em]
$\Aa_1$ & Almost the whole population is close to the leftmost particle at time $t$. & \eqref{eq: A1}
\\
$\Aa_2$ & The genealogy of the population at time $t$ is given by a star-shaped coalescent; there is a common ancestor at time $T\in[t_2,t_1]$. & \eqref{eq: A2}
\\
\midrule
& \centering $\Aa_1$ and $\Aa_2$ occur whp (Theorem~\ref{thm}) & 
\\
\midrule
$\Aa_3$ & Almost every particle at time $t$ descends from the leader at time $T\in[t_2,t_1]$.  & \eqref{eq: A3}
\\
$\Aa_4$ & Shortly after time $T$ no particle has a positive proportion of the population as descendants at time $t$. & \eqref{eq: A4}
\\
\midrule[0.08em]
& \centering If $\Aa_3$ and $\Aa_4$ occur whp then $\Aa_2$ occurs whp (Lemma~\ref{lemma: rewrite}) &
\\
\midrule
& \centering The event $\Aa_4$ occurs whp (Proposition~\ref{prop: A4}) &
\\
\midrule
& \centering The event $\Aa_1\cap\Aa_3$ occurs whp (Proposition~\ref{prop: A1A3}). This is shown using the events below. &
\\
\midrule[0.08em]
$\Bb_1$ & There is a leading tribe, descended from the leader at time $T\in[t_2,t_1]$, which is a significant distance from the other particles at time $t_1$. & \eqref{eq: B1}
\\
$\Bb_2$ & Particles which are not in the leading tribe at time $t_1$ have $o(N)$ descendants in total at time $t$. & \eqref{eq: B2}
\\
\midrule
& \centering $\Bb_1\cap\Bb_2 \subseteq \Aa_3$ (Lemma~\ref{lemma: A})&
\\
\midrule
$\Cc_1$ & A particle leads by a large distance compared to the second rightmost particle at some point in $[t_2+1,t_1]$. & \eqref{eq: C1}
\\
$\Cc_2$ & Particles far from the leader stay far behind or beat the leader by a lot. & \eqref{eq: C2}
\\
$\Cc_3$ & There is at most one big jump on a path of length $\ell_N$. & \eqref{eq: C3}
\\
$\Cc_4$ & Paths without big jumps move very little on the $a_N$ space scale. & \eqref{eq: C4}
\\
$\Cc_5$ & Two big jumps cannot happen at the same time. & \eqref{eq: C5}
\\
$\Cc_6$ & No big jumps happen at times very close to $t_2$ or $t_1$. & \eqref{eq: C6}
\\
$\Cc_7$ & The number of big jumps performed in $[t_4,t]$ is bounded above by a constant independent of $N$. & \eqref{eq: C7}
\\
\midrule
& \centering $\bigcap_{j=1}^7\Cc_j \subseteq \Bb_1\cap\Bb_2\cap\Aa_1 \subseteq \Aa_1\cap\Aa_3$ (Proposition~\ref{prop: B})& 
\\
\midrule
$\Dd_1$ & Same as $\Cc_2$ with different constants. & \eqref{eq: D1}
\\
$\Dd_2$ & In every short interval on the $\ell_N$ time scale, at least one big jump larger than a certain size occurs. & \eqref{eq: D2}
\\
$\Dd_3$ & In the first half of $[t_2,t_1]$ a big jump larger than a certain size occurs.  & \eqref{eq: D3}
\\
$\Dd_4$ & Shortly before time $t_2$, only jumps smaller than a certain size occur. & \eqref{eq: D4}
\\
$\Dd_5$ & During a short time interval, jumps of size in a certain small range do not happen. & \eqref{eq: D5}
\\
\midrule
& \centering $\bigcap_{j=2}^7\Cc_j \cap \bigcap_{i=1}^5\Dd_i \subseteq \Cc_1$ (Proposition~\ref{prop: C}) & 
\\
\midrule
& \centering The events $\Cc_2-\Cc_7$, $\Dd_1-\Dd_5$ all occur whp (Lemma~\ref{lemma: probC})& 
\\
\bottomrule
\end{longtable}

\subsubsection*{Acknowledgements}

MR would like to thank the Royal Society for funding his University Research Fellowship.
ZT is supported by a scholarship from the EPSRC Centre for Doctoral Training in Statistical Applied Mathematics at Bath (SAMBa), under the project EP/L015684/1.

\bibliographystyle{plain}
\bibliography{branching_ref}

\end{document}